\documentclass[11pt]{amsart}

\usepackage[margin=1.25in]{geometry}
\usepackage[T1]{fontenc}
\usepackage{lmodern} 
\usepackage{microtype}

\usepackage{amsmath,amssymb,amsthm,mathtools}
\numberwithin{equation}{section}

\usepackage{graphicx}
\usepackage{tikz}
\usetikzlibrary{arrows.meta,positioning,calc}

\usepackage{array,longtable}
\usepackage{enumitem}

\usepackage[hidelinks]{hyperref}
\hypersetup{
pdfauthor={Yin Cai, Xiang Fang, Hongdou Qu},
pdftitle={Exact Fourier dimensions of dyadic Mandelbrot cascades on curves of
nonvanishing curvature under minimal integrability},
pdfkeywords={dyadic Mandelbrot cascades, Fourier dimension, nondegenerate \(C^2\) arcs, \(C^2\) Jordan curves, minimum lower local dimension}
}

\newtheorem{theorem}{Theorem}[section]
\newtheorem{proposition}[theorem]{Proposition}
\newtheorem{lemma}[theorem]{Lemma}
\newtheorem{corollary}[theorem]{Corollary}

\theoremstyle{definition}
\newtheorem{definition}[theorem]{Definition}

\theoremstyle{remark}
\newtheorem{remark}[theorem]{Remark}

\newcommand{\weak}{\xrightarrow{\mathrm{w}}}

\title[Exact Fourier Dimensions on curves]
{Exact Fourier dimensions of dyadic Mandelbrot cascades on curves of
nonvanishing curvature under minimal integrability}

\author[Y. Cai]{Yin Cai}
\address{School of Mathematics\\
Hangzhou Normal University\\
Hangzhou 311121\\
P. R. China}
\email{cymath@hznu.edu.cn}

\author[X. Fang]{Xiang Fang}
\address{Department of Applied Mathematics\\
National Yang Ming Chiao Tung University\\
Hsinchu 30010\\
Taiwan}
\email{xfang@nycu.edu.tw}

\author[H. Qu]{Hongdou Qu}
\address{School of Mathematical Sciences\\
Dalian University of Technology\\
Dalian 116024\\
P. R. China}
\email{quhongdou0926@mail.dlut.edu.cn}

\date{\today}

\subjclass[2020]{Primary 60G57; Secondary 42B10, 28A80}

\keywords{dyadic Mandelbrot cascades, Fourier dimension, nondegenerate \(C^2\) arcs, \(C^2\) Jordan curves, minimum lower local dimension}

\begin{document}

\begin{abstract}
We prove exact Fourier-dimension formulas for scalar dyadic Mandelbrot cascades
pushed forward by fixed nondegenerate \(C^2\) parametrizations of embedded
arcs and Jordan curves in \(\mathbb R^2\). Let \(W\) be in the minimal Kahane--Peyri\`ere regime. For each fixed nondegenerate \(C^2\) embedded arc \(\gamma:[0,1]\to\mathbb R^2\), the pushforward \(\mu_\gamma\) of the interval cascade satisfies, almost surely on non-extinction,
\[
    \dim_{\mathrm F}(\mu_\gamma)=A_{\mathrm{loc}}(W),
\]
where
\[
    A_{\mathrm{loc}}(W)
    =
    \sup_{q>1}
    \max\left\{
        0,\,
        \frac{q-1-\log_2\mathbb E[W^q]}{q}
    \right\},
\]
with the \(q\)-term interpreted as \(0\) when \(\mathbb E[W^q]=\infty\). The analogous formula holds for scalar circle cascades pushed forward by
fixed nondegenerate \(C^2\) Jordan curves
\(\gamma:\mathbb T\to\mathbb R^2\), with the pushforward denoted by
\(\mu_\gamma^{\mathbb T}\).

This extends the scalar circle endpoint formula of \cite{2026-M}
from the canonical circle to arbitrary fixed nondegenerate \(C^2\)
embedded arcs and Jordan curves. The main new
issue beyond the canonical circle is the loss of the explicit trigonometric
phase and, for arcs, the presence of endpoint stationary regimes. We prove the
arc lower bound by a finite-\(r\) annular Fourier theorem based on an
endpoint-safe phase decomposition, phase-bin coefficient estimates, predictable
capping, complex Freedman concentration, and an \(r\)-tail compensator. The
Jordan lower bound follows by first-generation dyadic cutting into two fixed
arcs. The matching upper bounds use deterministic curved-support obstructions
together with the scalar-circle minimum lower local-dimension theorem.
\end{abstract}

\maketitle

\tableofcontents

\section{Introduction and main results}
\label{S:introduction}

\subsection{Motivation and main contributions}
\label{SS:introduction-background-main-contribution}

Mandelbrot cascades are fundamental examples of random multifractal measures. In the dyadic model, mass is redistributed along the binary tree by independent copies of a nonnegative weight. The limiting martingale measures arise naturally in multiplicative chaos, branching random walks, and multifractal analysis \cite{Kahane1985,Kahane1985Chaos,KahanePeyriere1976, Mandelbrot1974a,Mandelbrot1976,RhodesVargas2014}. A classical problem, going back to Mandelbrot and Kahane, asks when such random measures have polynomial Fourier decay, and how the almost sure Fourier dimension is determined by the distribution of the cascade weight. Recent progress on this Mandelbrot--Kahane Fourier-dimension problem and related cascade models can be found in \cite{ChenHanQiuWang2025Harmonic, ChenHanQiuWang2025MandelbrotKahane, ChenLiSuomala2025}.

Ryou and Suomala \cite{RyouSuomala2026} recently studied Mandelbrot cascades
on planar \(C^2\) curves with nonvanishing curvature.  In their \(b\)-adic
curvilinear setting, under superpolynomial tail assumptions, they proved that
the Fourier dimension is almost surely equal to the minimum lower pointwise
dimension \(\alpha_{\min}\) on non-extinction, and also obtained spherical
\(L^p\)-average decay estimates.

The present paper is complementary in three respects.  First, in the scalar
dyadic setting we work under the minimal Kahane--Peyri\`ere regime and obtain
an exact endpoint formula in terms of the cascade law.  More precisely, the
endpoint is identified explicitly as \(A_{\mathrm{loc}}(W)\), and each strict
subendpoint exponent requires only one finite-moment witness.  Second, the
fixed-arc lower bound is proved by a finite-\(r\) annular theorem with an
endpoint-safe phase decomposition, designed to handle one-sided endpoint
stationary regimes.  Third, the matching upper bounds are obtained by combining
deterministic curved-support obstructions with deterministic transfers of the
minimum local dimension.  The Jordan-curve formula is then obtained from the
fixed-arc theorem by first-generation dyadic cutting.

The scalar circle theorem of \cite{2026-M} is the benchmark and one of the
inputs for the upper bounds.  In the notation of the present paper, it assumes
the minimal Kahane--Peyri\`ere regime
\[
    W\ge0,\qquad
    \mathbb E W=1,\qquad
    \mathbb E[W\log_2^+W]<\infty,\qquad
    \mathbb E[W\log_2 W]<1,
\]
and identifies
\[
    A_{\mathrm{loc}}(W)
    =
    \sup_{q>1}
    \max\left\{
        0,\,
        \frac{q-1-\log_2\mathbb E[W^q]}{q}
    \right\},
\]
where the \(q\)-term is interpreted as \(0\) whenever
\(\mathbb E[W^q]=\infty\).  It proves
\[
    \dim_{\mathrm F}(\mu_\circ)=A_{\mathrm{loc}}(W)
\]
for the canonical scalar circle cascade on non-extinction.  The only external cascade-theoretic input used for the upper bounds is the
scalar-circle minimum lower local dimension identity
\[
    \alpha_{\min}(\mu_\circ)=A_{\mathrm{loc}}(W)
\]
from \cite{2026-M}.  The fixed-arc lower bound is proved independently of the
scalar-circle Fourier lower bound; it uses the annular martingale argument
developed in the present paper, together with standard martingale facts and the
complex Freedman inequality in the form recorded in \cite{2026-M}.

This result provides a natural benchmark, but it also raises a geometric
robustness question.  Is the endpoint Fourier-dimension formula a consequence
of the special circular parametrization, or does the same formula persist for
scalar cascades pushed forward to a fixed nondegenerate \(C^2\) curve?  This is
not merely a question of changing variables.  The unit circle has an explicit
trigonometric phase, constant curvature, global periodicity, and no endpoints.
A fixed \(C^2\) arc has no corresponding global structure.  For the phase
\[
    t\mapsto -2\pi \xi\cdot\gamma(t),
    \qquad 0\le t\le1,
\]
near-stationarity may occur at \(t=0\) or \(t=1\).  These endpoint regimes are
absent from the closed-circle argument and must be treated as part of the
Fourier analysis.

The purpose of the present paper is to prove that the same endpoint value
governs scalar cascade pushforwards to fixed nondegenerate \(C^2\) curves in
\(\mathbb R^2\), in two settings: fixed nondegenerate \(C^2\) embedded arcs
\(\gamma:[0,1]\to\mathbb R^2\), and fixed nondegenerate \(C^2\) Jordan curves
\(\gamma:\mathbb T\to\mathbb R^2\), where
\(\mathbb T=\mathbb R/\mathbb Z\).  For each fixed curve in these two classes,
the corresponding interval or circle cascade pushforward, respectively, has
Fourier dimension \(A_{\mathrm{loc}}(W)\) almost surely on the appropriate
non-extinction event.  Thus the endpoint value is determined by the cascade law
rather than by the particular fixed nondegenerate \(C^2\) curve, although the
constants in the Fourier estimates may depend on the chosen parametrized curve. In this sense, the endpoint Fourier-dimension formula is stable under replacing
the canonical circle by an arbitrary fixed nondegenerate \(C^2\) arc or Jordan
curve.

The fixed-arc lower bound is the main Fourier-analytic component of the paper.
It is proved within the present paper by a finite-\(r\) annular martingale
argument combined with an endpoint-safe phase decomposition.  The nonvanishing
curvature assumption supplies the quantitative phase geometry needed for the
derivative-band analysis, while the endpoint-safe pieces are controlled by
local-mass estimates.  This gives almost sure Fourier decay for every strict
exponent below \(A_{\mathrm{loc}}(W)\).  Apart from standard background facts
and the complex-valued Freedman inequality recorded in \cite{2026-M}, this
lower-bound argument does not use the scalar-circle Fourier lower bound.

The Jordan lower bound is reduced to the fixed-arc theorem by cutting the
circle cascade at the first dyadic generation.  This produces two interval
cascades on fixed dyadic subarcs, and the decay estimate is inherited by their
finite sum.

The upper bounds are obtained from local dimension rather than from annular
Fourier estimates.  The scalar-circle identity
\[
    \alpha_{\min}(\mu_\circ)=A_{\mathrm{loc}}(W)
\]
from \cite{2026-M} is transferred deterministically to the interval,
fixed-arc, and fixed-Jordan settings, and is then combined with deterministic
curved-support obstructions of the form
\[
    \dim_{\mathrm F}(\eta)\le \alpha_{\min}(\eta).
\]
The arc obstruction explicitly includes support points at the two endpoints,
where the local geometry is one-sided.

The results are fixed-curve statements.  The constants may depend on the chosen
parametrized arc or Jordan curve, including its \(C^2\)-norm, the lower bound
for \(|\gamma'|\), the lower curvature determinant bound, the diameter of the
image, and the relevant global embedding constants.  No uniformity over
families of curves is asserted.  We do not address curves with flat points,
random curves, curve-uniform families, or vector-valued cascades.

\subsection{Endpoint formulas for fixed arcs and Jordan curves}
\label{SS:introduction-models-main-results}

We first state the fixed-arc formula.  The curve class is the following.

\begin{definition}[Fixed nondegenerate \(C^2\) embedded arc]
\label{D:fixed-nondegenerate-arc}
A map \(\gamma:[0,1]\to\mathbb R^2\) is called a fixed nondegenerate
\(C^2\) embedded arc if \(\gamma\) is a \(C^2\) embedding and
\[
    \inf_{0\le t\le1}|\gamma'(t)|>0,
    \qquad
    \inf_{0\le t\le1}
    |\det(\gamma'(t),\gamma''(t))|>0 .
\]
\end{definition}

Let \(\widetilde\mu\) be the dyadic scalar Mandelbrot cascade on \([0,1]\)
generated by \(W\).  For a fixed nondegenerate \(C^2\) embedded arc
\(\gamma:[0,1]\to\mathbb R^2\), set
\(
\mu_\gamma=\gamma_\#\widetilde\mu,
\)
and let
\[
    S_\gamma=\{\widetilde\mu([0,1])>0\}
\]
be the non-extinction event.

\begin{theorem}[Endpoint formula on a fixed nondegenerate arc]
\label{T:arc-main-endpoint}
Let \(W\) be in the minimal Kahane--Peyri\`ere regime, and let
\(\widetilde\mu\) be the dyadic scalar Mandelbrot cascade on \([0,1]\)
generated by \(W\).  Let \(\gamma:[0,1]\to\mathbb R^2\) be a fixed
nondegenerate \(C^2\) embedded arc, and set
\(
\mu_\gamma=\gamma_\#\widetilde\mu.
\)
Then, almost surely on \(S_\gamma\),
\[
    \dim_{\mathrm F}(\mu_\gamma)=A_{\mathrm{loc}}(W).
\]
Moreover, if \(A_{\mathrm{loc}}(W)>0\), then, almost surely on \(S_\gamma\),
for every \(0<\sigma<A_{\mathrm{loc}}(W)\), there are finite random constants
\(C_\sigma,R_\sigma<\infty\) such that
\[
    |\widehat{\mu_\gamma}(\xi)|
    \le
    C_\sigma |\xi|^{-\sigma/2}
    \qquad
    (|\xi|\ge R_\sigma,\ \xi\in\mathbb R^2).
\]
\end{theorem}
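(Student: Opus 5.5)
The proof splits into a lower bound, $\dim_{\mathrm F}(\mu_\gamma)\ge A_{\mathrm{loc}}(W)$ with the quantitative decay $|\widehat{\mu_\gamma}(\xi)|\lesssim|\xi|^{-\sigma/2}$, and a matching upper bound $\dim_{\mathrm F}(\mu_\gamma)\le A_{\mathrm{loc}}(W)$. For the upper bound we argue deterministically. First transfer the scalar-circle identity $\alpha_{\min}(\mu_\circ)=A_{\mathrm{loc}}(W)$ from \cite{2026-M} to the interval cascade $\widetilde\mu$. The restriction of the circle cascade to the first-generation dyadic halves consists of independent scaled interval cascades, and lower local dimension is invariant under bi-Lipschitz maps. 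This gives $\alpha_{\min}(\widetilde\mu)=A_{\mathrm{loc}}(W)$ a.s. on $S_\gamma$, and since $\gamma$ is bi-Lipschitz onto its image, also $\alpha_{\min}(\mu_\gamma)=A_{\mathrm{loc}}(W)$. Next prove a curved-support obstruction: if $\eta$ is supported on a nondegenerate $C^2$ arc and $|\widehat\eta(\xi)|\lesssim|\xi|^{-s/2}$, then $\eta(B(x,r))\lesssim r^{s}$ up to an $\varepsilon$-loss, uniformly in $x$, endpoints included.

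To prove the obstruction, test $\eta$ against an anisotropic bump $\phi_{x,r}$ adapted to the arc near $x$. This is a rectangle of size $r\times r^2$ aligned with the tangent, one-sided when $x$ is an endpoint. Its Fourier transform is concentrated in a dual region of measure about $r^{-3}$, lying in a dyadic shell $|\xi|\sim r^{-2}$ and cone-localized. Then
\[
\eta(B(x,r))\lesssim\Bigl|\int\widehat{\phi_{x,r}}\,\overline{\widehat\eta}\Bigr|\lesssim r^{3}\cdot r^{-3}\cdot r^{s},
\]
where the curvature fixes the scale pairing $|\xi|^{-s/2}=r^{s}$. This is the reason for the exponent $\sigma/2$. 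Hence $\alpha_{\min}(\eta)\ge s$ for every admissible $s<\dim_{\mathrm F}(\eta)$, that is, $\dim_{\mathrm F}(\mu_\gamma)\le\alpha_{\min}(\mu_\gamma)=A_{\mathrm{loc}}(W)$.

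For the lower bound, fix $\sigma<A_{\mathrm{loc}}(W)$ and pick a witness $q>1$ with $\mathbb E W^q<\infty$ and $\sigma<(q-1-\log_2\mathbb E W^q)/q$. Work on annuli $|\xi|\sim 2^{k}$, using a finite $r$-net of frequencies in each annulus (finite-$r$). Write $\widehat{\mu_\gamma}(\xi)=\int e^{-2\pi i\xi\cdot\gamma(t)}\,d\widetilde\mu(t)$ and decompose $[0,1]$ at dyadic generation $n\approx k/2$. On intervals where the phase derivative $|\xi\cdot\gamma'(t)|$ exceeds about $2^{k/2}$, bin by derivative size. Integrating by parts against the cascade martingale increments, each bin contributes coefficients with oscillatory gain, and nonvanishing curvature bounds the number of intervals per bin. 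The near-stationary intervals, which number $O(1)$, possibly one-sided and located at $t=0$ or $t=1$, are handled without oscillation: bound them by the local mass $\widetilde\mu(I)\lesssim|I|^{\alpha}$, which is almost sure with $\alpha$ just below $A_{\mathrm{loc}}$ and $|I|\sim2^{-k/2}$. This is the endpoint-safe piece.

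For the oscillatory part, expand in the martingale differences across generations $m\ge n$. Cap the weights predictably at thresholds chosen from the $q$-th moment. Apply the complex Freedman inequality to the capped sums, bounding the predictable quadratic variation by $\sum_I\widetilde\mu_m(I)^2$-type terms whose moments are controlled by $\mathbb E W^q$. The capped-off tail is handled by an $r$-tail compensator with Markov's inequality. Summable exceptional probabilities over the net points and over $k$, combined with Borel--Cantelli and a Lipschitz interpolation between net points, give the almost sure decay for $|\xi|\ge R_\sigma$.

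The main obstacle is making the Freedman variance bound summable using only one finite moment $q$ possibly close to $1$. This requires choosing the caps and the derivative-band widths so that the phase-bin coefficient estimates and the tail compensator exponents both beat $2^{-k\sigma}$. The endpoint bins are where the counting is least favorable. My first attempt would optimize the cap level as $2^{m\theta}$ with $\theta$ tuned to $q$, paralleling the circle argument in \cite{2026-M}.
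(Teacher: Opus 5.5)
Your architecture matches the paper's, and the upper-bound half is essentially sound (two small corrections at the end). The lower bound, however, has a genuine gap at exactly the step you leave open: the choice of cap.

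\textbf{Why a generation-only cap fails.} In the paper's notation (\(|\xi|\sim2^n\), generation \(\ell\), \(M_I=\widetilde\mu_\ell(I)\), local-mass exponent \(a\) slightly above \(\sigma\)), a cap \(U_J\wedge2^{\theta\ell}\) that depends only on the generation cannot reach \(A_{\mathrm{loc}}(W)\).
\begin{itemize}
\item Its Freedman jump and variance bounds are \(|c_J|M_I2^{\theta\ell}\) and \(\sum|c_J|^2M_I^2\,2^{\theta\ell}\). With \(M_I\le2^{\varepsilon n}2^{-a\ell}\), this forces roughly \(\theta<a-\sigma\).
\item The truncated part is only bounded by \(2^{-\theta(r-1)\ell}\sum_J|c_J|M_I\), which sees just the first moment of the masses. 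Making it \(o(2^{-\sigma n/2})\) from generation \(\approx n/2\) onward forces \(\theta(r-1)\gtrsim\sigma\).
\end{itemize}
For a witness \(r\) close to \(1\) (unavoidable when \(\mathbb E W^q=\infty\) for all \(q\) beyond some \(q_0\) near \(1\)), these two constraints are incompatible once \(\sigma\) is close to \(A_{\mathrm{loc}}(W)\). You would then prove decay only for a strictly smaller exponent.

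\textbf{The missing idea.} Cap relative to the parent mass: \(C_{I,n}=L_n\,2T_{\ell+1,n}/M_I\) with \(T_{k,n}=2^{\varepsilon n}2^{-ak}\). This buys three things.
\begin{itemize}
\item Every jump is at most \(2|c_J|L_nT_{\ell+1,n}\), whatever \(M_I\) is.
\item The cap is inactive on the local-mass good event, so each raw increment splits exactly into a centered capped part plus a predictable drift.
\item The drift obeys \(|D_J|\le C|c_J|M_I^r(L_nT_{\ell+1,n})^{1-r}\).
\end{itemize}
The drift's expectation is therefore governed by the \(r\)-mass budget \(\mathbb E\sum_{I\in\mathcal D_\ell([0,1])}\widetilde\mu_\ell(I)^r\le C(2^{1-r}\mathbb E W^r)^\ell\le C2^{-r(\sigma+\delta)\ell}\). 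This yields level decay \(2^{-\beta_{\mathrm{comp}}\ell}\) with \(\beta_{\mathrm{comp}}=r(\sigma+\delta)-(r-1)a>\sigma\). This is exactly where the witness \((r-1-\log_2\mathbb E W^r)/r\) enters the proof, and nothing in your sketch plays that role.

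\textbf{Surviving the union bound over the net.} Two further points are needed to handle the roughly \(2^{(2+\sigma)n}\) net points.
\begin{itemize}
\item Freedman needs an almost-sure jump bound and a deterministic variance budget, not moment bounds on \(\sum_I\widetilde\mu_\ell(I)^2\). The paper inserts \(\mathbf 1_{\{\tau_n>\ell\}}\), where \(\tau_n\) is the first level at which some \(\widetilde\mu_\ell(I)\), \(I\in\mathcal D_k([0,1])\), \(k\le\ell\), exceeds \(T_{k,n}\). It then derives \(\sum|c_J|^2M_IT_{\ell+1,n}\le C2^{2\varepsilon n}|\xi|^{-a}\) from band localization, and uses that the stopped and unstopped arrays agree on \(\mathcal G_n^{\mathrm{pre}}=\{\tau_n=\infty\}\).
\item The drift cannot be handled by Markov at each net point, since decay \(2^{-cn}\) loses against the net size. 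It must be dominated by one frequency-independent envelope, with Markov applied once per annulus. With your split at generation \(\approx n/2\), this envelope is simply \(\sum_{\ell\gtrsim n/2}\sum_{I\in\mathcal D_\ell([0,1])}M_I^r(L_nT_{\ell+1,n})^{1-r}\), using \(|c_J|\le1\).
\end{itemize}

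\textbf{Upper-bound corrections.}
\begin{itemize}
\item The Fourier transform of a \(\rho\times\rho^2\) bump lives on a dual box centered at the origin, not in a shell. Your estimate still holds, but because \(|\widehat\eta(\xi)|\lesssim|\xi\cdot N_0|^{-s/2}\) and \(|v|^{-s/2}\) is locally integrable for \(s<2\). The paper instead uses a Gaussian average of \(|\widehat\eta|^2\) along the normal line.
\item Transferring through the two first-generation halves needs an independence argument to get \(\alpha_{\min}^{[0,1]}(\widetilde\mu)\le A_{\mathrm{loc}}(W)\), because the circle only sees the minimum over the halves. The paper avoids this by noting that \((f_{\mathbb T}\circ q)_\#\widetilde\mu\) is itself the circle cascade built from the same weights, and applying a gluing lemma at the identified endpoint.
\end{itemize}
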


We next state the closed-curve analogue.  Here
\(\mathbb T=\mathbb R/\mathbb Z\), and the curve class is defined as follows. 

Since the cascade lives on the parameter space, the parametrization is part of
the data.  For both arcs and Jordan curves, we allow any fixed \(C^2\)
parametrization satisfying the corresponding nondegeneracy assumptions.
No invariance of the resulting cascade pushforward under nonlinear
reparametrizations is asserted.

\begin{definition}[Fixed nondegenerate \(C^2\) Jordan curve]
\label{definition:fixed-curve}
A map \(\gamma:\mathbb T\to\mathbb R^2\) is called a fixed nondegenerate
\(C^2\) Jordan curve if \(\gamma\) is a \(C^2\) embedding and satisfies
 \[
     \inf_{t\in\mathbb T}|\gamma'(t)|>0,
     \qquad
    \inf_{t\in\mathbb T}
     |\det(\gamma'(t),\gamma''(t))|>0.
 \]
When derivatives are written, \(\gamma\) is represented by the associated
\(1\)-periodic \(C^2\) map
\(\bar\gamma=\gamma\circ\pi:\mathbb R\to\mathbb R^2\), where
\(\pi:\mathbb R\to\mathbb T\) is the quotient map; the notation
\(\gamma'(t)\), \(\gamma''(t)\) refers to the derivatives of
\(\bar\gamma\) at any representative of \(t\in\mathbb T\).
 \end{definition}

Let \(\widetilde\mu^{\mathbb T}\) be the scalar dyadic cascade on
\(\mathbb T\) generated by \(W\).  For a fixed nondegenerate \(C^2\) Jordan
curve \(\gamma:\mathbb T\to\mathbb R^2\), set
\(
    \mu_\gamma^{\mathbb T}
    =
    \gamma_\#\widetilde\mu^{\mathbb T},
\)
and let
\[
    S_\gamma^{\mathbb T}
    =
    \{\widetilde\mu^{\mathbb T}(\mathbb T)>0\}
\]
be the non-extinction event.

\begin{theorem}[Endpoint formula on a fixed nondegenerate Jordan curve]
\label{T:jordan-main-endpoint}
Let \(W\) be in the minimal Kahane--Peyri\`ere regime, and let
\(\widetilde\mu^{\mathbb T}\) be the dyadic scalar Mandelbrot cascade on
\(\mathbb T\) generated by \(W\).  Let
\(\gamma:\mathbb T\to\mathbb R^2\) be a fixed nondegenerate \(C^2\) Jordan
curve, and set
\(
    \mu_\gamma^{\mathbb T}
    =
    \gamma_\#\widetilde\mu^{\mathbb T}.
\)
Then, almost surely on \(S_\gamma^{\mathbb T}\),
\[
    \dim_{\mathrm F}(\mu_\gamma^{\mathbb T})
    =
    A_{\mathrm{loc}}(W).
\]
Moreover, if \(A_{\mathrm{loc}}(W)>0\), then, almost surely on
\(S_\gamma^{\mathbb T}\), for every
\(0<\sigma<A_{\mathrm{loc}}(W)\), there are finite random constants
\(C_\sigma,R_\sigma<\infty\) such that
\[
    |\widehat{\mu_\gamma^{\mathbb T}}(\xi)|
    \le
    C_\sigma |\xi|^{-\sigma/2}
    \qquad
    (|\xi|\ge R_\sigma,\ \xi\in\mathbb R^2).
\]
\end{theorem}

In both theorems, the Fourier decay estimate is stated only for strict
subendpoint exponents.  No uniform estimate is asserted at the endpoint
\(\sigma=A_{\mathrm{loc}}(W)\).  The endpoint equality is obtained at the
level of Fourier dimension by taking a countable family of strict exponents.

The two formulas have the same endpoint value, but their proofs use different
reductions.  The fixed-arc formula is proved through an endpoint-safe annular
argument.  The Jordan-curve lower bound is obtained by first-generation dyadic
cutting of the circle cascade into two interval cascades and applying the
fixed-arc theorem to the resulting subarcs.  The matching upper bound uses the scalar circle minimum lower local dimension
input together with the deterministic upper obstruction for measures supported
on a fixed closed curve.

\subsection{The finite-\texorpdfstring{\(r\)}{r} annular theorem}
\label{SS:introduction-finite-r-theorem}

The lower bound in Theorem~\ref{T:arc-main-endpoint} is obtained from a
finite-\(r\) annular Fourier theorem.  We state this theorem with an auxiliary
cascade weight \(U\).  This formulation is useful in the endpoint argument,
where \(U\) will be specialized to \(W\) only after a finite-\(r\) witness has
been chosen for a strict exponent below \(A_{\mathrm{loc}}(W)\).

\begin{theorem}[Finite-\(r\) annular Fourier decay on fixed arcs]
\label{T:arc-finite-r-annular}
Let \(0<s<1\).  Let \(U\ge0\) satisfy \(\mathbb E U=1\), and assume that there
exist \(r>1\) and \(\delta>0\) such that
\[
    \mathbb E[U^r]<\infty
    \qquad\text{and}\qquad
    2^{1-r}\mathbb E[U^r]\le 2^{-r(s+\delta)}.
\]
Let \(\widetilde\nu\) denote the almost sure weak limit of the dyadic scalar
Mandelbrot cascade on \([0,1]\) generated by \(U\).  Let
\(\gamma:[0,1]\to\mathbb R^2\) be a fixed nondegenerate \(C^2\) embedded arc,
and set \(\nu_\gamma=\gamma_\#\widetilde\nu\).  Then there exist constants
\(C_\gamma<\infty\), \(c_\gamma>0\), and \(\eta>0\), depending only on
\(s,r,\delta\), the law of \(U\), and the fixed arc \(\gamma\), such that, for
every integer \(n\ge1\),
\[
\mathbb P
\left(
    \sup_{2^n\le|\xi|\le2^{n+1}}
    |\widehat{\nu_\gamma}(\xi)|
    >
    C_\gamma 2^{-sn/2}
\right)
\le
C_\gamma\exp(-c_\gamma2^{\eta n})
+
C_\gamma2^{-c_\gamma n}.
\]
Consequently,
\[
    |\widehat{\nu_\gamma}(\xi)|
    =
    O(|\xi|^{-s/2})
    \qquad
    (|\xi|\to\infty)
\]
almost surely.
\end{theorem}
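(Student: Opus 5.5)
Fix the annulus $2^n\le|\xi|\le2^{n+1}$. Choose a cutting generation $m=m(n)$ with $2^{-m}\approx 2^{-\theta n}$ for some $\theta$ slightly above $1/2$. Then each dyadic parameter interval $I$ of generation $m$ has image $\gamma(I)$ of length about $2^{-m}$, and the phase $\phi_\xi(t)=-2\pi\xi\cdot\gamma(t)$ has second derivative of size about $|\xi|$. By the branching property,
\[
\widehat{\nu_\gamma}(\xi)=\sum_{|I|=2^{-m}}\widetilde\nu_m(I)\,Z_I(\xi),\qquad
Z_I(\xi)=\int_0^1 e^{i\phi_\xi(\gamma_I(u))}\,d\widetilde\nu^{(I)}(u),
\]
where $\widetilde\nu_m(I)$ is the product of the weights along the path to $I$, $\gamma_I$ is $\gamma$ restricted to $I$ and rescaled, and the $\widetilde\nu^{(I)}$ are i.i.d. copies of $\widetilde\nu$, independent of $\mathcal F_m$.

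Write $Z_I=\mathbb E Z_I+(Z_I-\mathbb E Z_I)$. The mean part gives $\sum_I\widetilde\nu_m(I)\int_I e^{i\phi_\xi}\,dt$, which is the $m$-th martingale level. I will handle it by an endpoint-safe phase decomposition. Nonvanishing curvature makes $t\mapsto\gamma'(t)/|\gamma'(t)|$ a $C^1$ diffeomorphism onto an arc of directions. So $|\phi_\xi'(t)|\gtrsim|\xi|\,|t-t_\xi|$, where $t_\xi$ is the (possibly exterior) stationary point. Project $t_\xi$ onto $[0,1]$, so that endpoint stationarity becomes a one-sided case of the same picture. Then split $[0,1]$ into the stationary window $|t-t_\xi|\le 2^{-n/2}$ and dyadic derivative bands $|t-t_\xi|\sim2^{k-n/2}$.

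On the window, bound trivially by the local mass $\widetilde\nu(B(t_\xi,2^{-n/2}))$. On the bands, integrate by parts at a generation adapted to the band, which gives the factor $(|\xi|2^{k-n/2}|I|)^{-1}$. Group the intervals into phase bins by their derivative band. The coefficient estimates then come from $L^r$ bounds: $\mathbb E\sum_I\widetilde\nu_m(I)^r=(2^{1-r}\mathbb E U^r)^m\le 2^{-mr(s+\delta)}$, and Markov plus Hölder give $\max_I\widetilde\nu_m(I)\lesssim 2^{-m(s+\delta/2)}$ outside probability $2^{-c m}$. Since the supremum over the annulus requires control of all windows simultaneously, this max-mass bound is where the polynomial term $C2^{-cn}$ comes from. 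With $|I|\approx2^{-n/2}$ the window mass is then at most $2^{-sn/2}$.

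The fluctuation part $\sum_I\widetilde\nu_m(I)(Z_I-\mathbb E Z_I)$ is, conditionally on $\mathcal F_m$, a sum of independent centered terms, but $Z_I$ has only $r$ moments. I will handle this with predictable capping. Truncate $\widetilde\nu^{(I)}(1)$ at level $2^{\epsilon n}$ and apply the complex Freedman inequality to the capped sum. The conditional variance is at most $\sum_I\widetilde\nu_m(I)^2\,2^{(2-r)_+\epsilon n}$, which is at most $2^{-m(s+\delta/2)}$ times the total mass. Capped increments are bounded by $\max_I\widetilde\nu_m(I)\,2^{\epsilon n}$. This yields the stretched-exponential term $\exp(-c2^{\eta n})$. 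The capping error is an $r$-tail compensator, $\sum_I\widetilde\nu_m(I)\,\widetilde\nu^{(I)}(1)\mathbf 1_{\{\cdot>2^{\epsilon n}\}}$, whose expectation is at most $2^{-(r-1)\epsilon n}$. It is controlled by Markov, provided $\theta,\epsilon$ are chosen so that $(r-1)\epsilon n>sn/2+cn$.

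To pass from fixed $\xi$ to the supremum over the annulus, take a $2^{-2n}$-net of the annulus, which has polynomially many points, and use $|\nabla\widehat{\nu_\gamma}|\lesssim\widetilde\nu(1)$. A union bound keeps the stretched-exponential form. Finally, Borel--Cantelli applied to the summable bounds gives $|\widehat{\nu_\gamma}(\xi)|=O(|\xi|^{-s/2})$ almost surely.

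The main obstacle will be balancing $m$, the bin widths and the cap so that three things hold at once: the endpoint-window local mass, the band integration-by-parts losses, and the tail compensator all beat $2^{-sn/2}$, using only the single witness $r$. This is especially delicate near the endpoints, where the band sum is one-sided and the projected $t_\xi$ may sit far outside $[0,1]$. There I would first try a case split $|t_\xi-\mathrm{proj}(t_\xi)|\gtrless2^{-n/2}$, with pure nonstationary integration by parts in the far case.
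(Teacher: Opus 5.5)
You have a genuine gap in the fluctuation term $\sum_I\widetilde\nu_m(I)(Z_I-\mathbb E Z_I)$. Your single-cut identity $\widehat{\nu_\gamma}(\xi)=\sum_I\widetilde\nu_m(I)Z_I$ is fine, and so is the deterministic bound on the mean part at $m\approx n/2$. But Freedman at threshold $t=2^{-sn/2}$ needs conditional variance $V\ll t^2=2^{-sn}$. Your own bound gives $V\lesssim 2^{-m(s+\delta/2)}\approx 2^{-(s+\delta/2)n/2}$, which exceeds $2^{-sn}$ unless $\delta\gtrsim 2s$.

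This is structural, not bookkeeping. Any bound that treats $\operatorname{Var}(Z_I)$ as $O(1)$ gives $V\approx\sum_I\widetilde\nu_m(I)^2$. Take $\mathbb P(U=a)=a^{-1}$, $\mathbb P(U=0)=1-a^{-1}$ with $1<a<2$, and set $\chi=1-\log_2 a$. The hypotheses of the theorem hold for every $s<\chi$ (take $r$ large). Yet $\widetilde\nu_m(I)\in\{0,2^{-m\chi}\}$, so $\sum_I\widetilde\nu_m(I)^2=2^{-m\chi}\widetilde\nu_m([0,1])\gg 2^{-sn}$ on non-extinction when $m\approx n/2$ and $s>\chi/2$.

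Moving the cut does not rescue this. The variance requires $m\gtrsim sn/a$. Your deterministic treatment of $\int e^{i\varphi_\xi}\,d\widetilde\nu_m$ sees no cancellation on the window $\{|\varphi_\xi'|\lesssim 2^m\}$. That window has length $\approx 2^{m-n}$ and mass bounded only by $\approx 2^{\varepsilon n}2^{-a(n-m)}$, which forces $m\lesssim(1-s/(2a))n$. The two constraints are compatible only if $a>3s/2$; in the example above, only if $s<2\chi/3$.

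What is missing is the cancellation inside $Z_I$. For $I$ in a derivative band at level $d$, $Z_I$ is itself an oscillatory cascade integral at rescaled frequency $\approx|\xi|d\,2^{-m}$. The generations between $n/2$ and $\log_2(|\xi|d)$, which reach $n$ when $d\approx1$, must be exploited probabilistically, not merely bounded in size.

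The paper does this by running the martingale through all generations, band by band. It splits at the band's own phase-bin generation $m_{\xi,d}$, where $2^{m_{\xi,d}}\approx|\xi|d$. It uses $|c_J|\lesssim 2^{\ell-m_{\xi,d}}$ before that generation and $O(1)$ after, and applies Freedman to both the pre-bin and post-bin arrays. The predictable quadratic variation is $\sum_\ell\sum_I|c_J|^2M_IT_{\ell+1,n}\lesssim L_n2^{2\varepsilon n}|\xi|^{-a}\ll 2^{-sn}$. This uses only (maximal mass)$\times$(band mass), never $\sum_I M_I^2$ at a fixed coarse scale.

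A secondary problem is your uniform cap $2^{\epsilon n}$. For the compensator $\lesssim 2^{-(r-1)\epsilon n}$ to beat $2^{-sn/2}$, you need $\epsilon>s/(2(r-1))$. Then the jump bound $\max_I\widetilde\nu_m(I)\,2^{\epsilon n}$ stays below $2^{-sn/2}$ only if $(r-1)\delta\gtrsim 2s$. The paper instead caps relative to the local-mass threshold, $C_{I,n}=2L_nT_{\ell+1,n}/M_I$. That cap is inactive on the good event, and its compensator $\sum_I M_I^rL_n^{1-r}T_{\ell+1,n}^{1-r}$ is controlled by the $r$-mass budget.
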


The estimate is annular: the supremum is taken uniformly over the whole
frequency annulus \(2^n\le|\xi|\le2^{n+1}\).  This annular uniformity allows
the summable probability estimate to be converted, by Borel--Cantelli, into
almost sure Fourier decay.  The stretched-exponential term comes from
martingale concentration after predictable capping, while the summable
\(2^{-c n}\) term comes from the \(r\)-tail compensator.

We now indicate how Theorem~\ref{T:arc-finite-r-annular} gives the endpoint
lower bound.  Let \(0<\sigma<A_{\mathrm{loc}}(W)\).  By the definition of
\(A_{\mathrm{loc}}(W)\), there exists \(r>1\) such that
\[
    \mathbb E[W^r]<\infty
    \qquad\text{and}\qquad
    \frac{r-1-\log_2\mathbb E[W^r]}{r}>\sigma .
\]
Choose \(\delta>0\) such that
\[
    \sigma+\delta
    <
    \frac{r-1-\log_2\mathbb E[W^r]}{r}.
\]
Equivalently,
\(
2^{1-r}\mathbb E[W^r]\le 2^{-r(\sigma+\delta)}.
\)
Since \(A_{\mathrm{loc}}(W)\le1\) by Jensen's inequality, we have
\(0<\sigma<1\).
Therefore Theorem~\ref{T:arc-finite-r-annular} applies with \(U=W\) and
\(s=\sigma\), and gives
\[
    |\widehat{\mu_\gamma}(\xi)|
    =
    O(|\xi|^{-\sigma/2})
    \qquad
    (|\xi|\to\infty)
\]
almost surely.

Taking a countable intersection over rational exponents
\(\sigma\in\mathbb Q\cap(0,A_{\mathrm{loc}}(W))\) shows that, almost surely,
every strict subendpoint exponent is an admissible Fourier decay exponent for
\(\mu_\gamma\).  On the non-extinction event \(S_\gamma\), the measure
\(\mu_\gamma\) is nonzero.  Hence
\[
    \dim_{\mathrm F}(\mu_\gamma)\ge A_{\mathrm{loc}}(W)
\]
almost surely on \(S_\gamma\).

The closed-curve lower bound requires no separate annular theorem.  The same
strict subendpoint decay is transferred to the Jordan-curve setting by the
first-generation dyadic cutting argument in Section~\ref{S:endpoint-assembly},
which reduces the circle cascade to two interval cascades on fixed
nondegenerate subarcs.

\subsection{Main ideas of the proof}
\label{SS:introduction-main-ideas}

We outline the proof at the level of its main mechanisms.  The fixed-arc lower
bound is the part that requires the annular Fourier estimate.  The upper bound
is local-dimensional, and the Jordan lower bound is obtained by a finite dyadic
cutting reduction to fixed arcs.

For the fixed-arc lower bound, the key deterministic input is an
endpoint-safe phase decomposition.  For \(\xi\ne0\), the phase is
\[
    \varphi_\xi(t)=-2\pi\xi\cdot\gamma(t),
    \qquad 0\le t\le1.
\]
For closed curves there are no boundary endpoint regimes.  On an interval,
however, the phase may be nearly stationary at \(t=0\) or \(t=1\).  The
decomposition therefore separates the endpoint-safe pieces, the
small-derivative piece, and the dyadic derivative bands:
\[
    \chi_{\xi,0}
    +
    \chi_{\xi,1}
    +
    \chi_{\xi,\mathrm{sd}}
    +
    \sum_{d\in\mathfrak D_\xi}\chi_{\xi,d}
    =
    1
    \qquad\text{on }[0,1].
\]
The endpoint-safe and small-derivative pieces are controlled by local-mass
estimates.  On each derivative band, the nonvanishing curvature gives the
phase geometry needed for uniform phase-bin coefficient estimates.

The probabilistic part of the fixed-arc theorem turns this deterministic
decomposition into annular martingale estimates.  After reduction to phase
bins, the increments are split into a predictably capped centered martingale
part and an \(r\)-tail compensator.  The centered part is controlled by a
complex Freedman inequality, while the compensator is bounded using the finite
\(r\)-moment hypothesis together with local-mass budgets.  A finite annular
grid then converts the phase-bin estimates into a uniform bound over each
frequency annulus.

The upper bound has a different nature.  It does not come from annular Fourier
estimates.  Instead, it uses a deterministic obstruction for measures supported
on curved sets: if a nonzero finite measure \(\eta\) is supported on a fixed
\(C^2\) embedded arc, then
\(
\dim_{\mathrm F}(\eta)\le \alpha_{\min}(\eta).
\)
The proof uses a Gaussian average along a normal frequency line.  Near every
point of a \(C^2\) arc, including endpoints, the normal projection of the curve
is quadratic in the arclength distance.  Fourier decay along the normal line
therefore forces an upper bound on local mass.  Applying this obstruction
to the cascade pushforward and combining it with the local-dimension identity
gives
\[
    \dim_{\mathrm F}(\mu_\gamma)
    \le
    \alpha_{\min}(\mu_\gamma)
    =
    A_{\mathrm{loc}}(W).
\]

For Jordan curves, no separate oscillatory analysis is needed.  After cutting
the circle cascade at the first dyadic generation, the pushforward by a fixed
Jordan curve decomposes as
\[
    \mu_\gamma^{\mathbb T}
    =
    \frac{W_0}{2}(\gamma\circ\rho_0)_\#\widetilde\mu^{(0)}
    +
    \frac{W_1}{2}(\gamma\circ\rho_1)_\#\widetilde\mu^{(1)},
\]
on the probability-one event on which dyadic endpoints carry no mass.  Since
\(\gamma\circ\rho_0\) and \(\gamma\circ\rho_1\) are fixed nondegenerate
\(C^2\) embedded arcs, the strict subendpoint Fourier decay obtained for fixed
arcs applies to both summands and therefore to their finite sum.  The Jordan
upper bound is obtained in the same local-dimensional way, using the
closed-curve version of the deterministic upper obstruction and the scalar
circle local-dimension input.

\subsection{Organization of the paper}
\label{SS:proof-strategy-organization}

Section~\ref{S:preliminaries} sets up the dyadic notation on \([0,1]\), the
scalar cascade notation, the moment assumptions, Fourier dimension, minimum
lower local dimension, endpoint gluing, the scalar circle local-dimension
input, and the dyadic cutting relation for circle cascades.  It also records
the fixed-arc and fixed-Jordan-curve conventions used throughout the paper.

Section~\ref{S:phase-geometry} proves the endpoint-safe phase geometry for
fixed arcs.  This includes the tangent-angle estimates, the small-derivative
region, the endpoint-safe pieces, the dyadic derivative bands, and the
phase-bin coefficient estimates.

Section~\ref{S:finite-r-annular} proves the finite-\(r\) annular theorem,
Theorem~\ref{T:arc-finite-r-annular}.  The section carries out the local-mass
estimates, the deterministic annular reduction, the capped martingale
concentration argument, the \(r\)-tail compensator estimate, and the final
annular grid assembly.

Section~\ref{S:local-dimension-upper-obstruction} proves the interval and
fixed-arc local-dimension identities and the deterministic upper obstruction
for measures supported on fixed \(C^2\) embedded arcs.  The local-dimension
part transfers the scalar circle local-dimension theorem first to the interval
cascade and then to fixed-arc pushforwards.  The obstruction part shows that a
measure supported on a fixed \(C^2\) embedded arc cannot have Fourier dimension
larger than its minimum lower local dimension.

Section~\ref{S:endpoint-assembly} assembles the endpoint results. It first
proves the fixed-arc endpoint formula by combining the annular lower bound with
the local-dimension upper bound.  It then treats fixed Jordan curves: the lower
bound is obtained by dyadic cutting into two fixed arcs, while the upper bound
is obtained from the closed-curve local-dimension identity and the
closed-curve deterministic upper obstruction.

\section{Preliminaries}
\label{S:preliminaries}

This section fixes the notation and basic inputs used throughout the proof.
We recall dyadic intervals, scalar dyadic cascades, moment assumptions,
Fourier dimension, minimum lower local dimension, endpoint gluing, the imported
scalar circle local-dimension theorem, and the dyadic cutting relation for
circle cascades.  The main parameter space is the unit interval \([0,1]\).
Closed curves parametrized by \(\mathbb T=\mathbb R/\mathbb Z\) enter only
through a dyadic cutting reduction to interval cascades.

We use standard facts on nonnegative cascade martingales and
Kahane--Peyri\`ere nondegeneracy; see, for example,
\cite{Kahane1985,KahanePeyriere1976}.  We also use standard facts on Fourier
dimension and Fourier-energy estimates; see \cite{Mattila1995}.

\subsection{Dyadic intervals and the binary tree}
\label{SS:dyadic-intervals}

For \(n\ge0\), let
\(\Sigma_n=\{0,1\}^n\),
\(\Sigma_*=\bigcup_{n=0}^{\infty}\Sigma_n\).
The unique word of length \(0\) is denoted by \(\varnothing\). 
If \(v=(v_1,\ldots,v_n)\in\Sigma_n\), then \(|v|=n\).  
For \(0\le k\le n\), set
\(
    v|k=(v_1,\ldots,v_k),
\)
with \(v|0=\varnothing\).
Concatenation of words is denoted by juxtaposition:
if \(v,w\in\Sigma_*\), then \(vw\) is the word obtained by appending \(w\) to \(v\). 
We write \(v\preceq w\) if \(v\) is a prefix of \(w\), 
and \(v\prec w\) if \(v\preceq w\) and \(v\ne w\).

For \(v=(v_1,\ldots,v_n)\in\Sigma_n\), set
\[
    a_v=\sum_{j=1}^{n} v_j2^{-j},
    \qquad
    a_\varnothing=0 .
\]
The dyadic interval associated to \(v\) is \(I_\varnothing=[0,1]\), and, for
\(n\ge1\),
\[
    I_v
    =
    \begin{cases}
    [a_v,a_v+2^{-n}), & v\ne (1,\ldots,1),\\
    [a_v,1], & v=(1,\ldots,1).
    \end{cases}
\]
Let
\[
    \mathcal D_n([0,1])=\{I_v:v\in\Sigma_n\}.
\]
Then \(\mathcal D_n([0,1])\) is a partition of \([0,1]\) into \(2^n\) intervals of length \(2^{-n}\). 
If \(I=I_v\in\mathcal D_n([0,1])\), its dyadic children are \(I_{v0}\) and \(I_{v1}\).

The half-open convention is used only to make the dyadic partition
single-valued at endpoints; all local and Fourier estimates are insensitive to
this convention.

For \(v\in\Sigma_n\), we sometimes use the affine map
\[
    \rho_v:[0,1]\to I_v,
    \qquad
    \rho_v(t)=a_v+2^{-n}t .
\]

\subsection{Scalar dyadic cascades on the interval}
\label{SS:scalar-cascades}

Let \(V\ge0\) satisfy \(\mathbb E V=1\). 
Attach independent copies
\(
    \{V_v:v\in\Sigma_*,\ v\ne\varnothing\}
\)
of \(V\) to the nonempty vertices of the binary tree.  
For
\(v\in\Sigma_n\), define
\[
    Q_v=\prod_{j=1}^{n} V_{v|j},
    \qquad
    Q_\varnothing=1 .
\]
The level-\(n\) scalar dyadic cascade measure on \([0,1]\) is
\[
    d\widetilde\lambda_n(t)
    =
    \sum_{|v|=n} Q_v\mathbf 1_{I_v}(t)\,dt .
\]
Equivalently,
\(\widetilde\lambda_n(I_v)=2^{-n}Q_v\)
for 
\(|v|=n\).

The total mass
\[
    Y_n=\widetilde\lambda_n([0,1])
    =
    2^{-n}\sum_{|v|=n}Q_v
\]
is a nonnegative martingale with respect to the natural filtration
\[
    \mathcal F_n^V
    =
    \sigma\{V_v:1\le |v|\le n\},
    \qquad
    \mathcal F_0^V \text{ trivial}.
\]
Hence \(Y_n\) converges almost surely to a finite limit \(Y\).  Whenever the
level measures \(\widetilde\lambda_n\) are known to converge weakly, their weak
limit is denoted by \(\widetilde\lambda\).

If \(v\in\Sigma_*\), the descendant environment rooted at \(v\) is
\[
    \{V_{vw}:w\in\Sigma_*,\ w\ne\varnothing\}.
\]
The level-\(m\) descendant total mass rooted at \(v\) is
\[
    Y_m^{(v)}
    =
    2^{-m}
    \sum_{|w|=m}
    \prod_{j=1}^{m} V_{v(w|j)},
\]
with \(Y_0^{(v)}=1\).  
Its limit is denoted by \(Y^{(v)}\).  
Descendant cascades rooted at distinct vertices of the same generation are independent, have the same law as the original cascade, 
and are independent of the environment above that generation.

In the finite-\(r\) annular theorem, the scalar weight is denoted by \(U\), the level-\(\ell\) parameter cascade is denoted by \(\widetilde\nu_\ell\), 
its weak limit by \(\widetilde\nu\), and the arc pushforward by
\(
 \nu_\gamma=\gamma_\#\widetilde\nu .
\)
The corresponding filtration is denoted by
\[
    \mathcal F_\ell
    =
    \sigma\{U_v:1\le |v|\le \ell\},
    \qquad
    \mathcal F_0 \text{ trivial}.
\]

In the endpoint theorem, the scalar weight is denoted by \(W\), the level-\(n\)
parameter cascade is denoted by \(\widetilde\mu_n\), its weak limit by
\(\widetilde\mu\), and the arc pushforward by
\(
    \mu_\gamma=\gamma_\#\widetilde\mu .
\)
The non-extinction event for the interval cascade is
\(
S_\gamma=\{\widetilde\mu([0,1])>0\}.
\)

Under the moment hypotheses used below, the standard nonnegative martingale
and Kahane--Peyri\`ere theory gives a probability-one event on which the
cascade measures converge weakly and all descendant terminal masses exist
simultaneously.  On this event, dyadic endpoints carry no limiting mass, and
\[
    \widetilde\lambda(I_v)=2^{-|v|}Q_vY^{(v)}
    \qquad (v\in\Sigma_*).
\]
Indeed, for each fixed binary coding \(\omega\),
\[
    \mathbb E\left[
        2^{-m}Q_{\omega|m}Y^{(\omega|m)}
    \right]\le 2^{-m},
\]
and dyadic endpoints are countable and have at most two binary codings.

\subsection{Moment assumptions and the endpoint exponent}
\label{SS:moment-assumptions}

\begin{definition}[Finite-\(r\) witnessed hypothesis]
\label{D:finite-r-witnessed}
Let \(0<s<1\).  A nonnegative random variable \(U\) with \(\mathbb E U=1\)
satisfies the finite-\(r\) witnessed hypothesis at exponent \(s\) if there exist \(r>1\) and \(\delta>0\) such that
\[
    \mathbb E[U^r]<\infty
    \qquad\text{and}\qquad
    2^{1-r}\mathbb E[U^r]\le 2^{-r(s+\delta)} .
\]
\end{definition}

\begin{definition}[Minimal Kahane--Peyri\`ere regime]
\label{D:minimal-kahane-peyriere}
A nonnegative random variable \(W\) is in the minimal Kahane--Peyri\`ere regime
if
\[
    W\ge0,
    \qquad
    \mathbb E W=1,
    \qquad
    \mathbb E[W\log_2^+W]<\infty,
    \qquad
    \mathbb E[W\log_2 W]<1,
\]
where
\(
    \log_2^+x=\max\{\log_2x,0\}    
\)
and
\(
0\log_2 0=0.
\)
\end{definition}

\begin{definition}[Endpoint local exponent]
\label{D:endpoint-local-exponent}
For a nonnegative random variable \(W\) with \(\mathbb E W=1\), define
\[
    A_{\mathrm{loc}}(W)
    =
    \sup_{q>1}
    \max\left\{
        0,\,
        \frac{q-1-\log_2\mathbb E[W^q]}{q}
    \right\},
\]
where the \(q\)-term is interpreted as \(0\) whenever
\(\mathbb E[W^q]=\infty\).
\end{definition}

By Jensen's inequality,
\(
   0\le A_{\mathrm{loc}}(W)\le1 .
\)
Indeed, \(\mathbb E[W^q]\ge1\) for \(q>1\), so each finite \(q\)-term is at most
\((q-1)/q<1\).

\subsection{Fourier dimension and local dimension}
\label{SS:fourier-local-dimension}

For a finite Borel measure \(\eta\) on \(\mathbb R^2\), define
\[
    \widehat\eta(\xi)
    =
    \int_{\mathbb R^2} e^{-2\pi i x\cdot\xi}\,d\eta(x),
    \qquad
    \xi\in\mathbb R^2 .
\]
If \(\eta=\gamma_\#\widetilde\eta\) for a finite Borel measure
\(\widetilde\eta\) on \([0,1]\), then
\[
    \widehat\eta(\xi)
    =
    \int_0^1 e^{-2\pi i \xi\cdot\gamma(t)}\,d\widetilde\eta(t).
\]

For a nonzero finite Borel measure \(\eta\) on \(\mathbb R^2\), its Fourier
dimension is
\[
    \dim_{\mathrm F}(\eta)
    =
    \sup\left\{
        0\le\sigma\le2:
        |\widehat\eta(\xi)|=O(|\xi|^{-\sigma/2})
        \text{ as }|\xi|\to\infty
    \right\}.
\]
We shall say that \(\sigma>0\) is an admissible Fourier decay exponent for
\(\eta\) if
\[
    |\widehat\eta(\xi)|=O(|\xi|^{-\sigma/2})
    \qquad (|\xi|\to\infty).
\]

The minimum lower local dimension of a nonzero finite Borel measure \(\eta\) on
\(\mathbb R^2\) is
\[
    \alpha_{\min}(\eta)
    =
    \inf_{x\in\operatorname{spt}\eta}
    \liminf_{\rho\downarrow0}
    \frac{\log_2\eta(B(x,\rho))}{\log_2\rho}.
\]
For a nonzero finite Borel measure \(\widetilde\eta\) on \([0,1]\), define
\[
    \alpha_{\min}^{[0,1]}(\widetilde\eta)
    =
    \inf_{t\in\operatorname{spt}\widetilde\eta}
    \liminf_{\rho\downarrow0}
    \frac{
        \log_2\widetilde\eta([0,1]\cap(t-\rho,t+\rho))
    }{
        \log_2\rho
    }.
\]
For a nonzero finite Borel measure \(\zeta\) on \(\mathbb T\), define
\[
    \alpha_{\min}^{\mathbb T}(\zeta)
    =
    \inf_{\theta\in\operatorname{spt}\zeta}
    \liminf_{\rho\downarrow0}
    \frac{
        \log_2\zeta(B_{\mathbb T}(\theta,\rho))
    }{
        \log_2\rho
    },
\]
where \(B_{\mathbb T}(\theta,\rho)\) is the ball in the quotient metric on
\(\mathbb T\).

We shall also use the standard Fourier-energy comparison
\(
    \dim_{\mathrm F}(\eta)\le \dim_{\mathrm H}(\operatorname{spt}\eta)
\)
for nonzero finite Borel measures \(\eta\) on \(\mathbb R^2\); see \cite{Mattila1995}.  
In particular,
every measure supported on a Lipschitz curve has Fourier dimension at most \(1\).

\subsection{Fixed-curve consequences}
\label{SS:fixed-curve-consequences}

The fixed curve classes are defined in
Definitions~\ref{D:fixed-nondegenerate-arc} and
\ref{definition:fixed-curve}.  We record here the elementary
consequences used later.

Constants denoted by \(C_\gamma,c_\gamma,\eta_\gamma\), and similar symbols may
depend on the fixed arc or Jordan curve \(\gamma\), including its \(C^2\)-norm,
the lower bound for \(|\gamma'|\), the lower curvature determinant bound, the
diameter of the image, and the relevant global embedding constants.  All
curve-dependent estimates are fixed-curve estimates; no uniformity over a class
of curves is asserted.

We shall use without further comment that a \(C^1\) embedding
\(\gamma:[0,1]\to\mathbb R^2\) with \(\inf|\gamma'|>0\) is bi-Lipschitz from
\([0,1]\) onto its image.  The analogous statement holds for a \(C^1\)
embedding \(\gamma:\mathbb T\to\mathbb R^2\) with \(\inf_{\mathbb T}|\gamma'|>0\),
viewed as a map from \((\mathbb T,d_{\mathbb T})\) onto its image.

\subsection{Endpoint gluing and the imported circle theorem}
\label{SS:endpoint-gluing-preliminaries}

Let \(\mathbb T=\mathbb R/\mathbb Z\), and let
\(q:[0,1]\to\mathbb T\) be the quotient map identifying \(0\) and \(1\).
Let
\[
    \mathbb S^1=\{x\in\mathbb R^2:|x|=1\},
\]
and define \(f_{\mathbb T}:\mathbb T\to\mathbb S^1\) by
\[
    f_{\mathbb T}(\theta)=(\cos 2\pi\theta,\sin 2\pi\theta).
\]
The map \(f_{\mathbb T}\) is bi-Lipschitz.  Indeed,
\[
    |f_{\mathbb T}(\theta)-f_{\mathbb T}(\theta')|
    =
    2|\sin(\pi d_{\mathbb T}(\theta,\theta'))|,
\]
where
\(
    d_{\mathbb T}(\theta,\theta')
    =
    \min_{k\in\mathbb Z}|\theta-\theta'-k|.
\)
The elementary inequalities for \(\sin\) on \([0,\pi/2]\) give the required
two-sided comparison.

If \(\widetilde\mu\) is the endpoint scalar cascade on \([0,1]\), 
define the associated circle pushforward by
\(
    \mu_\circ=(f_{\mathbb T})_\# q_\#\widetilde\mu .
\)
Then
\(
\mu_\circ(\mathbb S^1)=\widetilde\mu([0,1]),
\)
so the circle and interval non-extinction events agree.

The following quoted result is \cite[Theorem~7.12]{2026-M}, rewritten in the
notation of the present paper and in the precise form used below.  It is the
only external probabilistic input concerning minimum lower local dimensions.

\begin{theorem}[Scalar circle minimum lower local dimension theorem {\cite[Theorem~7.12]{2026-M}}]
\label{T:imported-circle-local-dimension}
Let \(W\) be in the minimal Kahane--Peyri\`ere regime.  Let \(\mu_\circ\) be
the scalar dyadic Mandelbrot cascade on \(\mathbb S^1\) generated by \(W\).
Then, almost surely on
\(
    \{\mu_\circ(\mathbb S^1)>0\},
\)
one has
\[
    \alpha_{\min}(\mu_\circ)=A_{\mathrm{loc}}(W).
\]
\end{theorem}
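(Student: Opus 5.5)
The plan is to prove the two inequalities \(\alpha_{\min}(\mu_\circ)\ge A_{\mathrm{loc}}(W)\) and \(\alpha_{\min}(\mu_\circ)\le A_{\mathrm{loc}}(W)\) separately. Since \(f_{\mathbb T}\) is bi-Lipschitz and \(\mu_\circ(\mathbb S^1)=\widetilde\mu([0,1])\), local dimensions on \(\mathbb S^1\) agree with those of \(q_\#\widetilde\mu\) on \(\mathbb T\). A ball \(B_{\mathbb T}(\theta,\rho)\) with \(2^{-n-1}\le\rho<2^{-n}\) is covered by at most three level-\(n\) dyadic arcs, counting the wrap-around at \(0\equiv1\), and it contains a dyadic arc of level \(n+2\). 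So both bounds reduce to estimates on the dyadic masses \(\widetilde\mu(I_v)=2^{-|v|}Q_vY^{(v)}\), valid on the probability-one event of Subsection~\ref{SS:scalar-cascades}.

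\emph{Lower bound.} Fix \(q>1\) with \(\mathbb E[W^q]<\infty\) and \(\tau_q:=(q-1-\log_2\mathbb E[W^q])/q>0\); if no such \(q\) exists, \(A_{\mathrm{loc}}(W)=0\) and there is nothing to prove. Then \(2^{1-q}\mathbb E[W^q]<1\), so the Kahane--Peyri\`ere moment theorem gives \(\mathbb E[Y^q]<\infty\). By independence of \(Y^{(v)}\) from \(\mathcal F_n\),
\[
  \mathbb E\sum_{|v|=n}\widetilde\mu(I_v)^q=\bigl(2^{1-q}\mathbb E[W^q]\bigr)^n\,\mathbb E[Y^q]=2^{-nq\tau_q}\,\mathbb E[Y^q].
\]
Markov's inequality and Borel--Cantelli then give \(\max_{|v|=n}\widetilde\mu(I_v)\le 2^{-n(\tau_q-\varepsilon)}\) for all large \(n\), almost surely. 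The covering remark converts this to \(\mu_\circ(B(x,\rho))\lesssim\rho^{\tau_q-\varepsilon}\) uniformly in \(x\). Taking a countable supremum over rational \(q\) and \(\varepsilon\) gives \(\alpha_{\min}\ge A_{\mathrm{loc}}(W)\).

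\emph{Upper bound.} Fix \(\varepsilon>0\). We must produce, almost surely on non-extinction, a support point \(x\) with \(\liminf_\rho \log\mu_\circ(B(x,\rho))/\log\rho\le A_{\mathrm{loc}}(W)+\varepsilon\). Since the ball contains a dyadic arc, it suffices to find a coding \(\omega\) with \(2^{-n}Q_{\omega|n}Y^{(\omega|n)}\ge 2^{-n(A+\varepsilon)}\) along infinitely many \(n\).
\begin{itemize}[label=\(\cdot\)]
\item I would view \(-\log_2 Q_v\) as a branching random walk.
\item Using the convex log-Laplace function \(\Lambda(q)=\log_2\mathbb E[W^q]\), I would identify \(A_{\mathrm{loc}}(W)\) with the left endpoint of the multifractal spectrum. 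The first-moment and Biggins-type count is: the number of level-\(n\) vertices with \(Q_v\ge 2^{n(1-A-\varepsilon/2)}\) grows exponentially with positive probability. This follows by a Legendre-duality computation, using that \(q\mapsto\tau_q\) is increasing up to its supremum.
\item Those vertices generate a supercritical Galton--Watson subtree (block construction over \(k\) generations).
\item I would prune the subtree to vertices whose descendant mass satisfies \(Y^{(v)}\ge c\). This event has fixed positive probability independent of the past, so the pruned tree remains supercritical for \(k\) large.
\item An infinite ray in the pruned tree gives the required \(\omega\), and \(x=f_{\mathbb T}(q(\omega))\) lies in the support.
\item Finally, a zero--one argument upgrades positive probability to almost surely on \(\{\mu_\circ(\mathbb S^1)>0\}\). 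Survival at some descendant of every generation-\(m\) vertex occurs independently, and non-extinction forces infinitely many nonzero \(Q_v\).
\end{itemize}

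\emph{Main obstacle.} I expect the upper bound in the minimal regime to be the hard step, particularly when \(\mathbb E[W^q]=\infty\) for large \(q\), so that the supremum defining \(A_{\mathrm{loc}}(W)\) is attained at or near the boundary of the moment domain. There the Legendre identification must be done with truncated weights \(W\mathbf 1_{W\le M}\) (letting \(M\to\infty\)), and the pruning must use only \(\mathbb P(Y>0)>0\) rather than moment bounds on \(Y\). The case \(A_{\mathrm{loc}}(W)=0\) needs separate care: there one must show that points of near-zero local dimension exist, using heavy tails of \(W\) directly.
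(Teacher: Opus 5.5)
The paper does not prove this statement. It is quoted from Theorem 7.12 of [2026-M] and used only as a black box, so your proposal has to stand on its own.

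Your lower bound does stand on its own. The identity \(\mathbb E\sum_{|v|=n}\widetilde\mu(I_v)^q=2^{-nq\tau_q}\mathbb E[Y^q]\), together with Markov, Borel--Cantelli and the three-arc covering, gives \(\alpha_{\min}\ge\tau_q-\varepsilon\) uniformly over support points. The skeleton of your upper bound is also sound: a first-moment count of vertices with \(Q_v\ge 2^{n(1-\alpha')}\), a block Galton--Watson embedding, and a zero--one upgrade.

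The genuine gap is the pruning step. The event \(\{Y^{(v)}\ge c\}\) is independent of the weights above \(v\). However, it is measurable with respect to the subtree rooted at \(v\), and that subtree is exactly what determines the block offspring of \(v\) and every later pruning mark on rays through \(v\). So the pruned tree is not a Bernoulli\((p)\)-thinned Galton--Watson tree, and the heuristic ``offspring mean \(\approx p\,m_k>1\)'' says nothing about its survival. Requiring \(Y\ge c\) at \emph{every} block vertex of one ray is a strongly correlated constraint. Along a good ray you only have \(Y^{(v)}\ge 2^{-k\alpha'}Y^{(v')}\) for the next block vertex \(v'\), so the marks do not propagate, and your sketch gives no control of this.

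The repair is to ask for the mass bound only infinitely often and to use independence at a fixed level.
\begin{enumerate}
\item Conditionally on \(\mathcal F_{jk}\), the masses \(Y^{(u)}\) attached to the \(Z_j\) block vertices alive at level \(jk\) are i.i.d.\ with \(\mathbb P(Y\ge c)=p>0\). Hence the probability that \(Z_j\ge K\) but none of them has \(Y^{(u)}\ge c\) is at most \((1-p)^K\).
\item Since \(Z_j\to\infty\) on survival, reverse Fatou shows that, with probability at least \(\mathbb P(\text{survival})>0\), infinitely many levels \(m\) carry some \(u\) with \(\widetilde\mu(I_u)\ge c\,2^{-m\alpha'}\).
\item These vertices are not nested, so you still have to build the point. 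Upgrade the statement to an almost-sure one on non-extinction by your zero--one argument, and apply it simultaneously to all countably many descendant cascades.
\item Pass from \(v_j\) with \(\widetilde\mu(I_{v_j})>0\) to a descendant \(v_{j+1}\) so deep that the fixed prefactor \(2^{-|v_j|}Q_{v_j}\) costs only \(2^{-|v_{j+1}|\varepsilon}\). The limiting ray is a support point with lower local dimension at most \(\alpha'+2\varepsilon\).
\end{enumerate}
Alternatively, you can witness the mass through an off-ray sibling subtree, disjoint from the continuation; this gives an honest Galton--Watson thinning.

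Your heavy-tail concerns are overstated. Cram\'er's lower bound needs no moment hypothesis. Under the Kahane--Peyri\`ere assumptions, \(A_{\mathrm{loc}}(W)=0\) occurs exactly when \(\mathbb E[W^q]=\infty\) for all \(q>1\). In that case the rate function equals \(x\) for \(x>\mathbb E[W\log_2 W]\), and the expected number of vertices with \(Q_v\ge 2^{n(1-\alpha)}\) grows like \(2^{n\alpha+o(n)}\) for small \(\alpha>0\). So neither truncation nor a separate treatment of that case is needed.
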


Theorem~\ref{T:imported-circle-local-dimension} is used below only through
deterministic transfers of minimum lower local dimension, namely through the
quotient map \(q:[0,1]\to\mathbb T\), the standard bi-Lipschitz
parametrization \(f_{\mathbb T}:\mathbb T\to\mathbb S^1\), and fixed
bi-Lipschitz curve parametrizations.  No scalar Fourier-decay theorem from
\cite{2026-M} is used to prove the fixed-arc or fixed-Jordan-curve lower
bounds.

\subsection{Circle cascades and dyadic cutting}
\label{SS:circle-cascades-cutting}

We now record the elementary dyadic cutting relation that will be used to deduce the closed-curve result from the arc result.

Let \(W\) be in the minimal Kahane--Peyri\`ere regime, and let
\(\widetilde\mu^{\mathbb T}\) be the scalar dyadic cascade on \(\mathbb T\)
generated by \(W\).  Let \(J_0\) and \(J_1\) be the two first-generation dyadic
arcs of \(\mathbb T\), corresponding respectively to \([0,1/2]\) and
\([1/2,1]\).  Define
\[
    \rho_0:[0,1]\to\mathbb T,
    \qquad
    \rho_0(t)=\frac t2,
\]
and
\[
    \rho_1:[0,1]\to\mathbb T,
    \qquad
    \rho_1(t)=\frac{1+t}{2}\pmod 1.
\]

Let \(W_0,W_1\) be the two first-generation weights of
\(\widetilde\mu^{\mathbb T}\).  Let \(\widetilde\mu^{(0)}\) and
\(\widetilde\mu^{(1)}\) be the descendant interval cascades rooted at the two
first-generation vertices.  Thus \(\widetilde\mu^{(0)}\) and
\(\widetilde\mu^{(1)}\) are independent scalar dyadic cascades on \([0,1]\),
generated by the same weight \(W\).

\begin{lemma}[Dyadic cutting of the circle cascade]
\label{L:circle-cascade-dyadic-cutting}
There exists a probability-one event \(E_{\mathrm{cut}}\), depending only on
the circle cascade and the two descendant interval cascades, such that, on
\(E_{\mathrm{cut}}\),
\[
    \widetilde\mu^{\mathbb T}|_{J_i}
    =
    \frac{W_i}{2}(\rho_i)_\#\widetilde\mu^{(i)},
    \qquad i=0,1.
\]
Consequently, for any continuous map \(\gamma:\mathbb T\to\mathbb R^2\),
\[
    \gamma_\#\widetilde\mu^{\mathbb T}
    =
    \frac{W_0}{2}(\gamma\circ\rho_0)_\#\widetilde\mu^{(0)}
    +
    \frac{W_1}{2}(\gamma\circ\rho_1)_\#\widetilde\mu^{(1)}.
\]
\end{lemma}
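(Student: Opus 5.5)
The plan is to verify the identity on generating dyadic arcs, using the self-similar structure of the cascade and the no-atoms-at-dyadic-endpoints property recorded in Subsection~\ref{SS:scalar-cascades}.

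\textbf{Choice of event.} Let \(E_{\mathrm{cut}}\) be the intersection of three probability-one events:
\begin{itemize}
\item[(a)] the event on which the circle cascade and both descendant cascades converge weakly and all descendant terminal masses exist;
\item[(b)] the event on which dyadic endpoints carry no limiting mass, for \(\widetilde\mu^{\mathbb T}\) and for each \(\widetilde\mu^{(i)}\);
\item[(c)] the event on which the mass formulas
\[
\widetilde\mu^{\mathbb T}(J_{iw})=2^{-1-|w|}W_iQ^{(i)}_wY^{(iw)},
\qquad
\widetilde\mu^{(i)}(I_w)=2^{-|w|}Q^{(i)}_wY^{(iw)}
\]
hold for all words \(w\).
\end{itemize}
Here \(Q^{(i)}_w\) is the product of weights along \(w\) in the descendant environment rooted at \(i\). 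The two formulas share the same terminal masses \(Y^{(iw)}\), because the descendant cascade rooted at \(i\) is built from exactly the environment \(\{W_{iw}\}\). This is a countable intersection, so \(\mathbb P(E_{\mathrm{cut}})=1\).

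\textbf{Matching on dyadic arcs.} Fix \(i\in\{0,1\}\) and a word \(w\). The affine map \(\rho_i\) carries \(I_w\) onto the dyadic subarc \(J_{iw}\), up to endpoints. On \(E_{\mathrm{cut}}\) we therefore get
\[
\frac{W_i}{2}(\rho_i)_\#\widetilde\mu^{(i)}(J_{iw})
=\frac{W_i}{2}\,2^{-|w|}Q^{(i)}_wY^{(iw)}
=\widetilde\mu^{\mathbb T}(J_{iw}).
\]
The half-open versus closed conventions only affect endpoints, and these are null by (b).

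\textbf{Extension to all Borel sets.} The closed dyadic subarcs of \(J_i\), together with the empty set, form a \(\pi\)-system modulo null endpoint sets, and they generate the Borel \(\sigma\)-algebra of \(J_i\). Both measures are finite with equal total mass on \(J_i\), which is the case \(w=\varnothing\). By Dynkin's \(\pi\)-\(\lambda\) theorem they agree on \(J_i\). One could instead argue that both measures agree on every dyadic interval, so they agree on all finite unions of them, and then use outer regularity.

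\textbf{The pushforward identity.} The arcs \(J_0\) and \(J_1\) cover \(\mathbb T\) and overlap only at the two points \(0\) and \(1/2\), which carry no mass by (b). Hence
\[
\widetilde\mu^{\mathbb T}=\widetilde\mu^{\mathbb T}|_{J_0}+\widetilde\mu^{\mathbb T}|_{J_1}.
\]
Push this forward by the continuous map \(\gamma\), which is Borel measurable, and use \(\gamma_\#(\rho_i)_\#=(\gamma\circ\rho_i)_\#\).

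\textbf{Main obstacle.} The only delicate point is bookkeeping. The identity for \(\widetilde\mu^{\mathbb T}(J_{iw})\) has to hold simultaneously for all \(w\) on a single event, with the same terminal variables \(Y^{(iw)}\) as in the descendant cascades. This follows once we note that each descendant interval cascade is defined pathwise as a function of the subtree environment. Its level-\(m\) measures are then exactly rescaled restrictions of the level-\((m+1)\) circle measures, \(\widetilde\mu^{\mathbb T}_{m+1}|_{J_i}=\tfrac{W_i}{2}(\rho_i)_\#\widetilde\mu^{(i)}_m\). Taking weak limits on (a) yields the identity directly, since a restriction to a closed arc passes to the weak limit when the boundary carries no limiting mass. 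This gives a shortcut that avoids the explicit \(\pi\)-system argument.
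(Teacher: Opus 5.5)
Your proposal is correct, and the ``shortcut'' in your last paragraph is exactly the paper's proof. The paper first checks on level-\(n\) dyadic intervals that \(\widetilde\mu^{\mathbb T}_{n+1}|_{J_0}=\frac{W_0}{2}(\rho_0)_\#\widetilde\mu^{(0)}_n\) holds exactly: both sides assign \(2^{-(n+1)}W_0\prod_{j}W_{0(v|j)}\) to \(\rho_0(I_v)\), and both are absolutely continuous. It then lets \(n\to\infty\). Restriction to \(J_0\) commutes with the weak limit because \(\partial J_0\) is \(\widetilde\mu^{\mathbb T}\)-null, and pushforward by \(\rho_0\) is weakly continuous. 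The decomposition \(\widetilde\mu^{\mathbb T}=\widetilde\mu^{\mathbb T}|_{J_0}+\widetilde\mu^{\mathbb T}|_{J_1}\) and the pushforward by \(\gamma\) are handled as you do.

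Your main route instead works directly with the limit measures. You identify both sides on every dyadic subarc via the terminal-mass formula \(\widetilde\lambda(I_v)=2^{-|v|}Q_vY^{(v)}\) of Subsection~\ref{SS:scalar-cascades}. You observe that the circle environment and the descendant environment produce the same \(Y^{(iw)}\) pathwise. You then conclude by uniqueness of measures. This is valid, but it is the same mechanism, weak convergence plus null dyadic endpoints, applied countably many times, since that is how the terminal-mass formula is obtained.

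It also adds two pieces of bookkeeping that the paper avoids. The first is the pathwise identification of terminal masses across the two environments. The second is the uniqueness step: the closed dyadic arcs form a \(\pi\)-system only after you adjoin the null dyadic singletons or pass to the half-open convention. The paper's argument needs only the two boundary points of each \(J_i\) to be null and never mentions descendant terminal masses. Your route has the modest advantage of exhibiting the identity explicitly on the generating dyadic class.
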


\begin{proof}
Let \(E_{\mathrm{cut}}\) be the probability-one event on which the circle
cascade and the two descendant interval cascades converge weakly, and on which
all dyadic endpoints have zero limiting mass.  This is the same endpoint-zero
event as in the scalar cascade construction above, applied to the circle
cascade and to the two descendant interval cascades.

We prove the identity for \(i=0\); 
the case \(i=1\) is identical.  
At level \(n+1\), the restriction of the circle cascade to \(J_0\) is obtained by fixing the first digit equal to \(0\).
Thus, for every dyadic interval
\(I_v\subset[0,1]\) with \(|v|=n\),
\[
    \widetilde\mu^{\mathbb T}_{n+1}(\rho_0(I_v))
    =
    2^{-(n+1)}W_0
    \prod_{j=1}^{n}W_{0(v|j)}
    =
    \frac{W_0}{2}\,\widetilde\mu^{(0)}_n(I_v).
\]
Equivalently,
\[
    \widetilde\mu^{\mathbb T}_{n+1}|_{J_0}
    =
    \frac{W_0}{2}(\rho_0)_\#\widetilde\mu^{(0)}_n .
\]
On \(E_{\mathrm{cut}}\), the boundary of \(J_0\) has zero
\(\widetilde\mu^{\mathbb T}\)-mass.  Hence restriction to \(J_0\) is continuous
along the weakly convergent sequence under consideration.  Pushforward by the
continuous map \(\rho_0\) is also continuous under weak convergence.  Passing to the limit
gives
\[
    \widetilde\mu^{\mathbb T}|_{J_0}
    =
    \frac{W_0}{2}(\rho_0)_\#\widetilde\mu^{(0)}.
\]
The same argument gives the formula on \(J_1\).

Finally, \(J_0\cup J_1=\mathbb T\), and their overlap is contained in the
dyadic endpoints, which have zero \(\widetilde\mu^{\mathbb T}\)-mass on
\(E_{\mathrm{cut}}\).  Therefore
\[
    \widetilde\mu^{\mathbb T}
    =
    \widetilde\mu^{\mathbb T}|_{J_0}
    +
    \widetilde\mu^{\mathbb T}|_{J_1}.
\]
Pushing this identity forward by \(\gamma\) gives the asserted formula.
\end{proof}

\subsection{Consequences for fixed nondegenerate Jordan curves}
\label{SS:fixed-jordan-curves}

The fixed nondegenerate \(C^2\) Jordan curves used in this paper are defined in
Definition~\ref{definition:fixed-curve}. The determinant condition is invariant under orientation reversal and rules out
flat points.

We shall also use that such a curve is bi-Lipschitz from
\((\mathbb T,d_{\mathbb T})\) onto its image.  This follows from compactness
and from the fact that \(\gamma\) is a \(C^1\) embedding with
\(\inf_{\mathbb T}|\gamma'|>0\).  Consequently, pushforward by \(\gamma\) preserves minimum lower local
dimension by the standard bi-Lipschitz ball-comparison argument.

\begin{lemma}
\label{L:jordan-dyadic-subarcs}
Let \(\gamma:\mathbb T\to\mathbb R^2\) be a fixed nondegenerate \(C^2\) Jordan
curve.  For \(i=0,1\), set
\[
    \gamma_i=\gamma\circ\rho_i:[0,1]\to\mathbb R^2,
\]
where \(\rho_0,\rho_1\) are the first-generation dyadic maps defined in
Subsection~\ref{SS:circle-cascades-cutting}.  Then each \(\gamma_i\) is a fixed
nondegenerate \(C^2\) embedded arc.
\end{lemma}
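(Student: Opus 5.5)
The plan is to verify the three defining properties of Definition~\ref{D:fixed-nondegenerate-arc} for \(\gamma_i=\gamma\circ\rho_i\) by working with the periodic lift \(\bar\gamma=\gamma\circ\pi\) from Definition~\ref{definition:fixed-curve}. First, I will note that each \(\rho_i\), composed with a suitable lift, is affine: for \(t\in[0,1]\) we have \(\gamma_0(t)=\bar\gamma(t/2)\) and \(\gamma_1(t)=\bar\gamma((1+t)/2)\), where \(t/2\) and \((1+t)/2\) range over \([0,1/2]\) and \([1/2,1]\) in \(\mathbb R\). Since \(\bar\gamma\) is \(C^2\) on \(\mathbb R\), each \(\gamma_i\) is \(C^2\) on \([0,1]\), including one-sided derivatives at the endpoints. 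The chain rule then gives
\[
    \gamma_i'(t)=\tfrac12\bar\gamma'\bigl(\tfrac{i+t}{2}\bigr),
    \qquad
    \gamma_i''(t)=\tfrac14\bar\gamma''\bigl(\tfrac{i+t}{2}\bigr).
\]

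Second, these formulas give the nondegeneracy bounds directly. We have \(\inf_t|\gamma_i'(t)|\ge \tfrac12\inf_{\mathbb T}|\gamma'|>0\). By bilinearity of the determinant,
\[
    \det(\gamma_i'(t),\gamma_i''(t))=\tfrac18\det\bigl(\bar\gamma'(\tfrac{i+t}{2}),\bar\gamma''(\tfrac{i+t}{2})\bigr),
\]
so the infimum of its absolute value is at least \(\tfrac18\) of the positive lower bound assumed for \(\gamma\).

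Third, for the embedding property, I will show that \(\gamma_i\) is injective. The map \(\rho_i:[0,1]\to\mathbb T\) is injective: \(\rho_0\) sends \([0,1]\) bijectively onto \(\pi([0,1/2])\), and two points of \([0,1/2]\) differ by less than \(1\). The same argument applies to \(\rho_1\) on \([1/2,1]\). Since \(\gamma\) is injective, so is \(\gamma_i\). A continuous injection from the compact space \([0,1]\) into the Hausdorff space \(\mathbb R^2\) is a homeomorphism onto its image. Together with \(\gamma_i'\ne0\), this makes \(\gamma_i\) a \(C^2\) embedding.

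The only point needing care is the endpoint behaviour. Both the one-sided \(C^2\) regularity at \(t=0,1\) and the injectivity of \(\rho_1\) at \(t=1\) (where \((1+t)/2=1\equiv0\)) follow from working with the lift on the closed intervals \([0,1/2]\) and \([1/2,1]\), rather than on \(\mathbb T\) itself. Note that \(\rho_1(1)=0=\rho_0(0)\), but that coincidence only involves different arcs, so it does not affect the injectivity of either \(\gamma_i\) separately.
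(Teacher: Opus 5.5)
Your proposal is correct and follows essentially the same route as the paper: the chain-rule identities \(\gamma_i'=\tfrac12\gamma'\circ\rho_i\) and \(\gamma_i''=\tfrac14\gamma''\circ\rho_i\) give the factor \(\tfrac18\) in the curvature determinant, and the embedding property is inherited from \(\gamma\). The only difference is that you spell out the injectivity of \(\rho_i\) via the lift and the compact-to-Hausdorff argument, which the paper treats as immediate.
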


\begin{proof}
Since \(\gamma\) is a \(C^2\) embedding of \(\mathbb T\), its restriction to
each first-generation dyadic half-arc of \(\mathbb T\) is a \(C^2\) embedded
arc after parametrization by \(\rho_i\).  Moreover,
\[
    \gamma_i'(t)=\frac12\gamma'(\rho_i(t)),
    \qquad
    \gamma_i''(t)=\frac14\gamma''(\rho_i(t)).
\]
Therefore
\[
    \det(\gamma_i'(t),\gamma_i''(t))
    =
    \frac18
    \det(\gamma'(\rho_i(t)),\gamma''(\rho_i(t))).
\]
The lower bounds for \(|\gamma_i'|\) and
\(|\det(\gamma_i',\gamma_i'')|\) follow from the corresponding lower bounds for
\(\gamma\).  Hence each \(\gamma_i\) is a fixed nondegenerate \(C^2\) embedded
arc.
\end{proof}

\subsection{Phase notation and annular conventions}
\label{SS:phase-notation-conventions}

Throughout the phase-geometry and annular sections,
\(
    \gamma:[0,1]\to\mathbb R^2
\)
denotes a fixed nondegenerate \(C^2\) embedded arc.  For
\(\xi\in\mathbb R^2\setminus\{0\}\), the arc phase is
\[
    \varphi_\xi(t)
    =
    -2\pi\xi\cdot\gamma(t),
    \qquad
    0\le t\le1.
\]
The derivative-band variable is
\[
    h_\xi(t)^{1/2}
    =
    \frac{|\xi\cdot\gamma'(t)|}{|\xi|\,|\gamma'(t)|}.
\]

For \(n\ge1\), write
\[
    \mathcal A_n
    =
    \{\xi\in\mathbb R^2:2^n\le |\xi|\le 2^{n+1}\}.
\]
Finitely many low-frequency annuli are always absorbed into constants.

Once a derivative-band scale \(d\) has been introduced, the associated
phase-bin generation \(m_{\xi,d}\) is the unique integer satisfying
\[
    2^{m_{\xi,d}-1}<|\xi|d\le 2^{m_{\xi,d}}.
\]
We also set
\[
    m_{\xi,d,+}=\max\{m_{\xi,d},0\}.
\]

After the cutoffs \(\chi_{\xi,d}\) are constructed in the phase decomposition,
the coefficient convention used in the finite-\(r\) annular proof is
\[
    c_J(\xi,d,\ell)
    =
    2^\ell
    \int_J e^{i\varphi_\xi(t)}
    \chi_{\xi,d}(t)\,dt,
    \qquad
    J\in\mathcal D_{\ell+1}([0,1]).
\]

If \(I\in\mathcal D_\ell([0,1])\), \(J\in\mathcal D_{\ell+1}([0,1])\),
\(J\subset I\), and
\(
    M_I=\widetilde\nu_\ell(I),
\)
then
\[
    \widetilde\nu_{\ell+1}(J)=\frac12 M_IU_J,
\]
where \(U_J\) denotes the cascade weight attached to the child interval \(J\).
With the coefficient convention above, the raw martingale increment associated
with \(J\) is
\(
c_J(\xi,d,\ell)M_I(U_J-1).
\)

Constants denoted by \(C_\gamma,c_\gamma,\eta_\gamma\), and similar symbols may
change from line to line.  They may depend on the fixed arc \(\gamma\), on the
parameters fixed in the relevant theorem, and on the law of the cascade weight,
but not on \(\xi\), \(n\), \(\ell\), or on the dyadic intervals under
consideration, unless explicitly stated otherwise.

\section{Endpoint-safe phase geometry for arcs}
\label{S:phase-geometry}

This section proves the deterministic phase package used in the finite-\(r\)
annular theorem for fixed nondegenerate arcs.  Throughout the section,
\(
    \gamma:[0,1]\to\mathbb R^2
\)
is a fixed nondegenerate \(C^2\) embedded arc.  We use the phase
\(\varphi_\xi\) and the derivative-band variable \(h_\xi\) introduced in
Subsection~\ref{SS:phase-notation-conventions}.

The closed-curve case will not require a separate phase decomposition.  By the
dyadic cutting reduction in Lemma~\ref{L:circle-cascade-dyadic-cutting} and
Lemma~\ref{L:jordan-dyadic-subarcs}, a fixed nondegenerate Jordan curve is
reduced to two fixed nondegenerate arcs.  Therefore the endpoint-safe arc
package proved in this section is the only deterministic phase geometry needed
for the Fourier-decay part of the paper.

All constants \(C_\gamma,c_\gamma\), and similar constants in this section may
depend on the fixed arc \(\gamma\), but not on the frequency \(\xi\).  The main
point is that an arc has endpoints.  Thus the phase decomposition must include
unconditional endpoint-safe pieces, in addition to the usual small-derivative
region and dyadic derivative bands.

\subsection{Tangent angle and derivative level sets}
\label{SS:tangent-angle-level-sets}

\begin{lemma}
\label{L:arc-tangent-angle}
There exists a \(C^1\) function
\(
    \Theta_\gamma:[0,1]\to\mathbb R
\)
such that
\[
    \gamma'(t)
    =
    |\gamma'(t)|
    (\cos\Theta_\gamma(t),\sin\Theta_\gamma(t)).
\]
Moreover,
\[
    \Theta_\gamma'(t)
    =
    \frac{\det(\gamma'(t),\gamma''(t))}{|\gamma'(t)|^2},
\]
and there are constants \(0<c_\gamma\le C_\gamma<\infty\) such that
\[
    c_\gamma\le |\Theta_\gamma'(t)|\le C_\gamma
    \qquad (0\le t\le1).
\]
Consequently, \(\Theta_\gamma\) is monotone and bi-Lipschitz from \([0,1]\)
onto its image.
\end{lemma}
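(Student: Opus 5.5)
The plan is to build \(\Theta_\gamma\) as a continuous lift of the unit tangent and then read off its derivative from the lift identity. First, the normalized tangent
\[
    T(t)=\frac{\gamma'(t)}{|\gamma'(t)|}
\]
is a \(C^1\) map from \([0,1]\) to \(\mathbb S^1\). This holds because \(\gamma'\) is \(C^1\) (as \(\gamma\in C^2\)) and \(|\gamma'|\ge\inf|\gamma'|>0\), so \(t\mapsto|\gamma'(t)|\) is \(C^1\).

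Next, lift \(T\) through the covering map \(\theta\mapsto(\cos\theta,\sin\theta)\). Fix \(\Theta_\gamma(0)\) with \(T(0)=(\cos\Theta_\gamma(0),\sin\Theta_\gamma(0))\) and set
\[
    \Theta_\gamma(t)=\Theta_\gamma(0)+\int_0^t \det(T(u),T'(u))\,du .
\]
Write \(T=(T_1,T_2)\) and \(E(t)=(\cos\Theta_\gamma(t),\sin\Theta_\gamma(t))\). To check that this integral defines a genuine lift, differentiate \(\overline{E}\,(T_1+iT_2)\); using \(|T|=1\), which gives \(T\cdot T'=0\) and hence \(T'=\det(T,T')\,T^{\perp}\), the derivative vanishes. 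Since the product equals \(1\) at \(t=0\), we get \(T\equiv E\) on \([0,1]\). Because its integrand is continuous, \(\Theta_\gamma\) is \(C^1\), and the representation of \(\gamma'\) in the statement follows.

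For the derivative formula, compute \(\Theta_\gamma'=\det(T,T')\) directly. Write \(T'=\gamma''/|\gamma'|-\gamma'\,(|\gamma'|)'/|\gamma'|^2\). The second term is parallel to \(\gamma'\), so it contributes nothing to the determinant. Therefore
\[
    \Theta_\gamma'=\frac{\det(\gamma',\gamma'')}{|\gamma'|^2}.
\]

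The two-sided bounds now follow from compactness of \([0,1]\). The upper bound \(|\Theta_\gamma'|\le \|\gamma'\|_\infty\|\gamma''\|_\infty/\inf|\gamma'|^2\) is immediate. The lower bound \(|\Theta_\gamma'|\ge \inf|\det(\gamma',\gamma'')|/\|\gamma'\|_\infty^2>0\) uses the nondegeneracy assumption of Definition~\ref{D:fixed-nondegenerate-arc}.

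Finally, \(\Theta_\gamma'\) is continuous and never vanishes on the connected set \([0,1]\), so it has constant sign. Hence \(\Theta_\gamma\) is strictly monotone, and by the mean value theorem \(c_\gamma|t-s|\le|\Theta_\gamma(t)-\Theta_\gamma(s)|\le C_\gamma|t-s|\), which is the bi-Lipschitz claim onto the image interval.

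The only mildly delicate step is justifying that the integral formula gives a \(C^1\) lift rather than merely a continuous one. The product-derivative argument above handles this without invoking covering-space theory, so no real obstacle is expected.
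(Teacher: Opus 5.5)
Your proof is correct and follows essentially the same route as the paper: a \(C^1\) angle lift of the unit tangent, the identity \(\det(\gamma',\gamma'')=|\gamma'|^2\Theta_\gamma'\), and two-sided bounds from compactness plus the nondegeneracy assumption, with monotonicity and the bi-Lipschitz property following from the constant sign of \(\Theta_\gamma'\). The only difference is that you construct the lift explicitly as \(\Theta_\gamma(0)+\int_0^t\det(T,T')\,du\) and check it by differentiating \(\overline{E}\,(T_1+iT_2)\). The paper instead simply invokes existence of a \(C^1\) lift on the simply connected interval and differentiates \(\gamma'=|\gamma'|(\cos\Theta_\gamma,\sin\Theta_\gamma)\), so your version makes the \(C^1\) regularity of the lift self-contained.
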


\begin{proof}
Since \(\gamma\in C^2\) and \(\inf_{[0,1]}|\gamma'|>0\), the unit tangent field
\(
    \frac{\gamma'(t)}{|\gamma'(t)|}
\)
is a well-defined \(C^1\) map from \([0,1]\) to \(\mathbb S^1\).  Since the
interval is simply connected, this map admits a \(C^1\) angle lift
\(\Theta_\gamma\).  Differentiating
\[
    \gamma'(t)
    =
    |\gamma'(t)|
    (\cos\Theta_\gamma(t),\sin\Theta_\gamma(t))
\]
and taking the determinant with \(\gamma'(t)\) gives
\[
    \det(\gamma'(t),\gamma''(t))
    =
    |\gamma'(t)|^2\Theta_\gamma'(t).
\]
The curvature determinant is continuous and nowhere zero on \([0,1]\), hence
has a fixed sign and a positive absolute minimum.  Since \(|\gamma'|\) is
continuous and bounded above and below away from zero, the displayed bounds for
\(\Theta_\gamma'\) follow.  The monotonicity and bi-Lipschitz property are
immediate.
\end{proof}

If
\(
    \xi=|\xi|(\cos\alpha,\sin\alpha),
\)
then, by the definition of \(h_\xi\),
\[
    h_\xi(t)=\cos^2(\Theta_\gamma(t)-\alpha).
\]

\begin{lemma}
\label{L:arc-g-level-geometry}
For every \(\xi\ne0\),
\[
    c_\gamma|\xi|h_\xi(t)^{1/2}
    \le
    |\varphi_\xi'(t)|
    \le
    C_\gamma|\xi|h_\xi(t)^{1/2}
    \qquad (0\le t\le1),
\]
and
\[
    |h_\xi'(t)|\le C_\gamma h_\xi(t)^{1/2}
    \qquad (0\le t\le1).
\]
Moreover, for every \(0<d\le1\), the sublevel set
\[
    \left\{
        t\in[0,1]:
        h_\xi(t)^{1/2}\le C_\gamma d
    \right\}
\]
is contained in the union of at most \(C_\gamma\) intervals of total length at
most \(C_\gamma d\).  Consequently, for every dyadic \(d>0\), the band
\[
    \left\{
        t\in[0,1]:
        C_\gamma^{-1}d
        \le
        h_\xi(t)^{1/2}
        \le
        C_\gamma d
    \right\}
\]
is contained in the union of at most \(C_\gamma\) intervals of total length at
most \(C_\gamma d\).
\end{lemma}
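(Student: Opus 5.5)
The plan is to reduce everything to the tangent-angle function \(\Theta_\gamma\) from Lemma~\ref{L:arc-tangent-angle} and the identity \(h_\xi(t)=\cos^2(\Theta_\gamma(t)-\alpha)\).

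\emph{Derivative comparison.} We have \(\varphi_\xi'(t)=-2\pi\xi\cdot\gamma'(t)\), so by the definition of \(h_\xi\),
\[
    |\varphi_\xi'(t)|=2\pi|\xi|\,|\gamma'(t)|\,h_\xi(t)^{1/2}.
\]
The two-sided bound then follows from \(0<\inf|\gamma'|\le\sup|\gamma'|<\infty\).

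\emph{Derivative of \(h_\xi\).} Differentiating,
\[
    h_\xi'(t)=-2\cos(\Theta_\gamma(t)-\alpha)\sin(\Theta_\gamma(t)-\alpha)\,\Theta_\gamma'(t),
\]
so
\[
    |h_\xi'(t)|\le 2\sup|\Theta_\gamma'|\,|\cos(\Theta_\gamma(t)-\alpha)|=C_\gamma h_\xi(t)^{1/2}.
\]
The constant is uniform in \(\xi\) because \(\alpha\) enters only as a shift.

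\emph{Sublevel sets.} This is the only step needing care. Set \(\psi(t)=\Theta_\gamma(t)-\alpha\). By Lemma~\ref{L:arc-tangent-angle}, \(\psi\) is monotone with \(c_\gamma\le|\psi'|\le C_\gamma\). The range \(\psi([0,1])\) is therefore an interval of length at most \(C_\gamma\), independent of \(\alpha\).

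The condition \(|\cos\psi|\le Kd\) places \(\psi\) within distance \(\arcsin(Kd)\) of a point of \(\pi/2+\pi\mathbb Z\). Two regimes arise:
\begin{itemize}
\item If \(Kd\le 1/2\), say, then \(\arcsin(Kd)\le C Kd\), and the admissible \(\psi\)-set inside the range is a union of at most \(C_\gamma/\pi+2\) intervals, each of length at most \(CKd\).
\item If \(Kd>1/2\), the whole interval \([0,1]\) is a single interval of length \(1\le 2Kd\), and the claim is trivial.
\end{itemize}
In the first regime we pull back through the monotone bi-Lipschitz map \(\psi\). Preimages of intervals are intervals, and lengths are multiplied by at most \(1/c_\gamma\). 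This gives at most \(C_\gamma\) intervals of total length \(\le C_\gamma d\), with constants independent of \(\alpha\), hence of \(\xi\).

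\emph{Bands.} The band statement follows immediately, since the band is contained in the sublevel set \(\{h_\xi^{1/2}\le C_\gamma d\}\). For dyadic \(d>1\), the total-length bound \(C_\gamma d\ge1\) is trivial.

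The main obstacle is the uniformity of the interval count in \(\alpha\). It is handled by the bounded total turning \(\sup|\Theta_\gamma'|\) together with monotonicity. One should also note that the constant \(C_\gamma\) appearing inside the sublevel condition is the same symbol as the output constant, so it must be absorbed by letting the output constant depend on it.
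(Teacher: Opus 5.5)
Your proposal is correct and follows essentially the same route as the paper: the identity \(|\varphi_\xi'(t)|=2\pi|\xi|\,|\gamma'(t)|\,h_\xi(t)^{1/2}\), differentiating \(h_\xi=\cos^2(\Theta_\gamma-\alpha)\), and pulling back the sublevel set of \(|\cos u|\) through the monotone bi-Lipschitz map \(\Theta_\gamma\), whose image has bounded length independent of \(\alpha\). Your explicit \(\arcsin\) count and your separate treatment of the regime \(Kd>1/2\) just spell out details the paper leaves implicit.
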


\begin{proof}
The derivative of the phase is
\(
    \varphi_\xi'(t)=-2\pi\xi\cdot\gamma'(t),
\)
so
\[
    |\varphi_\xi'(t)|
    =
    2\pi|\xi|\,|\gamma'(t)|\,h_\xi(t)^{1/2}.
\]
Since \(|\gamma'|\) is bounded above and below away from zero, the first
estimate follows.

Using
\(
    h_\xi(t)=\cos^2(\Theta_\gamma(t)-\alpha),
\)
we get
\[
    h_\xi'(t)
    =
    -2
    \cos(\Theta_\gamma(t)-\alpha)
    \sin(\Theta_\gamma(t)-\alpha)
    \Theta_\gamma'(t),
\]
and hence
\(
    |h_\xi'(t)|\le C_\gamma h_\xi(t)^{1/2}.
\)

It remains to prove the level-set estimates.  By
Lemma~\ref{L:arc-tangent-angle}, the map \(t\mapsto\Theta_\gamma(t)\) is
bi-Lipschitz onto an interval of bounded length.  In the angle variable
\(u=\Theta_\gamma(t)-\alpha\), the zeros of \(\cos u\) in this interval are
simple and have cardinality at most \(C_\gamma\).  The set where
\(|\cos u|\le C_\gamma d\) is therefore contained in at most \(C_\gamma\)
intervals of total length at most \(C_\gamma d\).  Pulling back by the
bi-Lipschitz map \(\Theta_\gamma\) proves the sublevel estimate.  The band
estimate follows from the sublevel estimate.  If \(d>1\), the assertion is
trivial after increasing \(C_\gamma\).
\end{proof}

\subsection{Endpoint tubes and dyadic derivative bands}
\label{SS:endpoint-tubes-derivative-bands}

We now construct the endpoint-safe partition.  The low-frequency range is
harmless and will be absorbed into constants.

For the high-frequency construction, assume
\(
    |\xi|>16,
\)
and set
\(
    \rho_\xi=|\xi|^{-1/2}.
\)
Choose \(C^1\) functions
\(
    \eta_{\xi,0},\ \eta_{\xi,1}:[0,1]\to[0,1]
\)
such that
\[
    \eta_{\xi,0}(t)=0\quad (0\le t\le \rho_\xi),
    \qquad
    \eta_{\xi,0}(t)=1\quad (2\rho_\xi\le t\le1),
\]
and
\[
    \eta_{\xi,1}(t)=1\quad (0\le t\le1-2\rho_\xi),
    \qquad
    \eta_{\xi,1}(t)=0\quad (1-\rho_\xi\le t\le1).
\]
They are chosen with
\[
    |\eta_{\xi,0}'(t)|+|\eta_{\xi,1}'(t)|
    \le C|\xi|^{1/2},
\qquad
    \int_0^1|\eta_{\xi,0}'(t)|\,dt
    +
    \int_0^1|\eta_{\xi,1}'(t)|\,dt
    \le C.
\]
Define
\[
    \eta_\xi(t)=\eta_{\xi,0}(t)\eta_{\xi,1}(t),
    \qquad
    \chi_{\xi,0}(t)=1-\eta_{\xi,0}(t),
    \qquad
    \chi_{\xi,1}(t)=\eta_{\xi,0}(t)(1-\eta_{\xi,1}(t)).
\]
Thus \(\chi_{\xi,0}\) and \(\chi_{\xi,1}\) are endpoint-safe pieces.

Choose a \(C^\infty\) cutoff
\(
    \psi:[0,\infty)\to[0,1]
\)
such that
\[
    \psi(u)=1\quad (0\le u\le1),
    \qquad
    \psi(u)=0\quad (u\ge4).
\]
Define the small-derivative piece
\[
    \chi_{\xi,\mathrm{sd}}(t)
    =
    \eta_\xi(t)\psi(|\xi|h_\xi(t)).
\]

Next choose a nonnegative \(C^\infty\) function \(\zeta\), supported in
\([1/4,4]\), such that
\[
    \sum_{d\in2^{\mathbb Z}}
    \zeta\left(\frac{u}{d^2}\right)=1
    \qquad (u>0).
\]
For a dyadic \(d\), define
\[
    \chi_{\xi,d}(t)
    =
    \eta_\xi(t)
    \left(1-\psi(|\xi|h_\xi(t))\right)
    \zeta\left(\frac{h_\xi(t)}{d^2}\right).
\]
Let \(\mathfrak D_\xi\) be the finite set of dyadic \(d\)'s for which
\(\chi_{\xi,d}\) is not identically zero.

For \(|\xi|\le16\), we use the convention
\[
    \chi_{\xi,\mathrm{sd}}\equiv1,
    \qquad
    \chi_{\xi,0}\equiv0,
    \qquad
    \chi_{\xi,1}\equiv0,
    \qquad
    \mathfrak D_\xi=\varnothing.
\]

\begin{proposition}[Endpoint-safe phase-bin package]
\label{P:arc-endpoint-safe-phase-bins}
For every \(\xi\in\mathbb R^2\setminus\{0\}\), the following conclusions hold.

\begin{enumerate}[label=\textup{(\roman*)}]
\item Partition of unity.  The functions above satisfy
\[
    \chi_{\xi,0}(t)
    +
    \chi_{\xi,1}(t)
    +
    \chi_{\xi,\mathrm{sd}}(t)
    +
    \sum_{d\in\mathfrak D_\xi}\chi_{\xi,d}(t)
    =
    1
    \qquad (0\le t\le1).
\]

\item Size of the safe region.  Let
\[
    E_\xi
    =
    \operatorname{spt}\chi_{\xi,0}
    \cup
    \operatorname{spt}\chi_{\xi,1}
    \cup
    \operatorname{spt}\chi_{\xi,\mathrm{sd}}.
\]
Then
\[
    |E_\xi|\le C_\gamma|\xi|^{-1/2}.
\]

\item Localization of each derivative band.  For each
\(d\in\mathfrak D_\xi\), set
\(
    S_{\xi,d}=\operatorname{spt}\chi_{\xi,d}.
\)
Then
\[
    S_{\xi,d}
    \subset
    \left\{
        t\in[0,1]:
        C_\gamma^{-1}d
        \le
        h_\xi(t)^{1/2}
        \le
        C_\gamma d
    \right\}.
\]

\item Phase-derivative bounds on each band.  For each
\(d\in\mathfrak D_\xi\), one has
\[
    c_\gamma|\xi|d
    \le
    |\varphi_\xi'(t)|
    \le
    C_\gamma|\xi|d
    \qquad (t\in S_{\xi,d}).
\]

\item Range and number of active dyadic scales.  The active dyadic scales
satisfy
\[
    C_\gamma^{-1}|\xi|^{-1/2}\le d\le C_\gamma
    \qquad (d\in\mathfrak D_\xi),
\]
and
\[
    \#\mathfrak D_\xi
    \le
    C_\gamma(1+\log(2+|\xi|)).
\]

\item Thickening estimate for each band.  For every \(d\in\mathfrak D_\xi\)
and every integer \(\ell\ge0\),
\[
    \left|
    \left\{
        t\in[0,1]:
        \operatorname{dist}(t,S_{\xi,d})\le2^{-\ell}
    \right\}
    \right|
    \le
    C_\gamma(d+2^{-\ell}).
\]

\item Bounded-variation integration-by-parts estimate.  For every
\(d\in\mathfrak D_\xi\),
\[
    \int_0^1
    \frac{|\chi_{\xi,d}'(t)|}{|\varphi_\xi'(t)|}\,dt
    \le
    C_\gamma(|\xi|d)^{-1}.
\]
\end{enumerate}
\end{proposition}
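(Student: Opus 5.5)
The proof is a direct verification of (i)--(vii) from the explicit construction, with Lemmas~\ref{L:arc-tangent-angle} and \ref{L:arc-g-level-geometry} as the only inputs. The low-frequency case \(|\xi|\le16\) is trivial under the stated convention, with \(E_\xi=[0,1]\) and \(C_\gamma\ge4\). So assume \(|\xi|>16\).

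For (i), note that \(\chi_{\xi,0}+\chi_{\xi,1}=1-\eta_{\xi,0}\eta_{\xi,1}=1-\eta_\xi\). The remaining pieces sum to
\[
\eta_\xi\bigl[\psi(|\xi|h_\xi)+(1-\psi(|\xi|h_\xi))\textstyle\sum_d\zeta(h_\xi/d^2)\bigr].
\]
Where \(h_\xi>0\), the dyadic sum equals \(1\). Where \(h_\xi=0\), we have \(\psi=1\). Either way the bracket is \(1\), so the total is \(1\). The sum over \(d\) is finite, since \(1-\psi\) forces \(h_\xi\ge|\xi|^{-1}\), and \(h_\xi\le1\) forces \(d\lesssim1\).

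For (ii), the supports of \(\chi_{\xi,0}\) and \(\chi_{\xi,1}\) lie in \([0,2\rho_\xi]\cup[1-2\rho_\xi,1]\), which has length \(4|\xi|^{-1/2}\). The support of \(\chi_{\xi,\mathrm{sd}}\) lies in \(\{h_\xi^{1/2}\le2|\xi|^{-1/2}\}\). The sublevel part of Lemma~\ref{L:arc-g-level-geometry} with \(d\simeq|\xi|^{-1/2}\) bounds this set's measure by \(C_\gamma|\xi|^{-1/2}\).

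For (iii), \(\zeta\) is supported in \([1/4,4]\), so on \(S_{\xi,d}\) we have \(d/2\le h_\xi^{1/2}\le2d\). Combining this with the first estimate of Lemma~\ref{L:arc-g-level-geometry} gives (iv).

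For (v), the support of \(1-\psi\) gives \(h_\xi\ge|\xi|^{-1}\) on \(S_{\xi,d}\), while \(h_\xi\le1\). Hence \(|\xi|^{-1/2}/2\le d\le2\). The number of dyadic \(d\) in this range is \(O(\log|\xi|)\).

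For (vi), apply the band statement of Lemma~\ref{L:arc-g-level-geometry}. It places \(S_{\xi,d}\) inside at most \(C_\gamma\) intervals of total length \(C_\gamma d\). Thickening each interval by \(2^{-\ell}\) adds at most \(2\cdot2^{-\ell}\) per interval, which gives \(C_\gamma(d+2^{-\ell})\).

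The main work is (vii). Differentiating \(\chi_{\xi,d}\) produces three kinds of terms, and I will bound each after dividing by \(|\varphi_\xi'|\simeq|\xi|d\) on \(S_{\xi,d}\) (from (iv)); each contributes at most \(C_\gamma(|\xi|d)^{-1}\).

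\textbf{(a) The \(\eta_\xi'\) term.} This needs \(\int|\eta_\xi'|\le C\), which holds by construction.

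\textbf{(b) The \(\psi\) term.} It is \(|\xi||h_\xi'|\,|\psi'|\), nonzero only where \(|\xi|h_\xi\in[1,4]\). There \(h_\xi^{1/2}\simeq|\xi|^{-1/2}\), which forces \(d\simeq|\xi|^{-1/2}\). Using \(|h_\xi'|\le C_\gamma h_\xi^{1/2}\), the integrand is \(\lesssim|\xi|^{1/2}\). The set has measure \(\lesssim|\xi|^{-1/2}\), so the integral is \(O(1)\).

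\textbf{(c) The \(\zeta\) term.} It is \(d^{-2}|h_\xi'|\,|\zeta'|\lesssim d^{-2}\cdot d=d^{-1}\), on a set of measure \(\lesssim d\), so the integral is \(O(1)\).

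The one place a naive estimate fails is a cleaner alternative for (b) and (c). Each is a monotone function of \(h_\xi\), and \(h_\xi\) is piecewise monotone with \(O_\gamma(1)\) pieces, because \(\Theta_\gamma\) is monotone and covers a bounded range of angles. So the total variation of each factor is \(O(1)\) without any size bounds. Either argument gives \(\int|\chi_{\xi,d}'|\le C_\gamma\), and dividing by \(|\varphi_\xi'|\gtrsim|\xi|d\) yields (vii).
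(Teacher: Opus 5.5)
Your proposal is correct and follows the paper's proof essentially line by line: the identity $\chi_{\xi,0}+\chi_{\xi,1}=1-\eta_\xi$ for (i), the sublevel and band estimates of Lemma~\ref{L:arc-g-level-geometry} for (ii)--(vi), and the same three-term split of $\chi_{\xi,d}'$ for (vii), namely the endpoint-transition term, the $\psi$-transition term with $d\asymp|\xi|^{-1/2}$, and the $\zeta$-band term. One small correction to your optional total-variation aside: $\zeta$ is a bump and $\psi$ need not be monotone, so the accurate statement is that $\psi$ and $\zeta$ have bounded total variation, and this, combined with the $O_\gamma(1)$ monotone pieces of $h_\xi$, still gives $\int_0^1|\chi_{\xi,d}'|\le C_\gamma$.
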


\begin{proof}
For \(|\xi|\le16\), the assertions are immediate after increasing
\(C_\gamma\).  Assume \(|\xi|>16\).

The partition identity follows from
\(
    \chi_{\xi,0}(t)+\chi_{\xi,1}(t)=1-\eta_\xi(t).
\)
If \(h_\xi(t)=0\), then \(\psi(|\xi|h_\xi(t))=1\), so
\[
    \chi_{\xi,\mathrm{sd}}(t)=\eta_\xi(t),
    \qquad
    \chi_{\xi,d}(t)=0.
\]
If \(h_\xi(t)>0\), then the dyadic partition of unity gives
\[
    \chi_{\xi,\mathrm{sd}}(t)
    +
    \sum_{d\in\mathfrak D_\xi}\chi_{\xi,d}(t)
    =
    \eta_\xi(t).
\]
This proves the partition of unity.

The two endpoint pieces have supports of total length at most
\(C|\xi|^{-1/2}\).  Also,
\[
    \operatorname{spt}\chi_{\xi,\mathrm{sd}}
    \subset
    \left\{
        t\in[0,1]:
        h_\xi(t)^{1/2}\le2|\xi|^{-1/2}
    \right\}.
\]
By Lemma~\ref{L:arc-g-level-geometry}, this set is contained in at most
\(C_\gamma\) intervals of total length at most \(C_\gamma|\xi|^{-1/2}\).
Hence
\(
    |E_\xi|\le C_\gamma|\xi|^{-1/2}.
\)

If \(t\in S_{\xi,d}\), then the support of \(\zeta\) gives
\(
\frac12 d\le h_\xi(t)^{1/2}\le2d,
\)
which proves the localization.  Combining this with
Lemma~\ref{L:arc-g-level-geometry} gives
\[
    c_\gamma|\xi|d
    \le
    |\varphi_\xi'(t)|
    \le
    C_\gamma|\xi|d
    \qquad (t\in S_{\xi,d}).
\]
Since \(h_\xi\le1\), the same support condition gives \(d\le C_\gamma\).
Since \(1-\psi(|\xi|h_\xi)\ne0\) on \(S_{\xi,d}\), we have
\(|\xi|h_\xi>1\) somewhere on the band, hence
\(d\ge C_\gamma^{-1}|\xi|^{-1/2}\).  The bound on
\(\#\mathfrak D_\xi\) follows from the number of dyadic scales in this range.

The support-neighborhood estimate follows from the localization and
Lemma~\ref{L:arc-g-level-geometry}: \(S_{\xi,d}\) is contained in a union of at
most \(C_\gamma\) intervals of total length at most \(C_\gamma d\).  Enlarging
a union of \(C_\gamma\) intervals by distance \(2^{-\ell}\) increases its
length by at most \(C_\gamma2^{-\ell}\).

It remains to prove the bounded-variation estimate.  Differentiate
\[
    \chi_{\xi,d}
    =
    \eta_\xi
    (1-\psi(|\xi|h_\xi))
    \zeta(h_\xi/d^2).
\]
There are three contributions.

First, for the endpoint-transition term, we use
\(
    \int_0^1|\eta_\xi'(t)|\,dt\le C
\)
and the lower bound \(|\varphi_\xi'(t)|\ge c_\gamma|\xi|d\) on the
\(d\)-band.  Hence
\[
    \int_0^1
    \frac{
        |\eta_\xi'(t)|
        (1-\psi(|\xi|h_\xi(t)))
        \zeta(h_\xi(t)/d^2)
    }{
        |\varphi_\xi'(t)|
    }\,dt
    \le
    \frac{1}{c_\gamma|\xi|d}
    \int_0^1|\eta_\xi'(t)|\,dt
    \le
    C_\gamma(|\xi|d)^{-1}.
\]

Second, the derivative of \(1-\psi(|\xi|h_\xi)\) is supported where
\(
    1\le |\xi|h_\xi(t)\le4.
\)
If this intersects the \(d\)-band, then \(d\asymp_\gamma|\xi|^{-1/2}\).  On
this set,
\[
    \left|
    \frac{d}{dt}\psi(|\xi|h_\xi(t))
    \right|
    \le
    C|\xi|\,|h_\xi'(t)|
    \le
    C_\gamma|\xi|h_\xi(t)^{1/2}
    \le
    C_\gamma|\xi|^{1/2}
    \le
    C_\gamma d^{-1}.
\]
The transition set has length at most \(C_\gamma d\), and
\(|\varphi_\xi'|\ge c_\gamma|\xi|d\) on the band.  Hence this contribution is
also at most \(C_\gamma(|\xi|d)^{-1}\).

Third, for the dyadic band cutoff,
\[
    \left|
    \frac{d}{dt}
    \zeta\left(\frac{h_\xi(t)}{d^2}\right)
    \right|
    \le
    C\frac{|h_\xi'(t)|}{d^2}
    \le
    C_\gamma d^{-1}
\]
on the \(d\)-band.  Since the support has length at most \(C_\gamma d\) and
\(|\varphi_\xi'|\ge c_\gamma|\xi|d\), the contribution is at most
\(C_\gamma(|\xi|d)^{-1}\).  Combining the three estimates proves the
bounded-variation estimate.
\end{proof}

\begin{remark}
\label{R:endpoint-safe-pieces}
The endpoint pieces \(\chi_{\xi,0}\) and \(\chi_{\xi,1}\) are not treated by
integration by parts.  They are included in the safe region \(E_\xi\), whose
mass will later be controlled by the local-mass good event.  The oscillatory
integration-by-parts estimates are applied only on the derivative bands
\(\chi_{\xi,d}\), which are separated from the endpoints by the factor
\(\eta_\xi\).  This is the reason for using an endpoint-safe phase
decomposition rather than the stationary-tube decomposition used for closed
curves.
\end{remark}

\subsection{Coefficient estimates}
\label{SS:coefficient-estimates}

For \(d\in\mathfrak D_\xi\), recall from
Subsection~\ref{SS:phase-notation-conventions} that \(m_{\xi,d}\) is the
integer determined by
\(
    2^{m_{\xi,d}-1}<|\xi|d\le2^{m_{\xi,d}}.
\)
For \(J\in\mathcal D_{\ell+1}([0,1])\), recall that
\[
    c_J(\xi,d,\ell)
    =
    2^\ell
    \int_J e^{i\varphi_\xi(t)}
    \chi_{\xi,d}(t)\,dt.
\]

\begin{proposition}
\label{P:arc-phasebin-coefficient-estimates}
There exists \(C_\gamma<\infty\) such that, for every
\(\xi\ne0\), every \(d\in\mathfrak D_\xi\), every \(\ell\ge0\), and every
\(J\in\mathcal D_{\ell+1}([0,1])\), one has
\[
    |c_J(\xi,d,\ell)|
    \le
    C_\gamma2^{\ell-m_{\xi,d}}
    \qquad (0\le\ell<m_{\xi,d}),
\]
and
\[
    |c_J(\xi,d,\ell)|
    \le
    C_\gamma
    \qquad (\ell\ge m_{\xi,d}).
\]
Moreover,
\[
    \left|
    \int_0^1 e^{i\varphi_\xi(t)}
    \chi_{\xi,d}(t)\,dt
    \right|
    \le
    C_\gamma(|\xi|d)^{-1}.
\]
\end{proposition}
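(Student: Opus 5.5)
The plan is to reduce all three estimates to one integration-by-parts bound on subintervals of a single derivative band. For an interval \(J\subset[0,1]\), we write
\[
    \int_J e^{i\varphi_\xi}\chi_{\xi,d}\,dt
    =
    \int_J \frac{\chi_{\xi,d}}{i\varphi_\xi'}\,d\bigl(e^{i\varphi_\xi}\bigr).
\]
This is legitimate because \(\chi_{\xi,d}\) is supported in \(S_{\xi,d}\), where \(|\varphi_\xi'|\ge c_\gamma|\xi|d>0\) by Proposition~\ref{P:arc-endpoint-safe-phase-bins}(iv). On \(S_{\xi,d}\) the sign of \(\varphi_\xi'\) is constant on each component, so \(1/\varphi_\xi'\) is \(C^1\) there. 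Integrating by parts bounds the integral by three terms:
\[
    \Bigl[\frac{\chi_{\xi,d}}{|\varphi_\xi'|}\Bigr]_{\partial J}
    +\int_J \frac{|\chi_{\xi,d}'|}{|\varphi_\xi'|}\,dt
    +\int_J \chi_{\xi,d}\frac{|\varphi_\xi''|}{|\varphi_\xi'|^2}\,dt.
\]

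The boundary term is at most \(2/(c_\gamma|\xi|d)\). The second term is at most \(C_\gamma(|\xi|d)^{-1}\) by the bounded-variation estimate (vii). For the third term, use \(|\varphi_\xi''|=2\pi|\xi\cdot\gamma''|\le C_\gamma|\xi|\) together with \(|S_{\xi,d}|\le C_\gamma d\) from (iii) and Lemma~\ref{L:arc-g-level-geometry}. This gives
\[
    C_\gamma|\xi|\cdot d\,(|\xi|d)^{-2}=C_\gamma(|\xi|d)^{-1}.
\]
Hence, uniformly in subintervals \(J\),
\[
    \Bigl|\int_J e^{i\varphi_\xi}\chi_{\xi,d}\,dt\Bigr|\le C_\gamma(|\xi|d)^{-1}.
\]
Taking \(J=[0,1]\) gives the final assertion. (One could also avoid the sign issue by writing \(\varphi_\xi''/(\varphi_\xi')^2=-(1/\varphi_\xi')'\) and working component by component; there are at most \(C_\gamma\) components by Lemma~\ref{L:arc-g-level-geometry}, so this only costs a constant.)

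For the coefficient bounds, multiply by \(2^\ell\). Since \(2^{m_{\xi,d}-1}<|\xi|d\), the subinterval bound gives
\[
    |c_J|\le C_\gamma2^{\ell}(|\xi|d)^{-1}\le 2C_\gamma2^{\ell-m_{\xi,d}}.
\]
This is the bound for \(\ell<m_{\xi,d}\); in fact it holds for all \(\ell\). For \(\ell\ge m_{\xi,d}\), use instead the trivial estimate: \(0\le\chi_{\xi,d}\le1\) and \(|J|=2^{-\ell-1}\), so \(|c_J|\le 1/2\).

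The only delicate point is the third term. It needs the band to have length \(O(d)\), rather than just \(|\varphi_\xi''|\) being integrable, and it needs the vanishing of \(\chi_{\xi,d}\) near the endpoints so that \(1/\varphi_\xi'\) stays controlled. Both are already supplied by Proposition~\ref{P:arc-endpoint-safe-phase-bins}, so I expect no genuine obstacle beyond careful bookkeeping. The low-frequency convention \(\mathfrak D_\xi=\varnothing\) for \(|\xi|\le16\) makes that case vacuous.
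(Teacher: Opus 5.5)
Your proposal is correct and follows the paper's proof essentially step by step. Both use the trivial bound \(2^\ell|J|\le 1\) for \(\ell\ge m_{\xi,d}\), and for \(\ell<m_{\xi,d}\) an integration by parts in which the boundary term, the \(\chi_{\xi,d}'\) term (via the bounded-variation estimate (vii)) and the \(\varphi_\xi''\) term (via the band length \(O(d)\)) are each \(O((|\xi|d)^{-1})\), followed by multiplication by \(2^\ell\).

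The only difference is cosmetic. You integrate by parts once over \(J\), using that \(\chi_{\xi,d}/\varphi_\xi'\) extends by zero to a \(C^1\) function. The paper instead integrates over each of the at most \(C_\gamma\) components of \(J\cap\{\chi_{\xi,d}\neq0\}\).
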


\begin{proof}
The post-bin estimate is immediate:
\[
    |c_J(\xi,d,\ell)|
    \le
    2^\ell |J|
    =
    2^\ell2^{-(\ell+1)}
    \le1.
\]

Assume \(0\le\ell<m_{\xi,d}\).  It is enough to show
\[
    \left|
    \int_J e^{i\varphi_\xi(t)}
    \chi_{\xi,d}(t)\,dt
    \right|
    \le
    C_\gamma(|\xi|d)^{-1}.
\]
Set
\(
    G(t)=\chi_{\xi,d}(t).
\)
The set \(J\cap\operatorname{spt}G\) is contained in the union of at most
\(C_\gamma\) intervals of total length at most \(C_\gamma d\).  On these
intervals,
\[
    |\varphi_\xi'(t)|\ge c_\gamma|\xi|d,
    \qquad
    |\varphi_\xi''(t)|\le C_\gamma|\xi|.
\]
The set \(J\cap\{G\ne 0\}\) has at most \(C_\gamma\) connected components, after endpoints at which \(G\) vanishes are discarded. 
On each such component \(K\), integration by parts gives 
\[
    \int_K e^{i\varphi_\xi(t)}G(t)\,dt
    =
    \left[
        \frac{e^{i\varphi_\xi(t)}G(t)}
             {i\varphi_\xi'(t)}
    \right]_{\partial K}
    -
    \int_K e^{i\varphi_\xi(t)}
    \left(
        \frac{G'(t)}{i\varphi_\xi'(t)}
        -
        \frac{G(t)\varphi_\xi''(t)}
             {i(\varphi_\xi'(t))^2}
    \right)\,dt.
\]
The boundary contribution over all components is at most
\(C_\gamma(|\xi|d)^{-1}\).  The \(G'\)-term is controlled by
Proposition~\ref{P:arc-endpoint-safe-phase-bins}:
\[
    \int_0^1
    \frac{|G'(t)|}{|\varphi_\xi'(t)|}\,dt
    \le
    C_\gamma(|\xi|d)^{-1}.
\]
For the \(\varphi_\xi''\)-term,
\[
    \int_{\operatorname{spt}G}
    \frac{|G(t)|\,|\varphi_\xi''(t)|}
         {|\varphi_\xi'(t)|^2}\,dt
    \le
    C_\gamma
    \int_{\operatorname{spt}G}
    \frac{|\xi|}{|\xi|^2d^2}\,dt
    \le
    C_\gamma(|\xi|d)^{-1},
\]
because \(|\operatorname{spt}G|\le C_\gamma d\).  This proves the integral
estimate on \(J\).  Multiplying by \(2^\ell\), and using
\[
    2^{m_{\xi,d}-1}<|\xi|d\le2^{m_{\xi,d}},
\]
gives
\[
    |c_J(\xi,d,\ell)|
    \le
    C_\gamma2^\ell(|\xi|d)^{-1}
    \le
    C_\gamma2^{\ell-m_{\xi,d}}.
\]

The full-interval arclength estimate is the same integration-by-parts argument
with \(J=[0,1]\).
\end{proof}

\section{The finite-\texorpdfstring{\(r\)}{r} annular theorem}
\label{S:finite-r-annular}

This section begins the proof of Theorem~\ref{T:arc-finite-r-annular}.
Throughout Section~\ref{S:finite-r-annular}, fix
\[
    0<s<1,\qquad r>1,\qquad \delta>0,
\]
and let \(U\ge0\) satisfy
\[
    \mathbb E U=1,\qquad
    \mathbb E[U^r]<\infty,
    \qquad
    2^{1-r}\mathbb E[U^r]\le 2^{-r(s+\delta)}.
\]
Let \(\widetilde\nu_\ell\) be the level-\(\ell\) scalar dyadic cascade on
\([0,1]\) generated by \(U\).

We first record the elementary finite-\(r\) consequence needed to place the
cascade in the Kahane--Peyri\`ere nondegenerate regime.  Since \(r>1\) and
\(\mathbb E[U^r]<\infty\), we have
\[
    \mathbb E[U\log_2^+U]<\infty.
\]
Let
\[
    \Phi(q)=\log_2\mathbb E[U^q],
    \qquad 1\le q\le r.
\]
Then \(\Phi\) is convex on \([1,r]\), \(\Phi(1)=0\), and the finite-\(r\)
hypothesis gives
\[
    \Phi(r)
    =
    \log_2\mathbb E[U^r]
    \le
    r-1-r(s+\delta)
    <
    r-1.
\]
Since \(\mathbb E[U^r]<\infty\), the right derivative \(\Phi'_+(1)\) exists and
equals
\[
    \Phi'_+(1)=\mathbb E[U\log_2 U].
\]
By convexity,
\[
    \mathbb E[U\log_2 U]
    =
    \Phi'_+(1)
    \le
    \frac{\Phi(r)-\Phi(1)}{r-1}
    <
    1.
\]
Thus \(U\) satisfies the Kahane--Peyri\`ere assumptions
\[
    U\ge0,\qquad
    \mathbb E U=1,\qquad
    \mathbb E[U\log_2^+U]<\infty,\qquad
    \mathbb E[U\log_2U]<1.
\]
Consequently, the scalar cascade
\((\widetilde\nu_\ell)_{\ell\ge0}\) converges weakly almost surely to a finite
random Borel measure, denoted by \(\widetilde\nu\).  We set
\(
    \nu_\gamma=\gamma_\#\widetilde\nu.
\)
We work on the usual probability-one event on which this weak convergence
holds, all descendant terminal masses exist simultaneously, and dyadic
endpoints have zero \(\widetilde\nu\)-mass.

\subsection{Parameters and local-mass good events}
\label{SS:parameters-good-events}

We first choose the deterministic parameters used throughout the annular
argument.  Let
\[
    0<\delta_1<\min\{\delta,1-s\},
    \qquad
    a=s+\delta_1.
\]
Then
\(
    s<a<1.
\)
The exponent \(a\) will be used in the local-mass thresholds.  The choices of
\(\varepsilon>0\) and \(\kappa>0\) are made in
Lemma~\ref{lemma:annular-parameter-choice} below.

For \(k\ge0\) and \(n\ge1\), define
\[
    T_{k,n}=2^{\varepsilon n}2^{-ak},
    \qquad
    L_n=2^{\kappa n}.
\]
The quantity \(T_{k,n}\) is the local-mass threshold at dyadic scale \(k\) and
annular scale \(n\).  The quantity \(L_n\) is the cap-growth factor used in the
predictable capping step.

For the \(r\)-tail compensator estimates, introduce
\[
    \beta_{\mathrm{comp}}
    =
    r(s+\delta)-(r-1)a
    =
    s+r\delta-(r-1)\delta_1,
\qquad
    \theta_{\mathrm{comp}}
    =
    \min\{\beta_{\mathrm{comp}},1\}.
\]
With the above choice of \(\delta_1\), one has
\[
    \beta_{\mathrm{comp}}>s,
    \qquad
    \theta_{\mathrm{comp}}>s.
\]
These two exponents are introduced for the \(r\)-tail compensator estimates.

\begin{lemma}
\label{lemma:annular-parameter-choice}
Let \(0<\delta_1<\min\{\delta,1-s\}\), and set \(a=s+\delta_1\).  Then
\(\varepsilon,\kappa>0\) can be chosen so that
\[
    20\varepsilon+\kappa<\frac{\delta_1}{2},
\]
and, with
\begin{equation}
\label{eq:vartheta-def}
    \vartheta=1-\frac{s}{2a}-\frac{8\varepsilon}{a},
\end{equation}
one has
\[
    \vartheta>\frac{s}{2},
    \qquad
    \beta_{\mathrm{comp}}\vartheta>\frac{s}{2},
    \qquad
    \theta_{\mathrm{comp}}\vartheta>\frac{s}{2}.
\]
\end{lemma}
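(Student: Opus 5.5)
The plan is to treat the three target inequalities as small perturbations of their values at \(\varepsilon=0\), and then use continuity in \(\varepsilon\). The parameter \(\kappa\) does not enter \(\vartheta\), \(\beta_{\mathrm{comp}}\) or \(\theta_{\mathrm{comp}}\). So it only has to satisfy \(20\varepsilon+\kappa<\delta_1/2\), and it can be fixed at the end, for instance \(\kappa=\delta_1/4-10\varepsilon\), once \(\varepsilon<\delta_1/40\).

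\textbf{Step 1: the compensator exponents exceed \(s\).} First I will confirm the two claims recorded just before the lemma. Since \(\delta_1<\delta\) and \(r-1>0\),
\[
\beta_{\mathrm{comp}}=s+r\delta-(r-1)\delta_1>s+r\delta-(r-1)\delta=s+\delta>s .
\]
Since also \(1>s\), it follows that \(\theta_{\mathrm{comp}}=\min\{\beta_{\mathrm{comp}},1\}>s\). Neither exponent depends on \(\varepsilon\) or \(\kappa\).

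\textbf{Step 2: the value at \(\varepsilon=0\).} Put \(\vartheta_0=1-s/(2a)\). Because \(a=s+\delta_1>s>0\), we have \(s/(2a)<1/2\), and hence \(\vartheta_0>1/2\). Combined with \(s<1\), this gives \(\vartheta_0>1/2>s/2\). Combined with Step 1, it gives
\[
\beta_{\mathrm{comp}}\vartheta_0>s\cdot\tfrac12=\tfrac s2,
\qquad
\theta_{\mathrm{comp}}\vartheta_0>\tfrac s2 .
\]
The positivity of \(\beta_{\mathrm{comp}}\) and \(\theta_{\mathrm{comp}}\) is used in both products.

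\textbf{Step 3: perturbation in \(\varepsilon\).} From \eqref{eq:vartheta-def}, \(\vartheta=\vartheta_0-8\varepsilon/a\), which is affine and decreasing in \(\varepsilon\). The three inequalities of Step 2 are strict, so there is an explicit \(\varepsilon_0>0\) for which all three persist whenever \(0<\varepsilon<\varepsilon_0\). A concrete choice is any \(\varepsilon_0\) with
\[
\tfrac{8\varepsilon_0}{a}<\min\Bigl\{\vartheta_0-\tfrac s2,\ \vartheta_0-\tfrac{s}{2\beta_{\mathrm{comp}}},\ \vartheta_0-\tfrac{s}{2\theta_{\mathrm{comp}}}\Bigr\}.
\]
Each quantity in the minimum is positive by Step 2.

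We then take \(\varepsilon<\min\{\varepsilon_0,\delta_1/40\}\) and choose \(\kappa\) as above. This satisfies all the requirements of the lemma. There is no real obstacle; the only point needing care is checking \(\beta_{\mathrm{comp}}>s\), which uses \(\delta_1<\delta\) together with \(r>1\).
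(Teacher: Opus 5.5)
Your proof is correct and follows the paper's argument. Both proofs verify \(\beta_{\mathrm{comp}},\theta_{\mathrm{comp}}>s\) from \(\delta_1<\delta\), check the three inequalities at \(\varepsilon=0\) via \(\vartheta_0>1/2>s/2\), shrink \(\varepsilon\) by continuity, and only then fix \(\kappa\); your version just makes the threshold \(\varepsilon_0\) and the choice \(\kappa=\delta_1/4-10\varepsilon\) explicit.
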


\begin{proof}
Since \(a=s+\delta_1\), we have \(s<a<1\).  Since \(\delta_1<\delta\),
\[
    \beta_{\mathrm{comp}}
    =
    s+r\delta-(r-1)\delta_1>s,
    \qquad
    \theta_{\mathrm{comp}}=\min\{\beta_{\mathrm{comp}},1\}>s.
\]
For \(\varepsilon=0\), put
\(
    \vartheta_0=1-\frac{s}{2a}.
\)
Since \(a>s\) and \(s<1\), we have
\(
    \vartheta_0>\frac12>\frac{s}{2}.
\)
Thus
\[
    \beta_{\mathrm{comp}}\vartheta_0>\frac{s}{2},
    \qquad
    \theta_{\mathrm{comp}}\vartheta_0>\frac{s}{2}.
\]
By continuity, choose \(\varepsilon>0\) sufficiently small so that
\[
    \vartheta
    =
    1-\frac{s}{2a}-\frac{8\varepsilon}{a}
    >
    \frac{s}{2},
\]
and the two strict inequalities
\[
    \beta_{\mathrm{comp}}\vartheta>\frac{s}{2},
    \qquad
    \theta_{\mathrm{comp}}\vartheta>\frac{s}{2}
\]
still hold.  We also choose \(\varepsilon>0\) sufficiently small so that
\(
    20\varepsilon<\frac{\delta_1}{2}.
\)
Finally, choose
\(
    0<\kappa<\frac{\delta_1}{2}-20\varepsilon.
\)
Then \(20\varepsilon+\kappa<\delta_1/2\), as required.
\end{proof}

We record the two \(r\)-moment estimates used to construct the local-mass good
events.

\begin{lemma}
\label{L:arc-terminal-r-moment}
Assume the finite-\(r\) hypothesis
\[
    2^{1-r}\mathbb E[U^r]
    \le
    2^{-r(s+\delta)}
    <1 .
\]
Let
\(
    Y_\ell=\widetilde\nu_\ell([0,1]),
\)
\(
    Y=\widetilde\nu([0,1]).
\)
Then there exists \(C_r<\infty\), depending only on \(r\) and the law of \(U\),
such that
\[
    \sup_{\ell\ge0}\mathbb E[Y_\ell^r]\le C_r.
\]
Consequently,
\(
    \mathbb E[Y^r]\le C_r.
\)
The same bound holds for all descendant terminal masses.
\end{lemma}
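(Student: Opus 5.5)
Since \(Y_\ell=\widetilde\nu_\ell([0,1])\) is a nonnegative martingale and the statement concerns \(\sup_\ell\mathbb E[Y_\ell^r]\), I would use the one-step recursion
\[
    Y_{\ell+1}=\tfrac12\bigl(U_0Y^{(0)}_\ell+U_1Y^{(1)}_\ell\bigr),
\]
where \(U_0,U_1\) are the first-generation weights and \(Y^{(0)}_\ell,Y^{(1)}_\ell\) are the level-\(\ell\) descendant total masses. These four variables are independent, and each \(Y^{(i)}_\ell\) has the law of \(Y_\ell\). The plan is to derive a linear recursive inequality for \(m_\ell=\mathbb E[Y_\ell^r]\) with contraction factor \(\rho=2^{1-r}\mathbb E[U^r]<1\), plus a bounded additive term.

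\emph{Case \(1<r\le2\).} I would use \((x+y)^r\le x^r+y^r+C_r(x^{r-1}y+xy^{r-1})\) for \(x,y\ge0\); this follows from the homogeneous inequality \((1+u)^r\le1+u^r+C_r(u+u^{r-1})\). Take expectations and use independence. The diagonal terms give \(2\cdot2^{-r}\mathbb E[U^r]m_\ell=\rho\, m_\ell\). A cross term such as \(\mathbb E[(U_0Y^{(0)}_\ell)^{r-1}U_1Y^{(1)}_\ell]\) factors as \(\mathbb E[U^{r-1}]\,\mathbb E[Y_\ell^{r-1}]\cdot\mathbb E U\,\mathbb E Y_\ell\). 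Jensen gives \(\mathbb E[Y_\ell^{r-1}]\le(\mathbb E Y_\ell)^{r-1}=1\) since \(r-1\le1\), and \(\mathbb E U=\mathbb E Y_\ell=1\). Hence \(m_{\ell+1}\le\rho\, m_\ell+C\), and iterating from \(m_0=1\) gives \(\sup_\ell m_\ell\le\max\{1,C/(1-\rho)\}\).

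\emph{Case \(r>2\).} I would argue by induction over the integer part of \(r\), using the general expansion
\[
    (x+y)^r\le x^r+y^r+C_r\sum_{j}(x^{p_j}y^{r-p_j}+x^{r-p_j}y^{p_j}),
\]
with finitely many exponents \(p_j\in[1,r-1]\). This holds with \(p\in\{1,\dots,\lfloor r\rfloor-1\}\) together with \(r-1\), via the binomial-type inequality \((1+u)^r\le1+u^r+C\sum_{k}(u^k+u^{r-k})\). Each cross term factors by independence into products of \(\mathbb E[U^{p}]\), \(\mathbb E[Y_\ell^{p}]\), \(\mathbb E[U^{r-p}]\), \(\mathbb E[Y_\ell^{r-p}]\) with \(p,r-p\in[1,r-1]\). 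These are bounded by the inductive hypothesis applied at exponents \(q\le r-1\).

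That hypothesis is valid because the contraction condition is inherited at smaller exponents. Log-convexity of \(\Phi(q)=\log_2\mathbb E[U^q]\) with \(\Phi(1)=0\) and \(\Phi(r)<r-1\) gives \(\Phi(q)<q-1\) for all \(1<q\le r\), i.e. \(2^{1-q}\mathbb E[U^q]<1\); this convexity argument is already used in the section preamble. The moments \(\mathbb E[U^q]\) are finite for all \(q\le r\), so this closes the induction. It again yields \(m_{\ell+1}\le\rho\, m_\ell+C\) and a uniform bound depending only on \(r\) and the law of \(U\).

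\emph{Consequences.} For the terminal mass, Fatou's lemma applied to \(Y_\ell\to Y\) a.s. gives \(\mathbb E[Y^r]\le\liminf_\ell\mathbb E[Y_\ell^r]\le C_r\). The descendant masses \(Y^{(v)}_\ell\) and \(Y^{(v)}\) have the same laws as \(Y_\ell\) and \(Y\), so the same bound holds for them.

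The main obstacle is the cross terms when \(r>2\): one must confirm that every mixed moment involves only exponents at most \(r-1\), each covered by the induction with its own contraction constant \(<1\). The elementary inequality with the correct set of exponents is the step I would check most carefully.
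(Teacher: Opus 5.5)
Your proof is correct, but it is organized differently from the paper's.

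\textbf{The paper's route.} The paper uses the single inequality $(x+y)^r\le x^r+y^r+C_r(xy^{r-1}+yx^{r-1})$, which holds for every $r>1$ and not only for $r\le 2$. It controls the only mixed moment $\mathbb E[Y_\ell^{r-1}]$ by the Lyapunov/H\"older bound $\mathbb E[Y_\ell^{r-1}]\le(\mathbb E[Y_\ell^r])^{(r-1)/r}$. This gives the sublinear recursion $q_{\ell+1}\le\rho_r q_\ell+C_r q_\ell^{(r-1)/r}$. That recursion is bounded because, once $q_\ell$ is large, the sublinear term is at most $\frac{1-\rho_r}{2}q_\ell$. There is no case split and no induction, and contraction is used only at the single exponent $r$.

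\textbf{Your route.} You replace the H\"older step by Jensen when $r\le 2$, which gives a clean linear recursion $m_{\ell+1}\le\rho\,m_\ell+C$. When $r>2$ you use an induction on $\lfloor r\rfloor$. This forces you to import the contraction $2^{1-q}\mathbb E[U^q]<1$ at intermediate exponents $1<q\le r$, via convexity of $q\mapsto\log_2\mathbb E[U^q]$; that step is correct. In return you get uniform-in-$\ell$ bounds for every moment of order $q\in(1,r]$, and explicit constants such as $\max\{1,C/(1-\rho)\}$.

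\textbf{The step you flagged.} It is simpler than you feared. The two-term inequality already holds for all $r>1$, so the only mixed moments that appear are $\mathbb E[U^{r-1}]\,\mathbb E[Y_\ell^{r-1}]$ and $\mathbb E U\,\mathbb E Y_\ell$. The multi-exponent binomial expansion is therefore unnecessary. Each stage of your induction only needs a uniform bound on $\mathbb E[Y_\ell^{r-1}]$, and the paper's H\"older trick shows that even this induction can be avoided.
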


\begin{proof}
Let \(U_0\) and \(U_1\) be independent copies of \(U\), and define
\(A_i=\frac{U_i}{2}\), for \(i=0,1\).
Then
\(
\mathbb E(A_0+A_1)=1.
\)
Moreover, the finite-\(r\) hypothesis yields
\[
    \rho_r
    :=
    \mathbb E\!\left[A_0^r+A_1^r\right]
    =
    2^{1-r}\mathbb E[U^r]
    \le
    2^{-r(s+\delta)}
    <1.
\]
The terminal masses satisfy the branching identity
\[
    Y_{\ell+1}
    \stackrel{\mathrm d}{=}
    A_0Y_\ell^{(0)}+A_1Y_\ell^{(1)},
\]
where \(Y_\ell^{(0)}\) and \(Y_\ell^{(1)}\) are independent copies of
\(Y_\ell\), independent of \(A_0,A_1\).  Moreover,
\(\mathbb E Y_\ell=1\)
for
\(\ell\ge0\).

For \(x,y\ge0\), there exists \(C_r<\infty\) such that
\[
    (x+y)^r
    \le
    x^r+y^r+C_r\bigl(xy^{r-1}+yx^{r-1}\bigr).
\]
Applying this with \(x=A_0Y_\ell^{(0)},\) \(y=A_1Y_\ell^{(1)},\)
and using independence, we get
\[
    \mathbb E[Y_{\ell+1}^r]
    \le
    \rho_r\mathbb E[Y_\ell^r]
    +
    C_r
    \mathbb E\!\left[A_0A_1^{r-1}+A_1A_0^{r-1}\right]
    \mathbb E[Y_\ell]\,
    \mathbb E[Y_\ell^{r-1}].
\]
By Young's inequality,
\[
    \mathbb E\!\left[A_0A_1^{r-1}+A_1A_0^{r-1}\right]
    <\infty,
\]
and by H\"older's inequality,
\(
   \mathbb E[Y_\ell^{r-1}]
    \le
    \bigl(\mathbb E[Y_\ell^r]\bigr)^{(r-1)/r}.
\)
Thus, writing
\(
    q_\ell=\mathbb E[Y_\ell^r],
\)
we obtain
\[
    q_{\ell+1}
    \le
    \rho_rq_\ell+C_rq_\ell^{(r-1)/r}.
\]
This recursion implies \(\sup_{\ell\ge0}q_\ell<\infty\).  Indeed, for
\(q_\ell\) sufficiently large, the term \(C_rq_\ell^{(r-1)/r}\) is at most
\((1-\rho_r)q_\ell/2\), and hence
\(
q_{\ell+1}\le \frac{1+\rho_r}{2}q_\ell<q_\ell.
\)
Therefore the sequence \((q_\ell)_{\ell\ge0}\) is bounded, and
\(
    \sup_{\ell\ge0}\mathbb E[Y_\ell^r]\le C_r.
\)
Since \(Y_\ell\to Y\) almost surely, Fatou's lemma gives
\[
    \mathbb E[Y^r]
    \le
    \liminf_{\ell\to\infty}\mathbb E[Y_\ell^r]
    \le C_r.
\]
Finally, each descendant cascade has the same law as the original cascade, 
so the same estimate holds for all descendant terminal masses.
\end{proof}

\begin{lemma}
\label{L:arc-r-mass-budgets}
There exists \(C_r<\infty\) such that, for all \(0\le k\le \ell\),
\[
    \mathbb E\left[
        \sum_{I\in\mathcal D_k([0,1])}
        \widetilde\nu_\ell(I)^r
    \right]
    \le
    C_r2^{-r(s+\delta)k}.
\]
Moreover, for all \(k\ge0\),
\[
    \mathbb E\left[
        \sum_{I\in\mathcal D_k([0,1])}
        \widetilde\nu(I)^r
    \right]
    \le
    C_r2^{-r(s+\delta)k}.
\]
\end{lemma}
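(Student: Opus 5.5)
We use the multiplicative structure of the cascade at generation \(k\). For \(I=I_v\in\mathcal D_k([0,1])\) and \(\ell\ge k\), the level-\(\ell\) mass factorizes as
\[
    \widetilde\nu_\ell(I_v)=2^{-k}Q_vY^{(v)}_{\ell-k},
\]
where \(Q_v=\prod_{j=1}^kU_{v|j}\) is \(\mathcal F_k\)-measurable. The descendant total mass \(Y^{(v)}_{\ell-k}\) is independent of \(\mathcal F_k\) and has the law of \(Y_{\ell-k}\). This follows directly from the definition \(\widetilde\nu_\ell(I_w)=2^{-\ell}Q_w\) for \(|w|=\ell\): summing over the \(2^{\ell-k}\) descendants \(w=vu\) with \(|u|=\ell-k\) gives the product form.

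Raising to the \(r\)-th power and using independence, we get
\[
    \mathbb E\bigl[\widetilde\nu_\ell(I_v)^r\bigr]
    =2^{-rk}\,\mathbb E[Q_v^r]\,\mathbb E\bigl[Y_{\ell-k}^r\bigr]
    =2^{-rk}\bigl(\mathbb E[U^r]\bigr)^k\,\mathbb E\bigl[Y_{\ell-k}^r\bigr].
\]
Lemma~\ref{L:arc-terminal-r-moment} bounds \(\mathbb E[Y_{\ell-k}^r]\le C_r\) uniformly in \(\ell-k\ge0\). Summing over the \(2^k\) intervals of \(\mathcal D_k([0,1])\) then yields
\[
    \mathbb E\Bigl[\sum_{I\in\mathcal D_k}\widetilde\nu_\ell(I)^r\Bigr]
    \le C_r\bigl(2^{1-r}\mathbb E[U^r]\bigr)^k
    \le C_r2^{-r(s+\delta)k},
\]
by the finite-\(r\) hypothesis. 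This proves the first bound.

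For the limiting measure, we work on the probability-one event fixed at the start of Section~\ref{S:finite-r-annular}. There, by the identity recorded in Subsection~\ref{SS:scalar-cascades}, \(\widetilde\nu(I_v)=2^{-k}Q_vY^{(v)}\), where \(Y^{(v)}\) is the terminal descendant mass. It is independent of \(Q_v\), and by Lemma~\ref{L:arc-terminal-r-moment} it satisfies \(\mathbb E[(Y^{(v)})^r]\le C_r\). The same computation then gives the second bound. Alternatively, one could apply Fatou's lemma to \(\widetilde\nu_\ell(I_v)=2^{-k}Q_vY^{(v)}_{\ell-k}\to2^{-k}Q_vY^{(v)}\) as \(\ell\to\infty\).

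The only point needing care is justifying the factorization and its independence, including the half-open convention at the last interval. This is not an obstacle: the level-\(\ell\) identity is exact by construction. At the limit, dyadic endpoints carry no mass, so the convention does not matter.
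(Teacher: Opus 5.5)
Your proposal is correct and follows the paper's proof essentially verbatim. You factorize $\widetilde\nu_\ell(I_v)=2^{-k}Q_vY^{(v)}_{\ell-k}$, and $\widetilde\nu(I_v)=2^{-k}Q_vY^{(v)}$ on the good event, then use independence and Lemma~\ref{L:arc-terminal-r-moment}, and sum over the $2^k$ words using the finite-$r$ hypothesis. Your observation that the level-$\ell$ factorization holds exactly by construction, rather than only on the convergence event as the paper phrases it, is a harmless refinement.
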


\begin{proof}
For a dyadic word \(v\in\Sigma_k\), write
\(
Q_v=\prod_{j=1}^kU_{v|j}.
\)
On the simultaneous descendant-convergence event, for every \(\ell\ge k\),
\(
   \widetilde\nu_\ell(I_v)
    =
    2^{-k}Q_vY_{\ell-k}^{(v)},
\)
where \(Y_{\ell-k}^{(v)}\) is the level-\((\ell-k)\) descendant total mass
rooted at \(v\).  
Moreover,
\(
\widetilde\nu(I_v)=2^{-k}Q_vY^{(v)},
\)
where \(Y^{(v)}\) is the limiting descendant total mass rooted at \(v\).  
The descendant masses are independent of \(Q_v\) and have the same laws as the corresponding terminal masses of the original cascade.

Thus, by Lemma~\ref{L:arc-terminal-r-moment},
\[
    \mathbb E[\widetilde\nu_\ell(I_v)^r]
    =
    2^{-kr}\mathbb E[Q_v^r]\,
    \mathbb E[(Y_{\ell-k}^{(v)})^r]
    \le
    C_r2^{-kr}(\mathbb E[U^r])^k.
\]
Summing over the \(2^k\) words \(v\in\Sigma_k\) and using the finite-\(r\)
hypothesis gives
\[
    \mathbb E\left[
        \sum_{I\in\mathcal D_k([0,1])}
        \widetilde\nu_\ell(I)^r
    \right]
    \le
    C_r\bigl(2^{1-r}\mathbb E[U^r]\bigr)^k
    \le
    C_r2^{-r(s+\delta)k}.
\]

The limiting estimate is identical.  Indeed, using
\(
    \widetilde\nu(I_v)=2^{-k}Q_vY^{(v)}
\)
and
\(
    \mathbb E[(Y^{(v)})^r]\le C_r,
\)
we obtain
\[
    \mathbb E[\widetilde\nu(I_v)^r]
    \le
    C_r2^{-kr}(\mathbb E[U^r])^k.
\]
Summing over \(v\in\Sigma_k\) and using the same finite-\(r\) hypothesis gives
\[
    \mathbb E\left[
        \sum_{I\in\mathcal D_k([0,1])}
        \widetilde\nu(I)^r
    \right]
    \le
    C_r2^{-r(s+\delta)k}.
\]
\end{proof}

\begin{definition}
\label{D:arc-local-mass-good-events}
For \(n\ge1\), let \(\mathcal G_n^{\mathrm{pre}}\) be the event that, for every
\(k\ge0\), \(I\in\mathcal D_k([0,1])\),\(\ell\ge k\),
one has
\[
    \widetilde\nu_\ell(I)\le T_{k,n}.
\]
Let \(\mathcal G_n^{\mathrm{lim}}\) be the event that, for every
\(k\ge0\), \(I\in\mathcal D_k([0,1])\),
one has
\[
    \widetilde\nu(I)\le T_{k,n}.
\]
Set
\(
    \mathcal G_n
    =
    \mathcal G_n^{\mathrm{pre}}
    \cap
    \mathcal G_n^{\mathrm{lim}}.
\)
\end{definition}

\begin{proposition}
\label{P:arc-local-mass-good-events}
There are constants \(C<\infty\) and \(c>0\) such that, for all \(n\ge1\),
\[
    \mathbb P(\mathcal G_n^c)\le C2^{-cn}.
\]
In particular,
\(
\sum_{n=1}^\infty\mathbb P(\mathcal G_n^c)<\infty.
\)
\end{proposition}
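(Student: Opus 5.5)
The plan is a union bound over dyadic scales \(k\), with Markov's inequality at order \(r\) applied on each scale. The gap \(\delta_1<\delta\) between the threshold exponent \(a=s+\delta_1\) and the moment exponent \(s+\delta\) makes the resulting series in \(k\) geometrically summable. The factor \(2^{\varepsilon n}\) in \(T_{k,n}\) then supplies the decay \(2^{-r\varepsilon n}\) in \(n\). We treat \(\mathcal G_n^{\mathrm{lim}}\) and \(\mathcal G_n^{\mathrm{pre}}\) separately and bound \(\mathbb P(\mathcal G_n^c)\) by the sum of the two complementary probabilities.

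\emph{Limit event.} For fixed \(k\), Markov's inequality and the second estimate of Lemma~\ref{L:arc-r-mass-budgets} give
\[
\mathbb P\bigl(\exists I\in\mathcal D_k([0,1]):\widetilde\nu(I)>T_{k,n}\bigr)
\le T_{k,n}^{-r}\,\mathbb E\Bigl[\sum_{I\in\mathcal D_k([0,1])}\widetilde\nu(I)^r\Bigr]
\le C_r2^{-r\varepsilon n}2^{rak}2^{-r(s+\delta)k}.
\]
The right-hand side equals \(C_r2^{-r\varepsilon n}2^{-r(\delta-\delta_1)k}\). Since \(\delta_1<\delta\), summing over \(k\ge0\) gives \(\mathbb P((\mathcal G_n^{\mathrm{lim}})^c)\le C2^{-r\varepsilon n}\).

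\emph{Pre-limit event.} Here there is the extra supremum over \(\ell\ge k\), and a union bound over \(\ell\) would diverge. Instead we use a maximal inequality. For \(I=I_v\) with \(v\in\Sigma_k\), write \(\widetilde\nu_\ell(I_v)=2^{-k}Q_vY^{(v)}_{\ell-k}\) for \(\ell\ge k\). The sequence \((Y^{(v)}_m)_{m\ge0}\) is a nonnegative martingale with respect to the descendant filtration, and it is independent of \(Q_v\). Doob's \(L^r\) maximal inequality (\(r>1\)) combined with Lemma~\ref{L:arc-terminal-r-moment} gives
\[
\mathbb E\Bigl[\sup_{m\ge0}(Y^{(v)}_m)^r\Bigr]\le\Bigl(\tfrac{r}{r-1}\Bigr)^r\sup_m\mathbb E[(Y_m^{(v)})^r]\le C_r'.
\]
Hence \(\mathbb E[\sup_{\ell\ge k}\widetilde\nu_\ell(I_v)^r]\le C_r'2^{-kr}(\mathbb E[U^r])^k\). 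Summing over the \(2^k\) words gives \(C_r'(2^{1-r}\mathbb E[U^r])^k\le C_r'2^{-r(s+\delta)k}\). The same Markov and union-bound computation as for the limit event then yields \(\mathbb P((\mathcal G_n^{\mathrm{pre}})^c)\le C2^{-r\varepsilon n}\).

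Combining the two bounds gives the proposition with \(c=r\varepsilon>0\), and summability in \(n\) is immediate. The only genuinely nontrivial step is the uniformity in \(\ell\) for \(\mathcal G_n^{\mathrm{pre}}\). This is handled by the martingale maximal inequality together with the uniform \(r\)-moment bound of Lemma~\ref{L:arc-terminal-r-moment}, which holds for every descendant cascade since each has the law of the original. The half-open dyadic convention plays no role here, because the masses \(\widetilde\nu_\ell(I)\) and \(\widetilde\nu(I)\) are given exactly by the cascade product formulas on the probability-one event fixed above.
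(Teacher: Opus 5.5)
Your proposal is correct and follows essentially the same route as the paper: an order-\(r\) Markov inequality with the \(r\)-mass budgets, a union bound over \(k\) that converges because \(\delta_1<\delta\), and Doob's \(L^r\) maximal inequality to control the supremum over \(\ell\) in \(\mathcal G_n^{\mathrm{pre}}\). The only difference is where Doob is applied. You apply it to the descendant martingale \(Y^{(v)}_m\), using independence from \(Q_v\) and the uniform prelimit bound of Lemma~\ref{L:arc-terminal-r-moment}. The paper applies it directly to \((\widetilde\nu_\ell(I))_{\ell\ge k}\) and bounds the result by the limiting budget \(\mathbb E[\widetilde\nu(I)^r]\).
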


\begin{proof}
Fix \(k\ge0\) and \(I\in\mathcal D_k([0,1])\).  The process
\(
    \bigl(\widetilde\nu_\ell(I)\bigr)_{\ell\ge k}
\)
is a nonnegative martingale with terminal value \(\widetilde\nu(I)\).  By
Doob's \(L^r\) maximal inequality,
\[
    \mathbb E\left[
        \sup_{\ell\ge k}\widetilde\nu_\ell(I)^r
    \right]
    \le
    C_r\mathbb E[\widetilde\nu(I)^r].
\]
Therefore, by Markov's inequality and Lemma~\ref{L:arc-r-mass-budgets},
\[
\begin{aligned}
    \mathbb P\left(
        \exists I\in\mathcal D_k([0,1]),\ 
        \exists \ell\ge k:
        \widetilde\nu_\ell(I)>T_{k,n}
    \right)                                               \le
    T_{k,n}^{-r}
    \mathbb E\left[
        \sum_{I\in\mathcal D_k([0,1])}
        \sup_{\ell\ge k}\widetilde\nu_\ell(I)^r
    \right]                                               \le
    C T_{k,n}^{-r}2^{-r(s+\delta)k}.
\end{aligned}
\]
Since
\(
T_{k,n}=2^{\varepsilon n}2^{-ak},
\)
and
\(
a=s+\delta_1,
\)
the right-hand side is
\(
    C2^{-r\varepsilon n}2^{-r(\delta-\delta_1)k}.
\)
Summing over \(k\ge0\), and using \(\delta_1<\delta\), gives
\[
    \mathbb P\bigl((\mathcal G_n^{\mathrm{pre}})^c\bigr)
    \le
    C2^{-r\varepsilon n}.
\]

The limiting event is handled in the same way, without Doob's maximal
inequality.  By Markov's inequality and the limiting \(r\)-mass budget,
\[
    \mathbb P\left(
        \exists I\in\mathcal D_k([0,1]):
        \widetilde\nu(I)>T_{k,n}
    \right)
    \le
    C2^{-r\varepsilon n}2^{-r(\delta-\delta_1)k}.
\]
Summing over \(k\ge0\) gives
\(
    \mathbb P\bigl((\mathcal G_n^{\mathrm{lim}})^c\bigr)
    \le
    C2^{-r\varepsilon n}.
\)
Thus
\(
    \mathbb P(\mathcal G_n^c)
    \le
    C2^{-r\varepsilon n},
\)
which is the desired estimate after decreasing \(c>0\).
\end{proof}

\begin{lemma}
\label{L:arc-local-mass-consequences}
Assume \(\mathcal G_n\) holds.  Then, with a constant \(C<\infty\) independent
of \(n\), the following estimates hold.

\begin{enumerate}[label=\textup{(\roman*)}]
\item For every interval \(B\subset[0,1]\),
\[
    \widetilde\nu(B)
    \le
    C2^{\varepsilon n}|B|^a.
\]

\item For every \(\ell\ge0\) and every interval \(B\subset[0,1]\),
\[
    \widetilde\nu_\ell(B)
    \le
    C2^{\varepsilon n}(|B|+2^{-\ell})^a.
\]

\item For every \(\ell\ge0\) and every interval \(B\subset[0,1]\),
\[
    \sum_{\substack{
        I\in\mathcal D_\ell([0,1])\\
        I\cap B\ne\varnothing
    }}
    \widetilde\nu_\ell(I)^2
    \le
    C2^{2\varepsilon n}2^{-a\ell}
    (|B|+2^{-\ell})^a.
\]
\end{enumerate}

In particular,
\(
    \widetilde\nu([0,1])\le 2^{\varepsilon n}.
\)
\end{lemma}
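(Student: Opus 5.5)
The plan is to derive all three estimates from the two halves of \(\mathcal G_n\) by covering the interval \(B\) with a bounded number of dyadic intervals of comparable length.

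\textbf{Step 1: part (i).} Let \(B\subset[0,1]\) be an interval. If \(|B|\ge1/2\), then
\[
    \widetilde\nu(B)\le\widetilde\nu([0,1])\le T_{0,n}=2^{\varepsilon n},
\]
using \(\mathcal G_n^{\lim}\) with \(k=0\). Since \(|B|^a\ge2^{-a}\), this gives the bound with \(C=2^a\). The case \(k=0\) also gives the final assertion \(\widetilde\nu([0,1])\le2^{\varepsilon n}\).

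If \(|B|<1/2\), choose \(k\ge1\) with \(2^{-k-1}<|B|\le2^{-k}\). Then \(B\) meets at most two intervals of \(\mathcal D_k([0,1])\). Each of these has \(\widetilde\nu\)-mass at most \(T_{k,n}=2^{\varepsilon n}2^{-ak}\le 2^a2^{\varepsilon n}|B|^a\). Hence \(\widetilde\nu(B)\le 2^{1+a}2^{\varepsilon n}|B|^a\). If \(|B|=0\), then \(B\) is at most a point; the bound follows by taking \(k\to\infty\) in the same covering. Alternatively, one can note that a single point of \([0,1]\) lies in some \(I\in\mathcal D_k\) for every \(k\), so its mass is at most \(T_{k,n}\to0\).

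\textbf{Step 2: part (ii).} Set \(\rho=|B|+2^{-\ell}\) and choose \(k\le\ell\) with \(2^{-k-1}<\rho\le2^{-k}\), or \(k=0\) when \(\rho>1/2\). Then \(B\) meets at most two (or three) intervals \(I\in\mathcal D_k\), and each satisfies \(\ell\ge k\). The pre-limit event \(\mathcal G_n^{\mathrm{pre}}\) gives \(\widetilde\nu_\ell(I)\le T_{k,n}\lesssim2^{\varepsilon n}\rho^a\). Summing over these intervals proves (ii). The constraint \(k\le\ell\) holds precisely because \(\rho\ge2^{-\ell}\), which is why the \(2^{-\ell}\) term appears.

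\textbf{Step 3: part (iii).} For \(I\in\mathcal D_\ell\), the event \(\mathcal G_n^{\mathrm{pre}}\) with \(k=\ell\) gives \(\widetilde\nu_\ell(I)\le T_{\ell,n}=2^{\varepsilon n}2^{-a\ell}\). Therefore
\[
    \sum_{I\cap B\ne\varnothing}\widetilde\nu_\ell(I)^2
    \le 2^{\varepsilon n}2^{-a\ell}
    \sum_{I\cap B\ne\varnothing}\widetilde\nu_\ell(I).
\]
The intervals \(I\in\mathcal D_\ell\) meeting \(B\) all lie in the enlarged interval \(B'=\{t\in[0,1]:\operatorname{dist}(t,B)\le2^{-\ell}\}\), which has length at most \(|B|+2\cdot2^{-\ell}\). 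Since these intervals are disjoint, the remaining sum is at most \(\widetilde\nu_\ell(B')\). By (ii), \(\widetilde\nu_\ell(B')\le C2^{\varepsilon n}(|B|+2^{-\ell})^a\). Combining the two bounds gives (iii).

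The only care needed is the bookkeeping of the degenerate cases: \(|B|\) comparable to \(1\), \(|B|=0\), and the half-open convention at the endpoint \(1\). On the working event, points carry no \(\widetilde\nu\)-mass, and the dyadic partition covers \([0,1]\), so neither the degenerate cases nor the endpoint convention causes difficulty. I do not expect a genuine obstacle here.
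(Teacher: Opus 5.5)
Your proposal is correct and follows essentially the same covering argument as the paper. Part (i) uses boundedly many dyadic intervals of comparable length together with \(\mathcal G_n^{\mathrm{lim}}\). Part (ii) uses \(\mathcal G_n^{\mathrm{pre}}\) at a scale \(k\le\ell\), which is guaranteed by \(\rho\ge2^{-\ell}\). Part (iii) uses the level-\(\ell\) bound \(T_{\ell,n}\) together with (ii) applied to the \(2^{-\ell}\)-enlargement of \(B\).
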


\begin{proof}
We give the standard covering argument.  If \(0<|B|<1\), choose \(k\ge0\) with
\(
 2^{-(k+1)}<|B|\le2^{-k}.
\)
The interval \(B\) is covered by at most a bounded number of dyadic intervals
in \(\mathcal D_k([0,1])\).  On \(\mathcal G_n^{\mathrm{lim}}\), each has mass
at most \(T_{k,n}\).  Hence
\[
    \widetilde\nu(B)
    \le
    C2^{\varepsilon n}2^{-ak}
    \le
    C2^{\varepsilon n}|B|^a.
\]
The cases \(|B|=0\) and \(B=[0,1]\) are immediate: the former follows by shrinking dyadic intervals and the bound
\(\widetilde\nu(I_k)\le2^{\varepsilon n}2^{-ak}\to0\), 
while the latter is the \(k=0\) case.

For the prelimit estimate, put
\(
    \rho=|B|+2^{-\ell}.
\)
If \(\rho\ge1\), the claim follows from \(T_{0,n}=2^{\varepsilon n}\).  If
\(0<\rho<1\), choose \(k\le\ell\) with
\(
    2^{-(k+1)}<\rho\le2^{-k}.
\)
The union of level-\(\ell\) dyadic intervals meeting \(B\) is contained in a bounded number of level-\(k\) dyadic intervals.  

On \(\mathcal G_n^{\mathrm{pre}}\), 
each has \(\widetilde\nu_\ell\)-mass at most \(T_{k,n}\), giving
\(
    \widetilde\nu_\ell(B)
    \le
    C2^{\varepsilon n}\rho^a.
\)

Finally, on \(\mathcal G_n^{\mathrm{pre}}\), every level-\(\ell\) dyadic
interval has mass at most
\(
    T_{\ell,n}=2^{\varepsilon n}2^{-a\ell}.
\)
Thus
\[
\begin{aligned}
    \sum_{\substack{
        I\in\mathcal D_\ell([0,1])\\
        I\cap B\ne\varnothing
    }}
    \widetilde\nu_\ell(I)^2
    &\le
    2^{\varepsilon n}2^{-a\ell}
    \sum_{\substack{
        I\in\mathcal D_\ell([0,1])\\
        I\cap B\ne\varnothing
    }}
    \widetilde\nu_\ell(I).
\end{aligned}
\]
The last sum is controlled by applying the prelimit interval estimate to an
enlarged interval of length \(C(|B|+2^{-\ell})\).  This gives
\[
    \sum_{\substack{
        I\in\mathcal D_\ell([0,1])\\
        I\cap B\ne\varnothing
    }}
    \widetilde\nu_\ell(I)^2
    \le
    C2^{2\varepsilon n}2^{-a\ell}
    (|B|+2^{-\ell})^a.
\]
\end{proof}

We also use a stopped version of the preceding prelimit estimates.  Define
\[
    \tau_n
    =
    \inf
    \left\{
        \ell\ge0:
        \exists\,0\le k\le\ell,\ 
        \exists I\in\mathcal D_k([0,1])
        \text{ such that }
        \widetilde\nu_\ell(I)>T_{k,n}
    \right\},
\]
with the convention \(\inf\varnothing=+\infty\).  Then \(\tau_n\) is a stopping
time and
\[
    \mathcal G_n^{\mathrm{pre}}=\{\tau_n=+\infty\}.
\]

\begin{lemma}
\label{L:arc-stopped-local-mass-consequences}
Let \(n\ge1\) and \(\ell\ge0\).  On the event \(\{\tau_n>\ell\}\), the
following estimates hold for every interval \(B\subset[0,1]\):
\[
    \widetilde\nu_\ell(B)
    \le
    C2^{\varepsilon n}(|B|+2^{-\ell})^a,
\]
and
\[
    \sum_{\substack{
        I\in\mathcal D_\ell([0,1])\\
        I\cap B\ne\varnothing
    }}
    \widetilde\nu_\ell(I)^2
    \le
    C2^{2\varepsilon n}2^{-a\ell}
    (|B|+2^{-\ell})^a.
\]
\end{lemma}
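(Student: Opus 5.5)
The plan is to copy the argument for parts (ii) and (iii) of Lemma~\ref{L:arc-local-mass-consequences}, replacing the global hypothesis \(\mathcal G_n^{\mathrm{pre}}\) by the stopping-time information at the single level \(\ell\). The key observation is the definition of \(\tau_n\). On \(\{\tau_n>\ell\}\) the infimum defining \(\tau_n\) is not attained at level \(\ell\). Hence, for every \(0\le k\le\ell\) and every \(I\in\mathcal D_k([0,1])\), we have
\[
    \widetilde\nu_\ell(I)\le T_{k,n}=2^{\varepsilon n}2^{-ak}.
\]
This is exactly the family of level-\(\ell\) bounds used in the earlier proof. That proof only invoked \(\widetilde\nu_\ell(I)\le T_{k,n}\) for \(k\le\ell\), never for other levels \(\ell'\).

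First, the interval estimate. Set \(\rho=|B|+2^{-\ell}\).
\begin{enumerate}[label=\textup{(\alph*)}]
\item If \(\rho\ge1\), use the \(k=0\) bound to get \(\widetilde\nu_\ell(B)\le\widetilde\nu_\ell([0,1])\le T_{0,n}=2^{\varepsilon n}\le 2^{\varepsilon n}\rho^a\).
\item If \(\rho<1\), choose \(k\) with \(2^{-(k+1)}<\rho\le2^{-k}\). Since \(\rho\ge2^{-\ell}\), this gives \(k\le\ell\), so the stopped bound applies at scale \(k\). The interval \(B\), together with any level-\(\ell\) dyadic intervals meeting it, lies in at most three consecutive intervals of \(\mathcal D_k([0,1])\). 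Summing their masses gives \(\widetilde\nu_\ell(B)\le 3\cdot 2^{\varepsilon n}2^{-ak}\le C2^{\varepsilon n}\rho^a\).
\end{enumerate}

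Second, the square-sum estimate. Take \(k=\ell\) in the stopped bound: every \(I\in\mathcal D_\ell([0,1])\) satisfies \(\widetilde\nu_\ell(I)\le 2^{\varepsilon n}2^{-a\ell}\). Pull one factor out of the square, so that
\[
    \sum\widetilde\nu_\ell(I)^2\le 2^{\varepsilon n}2^{-a\ell}\sum\widetilde\nu_\ell(I).
\]
The remaining sum is at most \(\widetilde\nu_\ell(B')\), where \(B'\) is the union of the level-\(\ell\) intervals meeting \(B\). This union is an interval of length at most \(|B|+2\cdot2^{-\ell}\le 2\rho\). Apply the first estimate to \(B'\), and absorb \((2\rho+2^{-\ell})^a\le 3\rho^a\) into \(C\).

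There is no real obstacle here. The one point to check carefully is that the stopping-time condition at level \(\ell\) supplies the bounds at all coarser scales \(k\le\ell\) simultaneously, which is what the covering step needs. It does, because the definition of \(\tau_n\) quantifies over all \(0\le k\le\ell\). Also, the constant \(C\) depends only on the covering multiplicity and \(a\), so it is independent of \(n\), \(\ell\) and \(B\).
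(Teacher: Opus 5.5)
Your proposal is correct and is the same argument as the paper's. The paper also observes that \(\{\tau_n>\ell\}\) yields \(\widetilde\nu_\ell(I)\le T_{k,n}\) for every \(0\le k\le\ell\) and every \(I\in\mathcal D_k([0,1])\). It then reruns the covering argument of Lemma~\ref{L:arc-local-mass-consequences}(ii)--(iii) at the fixed level \(\ell\); you have simply written out the covering details that the paper leaves implicit.
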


\begin{proof}
On \(\{\tau_n>\ell\}\), all dyadic masses \(\widetilde\nu_\ell(I)\) with
\(I\in\mathcal D_k([0,1])\) and \(0\le k\le\ell\) satisfy
\(
\widetilde\nu_\ell(I)\le T_{k,n}.
\)
Therefore the covering argument used in the proof of
Lemma~\ref{L:arc-local-mass-consequences} applies at the fixed level \(\ell\)
with \(\mathcal G_n^{\mathrm{pre}}\) replaced by the finite-level information
available on \(\{\tau_n>\ell\}\).  This gives both estimates.
\end{proof}

\subsection{Deterministic annular reduction}
\label{SS:deterministic-annular-reduction}

We now reduce the Fourier transform on a fixed annulus to a safe error plus
exact martingale arrays.  Fix \(n\ge1\) and \(\xi\in\mathcal A_n\).  Recall
from Proposition~\ref{P:arc-endpoint-safe-phase-bins} that
\[
    \chi_{\xi,0}
    +
    \chi_{\xi,1}
    +
    \chi_{\xi,\mathrm{sd}}
    +
    \sum_{d\in\mathfrak D_\xi}\chi_{\xi,d}
    =
    1
    \qquad\text{on }[0,1].
\]
Recall also that the safe region is
\[
    E_\xi
    =
    \operatorname{spt}\chi_{\xi,0}
    \cup
    \operatorname{spt}\chi_{\xi,1}
    \cup
    \operatorname{spt}\chi_{\xi,\mathrm{sd}}.
\]

\begin{definition}
\label{D:arc-oscillatory-mass-only-bands}
A derivative band \(d\in\mathfrak D_\xi\) is called mass-only at annular scale
\(n\) if
\[
    d^a\le 2^{-sn/2}2^{-8\varepsilon n}.
\]
It is called oscillatory, or non-mass, if
\[
    d^a>2^{-sn/2}2^{-8\varepsilon n}.
\]
The corresponding families are denoted by
\(\mathfrak D_{\xi,n}^{\mathrm{mass}}\)
and
\(\mathfrak D_{\xi,n}^{\mathrm{osc}}\).
\end{definition}

\begin{lemma}
\label{L:arc-safe-mass-only-estimates}
Assume \(\mathcal G_n\) holds and \(\xi\in\mathcal A_n\).  Then there exists
\(c_\gamma>0\) such that
\[
    \widetilde\nu(E_\xi)
    \le
    C_\gamma2^{-sn/2}2^{-c_\gamma n},
\]
and
\[
    \sum_{d\in\mathfrak D_{\xi,n}^{\mathrm{mass}}}
    \left|
        \int_0^1
        e^{i\varphi_\xi(t)}
        \chi_{\xi,d}(t)\,d\widetilde\nu(t)
    \right|
    \le
    C_\gamma2^{-sn/2}2^{-6\varepsilon n}.
\]
\end{lemma}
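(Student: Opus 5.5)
Both estimates are pure local-mass bounds, obtained from Lemma~\ref{L:arc-local-mass-consequences}(i) on \(\mathcal G_n\) together with the deterministic geometry of Proposition~\ref{P:arc-endpoint-safe-phase-bins}; no oscillation is used. For the safe region, we use the fact that for \(|\xi|>16\) the set \(E_\xi\) is a union of at most \(C_\gamma\) intervals. The two endpoint supports are \([0,2\rho_\xi]\) and \([1-2\rho_\xi,1]\). The small-derivative support lies in the sublevel set \(\{h_\xi^{1/2}\le 2|\xi|^{-1/2}\}\), which by Lemma~\ref{L:arc-g-level-geometry} is covered by at most \(C_\gamma\) intervals of total length at most \(C_\gamma|\xi|^{-1/2}\). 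Applying Lemma~\ref{L:arc-local-mass-consequences}(i) to each interval \(B\) gives
\[
\widetilde\nu(E_\xi)\le C_\gamma 2^{\varepsilon n}\bigl(|\xi|^{-1/2}\bigr)^a\le C_\gamma 2^{\varepsilon n}2^{-an/2}.
\]
Since \(a=s+\delta_1\), this equals \(C_\gamma 2^{-sn/2}2^{-(\delta_1/2-\varepsilon)n}\). The choice \(20\varepsilon<\delta_1/2\) in Lemma~\ref{lemma:annular-parameter-choice} makes the exponent \(c_\gamma=\delta_1/2-\varepsilon\) positive. The case \(|\xi|\le16\) does not arise, since \(\xi\in\mathcal A_n\) with \(n\ge1\) means \(|\xi|\ge2\); if needed, finitely many small \(n\) can be absorbed into \(C_\gamma\), using \(\widetilde\nu([0,1])\le 2^{\varepsilon n}\).

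For the mass-only bands, we bound each term trivially:
\[
\Bigl|\int e^{i\varphi_\xi}\chi_{\xi,d}\,d\widetilde\nu\Bigr|\le\widetilde\nu(S_{\xi,d}),
\]
since \(0\le\chi_{\xi,d}\le1\). By Proposition~\ref{P:arc-endpoint-safe-phase-bins}(iii) and Lemma~\ref{L:arc-g-level-geometry}, \(S_{\xi,d}\) is covered by at most \(C_\gamma\) intervals of total length at most \(C_\gamma d\). Lemma~\ref{L:arc-local-mass-consequences}(i) then gives \(\widetilde\nu(S_{\xi,d})\le C_\gamma 2^{\varepsilon n}d^a\). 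Summing over mass-only scales, we do \emph{not} want to use \(\#\mathfrak D_\xi\lesssim n\) crudely (although a factor \(n\) could be absorbed by \(2^{\varepsilon n}\) at the cost of one \(\varepsilon\)). Instead we use that the \(d\) are dyadic, so \(\sum_{d\le d_*}d^a\le C_a d_*^a\), where \(d_*\) is the largest mass-only scale. The mass-only condition gives \(d_*^a\le 2^{-sn/2}2^{-8\varepsilon n}\). The total is therefore
\[
C_\gamma 2^{\varepsilon n}\,2^{-sn/2}2^{-8\varepsilon n}=C_\gamma2^{-sn/2}2^{-7\varepsilon n}\le C_\gamma 2^{-sn/2}2^{-6\varepsilon n}.
\]

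The only point requiring care is the covering step. Lemma~\ref{L:arc-local-mass-consequences}(i) is stated for single intervals, so we must check that the number of covering intervals is bounded by a constant depending only on \(\gamma\), uniformly in \(\xi\) and \(d\); this is exactly the content of Lemma~\ref{L:arc-g-level-geometry}. Since \(t\mapsto t^a\) is concave, the sum of \(|B_j|^a\) over at most \(C_\gamma\) intervals is at most \(C_\gamma^{1-a}(\sum|B_j|)^a\), which yields the bounds claimed above. No probabilistic input beyond \(\mathcal G_n\) is needed, and the resulting constants are independent of \(\xi\in\mathcal A_n\).
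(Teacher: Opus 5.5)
Your proof is correct and follows the paper's argument. Both cover \(E_\xi\) and each \(S_{\xi,d}\) by \(O_\gamma(1)\) intervals and apply Lemma~\ref{L:arc-local-mass-consequences}(i) on \(\mathcal G_n\); the only deviation is that you sum the mass-only bands as a geometric series over dyadic \(d\), getting \(2^{-sn/2}2^{-7\varepsilon n}\) directly, whereas the paper uses \(\#\mathfrak D_\xi\le C_\gamma(1+n)\) and absorbs \(1+n\) into \(2^{\varepsilon n}\). One small slip: \(|\xi|\ge2\) does not rule out \(|\xi|\le16\) (this happens for \(n\le4\)), but your fallback covers it, namely absorbing these finitely many \(n\) into \(C_\gamma\) via \(\widetilde\nu([0,1])\le2^{\varepsilon n}\), with \(\mathfrak D_\xi=\varnothing\) there.
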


\begin{proof}
The endpoint pieces are supported in two endpoint intervals of length
\(O(|\xi|^{-1/2})\).  Moreover,
\[
    \operatorname{spt}\chi_{\xi,\mathrm{sd}}
    \subset
    \left\{
        t\in[0,1]:
        h_\xi(t)^{1/2}\le 2|\xi|^{-1/2}
    \right\}.
\]
By Lemma~\ref{L:arc-g-level-geometry}, this sublevel set is contained in a
union of at most \(C_\gamma\) intervals of total length at most
\(C_\gamma|\xi|^{-1/2}\).  Hence \(E_\xi\) is contained in a union of at most
\(C_\gamma\) intervals of total length at most \(C_\gamma|\xi|^{-1/2}\).

Applying Lemma~\ref{L:arc-local-mass-consequences} to this finite union of
intervals gives
\(
    \widetilde\nu(E_\xi)
    \le
    C_\gamma2^{\varepsilon n}|\xi|^{-a/2}.
\)
Since \(\xi\in\mathcal A_n\), we have \(|\xi|\ge2^n\), and therefore
\[
    \widetilde\nu(E_\xi)
    \le
    C_\gamma2^{\varepsilon n}2^{-an/2}
    =
    C_\gamma2^{-sn/2}2^{-(\delta_1/2-\varepsilon)n}.
\]
The margin \(\delta_1/2-\varepsilon>0\) gives the first estimate, after
renaming the positive exponent as \(c_\gamma\).

Now let \(d\in\mathfrak D_{\xi,n}^{\mathrm{mass}}\), and recall that
\(
    S_{\xi,d}=\operatorname{spt}\chi_{\xi,d}.
\)
By the localization property in Proposition~\ref{P:arc-endpoint-safe-phase-bins}
and the level-set estimate in Lemma~\ref{L:arc-g-level-geometry}, the set
\(S_{\xi,d}\) is contained in a union of at most \(C_\gamma\) intervals of
total length at most \(C_\gamma d\).  Applying
Lemma~\ref{L:arc-local-mass-consequences} to the components of this cover and
summing over their bounded number gives, on \(\mathcal G_n\),
\(
    \widetilde\nu(S_{\xi,d})
    \le
    C_\gamma2^{\varepsilon n}d^a.
\)
Since \(d\) is mass-only,
\(
    d^a\le 2^{-sn/2}2^{-8\varepsilon n},
\)
and therefore
\[
    \left|
        \int_0^1
        e^{i\varphi_\xi(t)}
        \chi_{\xi,d}(t)\,d\widetilde\nu(t)
    \right|
    \le
    \widetilde\nu(S_{\xi,d})
    \le
    C_\gamma2^{-sn/2}2^{-7\varepsilon n}.
\]
There are at most \(C_\gamma(1+n)\) active derivative bands in the annulus.
Since \(1+n\le C_\varepsilon2^{\varepsilon n}\), summing over all mass-only
bands gives the stated \(2^{-6\varepsilon n}\) bound.
\end{proof}

\begin{lemma}
\label{L:arc-oscillatory-range-arclength}
Let \(\vartheta\) be the parameter fixed in \eqref{eq:vartheta-def}.  Assume
\(\xi\in\mathcal A_n\) and \(d\in\mathfrak D_{\xi,n}^{\mathrm{osc}}\).  Then
\[
    \vartheta n-C_\gamma\le m_{\xi,d}\le n+C_\gamma.
\]
Moreover,
\[
    \left|
        \int_0^1
        e^{i\varphi_\xi(t)}
        \chi_{\xi,d}(t)\,dt
    \right|
    \le
    C_\gamma2^{-sn/2}2^{-c_\gamma n}
\]
for some \(c_\gamma>0\).  After possibly decreasing \(c_\gamma\), the same
bound holds after summing over all \(d\in\mathfrak D_{\xi,n}^{\mathrm{osc}}\).
\end{lemma}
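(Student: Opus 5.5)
The argument has three steps: bound $m_{\xi,d}$ from both sides, apply the full-interval estimate from Proposition~\ref{P:arc-phasebin-coefficient-estimates} band by band, and sum over the oscillatory bands.

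\emph{Range of $m_{\xi,d}$.} Recall that $2^{m_{\xi,d}-1}<|\xi|d\le 2^{m_{\xi,d}}$ and that $\xi\in\mathcal A_n$ gives $2^n\le|\xi|\le 2^{n+1}$.

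For the upper bound, Proposition~\ref{P:arc-endpoint-safe-phase-bins}(v) gives $d\le C_\gamma$. Hence $2^{m_{\xi,d}-1}<|\xi|d\le C_\gamma 2^{n+1}$, and therefore $m_{\xi,d}\le n+C_\gamma$.

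For the lower bound, the oscillatory condition of Definition~\ref{D:arc-oscillatory-mass-only-bands} says $d^a>2^{-sn/2-8\varepsilon n}$. Taking $a$-th roots gives
\[
    d>2^{-n(s/(2a)+8\varepsilon/a)}.
\]
Multiplying by $|\xi|\ge 2^n$ yields $|\xi|d>2^{n\vartheta}$, where $\vartheta=1-\frac{s}{2a}-\frac{8\varepsilon}{a}$ is exactly the parameter in \eqref{eq:vartheta-def}. Since $|\xi|d\le 2^{m_{\xi,d}}$, we get $m_{\xi,d}\ge\vartheta n$; the additive constant $C_\gamma$ only accounts for rounding.

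\emph{Band-by-band bound.} For each oscillatory $d$, the final assertion of Proposition~\ref{P:arc-phasebin-coefficient-estimates} gives
\[
    \left|\int_0^1 e^{i\varphi_\xi(t)}\chi_{\xi,d}(t)\,dt\right|\le C_\gamma(|\xi|d)^{-1}\le C_\gamma 2^{-\vartheta n}.
\]
Lemma~\ref{lemma:annular-parameter-choice} guarantees $\vartheta>s/2$, so $2^{-\vartheta n}=2^{-sn/2}2^{-(\vartheta-s/2)n}$. This is the single-band claim with $c_\gamma=\vartheta-s/2>0$.

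\emph{Summation over bands.} There are two ways to sum, and either works.

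\begin{itemize}
    \item Crude count. Proposition~\ref{P:arc-endpoint-safe-phase-bins}(v) gives $\#\mathfrak D_{\xi,n}^{\mathrm{osc}}\le C_\gamma(1+n)$. The factor $1+n$ is absorbed by halving the exponent, giving the bound $C_\gamma2^{-sn/2}2^{-c_\gamma n/2}$.
    \item Geometric sum. The quantities $|\xi|d$ run over a dyadic sequence bounded below by $2^{\vartheta n}$. The sum of $(|\xi|d)^{-1}$ over this sequence is therefore dominated by its smallest term, giving $\le C2^{-\vartheta n}$ directly.
\end{itemize}

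\emph{Main point of care.} The estimate is routine once the oscillatory threshold is seen to match the definition of $\vartheta$. The only step needing care is confirming that $\vartheta>s/2$ is precisely what Lemma~\ref{lemma:annular-parameter-choice} supplies. The decomposition is designed so that the threshold $2^{-sn/2-8\varepsilon n}$ translates exactly into $|\xi|d\gtrsim 2^{\vartheta n}$.
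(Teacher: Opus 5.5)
Your proposal is correct and follows the paper's proof essentially step for step. The oscillatory threshold gives $|\xi|d>2^{\vartheta n}$, the bound $d\le C_\gamma$ gives the upper range, and the full-interval estimate $C_\gamma(|\xi|d)^{-1}\le C_\gamma 2^{-\vartheta n}$ together with $\vartheta>s/2$ gives the single-band bound; the $C_\gamma(1+n)$ band count is then absorbed exactly as in the paper. Your geometric-sum alternative over the dyadic values $|\xi|d>2^{\vartheta n}$ is a harmless refinement that avoids even the polynomial loss.
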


\begin{proof}
Since \(d\in\mathfrak D_{\xi,n}^{\mathrm{osc}}\), Definition
\ref{D:arc-oscillatory-mass-only-bands} gives
\(
    d>2^{-sn/(2a)}2^{-8\varepsilon n/a}.
\)
Together with \(|\xi|\ge2^n\), this yields
\[
    |\xi|d
    >
    2^n2^{-sn/(2a)}2^{-8\varepsilon n/a}
    =
    2^{\vartheta n}.
\]
Since
\(
    2^{m_{\xi,d}-1}<|\xi|d\le2^{m_{\xi,d}},
\)
we obtain
\(
    m_{\xi,d}\ge \vartheta n-C_\gamma.
\)

For the upper bound, Proposition~\ref{P:arc-endpoint-safe-phase-bins} gives
\(d\le C_\gamma\).  Together with \(|\xi|\le2^{n+1}\), this gives
\(
    |\xi|d\le C_\gamma2^n,
\)
and hence
\(
    m_{\xi,d}\le n+C_\gamma.
\)

We now prove the Lebesgue forcing estimate.  By
Proposition~\ref{P:arc-phasebin-coefficient-estimates},
\[
    \left|
        \int_0^1
        e^{i\varphi_\xi(t)}
        \chi_{\xi,d}(t)\,dt
    \right|
    \le
    C_\gamma(|\xi|d)^{-1}.
\]
Since \(d\) is oscillatory,
\(
    d^{-1}<2^{sn/(2a)}2^{8\varepsilon n/a}.
\)
Together with \(|\xi|\ge2^n\), this gives
\[
    (|\xi|d)^{-1}
    \le
    2^{-n}2^{sn/(2a)}2^{8\varepsilon n/a}
    =
    2^{-\vartheta n}.
\]
Since \(\vartheta>s/2\), we obtain
\[
    2^{-\vartheta n}
    =
    2^{-sn/2}2^{-(\vartheta-s/2)n}.
\]
Thus the desired bound holds for a suitable \(c_\gamma>0\).

The sum over \(d\in\mathfrak D_{\xi,n}^{\mathrm{osc}}\) loses only a polynomial
factor \(C_\gamma(1+n)\), which is absorbed into the exponential decay after
decreasing \(c_\gamma>0\) if necessary.
\end{proof}

We now record the exact martingale identity.  Recall from
Subsection~\ref{SS:phase-notation-conventions} that if
\(I\in\mathcal D_\ell([0,1])\),
\(J\in\mathcal D_{\ell+1}([0,1])\),
\(J\subset I\),
and
\(M_I=\widetilde\nu_\ell(I),\)
then
\[
    \widetilde\nu_{\ell+1}(J)=\frac12M_IU_J.
\]
With the coefficient convention from
Subsection~\ref{SS:phase-notation-conventions}, the raw martingale increment on
the child interval \(J\) is
\(
c_J(\xi,d,\ell)M_I(U_J-1).
\)
There is no additional factor \(1/2\) in this increment.

\begin{definition}
\label{D:arc-pre-post-critical-pieces}
For \(d\in\mathfrak D_\xi\), recall that
\(
    m_{\xi,d,+}=\max\{m_{\xi,d},0\}.
\)
Set
\[
    F_{\xi,d}^{\mathrm{pre}}
    =
    \sum_{\ell=0}^{m_{\xi,d,+}-1}
    \sum_{I\in\mathcal D_\ell([0,1])}
    \sum_{\substack{
        J\in\mathcal D_{\ell+1}([0,1])\\
        J\subset I
    }}
    c_J(\xi,d,\ell)M_I(U_J-1),
\]
and, for every integer \(L\ge1\),
\[
    F_{\xi,d,L}^{\mathrm{post}}
    =
    \sum_{\ell=m_{\xi,d,+}}^{L-1}
    \sum_{I\in\mathcal D_\ell([0,1])}
    \sum_{\substack{
        J\in\mathcal D_{\ell+1}([0,1])\\
        J\subset I
    }}
    c_J(\xi,d,\ell)M_I(U_J-1),
\]
with the convention that the sum is \(0\) when \(L\le m_{\xi,d,+}\).

Whenever the limit exists, define
\(
    F_{\xi,d}^{\mathrm{post}}
    =
    \lim_{L\to\infty}F_{\xi,d,L}^{\mathrm{post}}.
\)
\end{definition}

\begin{lemma}
\label{L:arc-exact-martingale-array-identity}
Let \(d\in\mathfrak D_\xi\).  On the weak-convergence event,
\(F_{\xi,d}^{\mathrm{post}}\) exists and
\[
    \int_0^1
    e^{i\varphi_\xi(t)}
    \chi_{\xi,d}(t)\,d\widetilde\nu(t)
    =
    \int_0^1
    e^{i\varphi_\xi(t)}
    \chi_{\xi,d}(t)\,dt
    +
    F_{\xi,d}^{\mathrm{pre}}
    +
    F_{\xi,d}^{\mathrm{post}}.
\]
\end{lemma}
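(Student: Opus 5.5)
The plan is to telescope along the level measures. Set \(G=e^{i\varphi_\xi}\chi_{\xi,d}\), which is a continuous function on \([0,1]\) (indeed \(C^1\)). For each \(L\ge1\) we have
\[
\int_0^1 G\,d\widetilde\nu_L=\int_0^1 G\,d\widetilde\nu_0+\sum_{\ell=0}^{L-1}\Bigl(\int_0^1 G\,d\widetilde\nu_{\ell+1}-\int_0^1 G\,d\widetilde\nu_\ell\Bigr),
\]
and \(\widetilde\nu_0\) is Lebesgue measure on \([0,1]\), so the first term is exactly \(\int_0^1 e^{i\varphi_\xi}\chi_{\xi,d}\,dt\).

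Next we identify each increment. Both \(\widetilde\nu_\ell\) and \(\widetilde\nu_{\ell+1}\) have piecewise constant densities. On \(J\subset I\), with \(I\in\mathcal D_\ell([0,1])\) and \(J\in\mathcal D_{\ell+1}([0,1])\), the density of \(\widetilde\nu_\ell\) is \(2^\ell M_I\). By the relation \(\widetilde\nu_{\ell+1}(J)=\tfrac12 M_IU_J\), the density of \(\widetilde\nu_{\ell+1}\) on \(J\) is \(2^{\ell+1}\cdot\tfrac12M_IU_J=2^\ell M_IU_J\). Hence
\[
\int_J G\,d\widetilde\nu_{\ell+1}-\int_J G\,d\widetilde\nu_\ell=2^\ell M_I(U_J-1)\int_J G\,dt=c_J(\xi,d,\ell)M_I(U_J-1).
\]
This computation also confirms that there is no extra factor \(1/2\). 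Summing over \(J\) and \(I\) shows that the level-\(\ell\) increment is exactly the \(\ell\)-th summand in Definition~\ref{D:arc-pre-post-critical-pieces}. Splitting the \(\ell\)-range at \(m_{\xi,d,+}\), we get, for \(L>m_{\xi,d,+}\),
\[
\int_0^1 G\,d\widetilde\nu_L=\int_0^1 G\,dt+F_{\xi,d}^{\mathrm{pre}}+F_{\xi,d,L}^{\mathrm{post}}.
\]
This is a finite identity, and \(F^{\mathrm{pre}}_{\xi,d}\) does not depend on \(L\).

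Finally we pass to the limit \(L\to\infty\). On the weak-convergence event, \(\widetilde\nu_L\to\widetilde\nu\) weakly, and \(G\) is bounded and continuous on the compact interval \([0,1]\). Therefore \(\int G\,d\widetilde\nu_L\to\int G\,d\widetilde\nu\). By the finite identity, \(F^{\mathrm{post}}_{\xi,d,L}\) converges to \(\int G\,d\widetilde\nu-\int G\,dt-F^{\mathrm{pre}}_{\xi,d}\). This proves that the limit \(F^{\mathrm{post}}_{\xi,d}\) exists and gives the asserted identity.

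The only point needing care is that the dyadic partition uses half-open intervals, so the level-\(\ell\) densities are defined up to null sets. The integrals are taken against absolutely continuous measures, so this is harmless. The limit step uses only continuity of \(G\), not the endpoint-zero property. I expect no genuine obstacle beyond checking the normalization of the coefficient convention.
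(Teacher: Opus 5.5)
Your proposal is correct and follows the paper's proof essentially line for line. Both arguments telescope along the level measures and identify each increment on a child \(J\subset I\) through the piecewise-constant densities \(2^\ell M_I\) and \(2^\ell M_IU_J\). Both then split the sum at \(m_{\xi,d,+}\) and obtain the existence of \(F_{\xi,d}^{\mathrm{post}}\) from weak convergence \(\widetilde\nu_L\to\widetilde\nu\) applied to the continuous integrand \(e^{i\varphi_\xi}\chi_{\xi,d}\).
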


\begin{proof}
Let
\(
    G_{\xi,d}(t)=e^{i\varphi_\xi(t)}\chi_{\xi,d}(t).
\)
For finite \(L\ge1\),
\[
    \int G_{\xi,d}\,d\widetilde\nu_L
    =
    \int G_{\xi,d}(t)\,dt
    +
    \sum_{\ell=0}^{L-1}
    \int G_{\xi,d}\,d(\widetilde\nu_{\ell+1}-\widetilde\nu_\ell).
\]
On a child interval \(J\subset I\), the density of \(\widetilde\nu_\ell\) is
\(2^\ell M_I\), while the density of \(\widetilde\nu_{\ell+1}\) is
\(2^\ell M_IU_J\).  Hence
\[
    \int_J
    G_{\xi,d}\,d(\widetilde\nu_{\ell+1}-\widetilde\nu_\ell)
    =
    M_I(U_J-1)
    2^\ell
    \int_J G_{\xi,d}(t)\,dt
    =
    c_J(\xi,d,\ell)M_I(U_J-1).
\]
Splitting the resulting finite sum at \(m_{\xi,d,+}\) gives the finite
identity.  Since \(G_{\xi,d}\) is continuous and
\(\widetilde\nu_L\to\widetilde\nu\) weakly, the left-hand side converges.
Hence the post-bin partial sums converge, and the limiting identity follows.
\end{proof}

\begin{proposition}[Deterministic annular reduction]
\label{P:arc-deterministic-annular-reduction}
Assume that \(\mathcal G_n\) holds, and let \(\xi\in\mathcal A_n\).  On the
weak-convergence event,
\[
    \widehat{\nu_\gamma}(\xi)
    =
    E_{\xi,n}^{\mathrm{safe}}
    +
    \sum_{d\in\mathfrak D_{\xi,n}^{\mathrm{osc}}}
    \left(
        F_{\xi,d}^{\mathrm{pre}}
        +
        F_{\xi,d}^{\mathrm{post}}
    \right),
\]
where
\[
    |E_{\xi,n}^{\mathrm{safe}}|
    \le
    C_\gamma2^{-sn/2}2^{-c_\gamma n}
\]
for some \(c_\gamma>0\).  Moreover, for every
\(d\in\mathfrak D_{\xi,n}^{\mathrm{osc}}\), every \(\ell\ge0\), and every
\(J\in\mathcal D_{\ell+1}([0,1])\), the coefficients satisfy
\[
    |c_J(\xi,d,\ell)|
    \le
    C_\gamma2^{\ell-m_{\xi,d}}
    \qquad (0\le\ell<m_{\xi,d}),
\]
and
\[
    |c_J(\xi,d,\ell)|
    \le C_\gamma
    \qquad (\ell\ge m_{\xi,d}).
\]
\end{proposition}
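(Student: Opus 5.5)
The plan is to decompose \(\widehat{\nu_\gamma}(\xi)\) with the partition of unity of Proposition~\ref{P:arc-endpoint-safe-phase-bins}(i), integrating against \(\widetilde\nu\):
\[
    \widehat{\nu_\gamma}(\xi)
    =
    \int_0^1 e^{i\varphi_\xi}(\chi_{\xi,0}+\chi_{\xi,1}+\chi_{\xi,\mathrm{sd}})\,d\widetilde\nu
    +
    \sum_{d\in\mathfrak D_{\xi,n}^{\mathrm{mass}}}\int_0^1 e^{i\varphi_\xi}\chi_{\xi,d}\,d\widetilde\nu
    +
    \sum_{d\in\mathfrak D_{\xi,n}^{\mathrm{osc}}}\int_0^1 e^{i\varphi_\xi}\chi_{\xi,d}\,d\widetilde\nu .
\]
The sum over \(\mathfrak D_\xi\) is finite by Proposition~\ref{P:arc-endpoint-safe-phase-bins}(v), so the interchange with the integral is harmless. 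The identity is well defined on the weak-convergence event, because \(\widehat{\nu_\gamma}(\xi)=\int e^{i\varphi_\xi}\,d\widetilde\nu\) and each cutoff is continuous and bounded by \(1\).

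The first term is bounded in modulus by \(\widetilde\nu(E_\xi)\), since \(0\le\chi_{\xi,0}+\chi_{\xi,1}+\chi_{\xi,\mathrm{sd}}\le1\) is supported in \(E_\xi\). The second term is controlled by the second estimate of Lemma~\ref{L:arc-safe-mass-only-estimates}. On \(\mathcal G_n\), Lemma~\ref{L:arc-safe-mass-only-estimates} therefore bounds both terms by \(C_\gamma2^{-sn/2}2^{-c_\gamma n}\), where we take \(c_\gamma\le6\varepsilon\). For each oscillatory band, Lemma~\ref{L:arc-exact-martingale-array-identity} rewrites the third-term summand as the Lebesgue integral \(\int e^{i\varphi_\xi}\chi_{\xi,d}\,dt\) plus \(F^{\mathrm{pre}}_{\xi,d}+F^{\mathrm{post}}_{\xi,d}\); in particular, \(F^{\mathrm{post}}_{\xi,d}\) exists on the weak-convergence event. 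The Lebesgue integrals, summed over \(\mathfrak D_{\xi,n}^{\mathrm{osc}}\), are bounded by \(C_\gamma2^{-sn/2}2^{-c_\gamma n}\) by Lemma~\ref{L:arc-oscillatory-range-arclength}.

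We then define \(E^{\mathrm{safe}}_{\xi,n}\) as the sum of the endpoint/small-derivative piece, the mass-only bands and these Lebesgue forcing terms. Taking the minimum of the three positive exponents gives the asserted bound on \(|E^{\mathrm{safe}}_{\xi,n}|\), and what remains is exactly the displayed martingale sum.

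The coefficient bounds are Proposition~\ref{P:arc-phasebin-coefficient-estimates} restricted to \(d\in\mathfrak D_{\xi,n}^{\mathrm{osc}}\subset\mathfrak D_\xi\), so nothing new is needed there. The only point requiring care is bookkeeping rather than analysis. The good event \(\mathcal G_n\) is used only for the safe and mass-only terms. The martingale identity holds pathwise on the weak-convergence event with no sign or factor-\(1/2\) ambiguity, which the convention \(c_J=2^\ell\int_J\) already settles. The exponent \(c_\gamma\) must be chosen uniformly in \(\xi\in\mathcal A_n\). This is immediate, since all the cited bounds have constants independent of \(\xi\).
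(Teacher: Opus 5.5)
Your proposal is correct and follows the paper's proof essentially step for step. You use the partition of unity from Proposition~\ref{P:arc-endpoint-safe-phase-bins}, split the bands into mass-only and oscillatory ones, and apply Lemma~\ref{L:arc-exact-martingale-array-identity} on each oscillatory band; you then define \(E^{\mathrm{safe}}_{\xi,n}\) as the endpoint/small-derivative piece plus the mass-only bands plus the Lebesgue forcing terms, bound it via Lemmas~\ref{L:arc-safe-mass-only-estimates} and~\ref{L:arc-oscillatory-range-arclength}, and quote Proposition~\ref{P:arc-phasebin-coefficient-estimates} for the coefficient bounds.
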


\begin{proof}
Using the partition of unity from
Proposition~\ref{P:arc-endpoint-safe-phase-bins}, we first decompose
\[
    \widehat{\nu_\gamma}(\xi)
    =
    \int_0^1
    e^{i\varphi_\xi(t)}
    \bigl(
        \chi_{\xi,0}(t)
        +
        \chi_{\xi,1}(t)
        +
        \chi_{\xi,\mathrm{sd}}(t)
    \bigr)\,d\widetilde\nu(t)
    +
    \sum_{d\in\mathfrak D_\xi}
    \int_0^1
    e^{i\varphi_\xi(t)}
    \chi_{\xi,d}(t)\,d\widetilde\nu(t).
\]
Split the active derivative-band scales as
\(
    \mathfrak D_\xi
    =
    \mathfrak D_{\xi,n}^{\mathrm{mass}}
    \cup
    \mathfrak D_{\xi,n}^{\mathrm{osc}}.
\)
For each \(d\in\mathfrak D_{\xi,n}^{\mathrm{osc}}\), the exact martingale-array
identity, Lemma~\ref{L:arc-exact-martingale-array-identity}, gives
\[
    \int_0^1
    e^{i\varphi_\xi(t)}
    \chi_{\xi,d}(t)\,d\widetilde\nu(t)
    =
    \int_0^1
    e^{i\varphi_\xi(t)}
    \chi_{\xi,d}(t)\,dt
    +
    F_{\xi,d}^{\mathrm{pre}}
    +
    F_{\xi,d}^{\mathrm{post}}.
\]

Define
\[
\begin{aligned}
    E_{\xi,n}^{\mathrm{safe}}
    &=
    \int_0^1
    e^{i\varphi_\xi(t)}
    \bigl(
        \chi_{\xi,0}(t)
        +
        \chi_{\xi,1}(t)
        +
        \chi_{\xi,\mathrm{sd}}(t)
    \bigr)\,d\widetilde\nu(t)       \\
    &\quad
    +
    \sum_{d\in\mathfrak D_{\xi,n}^{\mathrm{mass}}}
    \int_0^1
    e^{i\varphi_\xi(t)}
    \chi_{\xi,d}(t)\,d\widetilde\nu(t)       \\
    &\quad
    +
    \sum_{d\in\mathfrak D_{\xi,n}^{\mathrm{osc}}}
    \int_0^1
    e^{i\varphi_\xi(t)}
    \chi_{\xi,d}(t)\,dt .
\end{aligned}
\]
With this definition, substituting the martingale-array identity for all
oscillatory bands gives
\[
    \widehat{\nu_\gamma}(\xi)
    =
    E_{\xi,n}^{\mathrm{safe}}
    +
    \sum_{d\in\mathfrak D_{\xi,n}^{\mathrm{osc}}}
    \left(
        F_{\xi,d}^{\mathrm{pre}}
        +
        F_{\xi,d}^{\mathrm{post}}
    \right).
\]

It remains to estimate \(E_{\xi,n}^{\mathrm{safe}}\).  By
Lemma~\ref{L:arc-safe-mass-only-estimates}, the endpoint-safe,
small-derivative, and mass-only contributions satisfy
\[
\begin{aligned}
    &
    \left|
    \int_0^1
    e^{i\varphi_\xi(t)}
    \bigl(
        \chi_{\xi,0}(t)
        +
        \chi_{\xi,1}(t)
        +
        \chi_{\xi,\mathrm{sd}}(t)
    \bigr)\,d\widetilde\nu(t)
    \right|                                      \\
    &\qquad
    +
    \sum_{d\in\mathfrak D_{\xi,n}^{\mathrm{mass}}}
    \left|
        \int_0^1
        e^{i\varphi_\xi(t)}
        \chi_{\xi,d}(t)\,d\widetilde\nu(t)
    \right|
    \le
    C_\gamma2^{-sn/2}2^{-c_\gamma n}.
\end{aligned}
\]
By Lemma~\ref{L:arc-oscillatory-range-arclength}, the oscillatory Lebesgue forcing terms satisfy
\[
    \sum_{d\in\mathfrak D_{\xi,n}^{\mathrm{osc}}}
    \left|
        \int_0^1
        e^{i\varphi_\xi(t)}
        \chi_{\xi,d}(t)\,dt
    \right|
    \le
    C_\gamma2^{-sn/2}2^{-c_\gamma n}.
\]
After decreasing \(c_\gamma>0\) if necessary, these two estimates imply
\[
    |E_{\xi,n}^{\mathrm{safe}}|
    \le
    C_\gamma2^{-sn/2}2^{-c_\gamma n}.
\]

Finally, the coefficient bounds are exactly those of
Proposition~\ref{P:arc-phasebin-coefficient-estimates}.  This completes the
proof.
\end{proof}

\subsection{Predictable capping and Freedman concentration}
\label{SS:capping-freedman}

We now estimate the martingale arrays in
Proposition~\ref{P:arc-deterministic-annular-reduction}.  Since \(U\) is not
assumed bounded, we cap each fresh child weight by a predictable cap, center the
capped increment, and apply Freedman's inequality.  The predictable drift
created by this cap is postponed to Subsection~\ref{SS:r-tail-compensator}.

Throughout this subsection, fix \(n\ge1\), \(\xi\in\mathcal A_n\), and
\(
    d\in\mathfrak D_{\xi,n}^{\mathrm{osc}}.
\)
Recall that
\[
    \mathcal F_\ell=\sigma\{U_v:1\le |v|\le\ell\}
\]
is the cascade filtration up to level \(\ell\).  Recall also from
Subsection~\ref{SS:phase-notation-conventions} and
Definition~\ref{D:arc-pre-post-critical-pieces} that, for
\(I\in\mathcal D_\ell([0,1])\) and a child \(J\subset I\), we write
\(
    M_I=\widetilde\nu_\ell(I),
\)
and \(U_J\) denotes the fresh child weight on the edge \(I\to J\).  With our
coefficient convention, the raw martingale increment is
\(
c_J(\xi,d,\ell)M_I(U_J-1),
\)
with no additional factor \(1/2\).

\begin{definition}
\label{D:arc-predictable-cap}
For
\(I\in\mathcal D_\ell([0,1])\),
\(M_I=\widetilde\nu_\ell(I)\),
define
\[
    C_{I,n}
    =
    \begin{cases}
    \displaystyle
    L_n\frac{2T_{\ell+1,n}}{M_I},
    & M_I>0,\\[6pt]
    +\infty,
    & M_I=0.
    \end{cases}
\]
For a child \(J\subset I\), set
\(
    U_{J,n}^{\mathrm{cap}}=U_J\wedge C_{I,n},
\)
and define
\(
    \overline U_{I,n}
    =
    \mathbb E\left[
        U_{J,n}^{\mathrm{cap}}
        \mid
        \mathcal F_\ell
    \right].
\)
The quantity \(\overline U_{I,n}\) is independent of the choice of child
\(J\subset I\).  Define
\[
    X_{J,n}^{\mathrm{cap}}(\xi,d,\ell)
    =
    c_J(\xi,d,\ell)M_I
    \left(
        U_{J,n}^{\mathrm{cap}}
        -
        \overline U_{I,n}
    \right).
\]
\end{definition}

The cap \(C_{I,n}\) is predictable, since it is \(\mathcal F_\ell\)-measurable.
The factor \(2\) is calibrated to the child-mass identity
\(
    \widetilde\nu_{\ell+1}(J)=\frac12M_IU_J.
\)
Indeed, whenever \(M_I>0\) and \(U_J\le C_{I,n}\), one has
\[
    \widetilde\nu_{\ell+1}(J)
    =
    \frac12M_IU_J
    \le
    L_nT_{\ell+1,n}.
\]
Since \(L_n\ge1\), the cap is inactive whenever the child mass is bounded by
\(T_{\ell+1,n}\).

\begin{lemma}
\label{L:arc-centered-capped-increment-bounds}
Let \(n\ge1\), \(\ell\ge0\),
\(I\in\mathcal D_\ell([0,1])\), and let \(J\subset I\) be a dyadic child.  Then
\[
    \mathbb E\left[
        X_{J,n}^{\mathrm{cap}}(\xi,d,\ell)
        \mid
        \mathcal F_\ell
    \right]
    =
    0.
\]
Moreover,
\[
    |X_{J,n}^{\mathrm{cap}}(\xi,d,\ell)|
    \le
    C|c_J(\xi,d,\ell)|L_nT_{\ell+1,n}
\]
almost surely, and
\[
    \mathbb E\left[
        |X_{J,n}^{\mathrm{cap}}(\xi,d,\ell)|^2
        \mid
        \mathcal F_\ell
    \right]
    \le
    CL_n|c_J(\xi,d,\ell)|^2M_IT_{\ell+1,n}.
\]
\end{lemma}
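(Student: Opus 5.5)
The three assertions follow from the conditional independence of the fresh weight \(U_J\) from \(\mathcal F_\ell\), together with the predictability of \(c_J\), \(M_I\) and \(C_{I,n}\). We dispose of the trivial case first: if \(M_I=0\), then \(X^{\mathrm{cap}}_{J,n}=0\), so we assume \(M_I>0\) throughout. The coefficient \(c_J(\xi,d,\ell)\) is deterministic, and both \(M_I\) and \(C_{I,n}\) are \(\mathcal F_\ell\)-measurable. Moreover, \(U_J\) is independent of \(\mathcal F_\ell\) and has the law of \(U\). Hence
\[
\overline U_{I,n}=\mathbb E[U\wedge c]\big|_{c=C_{I,n}},
\]
where the expectation on the right is taken over an independent copy \(U\), and this value depends only on \(C_{I,n}\). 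The martingale-difference property is then immediate: pulling the \(\mathcal F_\ell\)-measurable factor \(c_JM_I\) out of the conditional expectation leaves \(\mathbb E[U^{\mathrm{cap}}_{J,n}-\overline U_{I,n}\mid\mathcal F_\ell]=0\) by the definition of \(\overline U_{I,n}\).

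For the almost sure bound, note that \(0\le U^{\mathrm{cap}}_{J,n}\le C_{I,n}\). Taking conditional expectations, \(0\le\overline U_{I,n}\le C_{I,n}\) as well. It follows that \(|U^{\mathrm{cap}}_{J,n}-\overline U_{I,n}|\le C_{I,n}\). Multiplying by \(|c_J|M_I\) and using \(M_IC_{I,n}=2L_nT_{\ell+1,n}\), we obtain
\[
|X^{\mathrm{cap}}_{J,n}|\le 2|c_J|L_nT_{\ell+1,n}.
\]

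For the conditional variance, the centered second moment is at most the raw second moment:
\[
\mathbb E[(U^{\mathrm{cap}}_{J,n}-\overline U_{I,n})^2\mid\mathcal F_\ell]\le\mathbb E[(U_J\wedge C_{I,n})^2\mid\mathcal F_\ell].
\]
Next we use \((U\wedge c)^2\le c\,(U\wedge c)\le cU\) together with \(\mathbb EU=1\). This gives the bound \(\mathbb E[(U\wedge c)^2]\le c\). Therefore the conditional second moment of \(X^{\mathrm{cap}}_{J,n}\) is at most
\[
|c_J|^2M_I^2C_{I,n}=|c_J|^2M_I\cdot 2L_nT_{\ell+1,n},
\]
which is the claimed estimate with \(C=2\).

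The one point requiring care is the identification of \(\overline U_{I,n}\) as a deterministic function of the predictable cap. Specifically, one needs that \(U_J\) is independent of \(\mathcal F_\ell\) while \(C_{I,n}\) is \(\mathcal F_\ell\)-measurable, so the freezing lemma applies, and that the resulting value is the same for both children of \(I\). Note also that the variance step uses only \(\mathbb EU=1\), not the \(r\)-moment, which is why the variance bound is linear in \(M_I\) rather than quadratic.
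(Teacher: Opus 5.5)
Your proof is correct and follows the paper's argument essentially line for line. The mean-zero property comes from the definition of \(\overline U_{I,n}\), and the jump bound from \(0\le U^{\mathrm{cap}}_{J,n},\,\overline U_{I,n}\le C_{I,n}\) together with \(M_IC_{I,n}=2L_nT_{\ell+1,n}\). The variance bound comes from \((U_J\wedge C_{I,n})^2\le C_{I,n}U_J\) and \(\mathbb E U_J=1\), and yields \(C=2\) in both estimates exactly as in the paper.
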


\begin{proof}
The conditional mean-zero identity follows directly from the definition of
\(\overline U_{I,n}\).  If \(M_I=0\), then
\(X_{J,n}^{\mathrm{cap}}(\xi,d,\ell)=0\), and all assertions are immediate.  We
may therefore assume that \(M_I>0\).

Since
\(
    0\le U_{J,n}^{\mathrm{cap}}\le C_{I,n}
\)
and \(C_{I,n}\) is \(\mathcal F_\ell\)-measurable, we have
\[
    0\le \overline U_{I,n}\le C_{I,n},
    \qquad
    |U_{J,n}^{\mathrm{cap}}-\overline U_{I,n}|
    \le
    C_{I,n}.
\]
Thus
\[
    |X_{J,n}^{\mathrm{cap}}(\xi,d,\ell)|
    \le
    |c_J(\xi,d,\ell)|M_IC_{I,n}
    =
    2|c_J(\xi,d,\ell)|L_nT_{\ell+1,n},
\]
which proves the jump bound.

For the conditional second moment, first note that
\[
    \operatorname{Var}
    \left(
        U_{J,n}^{\mathrm{cap}}
        \mid
        \mathcal F_\ell
    \right)
    \le
    \mathbb E\left[
        (U_{J,n}^{\mathrm{cap}})^2
        \mid
        \mathcal F_\ell
    \right].
\]
Since
\[
    (U_{J,n}^{\mathrm{cap}})^2
    \le
    C_{I,n}U_{J,n}^{\mathrm{cap}}
    \le
    C_{I,n}U_J,
\]
and \(U_J\) is independent of \(\mathcal F_\ell\) with
\(\mathbb E U_J=1\), we get
\(
    \mathbb E\left[
        (U_{J,n}^{\mathrm{cap}})^2
        \mid
        \mathcal F_\ell
    \right]
    \le
    C_{I,n}.
\)
Therefore
\[
\begin{aligned}
    \mathbb E\left[
        |X_{J,n}^{\mathrm{cap}}(\xi,d,\ell)|^2
        \mid
        \mathcal F_\ell
    \right]
    &=
    |c_J(\xi,d,\ell)|^2M_I^2
    \operatorname{Var}
    \left(
        U_{J,n}^{\mathrm{cap}}
        \mid
        \mathcal F_\ell
    \right)                                      \\
    &\le
    |c_J(\xi,d,\ell)|^2M_I^2C_{I,n}              \\
    &=
    2L_n|c_J(\xi,d,\ell)|^2M_IT_{\ell+1,n}.
\end{aligned}
\]
This proves the conditional second-moment estimate.
\end{proof}

\begin{lemma}
\label{L:arc-capping-identity}
Assume \(\mathcal G_n^{\mathrm{pre}}\) holds.  Then, for every
\(I\in\mathcal D_\ell([0,1])\) and every child \(J\subset I\),
\(
 U_{J,n}^{\mathrm{cap}}=U_J.
\)
Consequently,
\[
    c_J(\xi,d,\ell)M_I(U_J-1)
    =
    X_{J,n}^{\mathrm{cap}}(\xi,d,\ell)
    +
    D_{J,n}(\xi,d,\ell),
\]
where
\[
    D_{J,n}(\xi,d,\ell)
    =
    c_J(\xi,d,\ell)M_I(\overline U_{I,n}-1)
\]
and equivalently
\[
    D_{J,n}(\xi,d,\ell)
    =
    -
    c_J(\xi,d,\ell)M_I
    \mathbb E\left[
        (U_J-C_{I,n})_+
        \mid
        \mathcal F_\ell
    \right].
\]
Moreover, for a child \(J\in\mathcal D_{\ell+1}([0,1])\), the same cap
inactivity \(U_{J,n}^{\mathrm{cap}}=U_J\) holds on the finite-level event
\(
    \{\tau_n>\ell+1\}.
\)
\end{lemma}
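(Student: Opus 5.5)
The plan is to check that the cap is inactive, and then deduce the decomposition by pure algebra. Fix \(\ell\ge0\), \(I\in\mathcal D_\ell([0,1])\) and a child \(J\subset I\). If \(M_I=0\), then \(C_{I,n}=+\infty\), so \(U_{J,n}^{\mathrm{cap}}=U_J\) trivially and every term carrying the factor \(M_I\) vanishes. Assume now \(M_I>0\). On \(\mathcal G_n^{\mathrm{pre}}\) we apply the defining inequality with \(k=\ell+1\) and the interval \(J\in\mathcal D_{\ell+1}([0,1])\) at level \(\ell+1\ge k\). Using the child-mass identity, this gives
\[
    \tfrac12 M_IU_J=\widetilde\nu_{\ell+1}(J)\le T_{\ell+1,n}.
\]
Hence \(U_J\le 2T_{\ell+1,n}/M_I\le L_n\,2T_{\ell+1,n}/M_I=C_{I,n}\), since \(L_n=2^{\kappa n}\ge1\). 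Therefore \(U_{J,n}^{\mathrm{cap}}=U_J\wedge C_{I,n}=U_J\).

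For the decomposition, I would substitute \(U_J=U_{J,n}^{\mathrm{cap}}\) into the raw increment and add and subtract \(\overline U_{I,n}\):
\[
c_JM_I(U_J-1)=c_JM_I\bigl(U_{J,n}^{\mathrm{cap}}-\overline U_{I,n}\bigr)+c_JM_I(\overline U_{I,n}-1),
\]
which is \(X_{J,n}^{\mathrm{cap}}+D_{J,n}\) by definition. For the equivalent form of \(D_{J,n}\), I would use that \(C_{I,n}\) is \(\mathcal F_\ell\)-measurable and \(U_J\) is independent of \(\mathcal F_\ell\) with \(\mathbb E U_J=1\). The pointwise identity \(U_J\wedge C=U_J-(U_J-C)_+\) then gives
\[
\overline U_{I,n}=1-\mathbb E[(U_J-C_{I,n})_+\mid\mathcal F_\ell].
\]
This is legitimate because \((U_J-C_{I,n})_+\le U_J\) is integrable. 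When \(M_I=0\) both sides of the equivalent form vanish, so that case needs no separate argument. One point to emphasize: \(\overline U_{I,n}\) is a conditional expectation, so it is defined unconditionally, and only the pathwise identity \(U_{J,n}^{\mathrm{cap}}=U_J\) uses the event.

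For the finite-level statement, I would argue from the definition of \(\tau_n\). On \(\{\tau_n>\ell+1\}\), the level \(\ell+1\) is not a violation time, so \(\widetilde\nu_{\ell+1}(J')\le T_{k,n}\) for all \(k\le\ell+1\) and all \(J'\in\mathcal D_k([0,1])\). Taking \(k=\ell+1\) and \(J'=J\) reproduces the displayed inequality, and the same computation gives cap inactivity. (In fact \(\{\tau_n>\ell\}\) together with no violation at level \(\ell+1\), i.e.\ \(\tau_n\ge\ell+2\), is what \(\tau_n>\ell+1\) means.) There is no real obstacle here. The only care needed is the bookkeeping of the factor \(2\) in \(C_{I,n}\) against \(\widetilde\nu_{\ell+1}(J)=\frac12M_IU_J\), and checking that the good event controls level-\((\ell+1)\) masses at scale \(k=\ell+1\), which is exactly how \(\mathcal G_n^{\mathrm{pre}}\) and \(\tau_n\) were defined.
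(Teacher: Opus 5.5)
Your proposal is correct and follows the paper's proof essentially line by line: cap inactivity comes from the level-\((\ell+1)\) good-event bound \(\tfrac12 M_IU_J=\widetilde\nu_{\ell+1}(J)\le T_{\ell+1,n}\) combined with \(L_n\ge1\), the decomposition comes from adding and subtracting \(\overline U_{I,n}\), the equivalent form of \(D_{J,n}\) comes from \(U_J\wedge C_{I,n}=U_J-(U_J-C_{I,n})_+\), and the same argument is run on \(\{\tau_n>\ell+1\}\). Your explicit remarks that \(\overline U_{I,n}\) is defined without reference to the event, and that the conditional-expectation formula uses the independence of \(U_J\) from \(\mathcal F_\ell\) with \(\mathbb E U_J=1\), are small clarifications that the paper leaves implicit.
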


\begin{proof}
If \(M_I=0\), the identity is immediate.  Assume \(M_I>0\).  On
\(\mathcal G_n^{\mathrm{pre}}\),
\(
\widetilde\nu_{\ell+1}(J)\le T_{\ell+1,n}.
\)
Since
\(
    \widetilde\nu_{\ell+1}(J)=\frac12M_IU_J,
\)
we obtain
\[
    U_J
    \le
    \frac{2T_{\ell+1,n}}{M_I}
    \le
    L_n\frac{2T_{\ell+1,n}}{M_I}
    =
    C_{I,n}.
\]
Thus \(U_{J,n}^{\mathrm{cap}}=U_J\).  The decomposition then follows by adding
and subtracting \(\overline U_{I,n}\):
\[
    c_JM_I(U_J-1)
    =
    c_JM_I(U_J-\overline U_{I,n})
    +
    c_JM_I(\overline U_{I,n}-1)
    =
    X_{J,n}^{\mathrm{cap}}(\xi,d,\ell)
    +
    D_{J,n}(\xi,d,\ell).
\]
Finally,
\[
    \overline U_{I,n}
    =
    \mathbb E[U_J\wedge C_{I,n}\mid\mathcal F_\ell]
    =
    1-
    \mathbb E\left[
        (U_J-C_{I,n})_+
        \mid
        \mathcal F_\ell
    \right],
\]
which gives the displayed formula for \(D_{J,n}\).

The proof of the finite-level statement is identical.  On
\(\{\tau_n>\ell+1\}\), the bound
\(
    \widetilde\nu_{\ell+1}(J)\le T_{\ell+1,n}
\)
holds for every level-\((\ell+1)\) child \(J\), and hence the same argument
shows \(U_{J,n}^{\mathrm{cap}}=U_J\).
\end{proof}

\begin{lemma}
\label{L:arc-stopped-square-sum-budgets}
There exists \(C_\gamma<\infty\) such that, for every \(n\ge1\), every
\(\xi\in\mathcal A_n\), and every
\(
    d\in\mathfrak D_{\xi,n}^{\mathrm{osc}},
\)
if
\(m=m_{\xi,d}\),
\(m_+=m_{\xi,d,+}\),
then
\[
    \sum_{\ell=0}^{m_+-1}
    \mathbf 1_{\{\tau_n>\ell\}}
    \sum_{I\in\mathcal D_\ell([0,1])}
    \sum_{\substack{
        J\in\mathcal D_{\ell+1}([0,1])\\
        J\subset I
    }}
    |c_J(\xi,d,\ell)|^2M_IT_{\ell+1,n}
    \le
    C_\gamma2^{2\varepsilon n}|\xi|^{-a},
\]
and
\[
    \sup_{L>m_+}
    \sum_{\ell=m_+}^{L-1}
    \mathbf 1_{\{\tau_n>\ell\}}
    \sum_{I\in\mathcal D_\ell([0,1])}
    \sum_{\substack{
        J\in\mathcal D_{\ell+1}([0,1])\\
        J\subset I
    }}
    |c_J(\xi,d,\ell)|^2M_IT_{\ell+1,n}
    \le
    C_\gamma2^{2\varepsilon n}|\xi|^{-a}.
\]
\end{lemma}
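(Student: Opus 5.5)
The plan is to bound each level-\(\ell\) contribution separately on the finite-level event \(\{\tau_n>\ell\}\), and then sum the resulting geometric series in \(\ell\). The only inputs are the coefficient bounds of Proposition~\ref{P:arc-deterministic-annular-reduction}, the localization of \(S_{\xi,d}\) from Proposition~\ref{P:arc-endpoint-safe-phase-bins}, and the stopped local-mass estimate of Lemma~\ref{L:arc-stopped-local-mass-consequences}.

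\emph{Step 1: reduce to a mass sum.} First note that \(c_J(\xi,d,\ell)=0\) unless \(J\) meets \(S_{\xi,d}\). By Proposition~\ref{P:arc-endpoint-safe-phase-bins}(iii) and Lemma~\ref{L:arc-g-level-geometry}, \(S_{\xi,d}\) is covered by at most \(C_\gamma\) intervals \(B_1,\dots,B_N\) of total length at most \(C_\gamma d\). Each \(I\) has two children, and \(T_{\ell+1,n}\le 2^{\varepsilon n}2^{-a\ell}\). Hence the level-\(\ell\) inner sum is at most
\[
2\,2^{\varepsilon n}2^{-a\ell}\Bigl(\max_J|c_J(\xi,d,\ell)|^2\Bigr)\sum_{i\le N}\ \sum_{\substack{I\in\mathcal D_\ell([0,1])\\ I\cap B_i\ne\varnothing}}M_I .
\]
On \(\{\tau_n>\ell\}\), the mass sum is at most \(C2^{\varepsilon n}(|B_i|+2^{-\ell})^a\). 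This follows from the first estimate of Lemma~\ref{L:arc-stopped-local-mass-consequences} applied to an enlarged interval of length \(C(|B_i|+2^{-\ell})\), exactly as in the proof of Lemma~\ref{L:arc-local-mass-consequences}(iii). Summing over the boundedly many \(B_i\) and using \((x+y)^a\le x^a+y^a\), the level-\(\ell\) term is at most
\[
C_\gamma 2^{2\varepsilon n}2^{-a\ell}\Bigl(\max_J|c_J|^2\Bigr)\bigl(d^a+2^{-a\ell}\bigr).
\]

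\emph{Step 2: the pre-critical range \(\ell<m_+\).} Here \(|c_J|\le C_\gamma2^{\ell-m}\). The sum splits into two geometric sums, each dominated by its top term because \(a<1\):
\[
2^{-2m}d^a\sum_{\ell<m}2^{(2-a)\ell}\le C2^{-am}d^a,
\qquad
2^{-2m}\sum_{\ell<m}2^{(2-2a)\ell}\le C2^{-2am}.
\]
Since \(2^{-m}\asymp(|\xi|d)^{-1}\), the first term is \(\le C|\xi|^{-a}\). The second term is \(\le C(|\xi|d)^{-2a}=C|\xi|^{-a}(|\xi|d^2)^{-a}\). By Proposition~\ref{P:arc-endpoint-safe-phase-bins}(v), \(d\ge C_\gamma^{-1}|\xi|^{-1/2}\), so \(|\xi|d^2\ge c_\gamma\) and this term is also \(\le C_\gamma|\xi|^{-a}\).

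\emph{Step 3: the post-critical range \(\ell\ge m_+\).} Here \(|c_J|\le C_\gamma\), and the series \(\sum_{\ell\ge m_+}2^{-a\ell}(d^a+2^{-a\ell})\) converges uniformly in \(L\) to at most \(C(2^{-am_+}d^a+2^{-2am_+})\). The same two comparisons as in Step~2 bound this by \(C_\gamma|\xi|^{-a}\), provided \(m_+=m\).

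The main point needing care is the degenerate case \(m\le0\), i.e.\ \(m_+=0\ne m\). For oscillatory bands, Lemma~\ref{L:arc-oscillatory-range-arclength} gives \(m\ge\vartheta n-C_\gamma\), so \(m>0\) for all \(n\) beyond a constant \(n_0(\gamma)\). For the finitely many remaining \(n\), we have \(|\xi|\le 2^{n_0+1}\), and the crude bound \(C2^{2\varepsilon n}\sum_\ell 2^{-a\ell}\) is absorbed into \(C_\gamma|\xi|^{-a}\). The remaining bookkeeping, namely the use of the stopped rather than global local-mass estimate and the uniformity of the bound in \(L\), is routine.
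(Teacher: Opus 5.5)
Your proposal is correct and follows essentially the same route as the paper: cover \(S_{\xi,d}\) by boundedly many intervals, apply the stopped local-mass estimate of Lemma~\ref{L:arc-stopped-local-mass-consequences} at level \(\ell\), insert the pre- and post-critical coefficient bounds, sum the geometric series using \(2^{-m}\le(|\xi|d)^{-1}\) and \(d\ge C_\gamma^{-1}|\xi|^{-1/2}\), and absorb the finitely many \(n\) with \(m\le0\) into \(C_\gamma\). The differences are cosmetic and change nothing. You split \((d+2^{-\ell})^a\le d^a+2^{-a\ell}\) additively, whereas the paper separates the cases \(2^{-\ell}\ge d\) and \(2^{-\ell}<d\). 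In the post-critical range, the paper uses \(2^{-\ell}\le C_\gamma d\), whereas you bound the extra \(2^{-2am}\) term via \(|\xi|d^2\ge c_\gamma\).
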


\begin{proof}
By Lemma~\ref{L:arc-oscillatory-range-arclength}, finitely many small values
of \(n\) account for all cases where \(m<1\).  These cases are absorbed into
\(C_\gamma\).  We therefore assume \(m\ge1\), so \(m_+=m\).

Recall that
\(
    S_{\xi,d}=\operatorname{spt}\chi_{\xi,d}.
\)
For \(\ell\ge0\), set
\[
    S_{\xi,d}^{(\ell)}
    =
    \left\{
        t\in[0,1]:
        \operatorname{dist}(t,S_{\xi,d})\le2^{-\ell}
    \right\}.
\]
By Proposition~\ref{P:arc-endpoint-safe-phase-bins} and
Lemma~\ref{L:arc-g-level-geometry}, the set \(S_{\xi,d}^{(\ell)}\) is covered
by at most \(C_\gamma\) intervals of total length at most
\(
    C_\gamma(d+2^{-\ell}).
\)

If \(c_J(\xi,d,\ell)\ne0\), then \(J\cap S_{\xi,d}\ne\varnothing\).  Since
\(J\subset I\) and \(|I|=2^{-\ell}\), it follows that
\(
    I\subset S_{\xi,d}^{(\ell)}.
\)
The two children of each parent contribute only an absolute factor.  Therefore,
by Lemma~\ref{L:arc-stopped-local-mass-consequences}, on
\(\{\tau_n>\ell\}\) we have
\[
    \sum_{\substack{
        I\in\mathcal D_\ell([0,1])\\
        I\subset S_{\xi,d}^{(\ell)}
    }}
    M_I
    \le
    C_\gamma2^{\varepsilon n}(d+2^{-\ell})^a.
\]

For \(0\le\ell<m\), we combine the coefficient estimate
\(
|c_J(\xi,d,\ell)|\le C_\gamma2^{\ell-m},
\)
with the stopping-time bound on \(\mathcal G_n\),
\(
T_{\ell+1,n}\le C2^{\varepsilon n}2^{-a\ell}.
\)
Thus the prefix contribution is bounded by
\[
    C_\gamma2^{2\varepsilon n}
    \sum_{\ell=0}^{m-1}
    2^{2(\ell-m)}
    2^{-a\ell}
    (d+2^{-\ell})^a.
\]
We claim that the last sum is \(O_\gamma(|\xi|^{-a})\).  Indeed, when
\(2^{-\ell}\ge d\), we have
\(
    (d+2^{-\ell})^a\le C2^{-a\ell},
\)
and hence this part is at most
\[
    C2^{-2m}
    \sum_{2^{-\ell}\ge d}2^{(2-2a)\ell}
    \le
    C2^{-2m}d^{-(2-2a)}
    \le
    C_\gamma|\xi|^{-a}.
\]
In the last inequality we used
\(
    2^{-m}\le(|\xi|d)^{-1}
\)
and the active-scale lower bound
\(
    d\ge C_\gamma^{-1}|\xi|^{-1/2}.
\)
When \(2^{-\ell}<d\), we have
\(
(d+2^{-\ell})^a\le Cd^a,
\)
and this part is at most
\[
    C2^{-2m}d^a
    \sum_{\ell<m}2^{(2-a)\ell}
    \le
    Cd^a2^{-am}
    \le
    C|\xi|^{-a}.
\]
This proves the prefix estimate.

For the post-bin estimate, use the uniform coefficient bound
\[
    |c_J(\xi,d,\ell)|\le C_\gamma
    \qquad(\ell\ge m).
\]
For such \(\ell\), the definition of \(m=m_{\xi,d}\) gives
\[
    2^{-\ell}\le2^{-m}\le(|\xi|d)^{-1}\le C_\gamma d,
\]
where the last inequality follows from the active-scale lower bound.  Hence
\(
d+2^{-\ell}\le C_\gamma d.
\)
The stopped local-mass estimate therefore gives
\[
    \sum_{\substack{
        I\in\mathcal D_\ell([0,1])\\
        I\subset S_{\xi,d}^{(\ell)}
    }}
    M_I
    \le
    C_\gamma2^{\varepsilon n}d^a.
\]
Thus the level-\(\ell\) post contribution is at most
\(
    C_\gamma2^{2\varepsilon n}2^{-a\ell}d^a.
\)
Summing over \(\ell\ge m\) gives
\[
    C_\gamma2^{2\varepsilon n}d^a2^{-am}
    \le
    C_\gamma2^{2\varepsilon n}|\xi|^{-a},
\]
again by \(2^{-m}\le(|\xi|d)^{-1}\).  This bound is uniform in the upper
cutoff \(L\), and hence proves the post-bin estimate.
\end{proof}

\begin{definition}
\label{D:arc-centered-capped-arrays}
Let \(d\in\mathfrak D_{\xi,n}^{\mathrm{osc}}\), and recall that
\(m_+=m_{\xi,d,+}\).
Define
\[
    \widehat F_{\xi,d,n}^{\mathrm{pre,cap}}
    =
    \sum_{\ell=0}^{m_+-1}
    \mathbf 1_{\{\tau_n>\ell\}}
    \sum_{I\in\mathcal D_\ell([0,1])}
    \sum_{\substack{
        J\in\mathcal D_{\ell+1}([0,1])\\
        J\subset I
    }}
    X_{J,n}^{\mathrm{cap}}(\xi,d,\ell).
\]
For every integer \(L\ge1\), define
\[
    \widehat F_{\xi,d,n,L}^{\mathrm{post,cap}}
    =
    \sum_{\ell=m_+}^{L-1}
    \mathbf 1_{\{\tau_n>\ell\}}
    \sum_{I\in\mathcal D_\ell([0,1])}
    \sum_{\substack{
        J\in\mathcal D_{\ell+1}([0,1])\\
        J\subset I
    }}
    X_{J,n}^{\mathrm{cap}}(\xi,d,\ell),
\]
with the convention that the sum is \(0\) when \(L\le m_+\).

The corresponding unstopped arrays
\(F_{\xi,d,n}^{\mathrm{pre,cap}}\) and
\(F_{\xi,d,n,L}^{\mathrm{post,cap}}\) are obtained by removing the factors
\(\mathbf 1_{\{\tau_n>\ell\}}\). 
The same empty-sum convention is used for
\(F_{\xi,d,n,L}^{\mathrm{post,cap}}\).

Whenever the limit exists, set
\[
    F_{\xi,d,n}^{\mathrm{post,cap}}
    =
    \lim_{L\to\infty}F_{\xi,d,n,L}^{\mathrm{post,cap}}.
\]
\end{definition}

We shall use the following complex-valued form of Freedman's inequality.  The
original real-valued martingale inequality is due to Freedman
\cite{Freedman1975}, and the proof of the complex-valued version used below is
recorded in \cite{2026-M}.

\begin{lemma}
\label{L:complex-freedman}
Let \((S_j,\mathcal H_j)_{j=0}^N\) be a complex-valued martingale with \(S_0=0\), 
and let \(\Delta_j=S_j-S_{j-1}\) be its differences.  
Let \(R,V\in(0,\infty)\).  
Assume that \(|\Delta_j|\le R\)
for
\(1\le j\le N\),
and
\[
    \sum_{j=1}^N
    \mathbb E[|\Delta_j|^2\mid\mathcal H_{j-1}]
    \le
    V
\]
almost surely.  Then, for every \(t>0\),
\[
    \mathbb P
    \left(
        \max_{0\le j\le N}|S_j|>t
    \right)
    \le
    4\exp
    \left(
        -\frac{t^2}{8(V+Rt)}
    \right).
\]
\end{lemma}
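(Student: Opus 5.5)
The plan is to reduce the complex statement to the classical real-valued Freedman inequality, applied separately to the real and imaginary parts, and then take a union bound. The factor \(4\) and the constant \(8\) in the exponent leave enough room for the loss from splitting \(t\) between the two coordinates.

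First I would record the real inequality in the form needed. Let \((Z_j,\mathcal H_j)\) be a real martingale with \(Z_0=0\), increments \(|\delta_j|\le R\), and \(\sum_{j\le N}\mathbb E[\delta_j^2\mid\mathcal H_{j-1}]\le V\) almost surely. Then
\[
    \mathbb P\Bigl(\max_{j\le N} Z_j\ge u\Bigr)\le \exp\Bigl(-\frac{u^2}{2(V+Ru/3)}\Bigr).
\]
If needed, I would reprove this in the standard way. Set \(g(\lambda)=(e^{\lambda R}-1-\lambda R)/R^2\). Using \(e^{x}\le 1+x+x^2g(\lambda)/\lambda^2\cdot\lambda^2\) for \(x=\lambda\delta_j\le\lambda R\), the process \(\exp(\lambda Z_j-g(\lambda)\sum_{i\le j}\mathbb E[\delta_i^2\mid\mathcal H_{i-1}])\) is a positive supermartingale. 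Optional stopping at the first time \(Z_j\ge u\), together with the almost sure bound \(V\), gives \(\mathbb P(\max Z_j\ge u)\le \exp(-\lambda u+g(\lambda)V)\). Optimizing in \(\lambda\) and using \(g(\lambda)\le \lambda^2/(2(1-\lambda R/3))\) yields the displayed Bernstein-type bound. Applying the same bound to \(-Z\) doubles the right-hand side for \(\max_j|Z_j|\).

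Next, write \(S_j=A_j+iB_j\). Both \(A_j=\operatorname{Re}S_j\) and \(B_j=\operatorname{Im}S_j\) are real \((\mathcal H_j)\)-martingales, since conditional expectation commutes with taking real and imaginary parts. Their increments satisfy \(|\operatorname{Re}\Delta_j|,|\operatorname{Im}\Delta_j|\le|\Delta_j|\le R\). Their conditional variances satisfy \(\mathbb E[(\operatorname{Re}\Delta_j)^2\mid\mathcal H_{j-1}]\le\mathbb E[|\Delta_j|^2\mid\mathcal H_{j-1}]\), and likewise for the imaginary part, so both real martingales satisfy the hypotheses with the same \(R\) and \(V\).

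If \(|S_j|>t\), then \(|A_j|>t/\sqrt2\) or \(|B_j|>t/\sqrt2\). The two-sided real bound for each coordinate, with \(u=t/\sqrt2\), then gives
\[
    \mathbb P\Bigl(\max_j|S_j|>t\Bigr)\le 4\exp\Bigl(-\frac{t^2/2}{2(V+Rt/(3\sqrt2))}\Bigr).
\]
Finally I would check the elementary comparison
\[
    \frac{t^2}{4(V+Rt/(3\sqrt2))}\ge\frac{t^2}{8(V+Rt)},
\]
which holds because \(4(V+Rt/(3\sqrt2))\le 8(V+Rt)\).

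The only genuinely delicate point is the supermartingale step in the real inequality, in particular justifying optional stopping with the predictable-variance bound holding only almost surely at the terminal time. The rest is bookkeeping of constants. Since the paper treats the real inequality as classical \cite{Freedman1975}, I would cite it and present only the reduction above in detail.
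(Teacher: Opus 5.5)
Your proof is correct and follows the natural route. The paper gives no proof of this lemma; it cites \cite{Freedman1975} for the real inequality and \cite{2026-M} for the complex version, and the constants $4=2\times 2$ and $8$ are of the form produced by two-sided real Freedman bounds for $\operatorname{Re}S_j$ and $\operatorname{Im}S_j$ combined by a union bound, which is what you do. Your $t/\sqrt2$ split with the Bernstein form $V+Ru/3$ even gives a slightly stronger exponent.

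There is one typo. The elementary inequality should read $e^{x}\le 1+x+x^{2}g(\lambda)/\lambda^{2}$ for $x\le\lambda R$, that is $e^{\lambda\delta_j}\le 1+\lambda\delta_j+g(\lambda)\delta_j^{2}$; this is the inequality your supermartingale actually uses. The optional-stopping step is routine rather than delicate: the horizon is finite, and the predictable variation is nondecreasing, so it is at most $V$ at any stopping time.
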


\begin{lemma}
\label{L:arc-jump-quadratic-variation-budgets}
There exist constants \(\eta_R>3\varepsilon\) and \(\eta_V>6\varepsilon\) such
that, for every \(n\ge1\), every \(\xi\in\mathcal A_n\), and every
\(d\in\mathfrak D_{\xi,n}^{\mathrm{osc}}\), the stopped prefix martingale and
every finite stopped post-bin martingale associated to \((\xi,d,n)\), ordered
edge by edge generation by generation, admit deterministic Freedman budgets
\(R_{\xi,d,n}\) and \(V_{\xi,d,n}\) satisfying
\[
    R_{\xi,d,n}
    \le
    C_\gamma2^{-sn/2}2^{-\eta_R n},
    \qquad
    V_{\xi,d,n}
    \le
    C_\gamma2^{-sn}2^{-\eta_V n}.
\]
Every martingale difference \(\Delta\) satisfies
\[
    |\Delta|\le R_{\xi,d,n},
\]
and the predictable quadratic variation of each such martingale is bounded by
\(V_{\xi,d,n}\).
\end{lemma}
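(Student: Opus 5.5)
The plan is to read both budgets off the pointwise and conditional-variance bounds of Lemma~\ref{L:arc-centered-capped-increment-bounds}, together with the stopped square-sum budgets of Lemma~\ref{L:arc-stopped-square-sum-budgets}. The exponents are then converted using the oscillatory range $m=m_{\xi,d}\ge\vartheta n-C_\gamma$ from Lemma~\ref{L:arc-oscillatory-range-arclength} and the parameter relations of Lemma~\ref{lemma:annular-parameter-choice}. As there, finitely many small $n$ (in particular all cases with $m<1$) are absorbed into $C_\gamma$, so we take $m_+=m\ge1$.

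\emph{Martingale structure.} Enumerate the edges $I\to J$ generation by generation, and within a generation in any fixed order. Let $\mathcal H_j$ be generated by $\mathcal F_\ell$ together with the weights $U_{J'}$ of the earlier edges of generation $\ell+1$. The increment attached to $J$ is $\mathbf 1_{\{\tau_n>\ell\}}X^{\mathrm{cap}}_{J,n}(\xi,d,\ell)$. Here $\mathbf 1_{\{\tau_n>\ell\}}$, $c_J$, $M_I$ and $C_{I,n}$ are $\mathcal F_\ell$-measurable. The weight $U_J$ is independent of $\mathcal F_\ell$ and of the other fresh weights of generation $\ell+1$. Hence conditioning on $\mathcal H_{j-1}$ is the same as conditioning on $\mathcal F_\ell$. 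Lemma~\ref{L:arc-centered-capped-increment-bounds} then gives conditional mean zero, and the same conditional second-moment bound holds with respect to $\mathcal H_{j-1}$.

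\emph{Jump budget.} We have $|\Delta|\le C|c_J|L_nT_{\ell+1,n}$ and $T_{\ell+1,n}\le 2^{\varepsilon n}2^{-a\ell}$. There are two ranges of $\ell$.
\begin{itemize}
\item For $\ell<m$, use $|c_J|\le C_\gamma 2^{\ell-m}$. This gives $|\Delta|\le C_\gamma 2^{(\kappa+\varepsilon)n}2^{(1-a)\ell-m}\le C_\gamma2^{(\kappa+\varepsilon)n}2^{-am}$, since $a<1$ and $\ell<m$.
\item For $\ell\ge m$, use $|c_J|\le C_\gamma$. This gives the same bound, because $2^{-a\ell}\le2^{-am}$.
\end{itemize}
Now $a\vartheta=a-s/2-8\varepsilon$, so $2^{-am}\le C_\gamma2^{-sn/2}2^{-(a-s-8\varepsilon)n}$. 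Since $a-s=\delta_1$, this yields $R_{\xi,d,n}=C_\gamma2^{-sn/2}2^{-\eta_Rn}$ with $\eta_R=\delta_1-9\varepsilon-\kappa$. The condition $20\varepsilon+\kappa<\delta_1/2$ gives $\delta_1>40\varepsilon+2\kappa$, hence $\eta_R>31\varepsilon>3\varepsilon$. This bound is deterministic and holds whether or not the stopping indicator vanishes.

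\emph{Variance budget.} The predictable quadratic variation is at most $CL_n\sum \mathbf 1_{\{\tau_n>\ell\}}|c_J|^2M_IT_{\ell+1,n}$, summed over the prefix generations or over any finite post-bin window. By Lemma~\ref{L:arc-stopped-square-sum-budgets} this is at most $C_\gamma L_n2^{2\varepsilon n}|\xi|^{-a}$, uniformly in the cutoff $L$. Using $|\xi|\ge2^n$ and $a=s+\delta_1$, we get $V_{\xi,d,n}=C_\gamma2^{-sn}2^{-\eta_Vn}$ with $\eta_V=\delta_1-\kappa-2\varepsilon$. This exceeds $38\varepsilon+\kappa>6\varepsilon$.

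The only delicate point I expect is that the variance bound is almost sure rather than only on $\mathcal G_n$. This is exactly why the stopped arrays carry $\mathbf 1_{\{\tau_n>\ell\}}$. Lemma~\ref{L:arc-stopped-square-sum-budgets} was proved on $\{\tau_n>\ell\}$ level by level, so the sum bound holds pathwise. Everything else is bookkeeping of exponents.
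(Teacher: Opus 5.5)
Your proposal is correct and follows the paper's proof essentially step by step: the same two-range coefficient bound $|c_J|T_{\ell+1,n}\le C_\gamma 2^{\varepsilon n}2^{-am}$, the same generation-ordered edge-revealing filtration, and the same appeal to Lemma~\ref{L:arc-stopped-square-sum-budgets}. These give the same exponents $\eta_R=\delta_1-\kappa-9\varepsilon$ and $\eta_V=\delta_1-\kappa-2\varepsilon$. The only cosmetic difference is that you convert $2^{-am}$ via $m\ge\vartheta n-C_\gamma$, whereas the paper uses $2^{-m}\le(|\xi|d)^{-1}$ together with the oscillatory condition directly; this is the same computation, since the range lemma is derived from exactly those two facts.
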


\begin{proof}
Set
\(m=m_{\xi,d}\),
\(m_+=m_{\xi,d,+}\).
By Lemma~\ref{L:arc-centered-capped-increment-bounds},
\[
    |X_{J,n}^{\mathrm{cap}}(\xi,d,\ell)|
    \le
    C|c_J(\xi,d,\ell)|L_nT_{\ell+1,n}.
\]
The coefficient estimates give
\[
    |c_J(\xi,d,\ell)|T_{\ell+1,n}
    \le
    C_\gamma2^{\varepsilon n}2^{-am}
    \qquad(\ell\ge0).
\]
Indeed, if \(\ell<m\), then
\[
    |c_J|T_{\ell+1,n}
    \le
    C_\gamma2^{\ell-m}2^{\varepsilon n}2^{-a\ell}
    =
    C_\gamma2^{\varepsilon n}2^{-m}2^{(1-a)\ell}
    \le
    C_\gamma2^{\varepsilon n}2^{-am},
\]
while if \(\ell\ge m\), then
\[
    |c_J|T_{\ell+1,n}
    \le
    C_\gamma2^{\varepsilon n}2^{-a\ell}
    \le
    C_\gamma2^{\varepsilon n}2^{-am}.
\]
Hence every stopped prefix or finite stopped post-bin increment is bounded by
\(
    C_\gamma L_n2^{\varepsilon n}2^{-am}.
\)
Using \(L_n=2^{\kappa n}\), \(2^{-m}\le(|\xi|d)^{-1}\), the oscillatory
condition
\(
    d^{-a}<2^{sn/2}2^{8\varepsilon n},
\)
and \(|\xi|\ge2^n\), we obtain
\[
    R_{\xi,d,n}
    \le
    C_\gamma2^{(\kappa+\varepsilon)n}|\xi|^{-a}d^{-a}
    \le
    C_\gamma2^{(\kappa+\varepsilon)n}2^{-an}
    2^{sn/2}2^{8\varepsilon n}
    =
    C_\gamma2^{-sn/2}
    2^{-(\delta_1-\kappa-9\varepsilon)n}.
\]
The parameter choice gives 
\(
    \delta_1-\kappa-9\varepsilon>3\varepsilon,
\)
so we can choose
\(
    3\varepsilon<\eta_R<\delta_1-\kappa-9\varepsilon
\)
and obtain
\[
    R_{\xi,d,n}
    \le
    C_\gamma2^{-sn/2}2^{-\eta_Rn}.
\]

It remains to justify the predictable quadratic-variation bound in the
edge-revealing filtration.  Fix either the prefix range
\(
    0\le\ell<m_+
\)
or a finite post-bin range
\(
    m_+\le\ell<L.
\)
List all child edges in the chosen range as
\[
    e_j=(I_j,J_j),
    \qquad
    I_j\in\mathcal D_{\ell_j}([0,1]),
    \quad
    J_j\subset I_j,
\]
first by parent generation \(\ell_j\), and then in a fixed deterministic order.
Let \(\ell_-\) denote the initial generation of the chosen range, and set
\[
    \mathcal H_0=\mathcal F_{\ell_-},
    \qquad
    \mathcal H_j
    =
    \mathcal F_{\ell_-}
    \vee
    \sigma(U_{J_1},\ldots,U_{J_j}).
\]
Because the ordering is generation by generation, all weights in generations
strictly below \(\ell_j+1\) have already been revealed before \(U_{J_j}\) is
revealed.  Hence \(\mathcal H_{j-1}\) contains \(\mathcal F_{\ell_j}\), the
mass \(M_{I_j}\), the cap \(C_{I_j,n}\), and the indicator
\(\mathbf 1_{\{\tau_n>\ell_j\}}\), but it does not contain the fresh child
weight \(U_{J_j}\).  The latter is independent of \(\mathcal H_{j-1}\).  Therefore, for
\[
    \Delta_j
    =
    \mathbf 1_{\{\tau_n>\ell_j\}}
    X_{J_j,n}^{\mathrm{cap}}(\xi,d,\ell_j),
\]
we have
\(
    \mathbb E[\Delta_j\mid\mathcal H_{j-1}]=0.
\)
Moreover, the proof of
Lemma~\ref{L:arc-centered-capped-increment-bounds} applies with
\(\mathcal H_{j-1}\) in place of \(\mathcal F_{\ell_j}\), because the only
fresh random variable is \(U_{J_j}\).  Hence
\[
    \mathbb E
    \left[
        |\Delta_j|^2
        \mid
        \mathcal H_{j-1}
    \right]
    \le
    CL_n\mathbf 1_{\{\tau_n>\ell_j\}}
    |c_{J_j}(\xi,d,\ell_j)|^2M_{I_j}T_{\ell_j+1,n}.
\]

Summing this estimate and applying
Lemma~\ref{L:arc-stopped-square-sum-budgets}, first in the prefix range and
then in any finite post-bin range, gives the common deterministic bound
\[
    \sum_j
    \mathbb E
    \left[
        |\Delta_j|^2
        \mid
        \mathcal H_{j-1}
    \right]
    \le
    C_\gamma L_n2^{2\varepsilon n}|\xi|^{-a}.
\]
Thus we may take
\[
    V_{\xi,d,n}
    =
    C_\gamma L_n2^{2\varepsilon n}|\xi|^{-a}.
\]
Using \(L_n=2^{\kappa n}\), \(|\xi|\ge2^n\), and \(a=s+\delta_1\), we get
\[
    V_{\xi,d,n}
    \le
    C_\gamma2^{(\kappa+2\varepsilon)n}2^{-an}
    =
    C_\gamma2^{-sn}
    2^{-(\delta_1-\kappa-2\varepsilon)n}.
\]
The parameter choice gives 
\(
    \delta_1-\kappa-2\varepsilon>6\varepsilon,
\)
so we can choose
\(
    6\varepsilon<\eta_V<\delta_1-\kappa-2\varepsilon
\)
and obtain
\[
    V_{\xi,d,n}
    \le
    C_\gamma2^{-sn}2^{-\eta_Vn}.
\]
The edge-ordered partial sums are therefore martingales with jump bound
\(R_{\xi,d,n}\) and predictable quadratic variation bounded by
\(V_{\xi,d,n}\), as claimed.
\end{proof}

\begin{proposition}[Capped Freedman estimate]
\label{P:arc-capped-freedman}
There exist constants \(C_\gamma<\infty\), \(c_\gamma>0\), and \(\eta>0\) such
that, for every \(n\ge1\), every \(\xi\in\mathcal A_n\), and every
\(d\in\mathfrak D_{\xi,n}^{\mathrm{osc}}\),
\[
    \mathbb P
    \left(
        \left|
            \widehat F_{\xi,d,n}^{\mathrm{pre,cap}}
        \right|
        >
        2^{-sn/2}2^{-3\varepsilon n}
    \right)
    \le
    C_\gamma\exp(-c_\gamma2^{\eta n}),
\]
and
\[
    \mathbb P
    \left(
        \sup_{L>m_{\xi,d,+}}
        \left|
            \widehat F_{\xi,d,n,L}^{\mathrm{post,cap}}
        \right|
        >
        2^{-sn/2}2^{-3\varepsilon n}
    \right)
    \le
    C_\gamma\exp(-c_\gamma2^{\eta n}).
\]
Consequently,
\[
    \mathbb P
    \left(
        \mathcal G_n^{\mathrm{pre}}
        \cap
        \left\{
            |F_{\xi,d,n}^{\mathrm{pre,cap}}|
            +
            \sup_{L>m_{\xi,d,+}}
            |F_{\xi,d,n,L}^{\mathrm{post,cap}}|
            >
            2^{-sn/2}2^{-2\varepsilon n}
        \right\}
    \right)
    \le
    C_\gamma\exp(-c_\gamma2^{\eta n}).
\]
\end{proposition}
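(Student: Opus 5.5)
The two stopped estimates come from applying Lemma~\ref{L:complex-freedman} to the edge-ordered martingales of Lemma~\ref{L:arc-jump-quadratic-variation-budgets}. The unstopped statement then follows by comparing stopped and unstopped arrays on \(\mathcal G_n^{\mathrm{pre}}\).

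\emph{Prefix array.} The prefix sum \(\widehat F_{\xi,d,n}^{\mathrm{pre,cap}}\) is the terminal value of a finite edge-by-edge martingale \((S_j,\mathcal H_j)\). Its differences are \(\Delta_j=\mathbf 1_{\{\tau_n>\ell_j\}}X_{J_j,n}^{\mathrm{cap}}\), which are centered because the indicator is \(\mathcal H_{j-1}\)-measurable. By Lemma~\ref{L:arc-jump-quadratic-variation-budgets} we have \(|\Delta_j|\le R=C_\gamma2^{-sn/2}2^{-\eta_Rn}\), and the predictable quadratic variation is at most \(V=C_\gamma2^{-sn}2^{-\eta_Vn}\). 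Take \(t=2^{-sn/2}2^{-3\varepsilon n}\). Then
\[
    \frac{t^2}{V}\ge c\,2^{(\eta_V-6\varepsilon)n},
    \qquad
    \frac{t}{R}\ge c\,2^{(\eta_R-3\varepsilon)n}.
\]
Since \(t^2/(V+Rt)\ge\frac12\min\{t^2/V,\,t/R\}\), Freedman's inequality gives the bound \(4\exp(-c_\gamma2^{\eta n})\) with \(\eta=\min\{\eta_V-6\varepsilon,\eta_R-3\varepsilon\}>0\). This is exactly where the strict inequalities \(\eta_R>3\varepsilon\) and \(\eta_V>6\varepsilon\) are used.

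\emph{Post-bin array.} For each finite \(L\), the same argument applies to the martingale indexed by edges in generations \(m_+\le\ell<L\). Lemma~\ref{L:complex-freedman} controls the maximum over all partial sums, in particular over the generation endpoints \(L'\le L\). The budgets \(R\) and \(V\) are uniform in \(L\) by Lemma~\ref{L:arc-stopped-square-sum-budgets}. This gives
\[
    \mathbb P\Bigl(\max_{m_+<L'\le L}\bigl|\widehat F^{\mathrm{post,cap}}_{\xi,d,n,L'}\bigr|>t\Bigr)\le 4\exp(-c_\gamma2^{\eta n}).
\]
Letting \(L\to\infty\), the events increase to the supremum event, so monotone continuity of \(\mathbb P\) gives the second display. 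I expect this to be the main technical point, namely checking that the budgets are genuinely deterministic and independent of \(L\). Everything else has already been arranged in the preceding lemma.

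\emph{Unstopped arrays.} On \(\mathcal G_n^{\mathrm{pre}}=\{\tau_n=\infty\}\), every indicator \(\mathbf 1_{\{\tau_n>\ell\}}\) equals \(1\). Hence \(F^{\mathrm{pre,cap}}=\widehat F^{\mathrm{pre,cap}}\) and \(F^{\mathrm{post,cap}}_L=\widehat F^{\mathrm{post,cap}}_L\) for all \(L\) on this event. If the sum of the two unstopped quantities exceeds \(2^{-sn/2}2^{-2\varepsilon n}\), then one of them exceeds half of this. For \(n\ge 1/\varepsilon\) we have \(2^{-\varepsilon n}\le \tfrac12\), so half of the threshold is at least \(2^{-sn/2}2^{-3\varepsilon n}\). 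The event in the final display is therefore contained in the union of the two stopped events, and a union bound completes the proof. Finitely many small \(n\) are absorbed by enlarging \(C_\gamma\).
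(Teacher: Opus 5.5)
Your proposal is correct and follows the paper's proof essentially step by step. You apply Lemma~\ref{L:complex-freedman} with the budgets of Lemma~\ref{L:arc-jump-quadratic-variation-budgets} at threshold \(t_n=2^{-sn/2}2^{-3\varepsilon n}\), pass to \(\sup_L\) via monotone limits of events, and use the inclusion on \(\mathcal G_n^{\mathrm{pre}}\) based on \(2t_n\le 2^{-sn/2}2^{-2\varepsilon n}\) for large \(n\). Your explicit use of the maximal form of Freedman's inequality, which controls all generation endpoints \(L'\le L\) at once, makes precise the step the paper summarizes as ``uniformly in the terminal generation.''
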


\begin{proof}
Set
\(
    t_n=2^{-sn/2}2^{-3\varepsilon n}.
\)
By Lemma~\ref{L:arc-jump-quadratic-variation-budgets}, the stopped prefix
martingale has jump and predictable quadratic-variation bounds
\[
    R_{\xi,d,n}
    \le
    C_\gamma2^{-sn/2}2^{-\eta_R n},
    \qquad
    V_{\xi,d,n}
    \le
    C_\gamma2^{-sn}2^{-\eta_V n}.
\]
Since
\(
    t_n^2=2^{-sn}2^{-6\varepsilon n},
\)
and since \(\eta_R>3\varepsilon\) and \(\eta_V>6\varepsilon\), there exists
\(\eta>0\) such that
\[
    V_{\xi,d,n}+R_{\xi,d,n}t_n
    \le
    C_\gamma t_n^2 2^{-\eta n}.
\]
Applying Lemma~\ref{L:complex-freedman} gives
\[
    \mathbb P
    \left(
        \left|
            \widehat F_{\xi,d,n}^{\mathrm{pre,cap}}
        \right|
        >t_n
    \right)
    \le
    C_\gamma\exp(-c_\gamma2^{\eta n}).
\]

The same argument applies to the finite stopped post-bin martingales uniformly in the terminal generation.  
Taking the supremum over \(L\) follows by monotone convergence of the events.  
Hence
\[
    \mathbb P
    \left(
        \sup_{L>m_{\xi,d,+}}
        \left|
            \widehat F_{\xi,d,n,L}^{\mathrm{post,cap}}
        \right|
        >t_n
    \right)
    \le
    C_\gamma\exp(-c_\gamma2^{\eta n}).
\]

On \(\mathcal G_n^{\mathrm{pre}}\), we have \(\tau_n=+\infty\).  Hence the
stopped and unstopped centered capped arrays agree on
\(\mathcal G_n^{\mathrm{pre}}\).  Moreover, for all sufficiently large \(n\),
\(
    2t_n\le2^{-sn/2}2^{-2\varepsilon n}.
\)
Therefore,
\[
\begin{aligned}
    &
    \mathcal G_n^{\mathrm{pre}}
    \cap
    \left\{
        |F_{\xi,d,n}^{\mathrm{pre,cap}}|
        +
        \sup_{L>m_{\xi,d,+}}
        |F_{\xi,d,n,L}^{\mathrm{post,cap}}|
        >
        2^{-sn/2}2^{-2\varepsilon n}
    \right\}                                      \\
    &\qquad\subset
    \left\{
        |\widehat F_{\xi,d,n}^{\mathrm{pre,cap}}|>t_n
    \right\}
    \cup
    \left\{
        \sup_{L>m_{\xi,d,+}}
        |\widehat F_{\xi,d,n,L}^{\mathrm{post,cap}}|>t_n
    \right\}.
\end{aligned}
\]
The desired probability estimate follows from the two preceding Freedman
bounds and the union bound.  The finitely many small values of \(n\) are
absorbed by increasing \(C_\gamma\).
\end{proof}

\subsection{The \texorpdfstring{\(r\)}{r}-tail compensator}
\label{SS:r-tail-compensator}

We estimate the predictable drift created by capping.  Recall from
Lemma~\ref{L:arc-capping-identity} that
\[
    D_{J,n}(\xi,d,\ell)
    =
    c_J(\xi,d,\ell)M_I(\overline U_{I,n}-1),
\]
and equivalently
\[
    D_{J,n}(\xi,d,\ell)
    =
    -
    c_J(\xi,d,\ell)M_I
    \mathbb E\left[
        (U_J-C_{I,n})_+
        \mid
        \mathcal F_\ell
    \right].
\]

\begin{lemma}
\label{L:arc-one-step-r-tail-bound}
There exists \(C<\infty\) such that, for every \(n\ge1\), every
\(\ell\ge0\), every \(I\in\mathcal D_\ell([0,1])\), and every child
\(J\subset I\),
\[
    |D_{J,n}(\xi,d,\ell)|
    \le
    C|c_J(\xi,d,\ell)|
    M_I^r
    L_n^{1-r}
    T_{\ell+1,n}^{1-r}.
\]
\end{lemma}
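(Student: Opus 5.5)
The plan is to bound the conditional expected overshoot \(\mathbb E[(U_J-C_{I,n})_+\mid\mathcal F_\ell]\) by a Markov-type \(r\)-moment estimate, and then to multiply by \(|c_J|M_I\).

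If \(M_I=0\), then \(D_{J,n}=0\) and there is nothing to prove. Assume \(M_I>0\). By Lemma~\ref{L:arc-capping-identity},
\[
    D_{J,n}(\xi,d,\ell)=-c_J(\xi,d,\ell)M_I\,\mathbb E[(U_J-C_{I,n})_+\mid\mathcal F_\ell].
\]
Here \(C_{I,n}\) is \(\mathcal F_\ell\)-measurable and \(U_J\) is independent of \(\mathcal F_\ell\). We may therefore evaluate the conditional expectation as \(g(C_{I,n})\), where \(g(c)=\mathbb E[(U-c)_+]\) for a deterministic \(c>0\).

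The elementary pointwise inequality is
\[
    (u-c)_+\le u\,\mathbf 1_{\{u>c\}}\le u\,(u/c)^{r-1}=u^r c^{1-r}
    \qquad(u\ge0,\ c>0,\ r>1).
\]
It gives
\[
    g(c)\le \mathbb E[U^r]\,c^{1-r},
\]
and \(\mathbb E[U^r]<\infty\) by the standing finite-\(r\) hypothesis.

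Substituting \(c=C_{I,n}=2L_nT_{\ell+1,n}/M_I\) yields
\[
    \mathbb E[(U_J-C_{I,n})_+\mid\mathcal F_\ell]
    \le
    \mathbb E[U^r]\,2^{1-r}L_n^{1-r}T_{\ell+1,n}^{1-r}M_I^{r-1}.
\]
Multiplying by \(|c_J(\xi,d,\ell)|M_I\) gives the claimed bound
\[
    |D_{J,n}(\xi,d,\ell)|
    \le
    C\,|c_J(\xi,d,\ell)|\,M_I^r\,L_n^{1-r}\,T_{\ell+1,n}^{1-r},
    \qquad C=2^{1-r}\mathbb E[U^r].
\]
This \(C\) depends only on \(r\) and the law of \(U\).

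No step is a genuine obstacle. The only point to check is that replacing the conditional expectation by the deterministic function \(g\) evaluated at the predictable cap is legitimate. This follows from the freezing lemma for independent variables, since \(U_J\) is independent of \(\mathcal F_\ell\) and \(C_{I,n}\) is \(\mathcal F_\ell\)-measurable; the case \(C_{I,n}=+\infty\) is exactly \(M_I=0\), which was handled separately.
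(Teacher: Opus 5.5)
Your proposal is correct and follows the paper's proof: it uses the same $r$-moment tail bound $\mathbb E[(U_J-K)_+]\le K^{1-r}\mathbb E[U^r]$, applies it at the predictable cap $K=C_{I,n}=2L_nT_{\ell+1,n}/M_I$, and multiplies by $|c_J(\xi,d,\ell)|M_I$. Your explicit constant $2^{1-r}\mathbb E[U^r]$ and the freezing-lemma justification spell out details the paper leaves implicit. The overshoot formula for $D_{J,n}$ is pure algebra together with $\mathbb E U_J=1$ and independence, so it does not need the $\mathcal G_n^{\mathrm{pre}}$ hypothesis under which Lemma~\ref{L:arc-capping-identity} is stated.
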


\begin{proof}
If \(M_I=0\), then the estimate is immediate.  Assume \(M_I>0\).  By the
definition of the cap,
\[
    C_{I,n}
    =
    L_n\frac{2T_{\ell+1,n}}{M_I}.
\]
For \(K>0\),
\[
    \mathbb E[(U_J-K)_+]
    \le
    \mathbb E[U_J\mathbf 1_{\{U_J>K\}}]
    \le
    K^{1-r}\mathbb E[U_J^r].
\]
Applying this with \(K=C_{I,n}\), and using \(\mathbb E[U^r]<\infty\), gives
\[
\begin{aligned}
    |D_{J,n}(\xi,d,\ell)|
    &\le
    |c_J(\xi,d,\ell)|M_I
    \mathbb E[(U_J-C_{I,n})_+\mid\mathcal F_\ell]      \\
    &\le
    C|c_J(\xi,d,\ell)|M_I C_{I,n}^{1-r}               \\
    &\le
    C|c_J(\xi,d,\ell)|
    M_I^r
    L_n^{1-r}
    T_{\ell+1,n}^{1-r}.
\end{aligned}
\]
This proves the claim.
\end{proof}

\begin{definition}
\label{D:arc-level-compensator-envelope}
For \(n\ge1\) and \(\ell\ge0\), define
\[
    R_{\ell,n}
    =
    2^{-(\varepsilon+\kappa)(r-1)n}
    2^{-\beta_{\mathrm{comp}}\ell},
\]
where \(\beta_{\mathrm{comp}}\) is the exponent fixed in
Subsection~\ref{SS:parameters-good-events}.
\end{definition}

\begin{lemma}[Expected level envelope]
\label{L:arc-level-compensator-envelope}
There exists \(C<\infty\) such that, for every \(n\ge1\) and every
\(\ell\ge0\),
\[
    \mathbb E\left[
        \sum_{I\in\mathcal D_\ell([0,1])}
        M_I^r
        L_n^{1-r}
        T_{\ell+1,n}^{1-r}
    \right]
    \le
    CR_{\ell,n}.
\]
\end{lemma}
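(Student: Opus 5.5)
The plan is to pull the deterministic factors out of the expectation and apply the $r$-mass budget of Lemma~\ref{L:arc-r-mass-budgets} with $k=\ell$. Since $L_n$ and $T_{\ell+1,n}$ are deterministic and $r>1$, the quantity to bound is
\[
    L_n^{1-r}T_{\ell+1,n}^{1-r}\,
    \mathbb E\Bigl[\sum_{I\in\mathcal D_\ell([0,1])}\widetilde\nu_\ell(I)^r\Bigr].
\]

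For the expectation, I invoke Lemma~\ref{L:arc-r-mass-budgets} with $k=\ell$ (the prelimit case $0\le k\le\ell$ is allowed). This gives $C_r2^{-r(s+\delta)\ell}$. Alternatively, one can see it directly from $M_I=2^{-\ell}Q_v$ at level $\ell$: then $\mathbb E\sum_I M_I^r=(2^{1-r}\mathbb E[U^r])^\ell\le 2^{-r(s+\delta)\ell}$, which avoids even the terminal-moment lemma.

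For the deterministic factors, I substitute $L_n^{1-r}=2^{-\kappa(r-1)n}$ and
\[
    T_{\ell+1,n}^{1-r}=2^{-\varepsilon(r-1)n}2^{a(r-1)(\ell+1)}=2^{a(r-1)}\,2^{-\varepsilon(r-1)n}2^{a(r-1)\ell}.
\]
The product is then
\[
    C\,2^{-(\varepsilon+\kappa)(r-1)n}\,2^{-(r(s+\delta)-(r-1)a)\ell}
    = C\,2^{-(\varepsilon+\kappa)(r-1)n}\,2^{-\beta_{\mathrm{comp}}\ell}
    = C R_{\ell,n},
\]
using the definition $\beta_{\mathrm{comp}}=r(s+\delta)-(r-1)a$. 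The constant $2^{a(r-1)}C_r$ depends only on $r$, $a$ and the law of $U$.

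The computation is routine. The main thing to check is the exponent bookkeeping, namely that the shift $\ell+1$ in $T_{\ell+1,n}$ costs only the bounded factor $2^{a(r-1)}$ and that the sign of $1-r$ inverts the threshold correctly. No stopping or good-event restriction is needed, because the envelope is an unconditional expectation.
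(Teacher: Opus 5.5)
Your proposal is correct and matches the paper's proof: pull out the deterministic factor $L_n^{1-r}T_{\ell+1,n}^{1-r}\le C\,2^{-(\varepsilon+\kappa)(r-1)n}2^{a(r-1)\ell}$, then apply the dyadic $r$-mass budget at $k=\ell$ to obtain $C R_{\ell,n}$ via $\beta_{\mathrm{comp}}=r(s+\delta)-(r-1)a$. Your side remark is also valid and slightly more elementary: at level $\ell$ one has the exact identity $\mathbb E\sum_I M_I^r=(2^{1-r}\mathbb E[U^r])^\ell$, which makes the terminal-moment lemma unnecessary here.
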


\begin{proof}
By the definition of \(L_n\) and \(T_{\ell+1,n}\),
\[
    L_n^{1-r}
    =
    2^{-\kappa(r-1)n},
\qquad
\text{and}
\qquad
    T_{\ell+1,n}^{1-r}
    =
    2^{-\varepsilon(r-1)n}
    2^{a(r-1)(\ell+1)}.
\]
Therefore,
\[
    L_n^{1-r}T_{\ell+1,n}^{1-r}
    \le
    C
    2^{-(\varepsilon+\kappa)(r-1)n}
    2^{a(r-1)\ell}.
\]
Using the dyadic \(r\)-mass budget,
Lemma~\ref{L:arc-r-mass-budgets}, we get
\[
\begin{aligned}
    \mathbb E\left[
        \sum_{I\in\mathcal D_\ell([0,1])}
        M_I^r
        L_n^{1-r}
        T_{\ell+1,n}^{1-r}
    \right]
    &\le
    C
    2^{-(\varepsilon+\kappa)(r-1)n}
    2^{a(r-1)\ell}
    2^{-r(s+\delta)\ell}        \\
    &=
    C
    2^{-(\varepsilon+\kappa)(r-1)n}
    2^{-\beta_{\mathrm{comp}}\ell}       \\
    &=
    CR_{\ell,n}.
\end{aligned}
\]
\end{proof}

Choose \(C_{\gamma,0}<\infty\) large enough so that
Lemma~\ref{L:arc-oscillatory-range-arclength} implies
\[
    \vartheta n-C_{\gamma,0}
    \le
    m_{\xi,d}
    \le
    n+C_{\gamma,0}
\]
for every \(n\ge1\), every \(\xi\in\mathcal A_n\), and every
\(d\in\mathfrak D_{\xi,n}^{\mathrm{osc}}\), after enlarging
\(C_{\gamma,0}\) to absorb finitely many small values of \(n\).  Set
\[
    m_n^-
    =
    \max\{0,\lfloor \vartheta n-C_{\gamma,0}\rfloor\},
    \qquad
    m_n^+
    =
    \lceil n+C_{\gamma,0}\rceil.
\]
Since the prefix--post splitting is made at the truncated generation
\(
    m_{\xi,d,+}=\max\{m_{\xi,d},0\},
\)
we shall use the corresponding deterministic window
\[
    m_n^-\le m_{\xi,d,+}\le m_n^+.
\]

\begin{definition}
\label{D:arc-absolute-compensator-arrays}
For \(n\ge1\), \(\xi\in\mathcal A_n\), and
\(d\in\mathfrak D_{\xi,n}^{\mathrm{osc}}\), recall that
\(
    m_+=m_{\xi,d,+}.
\)
Define the prefix absolute compensator by
\[
    \mathfrak C_{\xi,d,n}^{\mathrm{pre}}
    =
    \sum_{\ell=0}^{m_+-1}
    \sum_{I\in\mathcal D_\ell([0,1])}
    \sum_{\substack{
        J\in\mathcal D_{\ell+1}([0,1])\\
        J\subset I
    }}
    |D_{J,n}(\xi,d,\ell)|.
\]
For every integer \(L\ge1\), define the finite post-bin absolute compensator by
\[
    \mathfrak C_{\xi,d,n,L}^{\mathrm{post}}
    =
    \sum_{\ell=m_+}^{L-1}
    \sum_{I\in\mathcal D_\ell([0,1])}
    \sum_{\substack{
        J\in\mathcal D_{\ell+1}([0,1])\\
        J\subset I
    }}
    |D_{J,n}(\xi,d,\ell)|,
\]
with the convention that the sum is \(0\) when \(L\le m_+\).
\end{definition}

\begin{lemma}
\label{L:arc-bounded-band-count-phase-bin}
There exists \(C_\gamma<\infty\) such that, for every \(\xi\ne0\) and every
integer \(m\),
\[
    \#\{d\in\mathfrak D_\xi:m_{\xi,d}=m\}
    \le
    C_\gamma.
\]
\end{lemma}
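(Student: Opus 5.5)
The plan is to compare the phase-bin generation $m_{\xi,d}$ with the band scale $d$ itself. Everything follows from the injectivity of $d\mapsto |\xi|d$ on dyadic numbers.

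Fix $\xi\ne0$ and an integer $m$. By definition, $m_{\xi,d}$ is the unique integer with
\[
    2^{m_{\xi,d}-1}<|\xi|d\le 2^{m_{\xi,d}}.
\]
So the condition $m_{\xi,d}=m$ is equivalent to
\[
    2^{m-1}|\xi|^{-1}<d\le 2^{m}|\xi|^{-1}.
\]
This is a half-open interval of multiplicative length $2$. Since the active scales $d\in\mathfrak D_\xi$ all lie in $2^{\mathbb Z}$ by construction of the cutoffs $\chi_{\xi,d}$, at most one dyadic number $d$ can lie in such an interval. Consequently
\[
    \#\{d\in\mathfrak D_\xi:m_{\xi,d}=m\}\le1,
\]
and the lemma holds with $C_\gamma=1$. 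No use of the curve geometry, and in particular no use of Proposition~\ref{P:arc-endpoint-safe-phase-bins}(v), is needed.

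The low-frequency convention $\mathfrak D_\xi=\varnothing$ for $|\xi|\le16$ makes that case trivial. There is no real obstacle here. The only point to check is that the convention $2^{m-1}<|\xi|d\le 2^m$ uses a strict inequality on one side and a non-strict one on the other, which is what makes the window half-open and hence able to contain at most one power of two. The statement is phrased with a general constant $C_\gamma$ presumably so that it is robust to alternative, non-dyadic choices of band scales. If one later replaced the dyadic scales by a finer geometric grid with ratio $\lambda>1$, the same argument would give the bound $\lceil \log 2/\log\lambda\rceil+1$.
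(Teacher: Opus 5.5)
Your proof is correct and follows essentially the same route as the paper. Both rewrite \(m_{\xi,d}=m\) as \(2^{m-1}|\xi|^{-1}<d\le 2^{m}|\xi|^{-1}\) and use that the band scales lie in \(2^{\mathbb Z}\); your observation that a half-open window \((x,2x]\) contains exactly one power of two sharpens the paper's \(O(1)\) count to the explicit constant \(1\).
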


\begin{proof}
If \(m_{\xi,d}=m\), then
\(
    2^{m-1}|\xi|^{-1}<d\le2^m|\xi|^{-1}.
\)
Since \(d\) ranges over dyadic values, this interval contains only \(O(1)\)
possible values of \(d\).  This proves the claim.
\end{proof}

We shall also use the following immediate consequence.  With
\(m_{\xi,d,+}:=\max\{m_{\xi,d},0\}\), after increasing \(C_\gamma\) if necessary,
one has
\begin{equation}
\label{E:truncated-band-count}
    \#\{d\in\mathfrak D_{\xi,n}^{\mathrm{osc}}:m_{\xi,d,+}=m\}
    \le
    C_\gamma
    \qquad
    (n\ge1,\ \xi\in\mathcal A_n,\ m\ge0).
\end{equation}
Indeed, for \(m\ge1\) this follows immediately from
Lemma~\ref{L:arc-bounded-band-count-phase-bin}.  For \(m=0\), the deterministic
range for \(m_{\xi,d}\) gives
\[
    m_{\xi,d}\ge \vartheta n-C_{\gamma,0}\ge -C_{\gamma,0},
\]
so the level \(m_{\xi,d,+}=0\) is contained in the union of \(O_\gamma(1)\)
untruncated levels \(m_{\xi,d}=q\le0\).  Applying
Lemma~\ref{L:arc-bounded-band-count-phase-bin} to these levels gives
\eqref{E:truncated-band-count}.

\begin{lemma}
\label{L:arc-prefix-compensator-envelope}
There exists a nonnegative random variable \(E_n^{\mathrm{pre}}\) such that
\[
    \sup_{\xi\in\mathcal A_n}
    \sum_{d\in\mathfrak D_{\xi,n}^{\mathrm{osc}}}
    \mathfrak C_{\xi,d,n}^{\mathrm{pre}}
    \le
    E_n^{\mathrm{pre}},
\qquad
\text{and}
\qquad
    \mathbb E[E_n^{\mathrm{pre}}]
    \le
    C_\gamma n^2
    2^{-\theta_{\mathrm{comp}}\vartheta n}.
\]
\end{lemma}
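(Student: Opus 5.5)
We construct a single random envelope that does not depend on \(\xi\) by bounding each compensator term through the one-step bound of Lemma~\ref{L:arc-one-step-r-tail-bound}, and we remove the \(\xi\)-dependence by summing over the whole deterministic window of possible cut generations. Fix \(n\), \(\xi\in\mathcal A_n\) and \(d\in\mathfrak D_{\xi,n}^{\mathrm{osc}}\), and write \(m_+=m_{\xi,d,+}\). In the prefix range \(\ell<m_+\), we would like the coefficient bound \(|c_J|\le C_\gamma2^{\ell-m_{\xi,d}}\) from Proposition~\ref{P:arc-deterministic-annular-reduction}. It is cleaner, however, to discard the gain and use the crude bound \(|c_J|\le1\) (see below). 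Each parent has two children. Setting
\[
    Z_\ell=\sum_{I\in\mathcal D_\ell([0,1])}M_I^rL_n^{1-r}T_{\ell+1,n}^{1-r},
\]
we therefore get the deterministic domination
\[
    \mathfrak C_{\xi,d,n}^{\mathrm{pre}}
    \le
    C\sum_{\ell=0}^{m_+-1}Z_\ell .
\]
The random variable \(Z_\ell\) is independent of \(\xi\) and \(d\).

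Next we sum over the oscillatory bands. We group them by the value \(m=m_{\xi,d,+}\), which lies in the deterministic window \(m_n^-\le m\le m_n^+\). By \eqref{E:truncated-band-count}, each value of \(m\) is attained by at most \(C_\gamma\) bands. Hence
\[
    \sum_{d\in\mathfrak D_{\xi,n}^{\mathrm{osc}}}\mathfrak C_{\xi,d,n}^{\mathrm{pre}}
    \le
    C_\gamma\sum_{m=m_n^-}^{m_n^+}\sum_{\ell=0}^{m-1}Z_\ell
    =:E_n^{\mathrm{pre}},
\]
and the right-hand side is free of \(\xi\), so it bounds the supremum. By Lemma~\ref{L:arc-level-compensator-envelope},
\[
    \mathbb E[E_n^{\mathrm{pre}}]
    \le
    C_\gamma\,(m_n^+-m_n^-+1)\sum_{\ell\ge0}R_{\ell,n}
    \le
    C_\gamma\, n\,2^{-(\varepsilon+\kappa)(r-1)n}.
\]
Here we used \(\beta_{\mathrm{comp}}>0\), so that the geometric sum in \(\ell\) is finite.

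This crude estimate does not yet give the claimed rate \(2^{-\theta_{\mathrm{comp}}\vartheta n}\), and this is the main obstacle. We must recover decay of order \(2^{-\theta_{\mathrm{comp}} m}\) with \(m\ge m_n^-\approx\vartheta n\), and the coefficient gain \(2^{\ell-m}\) in the prefix range supplies exactly that. Keeping the gain, we instead bound
\[
    \mathfrak C^{\mathrm{pre}}
    \le
    C_\gamma\sum_{\ell<m}2^{\ell-m}Z_\ell ,
\]
and define
\[
    E_n^{\mathrm{pre}}
    =
    C_\gamma\sum_{m=m_n^-}^{m_n^+}\sum_{\ell<m}2^{\ell-m}Z_\ell .
\]
Its expectation is at most
\[
    C_\gamma\sum_m\sum_{\ell<m}2^{\ell-m}2^{-\beta_{\mathrm{comp}}\ell}.
\]
The inner sum splits into two cases.

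\emph{Case \(\beta_{\mathrm{comp}}<1\).} The inner sum is \(\lesssim 2^{-\beta_{\mathrm{comp}}m}\).

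\emph{Case \(\beta_{\mathrm{comp}}\ge1\).} The inner sum is \(\lesssim m\,2^{-m}\). The factor \(m\) appears only in the borderline case \(\beta_{\mathrm{comp}}=1\), and it is the source of the second factor of \(n\).

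In both cases the inner sum is \(\lesssim m\,2^{-\theta_{\mathrm{comp}}m}\). Summing over the \(O(n)\) window values \(m\ge m_n^-\ge\vartheta n-C_{\gamma,0}\) then gives \(C_\gamma n^2 2^{-\theta_{\mathrm{comp}}\vartheta n}\). The additional factor \(2^{-(\varepsilon+\kappa)(r-1)n}\le1\) may simply be dropped.

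The remaining care is the case \(m_{\xi,d}\le0\), where \(m_+=0\). There the prefix sum is empty, so the bound holds trivially. For \(m\ge1\) the coefficient bound \(|c_J|\le C_\gamma2^{\ell-m_{\xi,d}}\) with \(m_{\xi,d}=m_+\) applies directly.
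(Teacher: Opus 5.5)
Your proposal is correct and, once you keep the prefix coefficient gain \(2^{\ell-m}\), it is essentially the paper's proof: the same \(\xi\)-free envelope \(C_\gamma\sum_{m=m_n^-}^{m_n^+}\sum_{\ell<m}2^{\ell-m}\mathcal R_{\ell,n}\), obtained from the one-step \(r\)-tail bound, the truncated band count \eqref{E:truncated-band-count} and the deterministic window, followed by the estimate \(\sum_{\ell<m}2^{\ell-m}2^{-\beta_{\mathrm{comp}}\ell}\lesssim m\,2^{-\theta_{\mathrm{comp}}m}\). The preliminary crude bound using \(|c_J|\le1\) is a harmless detour, and you correctly discard it.
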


\begin{proof}
For \(\ell\ge0\), put
\[
    \mathcal R_{\ell,n}
    =
    \sum_{I\in\mathcal D_\ell([0,1])}
    M_I^rL_n^{1-r}T_{\ell+1,n}^{1-r}.
\]
By Lemma~\ref{L:arc-level-compensator-envelope},
\(
    \mathbb E[\mathcal R_{\ell,n}]
    \le
    CR_{\ell,n}.
\)

Fix \(\xi\in\mathcal A_n\) and
\(d\in\mathfrak D_{\xi,n}^{\mathrm{osc}}\), and write
\(
    m_+=m_{\xi,d,+}.
\)
If \(m_{\xi,d}\le0\), then \(m_+=0\), and the prefix compensator is empty.  If
\(m_{\xi,d}>0\), then \(m_+=m_{\xi,d}\), and Lemma~\ref{L:arc-one-step-r-tail-bound},
together with the prefix coefficient bound
\[
    |c_J(\xi,d,\ell)|
    \le
    C_\gamma2^{\ell-m_{\xi,d}}
    \qquad (0\le\ell<m_{\xi,d}),
\]
gives
\[
    \mathfrak C_{\xi,d,n}^{\mathrm{pre}}
    \le
    C_\gamma
    \sum_{\ell=0}^{m_+-1}
    2^{\ell-m_+}\mathcal R_{\ell,n}.
\]
Thus the same bound holds for every \(d\), with the convention that the sum is
empty when \(m_+=0\).

Now sum over \(d\in\mathfrak D_{\xi,n}^{\mathrm{osc}}\).  By
\eqref{E:truncated-band-count} and the deterministic window
\(m_n^-\le m_{\xi,d,+}\le m_n^+\), the right-hand side is bounded by
\[
    C_\gamma
    \sum_{m=m_n^-}^{m_n^+}
    \sum_{\ell=0}^{m-1}
    2^{\ell-m}\mathcal R_{\ell,n}.
\]
This bound no longer depends on \(\xi\).  We therefore define
\[
    E_n^{\mathrm{pre}}
    =
    C_\gamma
    \sum_{m=m_n^-}^{m_n^+}
    \sum_{\ell=0}^{m-1}
    2^{\ell-m}\mathcal R_{\ell,n}.
\]
Then
\[
    \sup_{\xi\in\mathcal A_n}
    \sum_{d\in\mathfrak D_{\xi,n}^{\mathrm{osc}}}
    \mathfrak C_{\xi,d,n}^{\mathrm{pre}}
    \le
    E_n^{\mathrm{pre}}.
\]

It remains to estimate the expectation.  Since
\[
    R_{\ell,n}
    =
    2^{-(\varepsilon+\kappa)(r-1)n}
    2^{-\beta_{\mathrm{comp}}\ell}
    \le
    2^{-\beta_{\mathrm{comp}}\ell},
\]
we get
\[
    \mathbb E[E_n^{\mathrm{pre}}]
    \le
    C_\gamma
    \sum_{m=m_n^-}^{m_n^+}
    \sum_{\ell=0}^{m-1}
    2^{\ell-m}R_{\ell,n}
    \le
    C_\gamma
    \sum_{m=m_n^-}^{m_n^+}
    \sum_{\ell=0}^{m-1}
    2^{\ell-m}2^{-\beta_{\mathrm{comp}}\ell}.
\]
Recall that
\(
    \theta_{\mathrm{comp}}
    =
    \min\{\beta_{\mathrm{comp}},1\}.
\)
For each \(m\ge1\),
\[
    \sum_{\ell=0}^{m-1}
    2^{\ell-m}2^{-\beta_{\mathrm{comp}}\ell}
    \le
    Cm2^{-\theta_{\mathrm{comp}}m}.
\]
Therefore,
\[
    \mathbb E[E_n^{\mathrm{pre}}]
    \le
    C_\gamma n^2
    2^{-\theta_{\mathrm{comp}}m_n^-}
    \le
    C_\gamma n^2
    2^{-\theta_{\mathrm{comp}}\vartheta n},
\]
after increasing \(C_\gamma\).
\end{proof}

\begin{lemma}
\label{L:arc-post-compensator-envelope}
There exists a nonnegative random variable \(E_n^{\mathrm{post}}\) such that
\[
    \sup_{\xi\in\mathcal A_n}
    \sup_{L\ge1}
    \sum_{d\in\mathfrak D_{\xi,n}^{\mathrm{osc}}}
    \mathfrak C_{\xi,d,n,L}^{\mathrm{post}}
    \le
    E_n^{\mathrm{post}},
\qquad
\text{and}
\qquad
    \mathbb E[E_n^{\mathrm{post}}]
    \le
    C_\gamma n
    2^{-\beta_{\mathrm{comp}}\vartheta n}.
\]
\end{lemma}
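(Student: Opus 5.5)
We follow the proof of Lemma~\ref{L:arc-prefix-compensator-envelope}, replacing the prefix coefficient bound by the uniform post-bin bound \(|c_J(\xi,d,\ell)|\le C_\gamma\) for \(\ell\ge m_{\xi,d}\) from Proposition~\ref{P:arc-phasebin-coefficient-estimates}. With the same level quantities
\[
    \mathcal R_{\ell,n}
    =
    \sum_{I\in\mathcal D_\ell([0,1])}
    M_I^rL_n^{1-r}T_{\ell+1,n}^{1-r},
\]
Lemma~\ref{L:arc-one-step-r-tail-bound} gives, for every fixed \(\xi\in\mathcal A_n\), every \(d\in\mathfrak D_{\xi,n}^{\mathrm{osc}}\) with \(m_+=m_{\xi,d,+}\), and every \(L\ge1\),
\[
    \mathfrak C_{\xi,d,n,L}^{\mathrm{post}}
    \le
    C_\gamma\sum_{\ell=m_+}^{L-1}\mathcal R_{\ell,n}
    \le
    C_\gamma\sum_{\ell=m_+}^{\infty}\mathcal R_{\ell,n}.
\]
Each child \(J\subset I\) contributes an absolute factor \(2\). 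For \(\ell\ge m_+\ge m_{\xi,d}\) the post-bin coefficient bound applies. When \(m_+=0>m_{\xi,d}\), we still have \(\ell\ge m_{\xi,d}\), so the uniform bound is valid for all \(\ell\) in the range. The right-hand side is monotone in \(L\), so the supremum over \(L\) is handled simply by passing to the infinite tail sum.

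Next we sum over the oscillatory bands. By \eqref{E:truncated-band-count}, at most \(C_\gamma\) bands share a given value of \(m_+\), and the deterministic window gives \(m_n^-\le m_+\le m_n^+\). This suggests defining the \(\xi\)-free envelope
\[
    E_n^{\mathrm{post}}
    =
    C_\gamma\sum_{m=m_n^-}^{m_n^+}\sum_{\ell=m}^{\infty}\mathcal R_{\ell,n},
\]
which dominates \(\sup_{\xi}\sup_{L}\sum_d\mathfrak C_{\xi,d,n,L}^{\mathrm{post}}\). Nonnegativity and measurability are clear, since \(\mathcal R_{\ell,n}\) is a nonnegative random variable. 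Finiteness almost surely will follow from the expectation bound below.

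To bound the expectation, we use Tonelli together with Lemma~\ref{L:arc-level-compensator-envelope}:
\[
    \mathbb E[E_n^{\mathrm{post}}]
    \le
    C_\gamma\sum_{m=m_n^-}^{m_n^+}\sum_{\ell\ge m}R_{\ell,n}.
\]
Since \(\beta_{\mathrm{comp}}>s>0\), the geometric tail satisfies \(\sum_{\ell\ge m}2^{-\beta_{\mathrm{comp}}\ell}\le C2^{-\beta_{\mathrm{comp}}m}\), and we drop the factor \(2^{-(\varepsilon+\kappa)(r-1)n}\le1\). The outer sum then has at most \(m_n^+-m_n^-+1\le Cn\) terms, each at most \(C2^{-\beta_{\mathrm{comp}}m_n^-}\). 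Finally, \(m_n^-\ge\vartheta n-C_{\gamma,0}-1\), so this is at most \(C_\gamma n2^{-\beta_{\mathrm{comp}}\vartheta n}\).

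There is no real obstacle here. The only point requiring care is making sure the uniform coefficient bound covers the truncated case \(m_{\xi,d}\le0\), which was addressed above, so the argument is routine.
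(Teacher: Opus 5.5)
Your proposal is correct and follows essentially the same route as the paper. You use the level quantities \(\mathcal R_{\ell,n}\), the uniform post-bin coefficient bound, and the truncated band count with the deterministic window \(m_n^-\le m_{\xi,d,+}\le m_n^+\) to build an envelope free of \(\xi\) and \(L\), then sum the geometric tail of \(R_{\ell,n}\) for the expectation bound. Your explicit check that the uniform coefficient bound still applies in the truncated case \(m_{\xi,d}\le 0\) is a small point the paper leaves implicit.
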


\begin{proof}
With \(\mathcal R_{\ell,n}\) as in the proof of
Lemma~\ref{L:arc-prefix-compensator-envelope}, Lemma~\ref{L:arc-one-step-r-tail-bound}
and the post-bin coefficient bound
\[
    |c_J(\xi,d,\ell)|\le C_\gamma
    \qquad(\ell\ge m_{\xi,d})
\]
give
\[
    \mathfrak C_{\xi,d,n,L}^{\mathrm{post}}
    \le
    C_\gamma
    \sum_{\ell=m_{\xi,d,+}}^{L-1}
    \mathcal R_{\ell,n}.
\]
Summing over \(d\in\mathfrak D_{\xi,n}^{\mathrm{osc}}\), and using
\eqref{E:truncated-band-count} together with the deterministic window
\(m_n^-\le m_{\xi,d,+}\le m_n^+\), gives
\[
    \sum_{d\in\mathfrak D_{\xi,n}^{\mathrm{osc}}}
    \mathfrak C_{\xi,d,n,L}^{\mathrm{post}}
    \le
    C_\gamma
    \sum_{m=m_n^-}^{m_n^+}
    \sum_{\ell=m}^{\infty}
    \mathcal R_{\ell,n}.
\]
The right-hand side is independent of \(\xi\) and \(L\).  Define
\[
    E_n^{\mathrm{post}}
    =
    C_\gamma
    \sum_{m=m_n^-}^{m_n^+}
    \sum_{\ell=m}^{\infty}
    \mathcal R_{\ell,n}.
\]
Then
\[
    \sup_{\xi\in\mathcal A_n}
    \sup_{L\ge1}
    \sum_{d\in\mathfrak D_{\xi,n}^{\mathrm{osc}}}
    \mathfrak C_{\xi,d,n,L}^{\mathrm{post}}
    \le
    E_n^{\mathrm{post}}.
\]

Taking expectations and using
\(
    \mathbb E[\mathcal R_{\ell,n}]
    \le
    CR_{\ell,n}
    \le
    C2^{-\beta_{\mathrm{comp}}\ell}
\)
gives
\[
\begin{aligned}
    \mathbb E[E_n^{\mathrm{post}}]
    \le
    C_\gamma
    \sum_{m=m_n^-}^{m_n^+}
    \sum_{\ell=m}^{\infty}
    2^{-\beta_{\mathrm{comp}}\ell}\le
    C_\gamma
    \sum_{m=m_n^-}^{m_n^+}
    2^{-\beta_{\mathrm{comp}}m} \le
    C_\gamma n
    2^{-\beta_{\mathrm{comp}}\vartheta n},
\end{aligned}
\]
after increasing \(C_\gamma\).
\end{proof}

\begin{proposition}[\(r\)-tail compensator estimate]
\label{P:arc-r-tail-compensator}
There exist constants \(C_\gamma<\infty\) and \(c_\gamma>0\) such that, for
every \(n\ge1\),
\[
    \mathbb P\left(
        \sup_{\xi\in\mathcal A_n}
        \left(
            \sum_{d\in\mathfrak D_{\xi,n}^{\mathrm{osc}}}
            \mathfrak C_{\xi,d,n}^{\mathrm{pre}}
            +
            \sup_{L\ge1}
            \sum_{d\in\mathfrak D_{\xi,n}^{\mathrm{osc}}}
            \mathfrak C_{\xi,d,n,L}^{\mathrm{post}}
        \right)
        >
        2n^{-2}2^{-sn/2}
    \right)
    \le
    C_\gamma2^{-c_\gamma n}.
\]
Consequently, these probabilities are summable in \(n\).
\end{proposition}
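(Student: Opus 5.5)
The plan is to combine the two deterministic envelopes already constructed with Markov's inequality. By Lemma~\ref{L:arc-prefix-compensator-envelope} and Lemma~\ref{L:arc-post-compensator-envelope}, the random quantity inside the probability is bounded pointwise, uniformly in \(\xi\in\mathcal A_n\) and in \(L\), by
\[
    E_n:=E_n^{\mathrm{pre}}+E_n^{\mathrm{post}}.
\]
This is the key structural point. The supremum over the uncountable annulus, together with the supremum over truncation levels, has already been absorbed into a single nonnegative random variable that does not depend on \(\xi\). So no grid or union bound over frequencies is needed at this stage. The event in question is contained in \(\{E_n>2n^{-2}2^{-sn/2}\}\).

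Next we apply Markov's inequality:
\[
    \mathbb P\bigl(E_n>2n^{-2}2^{-sn/2}\bigr)
    \le
    \tfrac12 n^2 2^{sn/2}\bigl(\mathbb E[E_n^{\mathrm{pre}}]+\mathbb E[E_n^{\mathrm{post}}]\bigr)
    \le
    C_\gamma n^4\bigl(2^{-(\theta_{\mathrm{comp}}\vartheta-s/2)n}+2^{-(\beta_{\mathrm{comp}}\vartheta-s/2)n}\bigr).
\]
Here we used the expectation bounds \(C_\gamma n^2 2^{-\theta_{\mathrm{comp}}\vartheta n}\) and \(C_\gamma n 2^{-\beta_{\mathrm{comp}}\vartheta n}\) from the two lemmas, and bounded \(n^2\cdot n^2\) crudely by \(n^4\).

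The only point needing care is the sign of the exponents. Lemma~\ref{lemma:annular-parameter-choice} gives exactly
\[
    \theta_{\mathrm{comp}}\vartheta>\tfrac s2
    \qquad\text{and}\qquad
    \beta_{\mathrm{comp}}\vartheta>\tfrac s2.
\]
Set \(c_0=\min\{\theta_{\mathrm{comp}}\vartheta,\beta_{\mathrm{comp}}\vartheta\}-s/2>0\). The polynomial factor \(n^4\) is then absorbed as \(n^4\le C 2^{c_0 n/2}\), which yields the bound \(C_\gamma 2^{-c_\gamma n}\) with \(c_\gamma=c_0/2\). Summability in \(n\) is then immediate from the geometric bound.

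I do not expect a genuine obstacle here. The substantive work lives in the envelope lemmas and in the parameter choice ensuring \(\theta_{\mathrm{comp}}\vartheta>s/2\). One point to double-check is measurability of the supremum over \(\xi\). This is sidestepped because we only need the event to be contained in \(\{E_n>\cdot\}\), and \(E_n\) is measurable; if needed, one can interpret the probability as an outer probability. One should also confirm that the prefix envelope already accounts for \(m_{\xi,d}\le 0\), where it is an empty sum, so that no case is missed.
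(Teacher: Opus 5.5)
Your proposal is correct and is essentially the paper's proof. You dominate the quantity by \(E_n^{\mathrm{pre}}+E_n^{\mathrm{post}}\) from the two envelope lemmas, apply Markov's inequality at the threshold \(2n^{-2}2^{-sn/2}\), and absorb the polynomial factors using \(\theta_{\mathrm{comp}}\vartheta>s/2\) and \(\beta_{\mathrm{comp}}\vartheta>s/2\) from the parameter-choice lemma. The only cosmetic difference is the order of steps: the paper first bounds \(\mathbb E[E_n^{\mathrm{pre}}+E_n^{\mathrm{post}}]\le C_\gamma 2^{-sn/2}2^{-c_\gamma n}\) and then applies Markov, while you apply Markov first and track the exponents explicitly.
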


\begin{proof}
By Lemma~\ref{L:arc-prefix-compensator-envelope}, there is a nonnegative random
variable \(E_n^{\mathrm{pre}}\) such that
\[
    \sup_{\xi\in\mathcal A_n}
    \sum_{d\in\mathfrak D_{\xi,n}^{\mathrm{osc}}}
    \mathfrak C_{\xi,d,n}^{\mathrm{pre}}
    \le
    E_n^{\mathrm{pre}},
\qquad
\text{and}
\qquad
    \mathbb E[E_n^{\mathrm{pre}}]
    \le
    C_\gamma n^2
    2^{-\theta_{\mathrm{comp}}\vartheta n}.
\]
By Lemma~\ref{L:arc-post-compensator-envelope}, there is a nonnegative random
variable \(E_n^{\mathrm{post}}\) such that
\[
    \sup_{\xi\in\mathcal A_n}
    \sup_{L\ge1}
    \sum_{d\in\mathfrak D_{\xi,n}^{\mathrm{osc}}}
    \mathfrak C_{\xi,d,n,L}^{\mathrm{post}}
    \le
    E_n^{\mathrm{post}},
\qquad
\text{and}
\qquad
    \mathbb E[E_n^{\mathrm{post}}]
    \le
    C_\gamma n
    2^{-\beta_{\mathrm{comp}}\vartheta n}.
\]

By Lemma~\ref{lemma:annular-parameter-choice},
\[
    \theta_{\mathrm{comp}}\vartheta>\frac{s}{2},
    \qquad
    \beta_{\mathrm{comp}}\vartheta>\frac{s}{2}.
\]
Hence, after decreasing \(c_\gamma>0\) and increasing \(C_\gamma\), the
polynomial factors \(n^2\) and \(n\) can be absorbed into the exponential
margins, giving
\[
    \mathbb E[E_n^{\mathrm{pre}}+E_n^{\mathrm{post}}]
    \le
    C_\gamma2^{-sn/2}2^{-c_\gamma n}.
\]

Therefore,
\[
\begin{aligned}
    &\mathbb P\left(
        \sup_{\xi\in\mathcal A_n}
        \left(
            \sum_{d\in\mathfrak D_{\xi,n}^{\mathrm{osc}}}
            \mathfrak C_{\xi,d,n}^{\mathrm{pre}}
            +
            \sup_{L\ge1}
            \sum_{d\in\mathfrak D_{\xi,n}^{\mathrm{osc}}}
            \mathfrak C_{\xi,d,n,L}^{\mathrm{post}}
        \right)
        >
        2n^{-2}2^{-sn/2}
    \right)                                      \\
    &\qquad\le
    \mathbb P\left(
        E_n^{\mathrm{pre}}+E_n^{\mathrm{post}}
        >
        2n^{-2}2^{-sn/2}
    \right)                                      \\
    &\qquad\le
    C_\gamma n^2 2^{-c_\gamma n}
    \le
    C_\gamma2^{-c_\gamma n},
\end{aligned}
\]
after decreasing \(c_\gamma>0\) and increasing \(C_\gamma\), if necessary.
The final bound is summable in \(n\), and the proof is complete.
\end{proof}

\begin{definition}
\label{D:arc-signed-compensator-sums}
For \(n\ge1\), \(\xi\in\mathcal A_n\), and
\(d\in\mathfrak D_{\xi,n}^{\mathrm{osc}}\), recall that
\(
    m_+=m_{\xi,d,+}.
\)
Define
\[
    D_{\xi,d,n}^{\mathrm{pre}}
    =
    \sum_{\ell=0}^{m_+-1}
    \sum_{I\in\mathcal D_\ell([0,1])}
    \sum_{\substack{
        J\in\mathcal D_{\ell+1}([0,1])\\
        J\subset I
    }}
    D_{J,n}(\xi,d,\ell).
\]
For every integer \(L\ge1\), define
\[
    D_{\xi,d,n,L}^{\mathrm{post}}
    =
    \sum_{\ell=m_+}^{L-1}
    \sum_{I\in\mathcal D_\ell([0,1])}
    \sum_{\substack{
        J\in\mathcal D_{\ell+1}([0,1])\\
        J\subset I
    }}
    D_{J,n}(\xi,d,\ell),
\]
with the convention that the sum is \(0\) when \(L\le m_+\).

Whenever the limit exists, set
\[
    D_{\xi,d,n}^{\mathrm{post}}
    =
    \lim_{L\to\infty}
    D_{\xi,d,n,L}^{\mathrm{post}}.
\]
\end{definition}

\begin{lemma}
\label{L:arc-compensator-array-identities}
Assume \(\mathcal G_n^{\mathrm{pre}}\) holds.  Then, for every
\(\xi\in\mathcal A_n\) and every
\(d\in\mathfrak D_{\xi,n}^{\mathrm{osc}}\),
\[
    F_{\xi,d}^{\mathrm{pre}}
    =
    F_{\xi,d,n}^{\mathrm{pre,cap}}
    +
    D_{\xi,d,n}^{\mathrm{pre}}.
\]
Moreover, for every integer \(L\ge1\),
\[
    F_{\xi,d,L}^{\mathrm{post}}
    =
    F_{\xi,d,n,L}^{\mathrm{post,cap}}
    +
    D_{\xi,d,n,L}^{\mathrm{post}}.
\]
If, in addition,
\begin{equation}\label{eq:posterior_uniform_bound}
    \sup_{L\geq 1}
    \mathfrak C_{\xi,d,n,L}^{\mathrm{post}}<\infty,
\end{equation}
then \(D_{\xi,d,n}^{\mathrm{post}}\) exists.  In this case, whenever
one of the two limits
\[
    F_{\xi,d}^{\mathrm{post}}
    =
    \lim_{L\to\infty}F_{\xi,d,L}^{\mathrm{post}},
    \qquad
    F_{\xi,d,n}^{\mathrm{post,cap}}
    =
    \lim_{L\to\infty}F_{\xi,d,n,L}^{\mathrm{post,cap}}
\]
exists, the other exists as well, and
\[
    F_{\xi,d}^{\mathrm{post}}
    =
    F_{\xi,d,n}^{\mathrm{post,cap}}
    +
    D_{\xi,d,n}^{\mathrm{post}}.
\]
\end{lemma}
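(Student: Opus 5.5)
The argument is termwise: each identity will follow from the single-edge decomposition in Lemma~\ref{L:arc-capping-identity}, summed over finite index sets, and then from a limit in \(L\) justified by absolute convergence. On \(\mathcal G_n^{\mathrm{pre}}\), Lemma~\ref{L:arc-capping-identity} gives, for every \(\ell\ge0\), every \(I\in\mathcal D_\ell([0,1])\) and every child \(J\subset I\),
\[
    c_J(\xi,d,\ell)M_I(U_J-1)
    =
    X_{J,n}^{\mathrm{cap}}(\xi,d,\ell)
    +
    D_{J,n}(\xi,d,\ell).
\]
The prefix range \(0\le\ell<m_+\) involves only finitely many generations, and each generation has finitely many edges. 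Summing the identity over this range therefore gives \(F_{\xi,d}^{\mathrm{pre}}=F_{\xi,d,n}^{\mathrm{pre,cap}}+D_{\xi,d,n}^{\mathrm{pre}}\) directly from Definitions~\ref{D:arc-pre-post-critical-pieces}, \ref{D:arc-centered-capped-arrays} and \ref{D:arc-signed-compensator-sums}. The same finite summation over \(m_+\le\ell<L\) gives the identity for every finite \(L\). When \(L\le m_+\), all three sums are empty by the common convention, so the identity holds trivially. No stopping indicators appear here, because these are the unstopped arrays.

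For the limiting statement, assume \eqref{eq:posterior_uniform_bound}. The partial sums \(\mathfrak C_{\xi,d,n,L}^{\mathrm{post}}\) are nondecreasing in \(L\) and bounded, so the series \(\sum_{\ell\ge m_+}\sum_I\sum_{J\subset I}|D_{J,n}(\xi,d,\ell)|\) converges. Hence the signed series defining \(D_{\xi,d,n,L}^{\mathrm{post}}\) converges absolutely, and \(D_{\xi,d,n}^{\mathrm{post}}\) exists. This is the standard fact that absolutely convergent complex series converge.

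Finally, the finite identity gives \(F_{\xi,d,L}^{\mathrm{post}}-F_{\xi,d,n,L}^{\mathrm{post,cap}}=D_{\xi,d,n,L}^{\mathrm{post}}\), and the right-hand side has a limit as \(L\to\infty\). So if one of the two sequences on the left converges, the other converges as well, being the sum of a convergent sequence and \(\pm D_{\xi,d,n,L}^{\mathrm{post}}\). Passing to the limit then gives the claimed identity.

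The argument has no genuine obstacle. The only point to check is that the order of summation within each generation is irrelevant: for each fixed \(\ell\) there are finitely many edges, and the limit is taken over generations, matching the definitions of all three arrays.
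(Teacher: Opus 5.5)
Your proposal is correct and follows the paper's proof: sum the edge-wise identity from Lemma~\ref{L:arc-capping-identity} over the finite prefix and finite post-bin ranges, then use the bounded, nondecreasing absolute sums \(\mathfrak C_{\xi,d,n,L}^{\mathrm{post}}\) to get convergence of \(D_{\xi,d,n,L}^{\mathrm{post}}\) and pass to the limit in the finite identity. You also spell out the ``one limit exists if and only if the other does'' step via \(F_{\xi,d,L}^{\mathrm{post}}-F_{\xi,d,n,L}^{\mathrm{post,cap}}=D_{\xi,d,n,L}^{\mathrm{post}}\), which the paper leaves implicit.
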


\begin{proof}
On \(\mathcal G_n^{\mathrm{pre}}\), Lemma~\ref{L:arc-capping-identity} gives,
for every child \(J\subset I\),
\[
    c_J(\xi,d,\ell)M_I(U_J-1)
    =
    X_{J,n}^{\mathrm{cap}}(\xi,d,\ell)
    +
    D_{J,n}(\xi,d,\ell).
\]
Summing this identity over the prefix range
\(
    0\le\ell<m_{\xi,d,+}
\)
gives
\[
    F_{\xi,d}^{\mathrm{pre}}
    =
    F_{\xi,d,n}^{\mathrm{pre,cap}}
    +
    D_{\xi,d,n}^{\mathrm{pre}}.
\]
Summing over the finite post-bin range
\(
    m_{\xi,d,+}\le\ell<L
\)
gives
\[
    F_{\xi,d,L}^{\mathrm{post}}
    =
    F_{\xi,d,n,L}^{\mathrm{post,cap}}
    +
    D_{\xi,d,n,L}^{\mathrm{post}}.
\]

If \eqref{eq:posterior_uniform_bound} holds,
then the signed post-bin compensator sums are absolutely Cauchy.  Hence
\[
    D_{\xi,d,n}^{\mathrm{post}}
    =
    \lim_{L\to\infty}
    D_{\xi,d,n,L}^{\mathrm{post}}
\]
exists.  Passing to the limit in the finite post-bin identity proves the final
assertion.
\end{proof}

\subsection{Annular grid assembly}
\label{SS:annular-grid-assembly}

We now complete the proof of the finite-\(r\) annular theorem.  Throughout the
deterministic arguments in this subsection, we work on the probability-one
weak-convergence event
\(
    \widetilde\nu_L\to\widetilde\nu .
\)
Set
\[
    R_\gamma
    =
    \max\left\{
        1,\sup_{0\le t\le1}|\gamma(t)|
    \right\}.
\]

\begin{lemma}
\label{L:arc-annular-net-grid-passage}
For every integer \(n\ge1\), set
\[
    \rho_n
    =
    (8\pi R_\gamma)^{-1}2^{-(s/2+\varepsilon)n}.
\]
There exists a finite set \(\mathcal N_n\subset\mathcal A_n\) such that
\(\mathcal A_n\) is covered by the balls
\(
    B(\eta,\rho_n)
\)
for
\(
\eta\in\mathcal N_n,
\)
and
\(
    \#\mathcal N_n
    \le
    C_\gamma2^{(2+s+2\varepsilon)n}.
\)
If \(\mathcal G_n^{\mathrm{lim}}\) holds and, for some \(A\ge0\),
\[
    \max_{\eta\in\mathcal N_n}
    |\widehat{\nu_\gamma}(\eta)|
    \le
    A2^{-sn/2},
\]
then
\[
    \sup_{\xi\in\mathcal A_n}
    |\widehat{\nu_\gamma}(\xi)|
    \le
    (A+1)2^{-sn/2}.
\]
\end{lemma}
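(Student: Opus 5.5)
The lemma has two parts: a covering statement for the annulus, and a deterministic Lipschitz transfer of Fourier bounds from the net to the whole annulus.

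\emph{The net.} The annulus \(\mathcal A_n\) sits inside the square \([-2^{n+1},2^{n+1}]^2\). I take \(\mathcal N_n'\) to be the points of the lattice \((\rho_n/2)\mathbb Z^2\) that lie within \(\rho_n/2\) of \(\mathcal A_n\). For each such lattice point I pick one point of \(\mathcal A_n\) within distance \(\rho_n/2\) of it, and let \(\mathcal N_n\subset\mathcal A_n\) be the set of chosen points. Every \(\xi\in\mathcal A_n\) lies within \(\rho_n/\sqrt2\cdot\tfrac12\cdot\sqrt2\le\rho_n/2\) of some lattice point, and that lattice point is within \(\rho_n/2\) of its chosen representative. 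By the triangle inequality, \(\mathcal A_n\subset\bigcup_{\eta\in\mathcal N_n}B(\eta,\rho_n)\). For the count, the number of lattice points is at most
\[
C(2^{n+1}/\rho_n+1)^2\le C_\gamma 2^{2n}2^{(s+2\varepsilon)n},
\]
where I used \(\rho_n^{-1}=8\pi R_\gamma 2^{(s/2+\varepsilon)n}\). This gives \(\#\mathcal N_n\le C_\gamma2^{(2+s+2\varepsilon)n}\).

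\emph{Lipschitz transfer.} For \(\xi,\eta\in\mathbb R^2\) I write
\[
|\widehat{\nu_\gamma}(\xi)-\widehat{\nu_\gamma}(\eta)|\le\int_0^1|e^{-2\pi i\xi\cdot\gamma(t)}-e^{-2\pi i\eta\cdot\gamma(t)}|\,d\widetilde\nu(t)\le2\pi R_\gamma|\xi-\eta|\,\widetilde\nu([0,1]).
\]
This uses \(|e^{ix}-e^{iy}|\le|x-y|\) and \(|\gamma(t)|\le R_\gamma\). On \(\mathcal G_n^{\mathrm{lim}}\), taking the case \(k=0\), \(I=[0,1]\) in Definition~\ref{D:arc-local-mass-good-events} gives \(\widetilde\nu([0,1])\le T_{0,n}=2^{\varepsilon n}\). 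Now fix \(\xi\in\mathcal A_n\) and choose \(\eta\in\mathcal N_n\) with \(|\xi-\eta|<\rho_n\). Then
\[
|\widehat{\nu_\gamma}(\xi)|\le A2^{-sn/2}+2\pi R_\gamma\rho_n2^{\varepsilon n}=A2^{-sn/2}+\tfrac14 2^{-sn/2},
\]
which is at most \((A+1)2^{-sn/2}\). Taking the supremum over \(\xi\in\mathcal A_n\) finishes the proof.

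No step is a real obstacle. The only point that needs care is that the factor \(2^{-\varepsilon n}\) built into \(\rho_n\) exactly absorbs the \(2^{\varepsilon n}\) bound on the total mass coming from \(\mathcal G_n^{\mathrm{lim}}\). That cancellation is why the net cardinality is \(2^{(2+s+2\varepsilon)n}\) rather than \(2^{(2+s)n}\). This polynomial count will later be multiplied against the stretched-exponential Freedman bounds in a union bound.
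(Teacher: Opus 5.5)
Your proof is correct and follows the paper's argument; the only difference is that the paper takes \(\mathcal N_n\) to be a maximal \(\rho_n\)-separated subset of \(\mathcal A_n\) and counts it by area comparison instead of your lattice representatives, while the transfer step is the same: \(\widetilde\nu([0,1])\le T_{0,n}=2^{\varepsilon n}\) on \(\mathcal G_n^{\mathrm{lim}}\) together with \(2\pi R_\gamma\rho_n2^{\varepsilon n}=\tfrac14 2^{-sn/2}\). One cosmetic point is that the distance from \(\xi\) to the nearest point of \((\rho_n/2)\mathbb Z^2\) is at most \(\sqrt2\rho_n/4<\rho_n/2\), and this strict inequality is what gives \(|\xi-\eta|<\rho_n\) for the open balls.
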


\begin{proof}
Let \(\mathcal N_n\) be a maximal \(\rho_n\)-separated subset of
\(\mathcal A_n\).  Since \(\mathcal A_n\) is compact, such a finite maximal
subset exists.  By maximality, \(\mathcal N_n\) is a \(\rho_n\)-net for
\(\mathcal A_n\).

The balls
\(
    B(\eta,\rho_n/2)
\)
for
\(
\eta\in\mathcal N_n,
\)
are pairwise disjoint and are contained in
\(
    B(0,2^{n+1}+\rho_n/2).
\)
Hence a standard area comparison gives
\[
    \#\mathcal N_n
    \le
    C\left(1+\frac{2^n}{\rho_n}\right)^2
    \le
    C_\gamma2^{(2+s+2\varepsilon)n}.
\]

On \(\mathcal G_n^{\mathrm{lim}}\), 
we have
\(
    \widetilde\nu([0,1])\le T_{0,n}=2^{\varepsilon n}.
\)
Since \(\nu_\gamma=\gamma_\#\widetilde\nu\), it follows that, for all
\(\xi,\eta\in\mathbb R^2\),
\[
\begin{aligned}
    |\widehat{\nu_\gamma}(\xi)-\widehat{\nu_\gamma}(\eta)|
    \le
    \int_0^1
    \left|
        e^{-2\pi i\xi\cdot\gamma(t)}
        -
        e^{-2\pi i\eta\cdot\gamma(t)}
    \right|\,d\widetilde\nu(t)             
    \le
    2\pi R_\gamma2^{\varepsilon n}|\xi-\eta|.
\end{aligned}
\]
Now fix \(\xi\in\mathcal A_n\), and choose \(\eta\in\mathcal N_n\) with
\(
    |\xi-\eta|\le\rho_n.
\)
Then
\[
    |\widehat{\nu_\gamma}(\xi)-\widehat{\nu_\gamma}(\eta)|
    \le
    2\pi R_\gamma2^{\varepsilon n}\rho_n
    =
    \frac14 2^{-sn/2}.
\]
Using the assumed estimate on \(\mathcal N_n\), we get
\[
    |\widehat{\nu_\gamma}(\xi)|
    \le
    |\widehat{\nu_\gamma}(\eta)|
    +
    |\widehat{\nu_\gamma}(\xi)-\widehat{\nu_\gamma}(\eta)|
    \le
    \left(A+\frac14\right)2^{-sn/2}
    \le
    (A+1)2^{-sn/2}.
\]
Taking the supremum over \(\xi\in\mathcal A_n\) completes the proof.
\end{proof}

For each \(n\ge1\), fix once and for all a set \(\mathcal N_n\) satisfying
the covering and cardinality conclusions of
Lemma~\ref{L:arc-annular-net-grid-passage}.

\begin{definition}
\label{D:arc-annular-exceptional-events}
Define the capped-martingale exceptional event
\(\mathcal E_n^{\mathrm{cap}}\) to be the event that there exist
\(\xi\in\mathcal N_n\),
\(d\in\mathfrak D_{\xi,n}^{\mathrm{osc}}\),
such that \(\mathcal G_n^{\mathrm{pre}}\) holds and
\[
    |F_{\xi,d,n}^{\mathrm{pre,cap}}|
    +
    \sup_{L>m_{\xi,d,+}}
    |F_{\xi,d,n,L}^{\mathrm{post,cap}}|
    >
    2^{-sn/2}2^{-2\varepsilon n}.
\]
Define the compensator exceptional event \(\mathcal E_n^{\mathrm{comp}}\) by
\[
\begin{aligned}
    \mathcal E_n^{\mathrm{comp}}
    =
    \Bigg\{
    \sup_{\xi\in\mathcal A_n}
    \Bigg(
        \sum_{d\in\mathfrak D_{\xi,n}^{\mathrm{osc}}}
        \mathfrak C_{\xi,d,n}^{\mathrm{pre}}
        +
        \sup_{L\ge1}
        \sum_{d\in\mathfrak D_{\xi,n}^{\mathrm{osc}}}
        \mathfrak C_{\xi,d,n,L}^{\mathrm{post}}
    \Bigg)
    >
    2n^{-2}2^{-sn/2}
    \Bigg\}.
\end{aligned}
\]
\end{definition}

\begin{lemma}
\label{L:arc-annular-exceptional-probabilities}
There exist constants \(C_\gamma<\infty\), \(c_\gamma>0\), and \(\eta>0\)
such that, for every integer \(n\ge1\),
\[
    \mathbb P(\mathcal E_n^{\mathrm{cap}})
    \le
    C_\gamma\exp(-c_\gamma2^{\eta n}),
\qquad
\text{and}
\qquad
    \mathbb P(\mathcal E_n^{\mathrm{comp}})
    \le
    C_\gamma2^{-c_\gamma n}.
\]
\end{lemma}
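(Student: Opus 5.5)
The plan is to derive both estimates directly from Proposition~\ref{P:arc-capped-freedman} and Proposition~\ref{P:arc-r-tail-compensator}, using the grid $\mathcal N_n$ fixed after Lemma~\ref{L:arc-annular-net-grid-passage} and a union bound over the pairs $(\xi,d)$.

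\emph{The compensator event.} Definition~\ref{D:arc-annular-exceptional-events} defines $\mathcal E_n^{\mathrm{comp}}$ as exactly the event whose probability is bounded in Proposition~\ref{P:arc-r-tail-compensator}. Hence $\mathbb P(\mathcal E_n^{\mathrm{comp}})\le C_\gamma2^{-c_\gamma n}$ holds with the constants of that proposition. The supremum over the whole annulus $\mathcal A_n$ was already handled inside the proposition through the $\xi$-independent envelopes $E_n^{\mathrm{pre}}$ and $E_n^{\mathrm{post}}$. Consequently no measurability issue arises: the event is contained in $\{E_n^{\mathrm{pre}}+E_n^{\mathrm{post}}>2n^{-2}2^{-sn/2}\}$, and if needed I would simply replace $\mathcal E_n^{\mathrm{comp}}$ by this measurable superset.

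\emph{The capped event.} I would write
\[
\mathcal E_n^{\mathrm{cap}}=\bigcup_{\xi\in\mathcal N_n}\ \bigcup_{d\in\mathfrak D_{\xi,n}^{\mathrm{osc}}}\Bigl(\mathcal G_n^{\mathrm{pre}}\cap\Bigl\{|F_{\xi,d,n}^{\mathrm{pre,cap}}|+\sup_{L>m_{\xi,d,+}}|F_{\xi,d,n,L}^{\mathrm{post,cap}}|>2^{-sn/2}2^{-2\varepsilon n}\Bigr\}\Bigr).
\]
This is a finite union of deterministically indexed events, because $\mathcal N_n$ is a fixed finite set and each $\mathfrak D_{\xi,n}^{\mathrm{osc}}$ is deterministic. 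The last display of Proposition~\ref{P:arc-capped-freedman} bounds each term by $C_\gamma\exp(-c_\gamma2^{\eta n})$, uniformly in $\xi\in\mathcal A_n$ and in $d$. The number of terms is at most
\[
\#\mathcal N_n\cdot\max_\xi\#\mathfrak D_\xi\le C_\gamma2^{(2+s+2\varepsilon)n}\cdot C_\gamma(1+\log(2+2^{n+1})),
\]
by Lemma~\ref{L:arc-annular-net-grid-passage} and Proposition~\ref{P:arc-endpoint-safe-phase-bins}(v). This count grows at most like $C_\gamma 2^{4n}$.

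\emph{Absorbing the count.} The union bound gives $\mathbb P(\mathcal E_n^{\mathrm{cap}})\le C_\gamma2^{4n}\exp(-c_\gamma2^{\eta n})$. Since $2^{4n}\exp(-\tfrac{c_\gamma}{2}2^{\eta n})$ is bounded in $n$, this is at most $C'_\gamma\exp(-\tfrac{c_\gamma}{2}2^{\eta n})$, and renaming constants yields the claim.

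The only real point is this absorption: the Freedman estimate must be stretched-exponential and not merely polynomial, so that it beats the polynomial cardinality of the net. That is exactly what Proposition~\ref{P:arc-capped-freedman} provides. I would also note that the uniformity of its constants over $\xi\in\mathcal A_n$ and $d\in\mathfrak D_{\xi,n}^{\mathrm{osc}}$ is guaranteed there, since the jump and variance budgets of Lemma~\ref{L:arc-jump-quadratic-variation-budgets} are uniform.
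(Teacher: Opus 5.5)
Your proposal is correct and follows the paper's proof essentially verbatim. Both arguments take a union bound over the fixed net $\mathcal N_n$ and the deterministic oscillatory bands, using the uniform stretched-exponential bound of Proposition~\ref{P:arc-capped-freedman} to absorb the cardinality $C_\gamma(1+n)2^{(2+s+2\varepsilon)n}$, and both identify $\mathcal E_n^{\mathrm{comp}}$ with the event already controlled in Proposition~\ref{P:arc-r-tail-compensator}.
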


\begin{proof}
By Proposition~\ref{P:arc-capped-freedman}, after relabeling its
stretched-exponential exponent as \(\eta_0>0\), for every
\(\xi\in\mathcal N_n\) and every
\(d\in\mathfrak D_{\xi,n}^{\mathrm{osc}}\),
\[
    \mathbb P
    \left(
        \mathcal G_n^{\mathrm{pre}}
        \cap
        \left\{
            |F_{\xi,d,n}^{\mathrm{pre,cap}}|
            +
            \sup_{L>m_{\xi,d,+}}
            |F_{\xi,d,n,L}^{\mathrm{post,cap}}|
            >
            2^{-sn/2}2^{-2\varepsilon n}
        \right\}
    \right)
    \le
    C_\gamma\exp(-c_\gamma2^{\eta_0 n}).
\]
Moreover, Lemma~\ref{L:arc-annular-net-grid-passage} gives
\(
    \#\mathcal N_n
    \le
    C_\gamma2^{(2+s+2\varepsilon)n},
\)
and Proposition~\ref{P:arc-endpoint-safe-phase-bins} gives
\[
    \#\mathfrak D_{\xi,n}^{\mathrm{osc}}
    \le
    \#\mathfrak D_\xi
    \le
    C_\gamma(1+n)
    \qquad(\xi\in\mathcal N_n).
\]
Therefore, by the definition of \(\mathcal E_n^{\mathrm{cap}}\) and the union
bound,
\[
    \mathbb P(\mathcal E_n^{\mathrm{cap}})
    \le
    C_\gamma(1+n)2^{(2+s+2\varepsilon)n}
    \exp(-c_\gamma2^{\eta_0 n})
    \le
    C_\gamma\exp(-c_\gamma2^{\eta n})
\]
for some \(0<\eta\le\eta_0\), after decreasing \(c_\gamma>0\), increasing
\(C_\gamma\), and absorbing the polynomial and exponential-in-\(n\) prefactor
into the stretched-exponential decay.

For the compensator event, observe that the event
\(\mathcal E_n^{\mathrm{comp}}\) is exactly the event controlled by
Proposition~\ref{P:arc-r-tail-compensator}.  Hence
\[
    \mathbb P(\mathcal E_n^{\mathrm{comp}})
    \le
    C_\gamma2^{-c_\gamma n}.
\]
This proves both estimates.
\end{proof}

\begin{proposition}[Annular grid assembly]
\label{P:arc-annular-grid-assembly}
There exist constants \(C_\gamma<\infty\), \(c_\gamma>0\), and \(\eta>0\)
such that, for every integer \(n\ge1\),
\[
    \mathbb P
    \left(
        \sup_{\xi\in\mathcal A_n}
        |\widehat{\nu_\gamma}(\xi)|
        >
        C_\gamma2^{-sn/2}
    \right)
    \le
    C_\gamma\exp(-c_\gamma2^{\eta n})
    +
    C_\gamma2^{-c_\gamma n}.
\]
\end{proposition}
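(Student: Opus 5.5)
We prove Proposition~\ref{P:arc-annular-grid-assembly} by working on the complement of the bad event
\[
    \mathcal B_n=\mathcal G_n^c\cup\mathcal E_n^{\mathrm{cap}}\cup\mathcal E_n^{\mathrm{comp}},
\]
intersected with the probability-one weak-convergence event. By Proposition~\ref{P:arc-local-mass-good-events} and Lemma~\ref{L:arc-annular-exceptional-probabilities},
\[
    \mathbb P(\mathcal B_n)\le C2^{-cn}+C_\gamma\exp(-c_\gamma2^{\eta n})+C_\gamma2^{-c_\gamma n},
\]
which is already of the required form. The task is therefore to show that on \(\mathcal B_n^c\) the sup over \(\mathcal A_n\) is deterministically bounded by \(C_\gamma2^{-sn/2}\). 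By Lemma~\ref{L:arc-annular-net-grid-passage}, and since \(\mathcal G_n^{\mathrm{lim}}\) holds, it suffices to bound \(|\widehat{\nu_\gamma}(\eta)|\le A2^{-sn/2}\) uniformly for \(\eta\in\mathcal N_n\), with \(A\) a constant.

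Fix \(\eta\in\mathcal N_n\subset\mathcal A_n\). Since \(\mathcal G_n\) holds, Proposition~\ref{P:arc-deterministic-annular-reduction} gives
\[
    \widehat{\nu_\gamma}(\eta)=E_{\eta,n}^{\mathrm{safe}}+\sum_{d\in\mathfrak D_{\eta,n}^{\mathrm{osc}}}\bigl(F_{\eta,d}^{\mathrm{pre}}+F_{\eta,d}^{\mathrm{post}}\bigr),
\]
with \(|E^{\mathrm{safe}}_{\eta,n}|\le C_\gamma2^{-sn/2}\). For each oscillatory band we invoke Lemma~\ref{L:arc-compensator-array-identities}. Because \(\mathcal G_n^{\mathrm{pre}}\) holds, the prefix identity \(F^{\mathrm{pre}}=F^{\mathrm{pre,cap}}+D^{\mathrm{pre}}\) holds. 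Outside \(\mathcal E_n^{\mathrm{comp}}\), the post compensators satisfy \(\sup_L\mathfrak C^{\mathrm{post}}_{\eta,d,n,L}<\infty\), so hypothesis \eqref{eq:posterior_uniform_bound} is met. In addition, \(F^{\mathrm{post}}_{\eta,d}\) exists by Lemma~\ref{L:arc-exact-martingale-array-identity}. Hence \(F^{\mathrm{post,cap}}\) exists, \(F^{\mathrm{post}}=F^{\mathrm{post,cap}}+D^{\mathrm{post}}\), and \(|F^{\mathrm{post,cap}}|\le\sup_{L}|F^{\mathrm{post,cap}}_{\eta,d,n,L}|\). (For \(L\le m_+\) the partial sums are zero, so restricting to \(L>m_+\) in the sup is harmless.)

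We then estimate the three groups of terms. For the capped martingales, outside \(\mathcal E_n^{\mathrm{cap}}\) and on \(\mathcal G_n^{\mathrm{pre}}\), each oscillatory band contributes at most \(2^{-sn/2}2^{-2\varepsilon n}\). There are at most \(C_\gamma(1+n)\) bands (Proposition~\ref{P:arc-endpoint-safe-phase-bins}(v)), and \(1+n\le C2^{2\varepsilon n}\), so the capped contribution is at most \(C_\gamma2^{-sn/2}\). For the compensators, \(|D^{\mathrm{pre}}|\le\mathfrak C^{\mathrm{pre}}\) and \(|D^{\mathrm{post}}|\le\sup_L\mathfrak C^{\mathrm{post}}_{\eta,d,n,L}\). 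Summing over \(d\) requires exchanging \(\sum_d\) and \(\sup_L\). We handle this by monotonicity in \(L\) of the nonnegative partial sums: each \(\sup_L\) is a limit as \(L\to\infty\), so \(\sum_d\sup_L=\sup_L\sum_d\). Outside \(\mathcal E_n^{\mathrm{comp}}\), the total compensator contribution is then at most \(2n^{-2}2^{-sn/2}\le 2\cdot2^{-sn/2}\).

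Adding the safe term, the capped contribution and the compensator contribution gives \(|\widehat{\nu_\gamma}(\eta)|\le A2^{-sn/2}\) with \(A=A_\gamma\) independent of \(\eta\) and \(n\). Lemma~\ref{L:arc-annular-net-grid-passage} then yields the annular bound with \(C_\gamma=A+1\), after enlarging \(C_\gamma\) to absorb finitely many small \(n\). The only delicate points are the existence of the post-bin limits and the exchange of \(\sum_d\) with \(\sup_L\); both are resolved by the monotonicity and finiteness supplied outside \(\mathcal E_n^{\mathrm{comp}}\). Everything else is bookkeeping on the union bound.
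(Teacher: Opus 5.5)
Your proposal is correct and follows essentially the same route as the paper: you take a union bound over \(\mathcal G_n^c\cup\mathcal E_n^{\mathrm{cap}}\cup\mathcal E_n^{\mathrm{comp}}\), then apply the deterministic annular reduction and the compensator identities at each grid point of \(\mathcal N_n\), and finish with the grid-passage lemma. Your explicit justification that \(\sum_d\sup_L=\sup_L\sum_d\), using that the nonnegative post-bin compensator partial sums are monotone in \(L\), fills in a step the paper uses without comment.
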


\begin{proof}
By Proposition~\ref{P:arc-local-mass-good-events} and
Lemma~\ref{L:arc-annular-exceptional-probabilities}, after adjusting constants,
\[
    \mathbb P
    \left(
        \mathcal G_n^c
        \cup
        \mathcal E_n^{\mathrm{cap}}
        \cup
        \mathcal E_n^{\mathrm{comp}}
    \right)
    \le
    C_\gamma\exp(-c_\gamma2^{\eta n})
    +
    C_\gamma2^{-c_\gamma n}.
\]
It remains to prove that, on the probability-one weak-convergence event and
outside
\(
    \mathcal G_n^c
    \cup
    \mathcal E_n^{\mathrm{cap}}
    \cup
    \mathcal E_n^{\mathrm{comp}},
\)
one has
\[
    \sup_{\xi\in\mathcal A_n}
    |\widehat{\nu_\gamma}(\xi)|
    \le
    C_\gamma2^{-sn/2}.
\]

Outside this exceptional event, \(\mathcal G_n\) holds.  In particular,
\(\mathcal G_n^{\mathrm{pre}}\)
and
\(\mathcal G_n^{\mathrm{lim}}\)
both hold.

We first prove the corresponding grid estimate.  Fix \(\xi\in\mathcal N_n\).
Since \(\mathcal N_n\subset\mathcal A_n\),
Proposition~\ref{P:arc-deterministic-annular-reduction} applies and gives
\[
    \widehat{\nu_\gamma}(\xi)
    =
    E_{\xi,n}^{\mathrm{safe}}
    +
    \sum_{d\in\mathfrak D_{\xi,n}^{\mathrm{osc}}}
    \left(
        F_{\xi,d}^{\mathrm{pre}}
        +
        F_{\xi,d}^{\mathrm{post}}
    \right),
\]
with
\(
    |E_{\xi,n}^{\mathrm{safe}}|
    \le
    C_\gamma2^{-sn/2}2^{-c_\gamma n}.
\)

For each \(d\in\mathfrak D_{\xi,n}^{\mathrm{osc}}\),
Lemma~\ref{L:arc-compensator-array-identities} applies on
\(\mathcal G_n^{\mathrm{pre}}\) and yields
\[
    F_{\xi,d}^{\mathrm{pre}}
    =
    F_{\xi,d,n}^{\mathrm{pre,cap}}
    +
    D_{\xi,d,n}^{\mathrm{pre}}.
\]
Moreover, since \((\mathcal E_n^{\mathrm{comp}})^c\) holds,
\[
    \sum_{d\in\mathfrak D_{\xi,n}^{\mathrm{osc}}}
    \mathfrak C_{\xi,d,n}^{\mathrm{pre}}
    +
    \sup_{L\ge1}
    \sum_{d\in\mathfrak D_{\xi,n}^{\mathrm{osc}}}
    \mathfrak C_{\xi,d,n,L}^{\mathrm{post}}
    \le
    2n^{-2}2^{-sn/2}.
\]
In particular, for every \(d\in\mathfrak D_{\xi,n}^{\mathrm{osc}}\), the
post-bin compensator is absolutely convergent.  By
Lemma~\ref{L:arc-exact-martingale-array-identity},
\(
    F_{\xi,d,L}^{\mathrm{post}}\to F_{\xi,d}^{\mathrm{post}}
\)
on the weak-convergence event.  Hence
Lemma~\ref{L:arc-compensator-array-identities} also gives
\[
    F_{\xi,d}^{\mathrm{post}}
    =
    F_{\xi,d,n}^{\mathrm{post,cap}}
    +
    D_{\xi,d,n}^{\mathrm{post}},
\]
with
\[
    |F_{\xi,d,n}^{\mathrm{post,cap}}|
    \le
    \sup_{L>m_{\xi,d,+}}
    |F_{\xi,d,n,L}^{\mathrm{post,cap}}|.
\]

Since \(\mathcal G_n^{\mathrm{pre}}\) holds and
\(\mathcal E_n^{\mathrm{cap}}\) does not occur, the definition of
\(\mathcal E_n^{\mathrm{cap}}\) gives, for every
\(d\in\mathfrak D_{\xi,n}^{\mathrm{osc}}\),
\[
    |F_{\xi,d,n}^{\mathrm{pre,cap}}|
    +
    \sup_{L>m_{\xi,d,+}}
    |F_{\xi,d,n,L}^{\mathrm{post,cap}}|
    \le
    2^{-sn/2}2^{-2\varepsilon n}.
\]
Also, by Proposition~\ref{P:arc-endpoint-safe-phase-bins},
\(
    \#\mathfrak D_{\xi,n}^{\mathrm{osc}}
    \le
    \#\mathfrak D_\xi
    \le
    C_\gamma(1+n).
\)
Therefore,
\[
    \sum_{d\in\mathfrak D_{\xi,n}^{\mathrm{osc}}}
    \left(
        |F_{\xi,d,n}^{\mathrm{pre,cap}}|
        +
        |F_{\xi,d,n}^{\mathrm{post,cap}}|
    \right)
    \le
    C_\gamma(1+n)2^{-sn/2}2^{-2\varepsilon n}
    \le
    C_\gamma2^{-sn/2}.
\]

The compensator contribution is bounded by
\[
    \sum_{d\in\mathfrak D_{\xi,n}^{\mathrm{osc}}}
    \left(
        |D_{\xi,d,n}^{\mathrm{pre}}|
        +
        |D_{\xi,d,n}^{\mathrm{post}}|
    \right)
    \le
    \sum_{d\in\mathfrak D_{\xi,n}^{\mathrm{osc}}}
    \mathfrak C_{\xi,d,n}^{\mathrm{pre}}
    +
    \sup_{L\ge1}
    \sum_{d\in\mathfrak D_{\xi,n}^{\mathrm{osc}}}
    \mathfrak C_{\xi,d,n,L}^{\mathrm{post}}
    \le
    2n^{-2}2^{-sn/2}.
\]
Combining the safe term, the centered capped contribution, and the compensator
contribution, we obtain
\(
    |\widehat{\nu_\gamma}(\xi)|
    \le
    C_\gamma2^{-sn/2}.
\)
Taking the maximum over \(\xi\in\mathcal N_n\) gives
\[
    \max_{\xi\in\mathcal N_n}
    |\widehat{\nu_\gamma}(\xi)|
    \le
    C_\gamma2^{-sn/2}.
\]
Finally, since \(\mathcal G_n^{\mathrm{lim}}\) holds,
Lemma~\ref{L:arc-annular-net-grid-passage} upgrades this grid estimate to the
whole annulus:
\[
    \sup_{\xi\in\mathcal A_n}
    |\widehat{\nu_\gamma}(\xi)|
    \le
    C_\gamma2^{-sn/2},
\]
after absorbing the additional constant into \(C_\gamma\).  This proves the
deterministic implication outside the exceptional event.  Together with the
probability bound for the exceptional event, the proposition follows.
\end{proof}

\begin{proof}[Proof of Theorem~\ref{T:arc-finite-r-annular}]
The annular probability estimate asserted in the theorem follows from
Proposition~\ref{P:arc-annular-grid-assembly}.  Its right-hand side is
summable in \(n\).  Hence the Borel--Cantelli lemma gives, almost surely, a
finite random integer \(N\ge1\) such that, for every \(n\ge N\),
\[
    \sup_{\xi\in\mathcal A_n}
    |\widehat{\nu_\gamma}(\xi)|
    \le
    C_\gamma2^{-sn/2}.
\]
Now let \(|\xi|\ge2^N\), and choose \(n\ge N\) such that
\(
    2^n\le|\xi|\le2^{n+1}.
\)
Then
\[
    |\widehat{\nu_\gamma}(\xi)|
    \le
    C_\gamma2^{-sn/2}
    \le
    C_\gamma2^{s/2}|\xi|^{-s/2}.
\]
Thus
\(
    |\widehat{\nu_\gamma}(\xi)|
    =
    O(|\xi|^{-s/2})
\)
for 
\(|\xi|\to\infty\)
almost surely.  This completes the proof.
\end{proof}

\section{Local dimension and the deterministic upper obstruction}
\label{S:local-dimension-upper-obstruction}

This section contains the two inputs needed for the endpoint upper bound.  First,
we transfer the imported scalar circle local-dimension theorem to the interval
cascade and then to a fixed arc.  Second, we prove a deterministic obstruction:
a nonzero finite measure supported on a fixed \(C^2\) embedded arc cannot have
Fourier dimension larger than its minimum lower local dimension.

\subsection{Endpoint local dimension by gluing}
\label{S:endpoint-gluing}

The only imported input in this subsection is
Theorem~\ref{T:imported-circle-local-dimension}; the remaining arguments are
deterministic.

\begin{lemma}
\label{L:arc-endpoint-gluing-local-dimension}
Let \(\eta\) be a nonzero finite Borel measure on \([0,1]\). Then
\[
    \alpha_{\min}^{\mathbb T}(q_{\#}\eta)
    =
    \alpha_{\min}^{[0,1]}(\eta).
\]
\end{lemma}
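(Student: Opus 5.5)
The plan is to prove the identity pointwise: for every \(t\in\operatorname{spt}\eta\), the lower local dimension of \(\eta\) at \(t\) in \([0,1]\) should be compared with that of \(q_\#\eta\) at \(q(t)\) in \(\mathbb T\). Both infima are then taken over corresponding index sets. First I will check that \(\operatorname{spt}(q_\#\eta)=q(\operatorname{spt}\eta)\). Since \(q\) is a continuous surjection from a compact space, it is closed. Hence \(q(\operatorname{spt}\eta)\) is a closed set of full \(q_\#\eta\)-measure, which gives \(\supseteq\) for the support. Conversely, if \(t\in\operatorname{spt}\eta\), every neighborhood of \(q(t)\) pulls back to a neighborhood of \(t\), hence has positive mass, which gives \(\subseteq\). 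All masses appearing below at support points are therefore strictly positive, so the logarithms are finite.

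Next I will write out the \(\mathbb T\)-balls explicitly for \(0<\rho<1/2\). If \(\theta=q(t)\) with \(0<t<1\), then for \(\rho<\min\{t,1-t\}\) one has \(q^{-1}(B_{\mathbb T}(\theta,\rho))=(t-\rho,t+\rho)=[0,1]\cap(t-\rho,t+\rho)\). The two local quantities therefore coincide for all small \(\rho\), and so do their \(\liminf\)s. For the glued point \(\theta_0=q(0)=q(1)\), one has \(q^{-1}(B_{\mathbb T}(\theta_0,\rho))=[0,\rho)\cup(1-\rho,1]\). Writing \(a(\rho)=\eta([0,\rho))\) and \(b(\rho)=\eta((1-\rho,1])\), the circle mass is \(a+b\). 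This satisfies \(\max\{a,b\}\le a+b\le 2\max\{a,b\}\), and the factor \(2\) disappears after dividing \(\log_2\) by \(\log_2\rho\to-\infty\).

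The point \(\theta_0\) is the only delicate step. For \(\rho<1\), \(\log_2\max\{a,b\}/\log_2\rho=\min\{\log_2 a/\log_2\rho,\ \log_2 b/\log_2\rho\}\), using the convention \(\log_2 0/\log_2\rho=+\infty\). The elementary identity \(\liminf\min\{f,g\}=\min\{\liminf f,\liminf g\}\) then gives the following: the circle lower local dimension at \(\theta_0\) equals the minimum of the interval lower local dimensions at \(0\) and at \(1\). Only those endpoints lying in \(\operatorname{spt}\eta\) are counted; an endpoint outside the support has mass \(0\) for small \(\rho\) and contributes \(+\infty\). One small point to verify is that \([0,1]\cap(0-\rho,0+\rho)=[0,\rho)\), and similarly at \(1\), so these are exactly the interval balls in the definition of \(\alpha^{[0,1]}_{\min}\).

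Finally, I will take infima. Interior support points match one-to-one with equal values. The glued point contributes exactly the minimum of the contributions of \(0\) and \(1\), or nothing if neither lies in the support. Therefore the infimum over \(\operatorname{spt}(q_\#\eta)\) equals the infimum over \(\operatorname{spt}\eta\), which proves the lemma.
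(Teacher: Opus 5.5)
Your proposal is correct and follows essentially the same route as the paper. Interior support points have identical small balls under \(q\), and at the glued point the circle exponent equals the minimum of the one-sided exponents at \(0\) and \(1\): you get this from \(\max\{a,b\}\le a+b\le 2\max\{a,b\}\) and \(\liminf\min\{f,g\}=\min\{\liminf f,\liminf g\}\), whereas the paper bounds \(A_0+A_1\) below by each \(A_i\) and above by \(2\rho^\beta\). (In your support step the labels \(\supseteq\) and \(\subseteq\) are interchanged relative to \(\operatorname{spt}(q_\#\eta)=q(\operatorname{spt}\eta)\), but both inclusions are correctly proved.)
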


\begin{proof}
Set
\(
    \zeta=q_{\#}\eta.
\)
For \(s,t\in[0,1]\),
\[
    d_{\mathbb T}(q(s),q(t))
    =
    \inf_{k\in\mathbb Z}|s-t+k|
    =
    \min\{|s-t|,1-|s-t|\}.
\]
Since \([0,1]\) is compact and \(q\) is continuous,
\(  \operatorname{spt}\zeta=q(\operatorname{spt}\eta).
\)

Fix
\(
    t\in\operatorname{spt}\eta\cap(0,1)
\)
and
\(
    0<\rho<\min\{t,1-t\}.
\)
Then
\(
    B_{[0,1]}(t,\rho)=(t-\rho,t+\rho).
\)
For \(s\in[0,1]\),
\[
    s\in q^{-1}\bigl(B_{\mathbb T}(q(t),\rho)\bigr)
\quad
\Longleftrightarrow
\quad
    \inf_{k\in\mathbb Z}|s-t+k|<\rho.
\]
If \(k\ge1\), then
\(
    s-t+k\ge1-t>\rho;
\)
if \(k\le-1\), then
\(
    s-t+k\le -t<-\rho.
\)
Thus only \(k=0\) can occur.  Hence
\[
    q^{-1}\bigl(B_{\mathbb T}(q(t),\rho)\bigr)
    =
    (t-\rho,t+\rho)
    =
    B_{[0,1]}(t,\rho),
\]
and therefore
\[
    \zeta\bigl(B_{\mathbb T}(q(t),\rho)\bigr)
    =
    \eta\bigl(B_{[0,1]}(t,\rho)\bigr)
\]
for all sufficiently small \(\rho\).  It follows that the lower local
dimension of \(\zeta\) at \(q(t)\), computed in \(\mathbb T\), equals the lower
local dimension of \(\eta\) at \(t\), computed in \([0,1]\).

It remains to treat the glued endpoint.  For \(0<\rho<1/2\),
\[
    q^{-1}\bigl(B_{\mathbb T}(q(0),\rho)\bigr)
    =
    [0,\rho)\cup(1-\rho,1].
\]
Set
\(A_0(\rho)=\eta([0,\rho))\),
\(A_1(\rho)=\eta((1-\rho,1])\).
Then
\[
    \zeta\bigl(B_{\mathbb T}(q(0),\rho)\bigr)
    =
    A_0(\rho)+A_1(\rho).
\]
Define
\[
    \lambda_i
    =
    \liminf_{\rho\downarrow0}
    \frac{\log_2 A_i(\rho)}{\log_2\rho},
    \qquad i=0,1,
\]
with \(\lambda_i=+\infty\) if \(A_i(\rho)=0\) for all sufficiently small
\(\rho\).  Thus \(\lambda_0\) and \(\lambda_1\) are the one-sided lower local
dimensions at \(0\) and \(1\), with the same \(+\infty\) convention.

We claim that
\[
    \liminf_{\rho\downarrow0}
    \frac{\log_2(A_0(\rho)+A_1(\rho))}{\log_2\rho}
    =
    \min\{\lambda_0,\lambda_1\}.
\]
If some \(A_i\) is eventually zero, the claim reduces to the other side; if
both are eventually zero, both sides are interpreted as \(+\infty\).  Otherwise,
by monotonicity of \(A_i\), the logarithms are defined for all sufficiently
small \(\rho\).  Since
\(
    A_0(\rho)+A_1(\rho)\ge A_i(\rho)
\)
and \(\log_2\rho<0\), we have
\[
    \frac{\log_2(A_0(\rho)+A_1(\rho))}{\log_2\rho}
    \le
    \frac{\log_2 A_i(\rho)}{\log_2\rho},
    \qquad i=0,1.
\]
Therefore the left-hand side is at most \(\min\{\lambda_0,\lambda_1\}\).

Conversely, if \(\beta<\min\{\lambda_0,\lambda_1\}\), then \(A_i(\rho)<\rho^\beta\) for \(i=0,1\) and all sufficiently small \(\rho\).
Thus
\(
    A_0(\rho)+A_1(\rho)\le2\rho^\beta,
\)
and hence
\[
    \frac{\log_2(A_0(\rho)+A_1(\rho))}{\log_2\rho}
    \ge
    \beta+\frac{1}{\log_2\rho}.
\]
Letting \(\rho\downarrow0\) gives the lower bound by \(\beta\), and then
letting \(\beta\uparrow\min\{\lambda_0,\lambda_1\}\) gives the reverse
inequality.  Therefore the lower local dimension of \(\zeta\) at the glued
point \(q(0)=q(1)\) is \(\min\{\lambda_0,\lambda_1\}\).

All interior lower local dimensions are preserved, and the glued endpoint
contributes precisely the minimum of the two one-sided endpoint exponents.
Taking the infimum over support points gives
\[
    \alpha_{\min}^{\mathbb T}(q_{\#}\eta)
    =
    \alpha_{\min}^{[0,1]}(\eta).
\]
\end{proof}

\begin{lemma}
\label{L:arc-standard-circle-transfer}
Let \(\zeta\) be a nonzero finite Borel measure on \(\mathbb T\), and set
\(
    \eta_{\circ}=(f_{\mathbb T})_{\#}\zeta.
\)
Then
\[
    \alpha_{\min}(\eta_{\circ})
    =
    \alpha_{\min}^{\mathbb T}(\zeta).
\]
\end{lemma}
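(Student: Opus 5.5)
The plan is to use that \(f_{\mathbb T}\) is bi-Lipschitz from \((\mathbb T,d_{\mathbb T})\) onto \(\mathbb S^1\subset\mathbb R^2\), as recorded in Subsection~\ref{SS:endpoint-gluing-preliminaries}, and to run the standard ball-comparison argument. Let \(c\le C\) be the bi-Lipschitz constants, so that
\[
    c\,d_{\mathbb T}(\theta,\theta')
    \le
    |f_{\mathbb T}(\theta)-f_{\mathbb T}(\theta')|
    \le
    C\,d_{\mathbb T}(\theta,\theta').
\]
Since \(f_{\mathbb T}\) is a homeomorphism onto the compact set \(\mathbb S^1\), we have \(\operatorname{spt}\eta_\circ=f_{\mathbb T}(\operatorname{spt}\zeta)\). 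This gives a bijection between support points, and it remains only to compare lower local dimensions pointwise.

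Fix \(\theta\in\operatorname{spt}\zeta\) and put \(x=f_{\mathbb T}(\theta)\). Since \(\eta_\circ\) lives on \(\mathbb S^1\), the bi-Lipschitz bounds give the inclusions
\[
    f_{\mathbb T}^{-1}\bigl(B(x,\rho)\bigr)\subset B_{\mathbb T}(\theta,\rho/c)
    \qquad\text{and}\qquad
    B_{\mathbb T}(\theta,\rho/C)\subset f_{\mathbb T}^{-1}\bigl(B(x,\rho)\bigr).
\]
Hence
\[
    \zeta\bigl(B_{\mathbb T}(\theta,\rho/C)\bigr)
    \le
    \eta_\circ\bigl(B(x,\rho)\bigr)
    \le
    \zeta\bigl(B_{\mathbb T}(\theta,\rho/c)\bigr).
\]
All of these masses are positive because \(\theta\) is a support point, so the logarithms are defined.

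Next take \(\log_2\) and divide by \(\log_2\rho<0\), which reverses the inequalities. Substituting \(\rho'=\rho/C\) or \(\rho'=\rho/c\) gives \(\log_2\rho=\log_2\rho'+O(1)\), and the ratio \(\log_2\rho/\log_2\rho'\to1\). Therefore
\[
    \liminf_{\rho\downarrow0}\frac{\log_2\zeta(B_{\mathbb T}(\theta,\rho/c))}{\log_2\rho}
    =
    \liminf_{\rho'\downarrow0}\frac{\log_2\zeta(B_{\mathbb T}(\theta,\rho'))}{\log_2\rho'},
\]
and the same identity holds with \(C\) in place of \(c\). The value \(+\infty\) is handled by the same computation. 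Squeezing the ratio between these two equal liminfs shows that the lower local dimension of \(\eta_\circ\) at \(x\) equals that of \(\zeta\) at \(\theta\).

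Finally, take the infimum over the bijectively corresponding support points to obtain the claimed identity \(\alpha_{\min}(\eta_\circ)=\alpha_{\min}^{\mathbb T}(\zeta)\). No step is a genuine obstacle here. The only point needing care is the rescaling of the liminf under \(\rho\mapsto\rho/c\), which is harmless because it amounts to an additive \(O(1)\) in the logarithm.
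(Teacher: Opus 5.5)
Your proposal is correct and follows essentially the same route as the paper. Both arguments use the bi-Lipschitz comparison for \(f_{\mathbb T}\), for which the paper makes the constants explicit as \(4\,d_{\mathbb T}(\theta,\theta')\le|f_{\mathbb T}(\theta)-f_{\mathbb T}(\theta')|\le 2\pi\,d_{\mathbb T}(\theta,\theta')\); they then use the two-sided ball inclusions, invariance of lower local exponents under a fixed rescaling of the radius, and an infimum over corresponding support points. Your explicit handling of \(\log_2\rho/\log_2\rho'\to1\) and of the \(+\infty\) case fills in details the paper leaves implicit.
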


\begin{proof}
For \(\theta,\theta'\in\mathbb T\), set
\(
    \delta=d_{\mathbb T}(\theta,\theta')\in[0,1/2].
\)
Choose representatives \(a,b\in\mathbb R\) and \(k\in\mathbb Z\) with
\(
    |a-b+k|=\delta.
\)
Then
\[
    |f_{\mathbb T}(\theta)-f_{\mathbb T}(\theta')|
    =
    2|\sin(\pi(a-b))|
    =
    2|\sin(\pi(a-b+k))|
    =
    2\sin(\pi\delta).
\]
Since \(\frac{2}{\pi}x\le \sin x\le x\) for \(0\le x\le\pi/2\), we get
\[
    4d_{\mathbb T}(\theta,\theta')
    \le
    |f_{\mathbb T}(\theta)-f_{\mathbb T}(\theta')|
    \le
    2\pi d_{\mathbb T}(\theta,\theta').
\]
Thus \(f_{\mathbb T}\) is a bi-Lipschitz homeomorphism from \(\mathbb T\) onto
\(\mathbb S^1\).  In particular,
\(
    \operatorname{spt}\eta_{\circ}
    =
    f_{\mathbb T}(\operatorname{spt}\zeta).
\)

Fix \(\theta\in\operatorname{spt}\zeta\), and set
\(
    y=f_{\mathbb T}(\theta).
\)
For all sufficiently small \(\rho>0\),
\[
    B_{\mathbb T}\left(\theta,\frac{\rho}{2\pi}\right)
    \subset
    f_{\mathbb T}^{-1}\bigl(B(y,\rho)\bigr)
    \subset
    B_{\mathbb T}\left(\theta,\frac{\rho}{4}\right),
\]
where \(B(y,\rho)\) denotes the Euclidean ball in \(\mathbb R^2\).  Hence
\[
    \zeta\left(
        B_{\mathbb T}\left(\theta,\frac{\rho}{2\pi}\right)
    \right)
    \le
    \eta_{\circ}(B(y,\rho))
    \le
    \zeta\left(
        B_{\mathbb T}\left(\theta,\frac{\rho}{4}\right)
    \right).
\]
Taking logarithms, dividing by \(\log_2\rho<0\), and using the invariance of
lower local exponents under multiplication of the radius by a fixed positive
constant, the lower local dimension of \(\eta_{\circ}\) at \(y\) equals the
lower local dimension of \(\zeta\) at \(\theta\).  Taking the infimum over
support points gives
\(
    \alpha_{\min}(\eta_{\circ})
    =
    \alpha_{\min}^{\mathbb T}(\zeta).
\)
\end{proof}

\begin{lemma}
\label{L:arc-interval-to-circle-cascade}
Let \(W\ge0\) satisfy \(\mathbb E W=1\).  Let
\(
    (\widetilde\mu_n)_{n\ge0}
\)
be the dyadic scalar cascade on \([0,1]\) generated by the dyadic weights \((W_v)\), 
and suppose that
\(
    \widetilde\mu_n\weak\widetilde\mu.
\)
Set
\[
    g=f_{\mathbb T}\circ q,
    \qquad
    \mu_{\circ}=g_{\#}\widetilde\mu.
\]
Then \(\mu_{\circ}\) is the weak limit of the scalar dyadic circle cascade
constructed from the same weights, with level-\(n\) dyadic arcs
\[
    A_v=g(I_v)\subset\mathbb S^1,
    \qquad v\in\Sigma_n,
\]
using the endpoint convention inherited from the interval construction.  In
particular, \(\mu_{\circ}\) has the law of the scalar dyadic circle cascade
generated by \(W\).  Moreover,
\(
\mu_{\circ}(\mathbb S^1)=\widetilde\mu([0,1]).
\)
\end{lemma}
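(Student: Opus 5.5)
The plan is to verify the identity at each finite level and then pass to the weak limit through the continuous map \(g\). Set \(g=f_{\mathbb T}\circ q:[0,1]\to\mathbb S^1\). This map is continuous and identifies only \(0\) with \(1\). By Lemma~\ref{L:arc-standard-circle-transfer}, \(g\) restricted to \([0,1)\) is a continuous bijection onto \(\mathbb S^1\).

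\textbf{Level-\(n\) identity.} Fix \(n\) and \(v\in\Sigma_n\). The arcs \(A_v=g(I_v)\) partition \(\mathbb S^1\), with the endpoint convention inherited from the half-open \(I_v\). The only exception is the point \(g(0)=g(1)\): it lies in \(A_{0\cdots0}\), and also in \(A_{1\cdots1}\) through the closed last interval. This double count involves a single point, so it carries no mass for the absolutely continuous level measures. We define the level-\(n\) circle cascade by
\[
\mu_{\circ,n}=\sum_{|v|=n}Q_v\,g_\#(\mathbf 1_{I_v}\,dt),
\]
that is, density \(Q_v\) with respect to normalized arclength on \(A_v\). Here the \(Q_v\) are the products of the same weights \(W_v\). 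Since \(g_\#(dt)\) is normalized arclength on \(\mathbb S^1\), we get exactly \(\mu_{\circ,n}=g_\#\widetilde\mu_n\). The family \((A_v)\) is the standard dyadic tree of arcs on \(\mathbb S^1\): arc \(A_v\) has angular length \(2\pi2^{-n}\), its two children are \(A_{v0}\) and \(A_{v1}\), and i.i.d.\ copies of \(W\) sit on the edges. So \((\mu_{\circ,n})\) is, by definition, the scalar dyadic circle cascade generated by \(W\), built from the same weights.

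\textbf{Passage to the limit.} Pushforward by a continuous map between compact metric spaces is weakly continuous: for \(\phi\in C(\mathbb S^1)\), the function \(\phi\circ g\) lies in \(C([0,1])\). Therefore \(\widetilde\mu_n\weak\widetilde\mu\) implies \(\mu_{\circ,n}=g_\#\widetilde\mu_n\weak g_\#\widetilde\mu=\mu_\circ\). Because the weak limit is unique, \(\mu_\circ\) is the weak limit of the circle cascade. The statement about laws follows because the joint law of \((W_v)\) determines the law of the sequence \((\mu_{\circ,n})\), and hence of its almost sure limit. Finally, taking \(\phi\equiv1\) gives
\[
\mu_\circ(\mathbb S^1)=\widetilde\mu(g^{-1}(\mathbb S^1))=\widetilde\mu([0,1]).
\]

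The only delicate point is the bookkeeping at the glued point \(g(0)=g(1)\), where the two endpoint arcs overlap under the inherited convention. This is harmless for two reasons. First, it is a single point, so it is null for every level measure. Second, the identification of \(\mu_\circ\) goes through \(g_\#\widetilde\mu\) directly, which does not depend on any convention for individual arcs. No almost sure endpoint-mass statement is needed for this lemma.
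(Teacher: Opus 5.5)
Your proof is correct and follows essentially the same route as the paper. Both arguments identify the level measures \(g_\#\widetilde\mu_n\) with the circle cascade built from the same weights and then pass to the limit by weak continuity of pushforward under the continuous map \(g\). The only difference is in the level-\(n\) check: the paper verifies the cylinder masses \(\mu_{\circ,n}(A_v)=2^{-n}Q_v\) using absolute continuity, whereas you match densities directly through \(g_\#(dt)\) being normalized arclength.
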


\begin{proof}
Set
\(
    \mu_{\circ,n}=g_{\#}\widetilde\mu_n.
\)
For \(v\in\Sigma_n\), let \(Q_v\) be the product of the weights along \(v\).
Then
\[
    \widetilde\mu_n(I_v)=2^{-n}Q_v.
\]
Since \(\widetilde\mu_n\) is absolutely continuous and
\(g^{-1}(A_v)\triangle I_v\) is contained in finitely many dyadic endpoints,
\[
    \mu_{\circ,n}(A_v)
    =
    \widetilde\mu_n(g^{-1}(A_v))
    =
    \widetilde\mu_n(I_v)
    =
    2^{-n}Q_v.
\]
Thus \((\mu_{\circ,n})_{n\ge0}\) satisfies the finite-level cylinder-mass rule
for the scalar dyadic circle cascade constructed from the same weights.

Since \(g\) is continuous and
\(
    \widetilde\mu_n\weak\widetilde\mu,
\)
we have
\[
    \mu_{\circ,n}=g_{\#}\widetilde\mu_n
    \weak
    g_{\#}\widetilde\mu
    =
    \mu_{\circ}.
\]
Therefore \(\mu_{\circ}\) is the weak limit of that circle cascade, and hence
has the law of the scalar dyadic circle cascade generated by \(W\).  Finally,
\[
    \mu_{\circ}(\mathbb S^1)
    =
    \widetilde\mu(g^{-1}(\mathbb S^1))
    =
    \widetilde\mu([0,1]).
\]
\end{proof}

\begin{proposition}
\label{P:arc-interval-local-dimension-identity}
Let \(W\) be in the minimal Kahane--Peyri\`ere regime, and let
\(\widetilde\mu\) be the dyadic scalar cascade on \([0,1]\) generated by
\(W\).  Then, almost surely on
\(
    \{\widetilde\mu([0,1])>0\},
\)
one has
\[
    \alpha_{\min}^{[0,1]}(\widetilde\mu)
    =
    A_{\mathrm{loc}}(W).
\]
\end{proposition}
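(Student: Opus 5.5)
The plan is to chain the three deterministic transfers just proved with the imported circle theorem. I will work on the probability-one event on which the interval cascade \(\widetilde\mu_n\) converges weakly to \(\widetilde\mu\); this event exists by the Kahane--Peyri\`ere theory recalled in Subsection~\ref{SS:scalar-cascades}.

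First, set \(g=f_{\mathbb T}\circ q\) and \(\mu_\circ=g_\#\widetilde\mu\). Lemma~\ref{L:arc-interval-to-circle-cascade} gives two facts. The measure \(\mu_\circ\) is exactly the scalar dyadic circle cascade generated by \(W\), built from the same weights, so it has the correct law. Moreover, \(\mu_\circ(\mathbb S^1)=\widetilde\mu([0,1])\), so the events \(\{\mu_\circ(\mathbb S^1)>0\}\) and \(\{\widetilde\mu([0,1])>0\}\) coincide.

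Second, apply Theorem~\ref{T:imported-circle-local-dimension} to this \(\mu_\circ\). There is one point to check here. That theorem is a statement about the law of the circle cascade, or equivalently about the cascade driven by a weight family of that law. Since \(\mu_\circ\) is a measurable function of the weights \((W_v)\) and has exactly that construction, the almost-sure conclusion transfers. The result is \(\alpha_{\min}(\mu_\circ)=A_{\mathrm{loc}}(W)\) almost surely on \(\{\widetilde\mu([0,1])>0\}\).

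Third, on this event \(\widetilde\mu\) is nonzero, so \(\zeta=q_\#\widetilde\mu\) is a nonzero finite measure on \(\mathbb T\). Lemma~\ref{L:arc-standard-circle-transfer} applied to \(\zeta\), noting that \((f_{\mathbb T})_\#\zeta=\mu_\circ\), gives
\[
\alpha_{\min}(\mu_\circ)=\alpha_{\min}^{\mathbb T}(\zeta).
\]
Lemma~\ref{L:arc-endpoint-gluing-local-dimension} then gives
\[
\alpha_{\min}^{\mathbb T}(q_\#\widetilde\mu)=\alpha_{\min}^{[0,1]}(\widetilde\mu).
\]
Chaining these two identities with the second step yields \(\alpha_{\min}^{[0,1]}(\widetilde\mu)=A_{\mathrm{loc}}(W)\), as required.

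The main obstacle, mild as it is, is the measurability and law-identification in the second step: making sure the imported theorem applies to this particular coupling, and not merely to some circle cascade with the same law. Lemma~\ref{L:arc-interval-to-circle-cascade} handles this, because it shows that \(\mu_\circ\) is the circle cascade built from the given weights. The endpoint convention on the arcs \(A_v\) is irrelevant for the same reason, since the limit measure does not depend on it. All remaining steps are deterministic and hold pointwise on the event.
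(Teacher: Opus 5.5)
Your proposal is correct and follows the paper's route: identify \(\mu_\circ=(f_{\mathbb T})_\#q_\#\widetilde\mu\) as the circle cascade via Lemma~\ref{L:arc-interval-to-circle-cascade}, so that the non-extinction events coincide, apply Theorem~\ref{T:imported-circle-local-dimension}, and then chain the deterministic transfers of Lemmas~\ref{L:arc-standard-circle-transfer} and~\ref{L:arc-endpoint-gluing-local-dimension}. Your remark that the imported theorem applies to this specific coupling, and not merely to some measure with the same law, is a harmless refinement of a point the paper leaves implicit.
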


\begin{proof}
Set
\[
    S_0=\{\widetilde\mu([0,1])>0\},
    \qquad
    \mu_{\circ}=(f_{\mathbb T})_{\#}q_{\#}\widetilde\mu .
\]
By Lemma~\ref{L:arc-interval-to-circle-cascade},
\(\mu_{\circ}\) has the law of the scalar dyadic circle cascade generated by
\(W\), and
\(
    \{\mu_{\circ}(\mathbb S^1)>0\}=S_0.
\)
Hence Theorem~\ref{T:imported-circle-local-dimension} gives
\(
    \alpha_{\min}(\mu_{\circ})
    =
    A_{\mathrm{loc}}(W)
\)
almost surely on \(S_0\).  On \(S_0\), the measures
\(\widetilde\mu\),
\(q_{\#}\widetilde\mu\),
\(\mu_{\circ}\)
are nonzero.  
Thus Lemma~\ref{L:arc-standard-circle-transfer} and
Lemma~\ref{L:arc-endpoint-gluing-local-dimension} apply.
Therefore, almost
surely on \(S_0\),
\[
    \alpha_{\min}^{[0,1]}(\widetilde\mu)
    =
    \alpha_{\min}^{\mathbb T}(q_{\#}\widetilde\mu)
    =
    \alpha_{\min}(\mu_{\circ})
    =
    A_{\mathrm{loc}}(W).
\]
\end{proof}

\begin{proposition}
\label{P:arc-bilipschitz-local-dimension-transfer}
Let \(\gamma:[0,1]\to\mathbb R^2\) be a \(C^1\) embedded arc with
\(
    \inf_{0\le t\le1}|\gamma'(t)|>0.
\)
If \(\eta\) is a nonzero finite Borel measure on \([0,1]\), then
\[
    \alpha_{\min}(\gamma_{\#}\eta)
    =
    \alpha_{\min}^{[0,1]}(\eta).
\]
\end{proposition}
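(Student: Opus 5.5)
The argument will follow the template of Lemma~\ref{L:arc-standard-circle-transfer}, with the canonical circle replaced by the fixed arc \(\gamma\) and the circle metric replaced by the interval metric on \([0,1]\). The first step is to record that \(\gamma\) is bi-Lipschitz onto its image, as noted in Subsection~\ref{SS:fixed-curve-consequences}. Thus there are constants \(0<c\le C<\infty\) with
\[
    c|s-t|\le|\gamma(s)-\gamma(t)|\le C|s-t|
    \qquad (s,t\in[0,1]).
\]
The upper bound comes from \(\sup|\gamma'|<\infty\). The lower bound comes from a compactness argument. Locally, \(\inf|\gamma'|>0\) and uniform continuity of \(\gamma'\) give \(|\gamma(s)-\gamma(t)|\ge c_0|s-t|\) whenever \(|s-t|\le\delta_0\). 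For \(|s-t|\ge\delta_0\), injectivity and continuity show that \(|\gamma(s)-\gamma(t)|\) attains a positive minimum on the compact set \(\{|s-t|\ge\delta_0\}\).

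The second step identifies the supports. Since \(\gamma\) is a homeomorphism of \([0,1]\) onto the compact set \(\gamma([0,1])\), we have \(\operatorname{spt}\gamma_\#\eta=\gamma(\operatorname{spt}\eta)\). This needs no separate treatment at the endpoints, because the support of \(\gamma_\#\eta\) is computed with Euclidean balls in \(\mathbb R^2\), and only the image \(\gamma([0,1])\) carries mass.

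The third step is the ball comparison. Fix \(t\in\operatorname{spt}\eta\) and put \(y=\gamma(t)\). Then
\[
    [0,1]\cap(t-\rho/C,\,t+\rho/C)
    \subset
    \gamma^{-1}(B(y,\rho))
    \subset
    [0,1]\cap(t-\rho/c,\,t+\rho/c).
\]
Consequently
\[
    \eta\bigl([0,1]\cap(t-\rho/C,t+\rho/C)\bigr)
    \le
    \gamma_\#\eta(B(y,\rho))
    \le
    \eta\bigl([0,1]\cap(t-\rho/c,t+\rho/c)\bigr).
\]
Next I take \(\log_2\) and divide by \(\log_2\rho<0\). The lower local exponent is unchanged when the radius is multiplied by a fixed positive constant, since \(\log_2(\rho/C)/\log_2\rho\to1\) and the relevant masses stay positive for \(t\) in the support. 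Hence the lower local dimension of \(\gamma_\#\eta\) at \(y\) equals the interval lower local dimension of \(\eta\) at \(t\). The endpoints \(t=0,1\) are handled automatically, because \(\alpha^{[0,1]}_{\min}\) is already defined with the one-sided balls \([0,1]\cap(t-\rho,t+\rho)\), and the inclusions above respect that intersection. Taking the infimum over \(t\in\operatorname{spt}\eta\), equivalently over \(y\in\operatorname{spt}\gamma_\#\eta\), gives the identity.

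The main obstacle is the global lower Lipschitz bound. The rest is a routine repetition of Lemma~\ref{L:arc-standard-circle-transfer}. I would settle the lower bound first by the local/separated split just described. I would also note explicitly that the infimum can be \(+\infty\) only if all local exponents are infinite, and that this case is symmetric on both sides of the identity.
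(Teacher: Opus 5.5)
Your proposal is correct and follows essentially the same route as the paper: the bi-Lipschitz sandwich \([0,1]\cap(t-\rho/C,t+\rho/C)\subset\gamma^{-1}(B(\gamma(t),\rho))\subset[0,1]\cap(t-\rho/c,t+\rho/c)\), invariance of lower local exponents under a fixed rescaling of the radius, the identity \(\operatorname{spt}(\gamma_\#\eta)=\gamma(\operatorname{spt}\eta)\), and an infimum over support points. The only addition is your explicit local/separated compactness argument for the lower Lipschitz bound, which the paper takes as a standard consequence of \(\gamma\) being a \(C^1\) embedding with \(\inf|\gamma'|>0\).
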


\begin{proof}
The map
\(
    \gamma:[0,1]\to\gamma([0,1])
\)
is bi-Lipschitz.  Hence there are constants
\(
    0<c_\gamma<C_\gamma<\infty
\)
such that
\[
    c_\gamma|s-t|
    \le
    |\gamma(s)-\gamma(t)|
    \le
    C_\gamma|s-t|
    \qquad (0\le s,t\le1).
\]
For \(t\in\operatorname{spt}\eta\) and all sufficiently small \(\rho>0\),
\[
    [0,1]\cap
    \left(t-\frac{\rho}{C_\gamma},t+\frac{\rho}{C_\gamma}\right)
    \subset
    \gamma^{-1}(B(\gamma(t),\rho))
    \subset
    [0,1]\cap
    \left(t-\frac{\rho}{c_\gamma},t+\frac{\rho}{c_\gamma}\right).
\]
Applying \(\eta\) to these inclusions and using that fixed multiplicative
changes of radius do not affect lower local exponents, the lower local
dimension of \(\gamma_{\#}\eta\) at \(\gamma(t)\) equals the lower local
dimension of \(\eta\) at \(t\).  Since \(\gamma\) is a homeomorphism from
\([0,1]\) onto its image,
\(
    \operatorname{spt}(\gamma_{\#}\eta)
    =
    \gamma(\operatorname{spt}\eta).
\)
Taking the infimum over support points gives the result.
\end{proof}

\begin{theorem}[Endpoint local dimension on a fixed arc]
\label{T:arc-local-dimension-identity}
Let \(W\) be in the minimal Kahane--Peyri\`ere regime, and let
\(\widetilde\mu\) be the dyadic scalar cascade on \([0,1]\) generated by \(W\).  

Let
\(
    \gamma:[0,1]\to\mathbb R^2
\)
be a fixed nondegenerate \(C^2\) embedded arc, and set
\(
    \mu_\gamma=\gamma_{\#}\widetilde\mu.
\)
Then, almost surely on
\(
    S_\gamma=\{\widetilde\mu([0,1])>0\},
\)
one has
\[
    \alpha_{\min}(\mu_\gamma)
    =
    A_{\mathrm{loc}}(W).
\]
\end{theorem}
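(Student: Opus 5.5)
The plan is to combine two results already in place: Proposition~\ref{P:arc-interval-local-dimension-identity} for the parameter-space measure, and the deterministic bi-Lipschitz transfer of Proposition~\ref{P:arc-bilipschitz-local-dimension-transfer}. No new probabilistic input is needed; the only care is in matching the events and checking that the hypotheses of the deterministic statements hold pathwise.

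First, I check the hypotheses of Proposition~\ref{P:arc-bilipschitz-local-dimension-transfer}. By Definition~\ref{D:fixed-nondegenerate-arc}, a fixed nondegenerate \(C^2\) embedded arc is in particular a \(C^1\) embedding with \(\inf_{[0,1]}|\gamma'|>0\). The curvature condition is not used here. Hence, for every nonzero finite Borel measure \(\eta\) on \([0,1]\), we have
\[
    \alpha_{\min}(\gamma_\#\eta)=\alpha_{\min}^{[0,1]}(\eta).
\]
This is a deterministic identity and holds simultaneously for all such \(\eta\).

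Second, I fix the probability-one event on which \(\widetilde\mu_n\) converges weakly to \(\widetilde\mu\). On \(S_\gamma=\{\widetilde\mu([0,1])>0\}\), the measure \(\widetilde\mu\) is a nonzero finite Borel measure on \([0,1]\), so \(\mu_\gamma=\gamma_\#\widetilde\mu\) is nonzero with total mass \(\widetilde\mu([0,1])\). The event \(S_\gamma\) is exactly the event \(\{\widetilde\mu([0,1])>0\}\) appearing in Proposition~\ref{P:arc-interval-local-dimension-identity}. That proposition therefore yields a null set \(N\) such that, on \(S_\gamma\setminus N\),
\[
    \alpha_{\min}^{[0,1]}(\widetilde\mu)=A_{\mathrm{loc}}(W).
\]
Applying the deterministic identity with \(\eta=\widetilde\mu(\omega)\) for each \(\omega\in S_\gamma\setminus N\) then gives
\[
    \alpha_{\min}(\mu_\gamma)=\alpha_{\min}^{[0,1]}(\widetilde\mu)=A_{\mathrm{loc}}(W)
\]
almost surely on \(S_\gamma\).

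I expect no real obstacle here. The only points needing care are the endpoint behaviour and the infimum over the support. One-sided balls at \(t=0,1\) are already built into the definition of \(\alpha_{\min}^{[0,1]}\) through the intersection with \([0,1]\). The bi-Lipschitz ball inclusions in Proposition~\ref{P:arc-bilipschitz-local-dimension-transfer} respect this intersection, and the equality \(\operatorname{spt}(\gamma_\#\widetilde\mu)=\gamma(\operatorname{spt}\widetilde\mu)\) ensures the two infima range over corresponding points. Both facts were verified in that proposition, so the theorem follows by composition.
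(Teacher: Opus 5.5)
Your proposal is correct and follows the paper's proof: on \(S_\gamma\), Proposition~\ref{P:arc-interval-local-dimension-identity} gives \(\alpha_{\min}^{[0,1]}(\widetilde\mu)=A_{\mathrm{loc}}(W)\) almost surely, and Proposition~\ref{P:arc-bilipschitz-local-dimension-transfer} (which needs only the \(C^1\) embedding and \(\inf|\gamma'|>0\)) transfers this identity to \(\mu_\gamma=\gamma_\#\widetilde\mu\).
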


\begin{proof}
On \(S_\gamma\), the measure \(\widetilde\mu\) is nonzero.  By
Proposition~\ref{P:arc-interval-local-dimension-identity},
\[
    \alpha_{\min}^{[0,1]}(\widetilde\mu)
    =
    A_{\mathrm{loc}}(W)
\]
almost surely on \(S_\gamma\).  Since a fixed nondegenerate \(C^2\) embedded
arc satisfies the hypotheses of
Proposition~\ref{P:arc-bilipschitz-local-dimension-transfer}, that proposition
applies on \(S_\gamma\) with \(\eta=\widetilde\mu\).  Hence, almost surely on
\(S_\gamma\),
\[
    \alpha_{\min}(\mu_\gamma)
    =
    \alpha_{\min}(\gamma_{\#}\widetilde\mu)
    =
    \alpha_{\min}^{[0,1]}(\widetilde\mu)
    =
    A_{\mathrm{loc}}(W).
\]
\end{proof}

\subsection{The deterministic arc-support upper obstruction}
\label{S:upper-obstruction}

We next prove the deterministic upper obstruction used in the endpoint theorem.
The argument is independent of the cascade.  It applies to any nonzero finite
Borel measure supported on a fixed \(C^2\) embedded arc.

The idea is that a \(C^2\) arc has quadratic contact with its tangent line.
Equivalently, in a normal direction at a point, the support has thickness
\(O(\rho^2)\) inside a ball of radius \(\rho\).  A one-dimensional Gaussian
Fourier average along that normal frequency line therefore detects local mass.

Throughout this subsection,
\(
    \gamma:[0,1]\to\mathbb R^2
\)
is a fixed \(C^2\) embedded arc satisfying
\(
\inf_{0\le t\le1}|\gamma'(t)|>0.
\)
The nonvanishing curvature assumption
\[
    \inf_{0\le t\le1}|\det(\gamma'(t),\gamma''(t))|>0
\]
is not needed for this upper obstruction; it is used only in the finite-\(r\)
lower-bound theorem.

We use the standard comparison
\(
    \dim_{\mathrm F}(\eta)
    \le
    \dim_{\mathrm H}(\operatorname{spt}\eta)
\)
for nonzero finite Borel measures on \(\mathbb R^2\), which follows from the
Fourier-energy identity; see \cite{Mattila1995}.  In particular, if \(\eta\) is
supported on a Lipschitz arc, then
\(
    \dim_{\mathrm F}(\eta)\le1.
\)

\begin{lemma}
\label{L:arc-uniform-quadratic-normal-projection}
There exists \(C_\gamma<\infty\) such that the following holds.  For every
\(t_0\in[0,1]\), put
\(
    x_0=\gamma(t_0),
\)
and let \(N_0\) be any unit vector with
\(
    N_0\cdot\gamma'(t_0)=0.
\)
If \(t,u\in[0,1]\),
\(x=\gamma(t)\),
\(y=\gamma(u)\),
then
\[
    |N_0\cdot(x-y)|
    \le
    C_\gamma
    \left(
        |x-x_0|^2+|y-x_0|^2
    \right).
\]
Consequently, if \(\rho>0\) and
\(
    x,y\in\gamma([0,1])\cap B(x_0,\rho),
\)
then
\[
    |N_0\cdot(x-y)|
    \le
    C_\gamma\rho^2.
\]
\end{lemma}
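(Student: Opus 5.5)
We will prove the estimate by a second-order Taylor expansion of \(\gamma\) around \(t_0\), combined with the bi-Lipschitz property of \(\gamma\) recorded in Subsection~\ref{SS:fixed-curve-consequences}. Let \(c_\gamma>0\) be such that \(|\gamma(t)-\gamma(u)|\ge c_\gamma|t-u|\) for all \(t,u\in[0,1]\), and set \(K_\gamma=\sup_{[0,1]}|\gamma''|<\infty\). Since \([0,1]\) is convex, Taylor's theorem with integral remainder applies at every \(t_0\), including the endpoints \(t_0\in\{0,1\}\). It gives
\[
    \gamma(t)=\gamma(t_0)+(t-t_0)\gamma'(t_0)+R(t),
    \qquad
    |R(t)|\le \tfrac12 K_\gamma|t-t_0|^2 .
\]
We then pair with \(N_0\) and use \(N_0\cdot\gamma'(t_0)=0\) and \(|N_0|=1\). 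This yields
\[
    |N_0\cdot(x-x_0)|\le \tfrac12K_\gamma|t-t_0|^2
    \le \tfrac{K_\gamma}{2c_\gamma^2}|x-x_0|^2,
\]
where the bi-Lipschitz lower bound converts parameter distance into Euclidean distance.

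The same bound holds with \(y\) and \(u\) in place of \(x\) and \(t\). Writing \(x-y=(x-x_0)-(y-x_0)\) and applying the triangle inequality gives the first claim with \(C_\gamma=K_\gamma/(2c_\gamma^2)\). This constant is uniform in \(t_0\) and in the choice of the unit normal \(N_0\), because it depends only on \(K_\gamma\) and \(c_\gamma\).

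For the consequence, take \(x,y\in\gamma([0,1])\cap B(x_0,\rho)\). Then \(|x-x_0|,|y-x_0|<\rho\), so the right-hand side is at most \(2C_\gamma\rho^2\), and we rename the constant.

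The only delicate point is that the quadratic bound must be in terms of Euclidean distance, not parameter distance, and must hold uniformly, including at the endpoints. The global bi-Lipschitz property of the embedding handles this: it rules out far-apart parameters landing in the same small ball. Endpoint one-sidedness causes no issue, since Taylor's theorem on an interval does not require \(t_0\) to be interior. The curvature lower bound is not used here.
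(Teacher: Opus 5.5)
Your proposal is correct and follows essentially the same route as the paper: a Taylor expansion with integral remainder at \(t_0\) (valid at the endpoints), pairing with \(N_0\) to kill the linear term, the bi-Lipschitz lower bound to convert \(|t-t_0|^2\) into \(|x-x_0|^2\), and the triangle inequality through \(x_0\). The only cosmetic difference is that you bound the remainder by \(\tfrac12\sup|\gamma''|\,|t-t_0|^2\), whereas the paper phrases the same step as a Lipschitz bound on \(\gamma'\).
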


\begin{proof}
By the bi-Lipschitz fact recorded in Subsection~\ref{SS:fixed-curve-consequences}, and since
\(\gamma\in C^2\) on the compact interval \([0,1]\), there is
\(C_\gamma<\infty\) such that
\[
    |t-t_0|
    \le
    C_\gamma|\gamma(t)-\gamma(t_0)|,
\qquad
\text{and}
\qquad
    |\gamma'(v)-\gamma'(t_0)|
    \le
    C_\gamma|v-t_0|
\]
for all \(t,t_0,v\in[0,1]\).

For every \(t,t_0\in[0,1]\), Taylor's formula in integral form gives
\[
    \gamma(t)-\gamma(t_0)
    =
    \gamma'(t_0)(t-t_0)
    +
    \int_{t_0}^{t}
    \bigl(\gamma'(v)-\gamma'(t_0)\bigr)\,dv,
\]
where the integral is understood in the oriented sense if \(t<t_0\).  Using
the preceding bound on \(\gamma'\), we get
\[
    \left|
        \int_{t_0}^{t}
        \bigl(\gamma'(v)-\gamma'(t_0)\bigr)\,dv
    \right|
    \le
    C_\gamma
    \int_{\min\{t,t_0\}}^{\max\{t,t_0\}}
    |v-t_0|\,dv
    \le
    C_\gamma|t-t_0|^2.
\]
Taking the dot product with \(N_0\) and using
\(
    N_0\cdot\gamma'(t_0)=0,
\)
we obtain
\[
    |N_0\cdot(\gamma(t)-\gamma(t_0))|
    \le
    C_\gamma|t-t_0|^2
    \le
    C_\gamma|\gamma(t)-\gamma(t_0)|^2.
\]
The same estimate with \(u\) in place of \(t\) gives
\[
\begin{aligned}
    |N_0\cdot(\gamma(t)-\gamma(u))|
    &\le
    |N_0\cdot(\gamma(t)-\gamma(t_0))|
    +
    |N_0\cdot(\gamma(u)-\gamma(t_0))|        \\
    &\le
    C_\gamma
    \left(
        |\gamma(t)-\gamma(t_0)|^2
        +
        |\gamma(u)-\gamma(t_0)|^2
    \right).
\end{aligned}
\]
Since
\(x=\gamma(t)\),
\(y=\gamma(u)\),
\(x_0=\gamma(t_0)\),
this proves the first assertion.

If
\(
    x,y\in\gamma([0,1])\cap B(x_0,\rho),
\)
then
\[
    |x-x_0|^2+|y-x_0|^2\le2\rho^2,
\]
and the final estimate follows after enlarging \(C_\gamma\).  The endpoint
cases \(t_0=0\) and \(t_0=1\) are included, since the same oriented-integral
identity and estimate remain valid on the compact interval.
\end{proof}

\begin{lemma}
\label{L:arc-gaussian-normal-frequency-lower-bound}
Let \(\eta\) be a nonzero finite Borel measure supported on \(\gamma([0,1])\).
There exists a constant \(c_\gamma>0\) such that, for every
\(
    x_0\in\operatorname{spt}\eta,
\)
every \(t_0\in[0,1]\) with
\(
    x_0=\gamma(t_0),
\)
every unit vector \(N_0\) with
\(
    N_0\cdot\gamma'(t_0)=0,
\)
and every \(0<\rho\le1\), with
\(
    \Lambda=\rho^{-2},
\)
one has
\[
    \int_{\mathbb R}
    |\widehat\eta(RN_0)|^2
    e^{-\pi(R/\Lambda)^2}\,dR
    \ge
    c_\gamma\Lambda\,\eta(B(x_0,c_\gamma\rho))^2.
\]
\end{lemma}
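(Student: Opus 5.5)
We expand the square and integrate in \(R\) first. Write \(|\widehat\eta(RN_0)|^2=\iint e^{-2\pi i R N_0\cdot(x-y)}\,d\eta(x)\,d\eta(y)\). By Fubini, which is justified because the Gaussian weight is integrable and \(\eta\) is finite, and by the one-dimensional Gaussian Fourier identity \(\int_{\mathbb R}e^{-2\pi iRu}e^{-\pi(R/\Lambda)^2}\,dR=\Lambda e^{-\pi\Lambda^2u^2}\), we obtain the exact identity
\[
    \int_{\mathbb R}|\widehat\eta(RN_0)|^2e^{-\pi(R/\Lambda)^2}\,dR
    =
    \Lambda\iint e^{-\pi\Lambda^2(N_0\cdot(x-y))^2}\,d\eta(x)\,d\eta(y).
\]
This is the key step. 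The integrand on the right is nonnegative, so we may discard everything outside \(B\times B\), where \(B=B(x_0,c\rho)\) and \(c\in(0,1]\) is to be chosen.

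On \(B\times B\) we use Lemma~\ref{L:arc-uniform-quadratic-normal-projection}. Since \(\eta\) is supported on \(\gamma([0,1])\), only pairs \(x,y\in\gamma([0,1])\cap B(x_0,c\rho)\) contribute, and for these \(|N_0\cdot(x-y)|\le C_\gamma c^2\rho^2\). With \(\Lambda=\rho^{-2}\), this gives \(\Lambda|N_0\cdot(x-y)|\le C_\gamma c^2\), hence \(e^{-\pi\Lambda^2(N_0\cdot(x-y))^2}\ge e^{-\pi C_\gamma^2c^4}\). Choosing \(c=c_\gamma\le\min\{1,C_\gamma^{-1/2}\}\) makes this at least \(e^{-\pi}\). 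Therefore the right-hand side is at least \(e^{-\pi}\Lambda\,\eta(B(x_0,c_\gamma\rho))^2\). After shrinking \(c_\gamma\) further so that it also bounds the prefactor \(e^{-\pi}\), this is the claimed inequality.

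The endpoint cases \(t_0\in\{0,1\}\) need no separate treatment, because the projection lemma already covers them. The hypothesis \(x_0\in\operatorname{spt}\eta\) is not actually used; it only guarantees that the lower bound is nontrivial. The only real obstacle is to check that the Gaussian normalization is exactly right: the Fourier transform of \(e^{-\pi(R/\Lambda)^2}\) is \(\Lambda e^{-\pi\Lambda^2u^2}\), and this is what turns quadratic thickness \(\rho^2\) into an order-one Gaussian factor at frequency scale \(\Lambda=\rho^{-2}\). Everything else is positivity and the uniform constant supplied by Lemma~\ref{L:arc-uniform-quadratic-normal-projection}.
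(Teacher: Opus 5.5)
Your proof is correct and follows the paper's argument essentially step for step. Both proofs use the Fubini/Gaussian identity \(\int_{\mathbb R}|\widehat\eta(RN_0)|^2e^{-\pi(R/\Lambda)^2}\,dR=\Lambda\iint e^{-\pi\Lambda^2(N_0\cdot(x-y))^2}\,d\eta(x)\,d\eta(y)\), restrict to a small ball by positivity, and apply Lemma~\ref{L:arc-uniform-quadratic-normal-projection} to make the Gaussian factor of order one. The only cosmetic difference is the constant: the paper picks \(c_\gamma\le 1/2\) with \(\pi C_0^2c_\gamma^4\le(\log_2 e)^{-1}\), so the Gaussian factor is at least \(1/2\ge c_\gamma\) directly. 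You obtain \(e^{-\pi}\) and then shrink \(c_\gamma\), which is legitimate because a smaller radius only decreases \(\eta(B(x_0,c_\gamma\rho))\) and improves the exponent bound.
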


\begin{proof}
Let \(C_0<\infty\) be the constant in
Lemma~\ref{L:arc-uniform-quadratic-normal-projection}.  Choose
\(0<c_\gamma\le1/2\) such that
\(
    \pi C_0^2c_\gamma^4\le(\log_2 e)^{-1}.
\)
Since \(\eta\) is finite and the Gaussian is integrable, 
Fubini's theorem and the Fourier transform of the Gaussian give
\[
\begin{aligned}
    &\int_{\mathbb R}
    |\widehat\eta(RN_0)|^2
    e^{-\pi(R/\Lambda)^2}\,dR       \\
    &\quad=
    \iint
    \left(
        \int_{\mathbb R}
        e^{-2\pi i R N_0\cdot(x-y)}
        e^{-\pi(R/\Lambda)^2}\,dR
    \right)\,d\eta(x)\,d\eta(y)       \\
    &\quad=
    \Lambda
    \iint
    e^{-\pi\Lambda^2(N_0\cdot(x-y))^2}
    \,d\eta(x)\,d\eta(y).
\end{aligned}
\]
Set
\(
    E_\rho=\gamma([0,1])\cap B(x_0,c_\gamma\rho).
\)
If \(x,y\in E_\rho\), then
Lemma~\ref{L:arc-uniform-quadratic-normal-projection} gives
\(
    |N_0\cdot(x-y)|
    \le
    C_0c_\gamma^2\rho^2.
\)
Since
\(
    \Lambda=\rho^{-2},
\)
we have
\[
    \pi\Lambda^2(N_0\cdot(x-y))^2
    \le
    \pi C_0^2c_\gamma^4
    \le
    (\log_2 e)^{-1}.
\]
Hence
\[
    e^{-\pi\Lambda^2(N_0\cdot(x-y))^2}
    \ge
    \frac12
    \ge
    c_\gamma
    \qquad (x,y\in E_\rho).
\]
Restricting the nonnegative double integral to \(E_\rho\times E_\rho\) yields
\[
    \int_{\mathbb R}
    |\widehat\eta(RN_0)|^2
    e^{-\pi(R/\Lambda)^2}\,dR
    \ge
    c_\gamma\Lambda\,\eta(E_\rho)^2.
\]
Since \(\eta\) is supported on \(\gamma([0,1])\),
\(
    \eta(E_\rho)
    =
    \eta(B(x_0,c_\gamma\rho)).
\)
This proves the desired estimate.
\end{proof}

\begin{lemma}
\label{L:arc-local-mass-from-fourier-decay}
Let \(\eta\) be a nonzero finite Borel measure supported on \(\gamma([0,1])\).
Suppose that \(\sigma>0\) is an admissible Fourier decay exponent for
\(\eta\).  Then, for every
\(
    0<\tau<\min\{\sigma,1\},
\)
there exists \(C_{\tau,\gamma}<\infty\) such that, for every
\(x_0\in\operatorname{spt}\eta\) and all sufficiently small \(\rho>0\),
\[
    \eta(B(x_0,\rho))
    \le
    C_{\tau,\gamma}\rho^\tau.
\]
Consequently,
\(
    \alpha_{\min}(\eta)\ge\tau.
\)
\end{lemma}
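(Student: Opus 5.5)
The plan is to combine the Gaussian normal-line lower bound of Lemma~\ref{L:arc-gaussian-normal-frequency-lower-bound} with an upper bound for the same Gaussian average coming from the assumed Fourier decay.

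\textbf{Step 1: the upper bound from decay.} Since \(\sigma\) is admissible, there is \(C_\eta<\infty\) with
\[
    |\widehat\eta(\xi)|\le C_\eta(1+|\xi|)^{-\sigma/2}
    \qquad (\xi\in\mathbb R^2),
\]
where the bound for small \(|\xi|\) uses \(|\widehat\eta|\le\eta(\mathbb R^2)\). Fix \(x_0\), \(t_0\), \(N_0\) as in Lemma~\ref{L:arc-gaussian-normal-frequency-lower-bound}, and set \(\Lambda=\rho^{-2}\) with \(0<\rho\le1\). Then
\[
    \int_{\mathbb R}|\widehat\eta(RN_0)|^2e^{-\pi(R/\Lambda)^2}\,dR
    \le
    C_\eta^2\int_{\mathbb R}(1+|R|)^{-\sigma}e^{-\pi(R/\Lambda)^2}\,dR .
\]
Split the integral at \(|R|\le\Lambda\) and \(|R|>\Lambda\); the Gaussian kills the tail beyond \(\Lambda\) up to a constant factor. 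This gives the bound \(C\Lambda^{1-\sigma}\) if \(\sigma<1\), \(C\log(2+\Lambda)\) if \(\sigma=1\), and \(C\) if \(\sigma>1\). In every case the bound is at most \(C_{\tau}\Lambda^{1-\tau'}\) for any \(\tau'<\min\{\sigma,1\}\) with \(\tau'\le\sigma\), using \(\Lambda\ge1\).

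\textbf{Step 2: compare with the lower bound.} Lemma~\ref{L:arc-gaussian-normal-frequency-lower-bound} gives
\[
    c_\gamma\Lambda\,\eta(B(x_0,c_\gamma\rho))^2\le C_\tau\Lambda^{1-\tau},
\]
so \(\eta(B(x_0,c_\gamma\rho))^2\le C\Lambda^{-\tau}=C\rho^{2\tau}\), that is, \(\eta(B(x_0,c_\gamma\rho))\le C\rho^{\tau}\). Rescaling \(\rho\mapsto\rho/c_\gamma\) gives \(\eta(B(x_0,\rho))\le C_{\tau,\gamma}\rho^\tau\) for \(\rho\le c_\gamma\). The constants are uniform in \(x_0\), because the constant in Lemma~\ref{L:arc-gaussian-normal-frequency-lower-bound} is uniform and \(C_\eta\) depends only on \(\eta\). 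Hence "sufficiently small" can even be taken uniformly.

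\textbf{Step 3: conclude.} For each \(x_0\in\operatorname{spt}\eta\), taking \(\log_2\) and dividing by \(\log_2\rho<0\) gives
\[
    \frac{\log_2\eta(B(x_0,\rho))}{\log_2\rho}\ge\tau+\frac{\log_2C_{\tau,\gamma}}{\log_2\rho}\to\tau .
\]
So the lower local dimension at each support point is at least \(\tau\), and taking the infimum over support points yields \(\alpha_{\min}(\eta)\ge\tau\).

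The only delicate point is the bookkeeping in Step 1, in particular the case \(\sigma\ge1\). There the decay integral is not of order \(\Lambda^{1-\sigma}\), and one must be content with the exponent \(\tau<1\). That is exactly why the statement restricts to \(\tau<\min\{\sigma,1\}\).
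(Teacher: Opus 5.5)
Your proposal is correct and follows the paper's proof essentially step for step. Both arguments bound the normal-line Gaussian average from above using the decay hypothesis, compare it at $\Lambda=\rho^{-2}$ with the lower bound $c_\gamma\Lambda\,\eta(B(x_0,c_\gamma\rho))^2$ from Lemma~\ref{L:arc-gaussian-normal-frequency-lower-bound}, then rescale $\rho$ and take logarithms. The only cosmetic difference is that the paper first weakens the pointwise bound to $|\widehat\eta(\xi)|^2\le A_\tau(1+|\xi|)^{-\tau}$ before integrating, which gives $C_\tau\Lambda^{1-\tau}$ directly and avoids your three-case split in $\sigma$.
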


\begin{proof}
Fix
\(
    0<\tau<\min\{\sigma,1\}.
\)
Since \(\sigma\) is an admissible Fourier decay exponent for \(\eta\), 
there exists \(A_\tau<\infty\) such that
\[
    |\widehat\eta(\xi)|^2
    \le
    A_\tau(1+|\xi|)^{-\tau}
    \qquad(\xi\in\mathbb R^2).
\]
Thus, for every unit vector \(N\in\mathbb R^2\) and every \(\Lambda\ge1\),
\[
    \int_{\mathbb R}
    |\widehat\eta(RN)|^2
    e^{-\pi(R/\Lambda)^2}\,dR
    \le
    A_\tau
    \int_{\mathbb R}
    (1+|R|)^{-\tau}
    e^{-\pi(R/\Lambda)^2}\,dR
    \le
    C_\tau\Lambda^{1-\tau},
\]
where the last inequality uses \(0<\tau<1\).

Fix \(x_0\in\operatorname{spt}\eta\), choose \(t_0\in[0,1]\) with
\(
    x_0=\gamma(t_0),
\)
and let \(N_0\) be a unit normal at \(t_0\).  Let \(c_\gamma\) and
\(\rho_\gamma\) be as in
Lemma~\ref{L:arc-gaussian-normal-frequency-lower-bound}.  For
\(0<\rho\le\rho_\gamma\), set
\(
    \Lambda=\rho^{-2}.
\)
Since \(\rho_\gamma\le1\), we have \(\Lambda\ge1\), and hence
\[
    c_\gamma\Lambda\,\eta(B(x_0,c_\gamma\rho))^2
    \le
    C_\tau\Lambda^{1-\tau}.
\]
Because \(\Lambda=\rho^{-2}\), this gives
\(
    \eta(B(x_0,c_\gamma\rho))
    \le
    C_{\tau,\gamma}\rho^\tau.
\)
Equivalently, after replacing \(c_\gamma\rho\) by \(\rho\) and enlarging
\(C_{\tau,\gamma}\),
\(
 \eta(B(x_0,\rho))
    \le
    C_{\tau,\gamma}\rho^\tau
\)
for all sufficiently small \(\rho>0\), uniformly in
\(x_0\in\operatorname{spt}\eta\).

It remains to pass from the local mass estimate to the lower local exponent.
Since \(x_0\in\operatorname{spt}\eta\), one has
\(
    \eta(B(x_0,\rho))>0
\)
for every \(\rho>0\).  For all sufficiently small \(\rho\in(0,1)\),
\[
    \log_2\eta(B(x_0,\rho))
    \le
    \log_2 C_{\tau,\gamma}
    +
    \tau\log_2\rho.
\]
Since \(\log_2\rho<0\), division by \(\log_2\rho\) reverses the inequality:
\[
    \frac{\log_2\eta(B(x_0,\rho))}{\log_2\rho}
    \ge
    \tau+
    \frac{\log_2 C_{\tau,\gamma}}{\log_2\rho}.
\]
Letting \(\rho\downarrow0\) gives
\[
    \liminf_{\rho\downarrow0}
    \frac{\log_2\eta(B(x_0,\rho))}{\log_2\rho}
    \ge
    \tau.
\]
Taking the infimum over \(x_0\in\operatorname{spt}\eta\) proves
\(
    \alpha_{\min}(\eta)\ge\tau.
\)
\end{proof}

\begin{proposition}[Deterministic arc-support upper obstruction]
\label{P:arc-support-upper-obstruction}
Let \(\gamma:[0,1]\to\mathbb R^2\) be a fixed \(C^2\) embedded arc with
\(
\inf_{0\le t\le1}|\gamma'(t)|>0.
\)
If \(\eta\) is a nonzero finite Borel measure supported on \(\gamma([0,1])\),
then
\[
    \dim_{\mathrm F}(\eta)
    \le
    \alpha_{\min}(\eta).
\]
In particular, this conclusion applies to every fixed nondegenerate \(C^2\)
embedded arc.
\end{proposition}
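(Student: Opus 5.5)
The plan is to derive the obstruction from Lemma~\ref{L:arc-local-mass-from-fourier-decay} together with the trivial bound \(\dim_{\mathrm F}(\eta)\le 1\) for measures on Lipschitz arcs. Set \(D=\dim_{\mathrm F}(\eta)\). If \(D=0\) there is nothing to prove, since \(\alpha_{\min}(\eta)\ge0\): every ball around a support point has mass at most \(\eta(\mathbb R^2)\), so each ratio \(\log_2\eta(B(x,\rho))/\log_2\rho\) has nonnegative liminf. So assume \(D>0\).

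Fix any \(\tau\) with \(0<\tau<D\). By the definition of \(\dim_{\mathrm F}\) as a supremum, there is an admissible Fourier decay exponent \(\sigma\) with \(\tau<\sigma\le D\). The standard comparison recorded in Subsection~\ref{SS:fourier-local-dimension} gives \(D\le\dim_{\mathrm H}(\gamma([0,1]))\le1\), hence \(\tau<\min\{\sigma,1\}\) automatically. Lemma~\ref{L:arc-local-mass-from-fourier-decay} then yields \(\alpha_{\min}(\eta)\ge\tau\). Letting \(\tau\uparrow D\) gives \(\alpha_{\min}(\eta)\ge D\), which is the claim. The final sentence follows because a nondegenerate arc satisfies the standing hypotheses, \(C^2\) embedding and \(\inf|\gamma'|>0\).

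The only point needing care is the strictness \(\tau<1\) required by the lemma. That lemma needs \(\tau<1\) for the integrability estimate \(\int(1+|R|)^{-\tau}e^{-\pi(R/\Lambda)^2}\,dR\lesssim\Lambda^{1-\tau}\). Since \(\tau<D\le1\), this holds with no extra argument. I also want to check that the lemma's constant \(\rho_\gamma\) (``sufficiently small \(\rho\)'') causes no issue. It only affects small-radius behavior, which is all the liminf sees.
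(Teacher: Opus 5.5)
Your proposal is correct and follows the paper's proof essentially verbatim: the bound \(\dim_{\mathrm F}(\eta)\le\dim_{\mathrm H}(\gamma([0,1]))\le1\), the trivial case \(\dim_{\mathrm F}(\eta)=0\) via \(\alpha_{\min}(\eta)\ge0\), and then Lemma~\ref{L:arc-local-mass-from-fourier-decay} applied to an admissible exponent strictly above the target, followed by a supremum. The only cosmetic difference is that the paper fixes \(\sigma<\dim_{\mathrm F}(\eta)\) and inserts an extra exponent \(\tau\in(\sigma,\min\{\sigma',1\})\), whereas you apply the lemma directly with \(\tau<\sigma\); both versions are valid.
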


\begin{proof}
Since \(\gamma([0,1])\) is a Lipschitz image of an interval and
\(\operatorname{spt}\eta\subset\gamma([0,1])\), the standard Fourier-energy
support comparison gives
\[
    \dim_{\mathrm F}(\eta)
    \le
    \dim_{\mathrm H}(\operatorname{spt}\eta)
    \le
    \dim_{\mathrm H}(\gamma([0,1]))
    \le
    1.
\]
Since \(\eta\) is finite, one has
\(
    \alpha_{\min}(\eta)\ge0.
\)
Hence the case
\(
    \dim_{\mathrm F}(\eta)=0
\)
is immediate.

Assume
\(
    \dim_{\mathrm F}(\eta)>0,
\)
and fix
\(
    0<\sigma<\dim_{\mathrm F}(\eta).
\)
By the definition of Fourier dimension, there exists an admissible Fourier
decay exponent \(\sigma'\) for \(\eta\) with
\(
    \sigma<\sigma'.
\)
Since
\(
    \dim_{\mathrm F}(\eta)\le1,
\)
we also have \(\sigma<1\).  Hence one can choose \(\tau\) such that
\(
\sigma<\tau<\min\{\sigma',1\}.
\)
Applying Lemma~\ref{L:arc-local-mass-from-fourier-decay} with the admissible
exponent \(\sigma'\) gives
\(
 \alpha_{\min}(\eta)\ge\tau>\sigma.
\)
Taking the supremum over all
\(
 0<\sigma<\dim_{\mathrm F}(\eta)
\)
gives
\(
\alpha_{\min}(\eta)\ge\dim_{\mathrm F}(\eta).
\)
This proves the proposition.
\end{proof}

\begin{remark}
\label{R:arc-upper-obstruction-endpoints}
The proof treats endpoint and interior support points uniformly.  If
\(x_0=\gamma(0)\) or \(x_0=\gamma(1)\), the Taylor estimate in
Lemma~\ref{L:arc-uniform-quadratic-normal-projection} is one-sided in the
parameter, which is sufficient because the measure is supported on the
one-sided arc near the endpoint.
\end{remark}

\section{Endpoint assembly}
\label{S:endpoint-assembly}

We now prove the endpoint formula.  The first subsection treats fixed arcs.
The second subsection derives the closed Jordan curve case by cutting the
circle cascade into two interval cascades and applying the fixed-arc theorem.

\subsection{The fixed-arc endpoint formula}
\label{SS:arc-endpoint-assembly}

Throughout this subsection, \(W\) is in the minimal Kahane--Peyri\`ere regime,
\(\widetilde\mu\) is the dyadic scalar cascade on \([0,1]\) generated by \(W\),
and, for a fixed nondegenerate \(C^2\) embedded arc
\(
    \gamma:[0,1]\to\mathbb R^2,
\)
we set
\(
    \mu_\gamma=\gamma_{\#}\widetilde\mu.
\)

Let
\(
    S_\gamma=\{\widetilde\mu([0,1])>0\}
\)
be the non-extinction event.  On \(S_\gamma\), the measure \(\mu_\gamma\) is a
nonzero finite Borel measure supported on \(\gamma([0,1])\).

\begin{lemma}
\label{L:arc-strict-subendpoint-annular-decay}
Assume \(A_{\mathrm{loc}}(W)>0\), and let
\(
    0<\sigma<A_{\mathrm{loc}}(W).
\)
Then there exist constants
\(C_{\gamma,\sigma}<\infty\),
\(c_{\gamma,\sigma}>0\),
\(\eta_{\gamma,\sigma}>0\)
such that, for every \(n\ge1\),
\[
    \mathbb P
    \left(
        \sup_{\xi\in\mathcal A_n}
        |\widehat{\mu_\gamma}(\xi)|
        >
        C_{\gamma,\sigma}2^{-\sigma n/2}
    \right)
    \le
    C_{\gamma,\sigma}\exp(-c_{\gamma,\sigma}2^{\eta_{\gamma,\sigma}n})
    +
    C_{\gamma,\sigma}2^{-c_{\gamma,\sigma}n}.
\]
Consequently,
\[
    |\widehat{\mu_\gamma}(\xi)|
    =
    O(|\xi|^{-\sigma/2})
    \qquad(|\xi|\to\infty)
\]
almost surely.
\end{lemma}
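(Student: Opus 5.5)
The plan is to reduce the lemma to Theorem~\ref{T:arc-finite-r-annular} with \(U=W\) and \(s=\sigma\), following the sketch in Subsection~\ref{SS:introduction-finite-r-theorem}.

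\emph{Finite-moment witness.} Since \(0<\sigma<A_{\mathrm{loc}}(W)\) and \(A_{\mathrm{loc}}(W)\) is a supremum over \(q>1\) of \(\max\{0,(q-1-\log_2\mathbb E[W^q])/q\}\), the supremum is strictly larger than \(\sigma>0\). Hence some \(r>1\) has a strictly positive \(q\)-term exceeding \(\sigma\). By the interpretation convention, that term is nonzero only if \(\mathbb E[W^r]<\infty\), so
\[
    \mathbb E[W^r]<\infty,
    \qquad
    \frac{r-1-\log_2\mathbb E[W^r]}{r}>\sigma .
\]
Choose \(\delta>0\) with \(\sigma+\delta<(r-1-\log_2\mathbb E[W^r])/r\). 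Multiplying by \(r\) and exponentiating base \(2\) gives
\[
    2^{1-r}\mathbb E[W^r]\le2^{-r(\sigma+\delta)}.
\]
Also \(\sigma<A_{\mathrm{loc}}(W)\le1\) by the Jensen remark after Definition~\ref{D:endpoint-local-exponent}, so \(0<\sigma<1\). Thus \(W\) satisfies the finite-\(r\) witnessed hypothesis (Definition~\ref{D:finite-r-witnessed}) at exponent \(\sigma\), and \(\mathbb EW=1\) comes from the minimal Kahane--Peyri\`ere regime.

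\emph{Identification of the measures.} Theorem~\ref{T:arc-finite-r-annular} with \(U=W\) concerns \(\nu_\gamma=\gamma_\#\widetilde\nu\), where \(\widetilde\nu\) is the almost sure weak limit of the cascade generated by \(W\). This is exactly \(\widetilde\mu\), built from the same weights, so \(\nu_\gamma=\mu_\gamma\) almost surely. The theorem's constants depend only on \(s=\sigma\), \(r\), \(\delta\), the law of \(W\), and \(\gamma\). Since \(r\) and \(\delta\) are fixed functions of \(\sigma\) and the law of \(W\), relabelling them as \(C_{\gamma,\sigma}\), \(c_{\gamma,\sigma}\), \(\eta_{\gamma,\sigma}\) gives the stated annular probability bound for every \(n\ge1\).

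\emph{Almost sure decay.} The consequence \(|\widehat{\mu_\gamma}(\xi)|=O(|\xi|^{-\sigma/2})\) almost surely is the final assertion of Theorem~\ref{T:arc-finite-r-annular}. Equivalently, the right-hand side of the annular bound is summable in \(n\), so Borel--Cantelli gives a random \(N\) with \(\sup_{\mathcal A_n}|\widehat{\mu_\gamma}|\le C_{\gamma,\sigma}2^{-\sigma n/2}\) for \(n\ge N\), and \(|\xi|\in[2^n,2^{n+1}]\) converts this to \(|\xi|^{-\sigma/2}\).

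The only delicate point, though a mild one, is extracting a witness \(r\) with \emph{finite} moment from the supremum. This relies on the convention that infinite-moment terms count as \(0\), together with \(\sigma>0\).
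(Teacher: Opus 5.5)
Your proposal is correct and follows essentially the same route as the paper: you extract a finite-moment witness \(r\) from the supremum defining \(A_{\mathrm{loc}}(W)\), choose \(\delta\), check \(0<\sigma<1\), and apply Theorem~\ref{T:arc-finite-r-annular} with \(U=W\) and \(s=\sigma\). The almost sure decay then follows from Borel--Cantelli, exactly as in the paper's proof.
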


\begin{proof}
By the definition of \(A_{\mathrm{loc}}(W)\), there exists \(r>1\) such that
\[
    \mathbb E[W^r]<\infty,
    \qquad
    \frac{r-1-\log_2\mathbb E[W^r]}{r}
    >
    \sigma.
\]
Choose \(\delta>0\) such that
\[
    \sigma+\delta
    <
    \frac{r-1-\log_2\mathbb E[W^r]}{r}.
\]
Equivalently,
\(
    2^{1-r}\mathbb E[W^r]
    \le
    2^{-r(\sigma+\delta)}.
\)
Since \(0<\sigma<A_{\mathrm{loc}}(W)\le1\), we have \(0<\sigma<1\).
Therefore Theorem~\ref{T:arc-finite-r-annular} applies with
\(U=W\) and \(s=\sigma\).
The cascade generated by \(U=W\) is \(\widetilde\mu\), and its arc pushforward
is \(\mu_\gamma\).  Renaming the constants in
Theorem~\ref{T:arc-finite-r-annular} as
\(C_{\gamma,\sigma}\),
\(c_{\gamma,\sigma}\),
\(\eta_{\gamma,\sigma}\),
we obtain the stated annular probability estimate.

The right-hand side is summable in \(n\).  Hence, by the Borel--Cantelli lemma,
almost surely there exists a finite random integer \(N_\sigma\ge1\) such that,
for every \(n\ge N_\sigma\),
\[
    \sup_{\xi\in\mathcal A_n}
    |\widehat{\mu_\gamma}(\xi)|
    \le
    C_{\gamma,\sigma}2^{-\sigma n/2}.
\]
If \(2^n\le|\xi|\le2^{n+1}\), then
\(2^{-\sigma n/2}\le 2^{\sigma/2}|\xi|^{-\sigma/2}\).
Therefore, \(|\widehat{\mu_\gamma}(\xi)|=O(|\xi|^{-\sigma/2})\) as
\(|\xi|\to\infty\), almost surely.
\end{proof}

\begin{proposition}
\label{P:arc-endpoint-lower-bound}
Almost surely on \(S_\gamma\),
\[
    \dim_{\mathrm F}(\mu_\gamma)
    \ge
    A_{\mathrm{loc}}(W).
\]
Moreover, if \(A_{\mathrm{loc}}(W)>0\), then almost surely, for every
\(
    0<\sigma<A_{\mathrm{loc}}(W),
\)
there exist finite constants
\(C_\sigma<\infty\),
\(R_\sigma<\infty\)
such that
\[
    |\widehat{\mu_\gamma}(\xi)|
    \le
    C_\sigma|\xi|^{-\sigma/2}
    \qquad(|\xi|\ge R_\sigma).
\]
\end{proposition}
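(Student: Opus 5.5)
The plan is to deduce this almost directly from Lemma~\ref{L:arc-strict-subendpoint-annular-decay} by a countable intersection over rational exponents.

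\emph{Degenerate case.} If \(A_{\mathrm{loc}}(W)=0\), the lower bound \(\dim_{\mathrm F}(\mu_\gamma)\ge0\) holds trivially on \(S_\gamma\). Indeed, \(\mu_\gamma\) is a nonzero finite measure there, \(|\widehat{\mu_\gamma}|\le\mu_\gamma(\mathbb R^2)\), so \(\sigma=0\) lies in the set defining \(\dim_{\mathrm F}\). The second assertion is then vacuous.

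\emph{Main case.} Assume \(A_{\mathrm{loc}}(W)>0\). For each rational \(\sigma\in\mathbb Q\cap(0,A_{\mathrm{loc}}(W))\), Lemma~\ref{L:arc-strict-subendpoint-annular-decay} gives a probability-one event \(\Omega_\sigma\) on which \(|\widehat{\mu_\gamma}(\xi)|=O(|\xi|^{-\sigma/2})\), with a random constant. Concretely, it provides a random \(N_\sigma\) such that for \(|\xi|\ge2^{N_\sigma}\) we have \(|\widehat{\mu_\gamma}(\xi)|\le C_{\gamma,\sigma}2^{\sigma/2}|\xi|^{-\sigma/2}\). Set \(\Omega_*=\bigcap_\sigma\Omega_\sigma\); this is a countable intersection, so \(\mathbb P(\Omega_*)=1\).

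Now take an arbitrary real \(0<\sigma<A_{\mathrm{loc}}(W)\) and pick a rational \(\sigma'\) with \(\sigma<\sigma'<A_{\mathrm{loc}}(W)\). On \(\Omega_*\), for \(|\xi|\ge R_\sigma:=\max\{1,2^{N_{\sigma'}}\}\),
\[
|\widehat{\mu_\gamma}(\xi)|\le C|\xi|^{-\sigma'/2}\le C|\xi|^{-\sigma/2},
\]
using \(|\xi|\ge1\). This gives the second assertion with \(C_\sigma=C_{\gamma,\sigma'}2^{\sigma'/2}\). It holds almost surely everywhere, hence in particular on \(S_\gamma\).

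\emph{Dimension bound.} On \(\Omega_*\cap S_\gamma\), \(\mu_\gamma\) is a nonzero finite Borel measure, so \(\dim_{\mathrm F}(\mu_\gamma)\) is defined. Every \(\sigma<A_{\mathrm{loc}}(W)\) is an admissible decay exponent, and \(\sigma\le2\) automatically because \(A_{\mathrm{loc}}(W)\le1\). Taking the supremum yields \(\dim_{\mathrm F}(\mu_\gamma)\ge A_{\mathrm{loc}}(W)\).

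The only point needing care is uniformity: the almost-sure event must not depend on the real \(\sigma\). This is exactly what restricting to a countable rational set and using monotonicity of decay exponents handles. There is no real analytic obstacle, since all the work sits in Theorem~\ref{T:arc-finite-r-annular}.
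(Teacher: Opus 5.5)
Your proposal is correct and follows the paper's proof essentially step for step. Both arguments dispose of the trivial case \(A_{\mathrm{loc}}(W)=0\), intersect the probability-one events from Lemma~\ref{L:arc-strict-subendpoint-annular-decay} over rational exponents in \((0,A_{\mathrm{loc}}(W))\), and pass to an arbitrary real \(\sigma\) by choosing a rational \(q>\sigma\) and taking \(R_\sigma\ge1\).
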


\begin{proof}
If \(A_{\mathrm{loc}}(W)=0\), then the lower bound follows from
\(
    \dim_{\mathrm F}(\mu_\gamma)\ge0,
\)
and the moreover statement is void.

Assume \(A_{\mathrm{loc}}(W)>0\), and set
\(
    \mathcal Q
    =
    \mathbb Q\cap(0,A_{\mathrm{loc}}(W)).
\)
We use rational exponents only to take a countable intersection of
probability-one events.

For each \(q\in\mathcal Q\), Lemma~\ref{L:arc-strict-subendpoint-annular-decay}
gives a probability-one event \(E_q\) on which
\(|\widehat{\mu_\gamma}(\xi)|=O(|\xi|^{-q/2})\) as \(|\xi|\to\infty\).
Let
\(
    E=\bigcap_{q\in\mathcal Q}E_q.
\)
Then \(\mathbb P(E)=1\).  On \(E\cap S_\gamma\), each
\(q\in\mathcal Q\) is an admissible Fourier decay exponent for \(\mu_\gamma\).
Hence
\[
    \dim_{\mathrm F}(\mu_\gamma)\ge q
    \qquad(q\in\mathcal Q).
\]
Taking the supremum over \(q\in\mathcal Q\) gives
\(
    \dim_{\mathrm F}(\mu_\gamma)
    \ge
    A_{\mathrm{loc}}(W)
\)
on \(E\cap S_\gamma\).

For the moreover statement, fix \(\omega\in E\) and
\(
    0<\sigma<A_{\mathrm{loc}}(W).
\)
Choose \(q\in\mathcal Q\) such that
\(
    \sigma<q<A_{\mathrm{loc}}(W).
\)
Since \(\omega\in E_q\), the \(q\)-decay estimate holds with some finite
constants \(C_q(\omega)\) and \(R_q(\omega)\).  Set
\[
    C_\sigma(\omega)=C_q(\omega),
    \qquad
    R_\sigma(\omega)=\max\{R_q(\omega),1\}.
\]
Since \(q>\sigma\), for \(|\xi|\ge R_\sigma(\omega)\) we obtain
\(
    |\widehat{\mu_\gamma}(\xi)|
    \le
    C_\sigma(\omega)|\xi|^{-\sigma/2}.
\)
This proves the moreover statement.
\end{proof}

\begin{proposition}
\label{P:arc-endpoint-upper-bound}
Almost surely on \(S_\gamma\),
\[
    \dim_{\mathrm F}(\mu_\gamma)
    \le
    A_{\mathrm{loc}}(W).
\]
\end{proposition}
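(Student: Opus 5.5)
The upper bound follows by chaining two results already established: the deterministic arc-support obstruction (Proposition~\ref{P:arc-support-upper-obstruction}) and the fixed-arc local-dimension identity (Theorem~\ref{T:arc-local-dimension-identity}). No new estimate is needed. The only care required is with the exceptional null sets and with the fact that the obstruction applies only to nonzero measures.

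First, I fix the probability-one event \(E_{\mathrm{loc}}\) supplied by Theorem~\ref{T:arc-local-dimension-identity}. On \(E_{\mathrm{loc}}\cap S_\gamma\) we have
\[
\alpha_{\min}(\mu_\gamma)=A_{\mathrm{loc}}(W).
\]
I intersect \(E_{\mathrm{loc}}\) with the standard weak-convergence event. On the intersection, \(\widetilde\mu\) is a well-defined finite Borel measure on \([0,1]\), so \(\mu_\gamma=\gamma_\#\widetilde\mu\) is a finite Borel measure on \(\mathbb R^2\). Its support is \(\gamma(\operatorname{spt}\widetilde\mu)\), which lies in \(\gamma([0,1])\) because \(\gamma\) is continuous and \([0,1]\) is compact. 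On \(S_\gamma\), its total mass \(\mu_\gamma(\mathbb R^2)=\widetilde\mu([0,1])\) is positive, so \(\mu_\gamma\) is nonzero.

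Second, for each \(\omega\) in this event, I apply Proposition~\ref{P:arc-support-upper-obstruction} pathwise to \(\eta=\mu_\gamma(\omega)\). The hypotheses hold: \(\gamma\) is a \(C^2\) embedded arc with \(\inf|\gamma'|>0\), and the curvature condition is not needed here. The proposition gives \(\dim_{\mathrm F}(\mu_\gamma)\le\alpha_{\min}(\mu_\gamma)\). Because the obstruction is deterministic and holds for every nonzero finite measure on the arc, no measurability or uniformity issue arises. Combining with the first step gives \(\dim_{\mathrm F}(\mu_\gamma)\le A_{\mathrm{loc}}(W)\) on \(E_{\mathrm{loc}}\cap S_\gamma\), that is, almost surely on \(S_\gamma\).

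I do not expect a real obstacle, since the substantive work lives in the two cited results. The one point to check is that the obstruction treats support points at the endpoints \(\gamma(0)\) and \(\gamma(1)\) correctly. Remark~\ref{R:arc-upper-obstruction-endpoints} and the one-sided Taylor estimate in Lemma~\ref{L:arc-uniform-quadratic-normal-projection} cover these points. The infimum defining \(\alpha_{\min}(\mu_\gamma)\) therefore ranges over the whole support, matching the quantity in Theorem~\ref{T:arc-local-dimension-identity}.
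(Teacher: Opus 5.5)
Your proposal is correct and follows essentially the same route as the paper. The paper applies the deterministic obstruction (Proposition~\ref{P:arc-support-upper-obstruction}) to the nonzero measure \(\mu_\gamma\) on \(S_\gamma\), then combines it with the almost-sure identity \(\alpha_{\min}(\mu_\gamma)=A_{\mathrm{loc}}(W)\) from Theorem~\ref{T:arc-local-dimension-identity}; your extra bookkeeping of the null sets and of the endpoint support points is consistent with this.
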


\begin{proof}
On \(S_\gamma\), the measure \(\mu_\gamma\) is a nonzero finite Borel measure
supported on \(\gamma([0,1])\).  Hence
Proposition~\ref{P:arc-support-upper-obstruction} gives
\(
    \dim_{\mathrm F}(\mu_\gamma)
    \le
    \alpha_{\min}(\mu_\gamma).
\)
By Theorem~\ref{T:arc-local-dimension-identity},
\(
    \alpha_{\min}(\mu_\gamma)
    =
    A_{\mathrm{loc}}(W)
\)
almost surely on \(S_\gamma\).  Combining these two facts yields
\[
    \dim_{\mathrm F}(\mu_\gamma)
    \le
    \alpha_{\min}(\mu_\gamma)
    =
    A_{\mathrm{loc}}(W)
\]
almost surely on \(S_\gamma\).
\end{proof}

\begin{proof}[Proof of Theorem~\ref{T:arc-main-endpoint}]
The lower bound
\(
    \dim_{\mathrm F}(\mu_\gamma)
    \ge
    A_{\mathrm{loc}}(W),
\)
together with the strict subendpoint Fourier decay statement, is given by
Proposition~\ref{P:arc-endpoint-lower-bound}.  The upper bound
\(
    \dim_{\mathrm F}(\mu_\gamma)
    \le
    A_{\mathrm{loc}}(W)
\)
is Proposition~\ref{P:arc-endpoint-upper-bound}.  Combining the two bounds
gives
\[
    \dim_{\mathrm F}(\mu_\gamma)
    =
    A_{\mathrm{loc}}(W)
\]
almost surely on \(S_\gamma\).  This proves the theorem.
\end{proof}

\subsection{Closed Jordan curves}
\label{SS:jordan-endpoint-assembly}

We now derive the closed-curve endpoint formula from the fixed-arc theorem and
the dyadic cutting of the circle cascade.  Throughout this subsection,
\(
    \gamma:\mathbb T\to\mathbb R^2
\)
is a fixed nondegenerate \(C^2\) Jordan curve, and
\(\widetilde\mu^{\mathbb T}\) denotes the scalar dyadic cascade on
\(\mathbb T\) generated by \(W\).  We set
\[
    \mu_\gamma^{\mathbb T}
    =
    \gamma_{\#}\widetilde\mu^{\mathbb T},
\qquad
\text{and}
\qquad
    S_\gamma^{\mathbb T}
    =
    \{\widetilde\mu^{\mathbb T}(\mathbb T)>0\}.
\]

\begin{proposition}[Closed-curve local-dimension identity]
\label{P:jordan-local-dimension-identity}
Let \(W\) be in the minimal Kahane--Peyri\`ere regime.  Then, almost surely on
\(S_\gamma^{\mathbb T}\),
\[
    \alpha_{\min}(\mu_\gamma^{\mathbb T})
    =
    A_{\mathrm{loc}}(W).
\]
\end{proposition}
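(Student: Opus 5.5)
The plan is to reduce Proposition~\ref{P:jordan-local-dimension-identity} to the imported circle theorem through two deterministic bi-Lipschitz transfers, exactly parallel to the proof of Theorem~\ref{T:arc-local-dimension-identity}, but without the endpoint-gluing step. The first task is to identify the law. By construction, \(\widetilde\mu^{\mathbb T}\) is the scalar dyadic cascade on \(\mathbb T\) generated by \(W\). Hence \(\mu_\circ:=(f_{\mathbb T})_\#\widetilde\mu^{\mathbb T}\) has the law of the scalar dyadic circle cascade on \(\mathbb S^1\) generated by \(W\). One can also realize \(\widetilde\mu^{\mathbb T}=q_\#\widetilde\mu\) for an interval cascade \(\widetilde\mu\) built from the same weights, and then invoke Lemma~\ref{L:arc-interval-to-circle-cascade}. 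Moreover, \(\mu_\circ(\mathbb S^1)=\widetilde\mu^{\mathbb T}(\mathbb T)\), so \(\{\mu_\circ(\mathbb S^1)>0\}=S_\gamma^{\mathbb T}\). Theorem~\ref{T:imported-circle-local-dimension} then gives \(\alpha_{\min}(\mu_\circ)=A_{\mathrm{loc}}(W)\) almost surely on \(S_\gamma^{\mathbb T}\).

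The second step is to pull this back to \(\mathbb T\). On \(S_\gamma^{\mathbb T}\) the measure \(\zeta=\widetilde\mu^{\mathbb T}\) is nonzero, so Lemma~\ref{L:arc-standard-circle-transfer} gives \(\alpha_{\min}^{\mathbb T}(\widetilde\mu^{\mathbb T})=\alpha_{\min}(\mu_\circ)\).

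The third step is a closed-curve analogue of Proposition~\ref{P:arc-bilipschitz-local-dimension-transfer}. The goal is that, for any nonzero finite Borel measure \(\zeta\) on \(\mathbb T\),
\[
\alpha_{\min}(\gamma_\#\zeta)=\alpha_{\min}^{\mathbb T}(\zeta).
\]
Subsection~\ref{SS:fixed-jordan-curves} records that \(\gamma\) is bi-Lipschitz from \((\mathbb T,d_{\mathbb T})\) onto its image, with constants \(0<c_\gamma\le C_\gamma\). This yields the inclusions
\[
B_{\mathbb T}(\theta,\rho/C_\gamma)\subset\gamma^{-1}(B(\gamma(\theta),\rho))\subset B_{\mathbb T}(\theta,\rho/c_\gamma).
\]
Since \(\gamma\) is a homeomorphism onto its image, \(\operatorname{spt}\gamma_\#\zeta=\gamma(\operatorname{spt}\zeta)\). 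Lower local exponents are insensitive to fixed multiplicative changes of radius, so the pointwise exponents agree, and taking the infimum over support points gives the identity. This is the same argument as Lemma~\ref{L:arc-standard-circle-transfer}, with \(f_{\mathbb T}\) replaced by \(\gamma\). Chaining the three steps gives \(\alpha_{\min}(\mu_\gamma^{\mathbb T})=\alpha_{\min}^{\mathbb T}(\widetilde\mu^{\mathbb T})=A_{\mathrm{loc}}(W)\) almost surely on \(S_\gamma^{\mathbb T}\).

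The only point needing care is the first step: the paper's circle cascade on \(\mathbb T\) and the imported \(\mathbb S^1\) cascade must be the same object in law, with matching non-extinction events. I would settle this by the cylinder-mass computation of Lemma~\ref{L:arc-interval-to-circle-cascade}, using that dyadic endpoints carry no mass, as in Lemma~\ref{L:circle-cascade-dyadic-cutting}. The bi-Lipschitz transfer itself is routine; the global bi-Lipschitz constant comes from compactness together with injectivity and the condition \(\inf|\gamma'|>0\).
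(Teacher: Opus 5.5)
Your proposal is correct and follows the paper's proof essentially step for step: you apply Theorem~\ref{T:imported-circle-local-dimension} to \((f_{\mathbb T})_\#\widetilde\mu^{\mathbb T}\), pull back to \(\mathbb T\) through the bi-Lipschitz map \(f_{\mathbb T}\) (Lemma~\ref{L:arc-standard-circle-transfer}), and then transfer to \(\mu_\gamma^{\mathbb T}\) using that the Jordan curve \(\gamma\) is bi-Lipschitz onto its image. Your added remarks on identifying the law and non-extinction event of the \(\mathbb S^1\) cascade, and your explicit ball inclusions for \(\gamma\), spell out details that the paper treats as routine.
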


\begin{proof}
By Theorem~\ref{T:imported-circle-local-dimension}, applied to the circle
pushforward
\(
    (f_{\mathbb T})_\#\widetilde\mu^{\mathbb T},
\)
we have
\[
    \alpha_{\min}\bigl((f_{\mathbb T})_\#\widetilde\mu^{\mathbb T}\bigr)
    =
    A_{\mathrm{loc}}(W)
\]
almost surely on \(S_\gamma^{\mathbb T}\).  Since
\(f_{\mathbb T}:\mathbb T\to\mathbb S^1\) is bi-Lipschitz,
\[
    \alpha_{\min}^{\mathbb T}(\widetilde\mu^{\mathbb T})
    =
    \alpha_{\min}\bigl((f_{\mathbb T})_\#\widetilde\mu^{\mathbb T}\bigr).
\]
By the bi-Lipschitz observation for fixed Jordan curves,
\[
    \alpha_{\min}(\mu_\gamma^{\mathbb T})
    =
    \alpha_{\min}^{\mathbb T}(\widetilde\mu^{\mathbb T}).
\]
Combining the three identities gives
\[
    \alpha_{\min}(\mu_\gamma^{\mathbb T})
    =
    A_{\mathrm{loc}}(W).
\]
\end{proof}

\begin{proposition}[Closed-curve deterministic upper obstruction]
\label{P:jordan-support-upper-obstruction}
Let
\(
    \gamma:\mathbb T\to\mathbb R^2
\)
be a fixed \(C^2\) embedded Jordan curve with
\(
    \inf_{t\in\mathbb T}|\gamma'(t)|>0.
\)
If \(\eta\) is a nonzero finite Borel measure supported on
\(\gamma(\mathbb T)\), then
\[
    \dim_{\mathrm F}(\eta)
    \le
    \alpha_{\min}(\eta).
\]
In particular, this conclusion applies to every fixed nondegenerate \(C^2\)
Jordan curve.
\end{proposition}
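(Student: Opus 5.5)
The plan is to reduce to the arc case, Proposition~\ref{P:arc-support-upper-obstruction}, by localization. The one point needing care is that a measure on a closed curve need not live on a single arc. However, the obstruction argument in Lemmas~\ref{L:arc-uniform-quadratic-normal-projection}--\ref{L:arc-local-mass-from-fourier-decay} only uses the geometry near each support point, so it transfers directly.

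\emph{Step 1: quadratic normal-projection estimate on \(\gamma(\mathbb T)\).} First I would prove the analogue of Lemma~\ref{L:arc-uniform-quadratic-normal-projection} on the closed curve. There is \(C_\gamma<\infty\) such that for every \(t_0\in\mathbb T\), with \(x_0=\gamma(t_0)\) and \(N_0\perp\gamma'(t_0)\) a unit vector, and every \(x,y\in\gamma(\mathbb T)\),
\[
|N_0\cdot(x-y)|\le C_\gamma\bigl(|x-x_0|^2+|y-x_0|^2\bigr).
\]
The proof goes as follows.
\begin{itemize}
\item Lift to a representative \(\bar t_0\in\mathbb R\) and, for \(t\in\mathbb T\), choose the representative \(\bar t\) with \(|\bar t-\bar t_0|=d_{\mathbb T}(t,t_0)\le 1/2\).
\item Apply the integral Taylor formula to the \(1\)-periodic \(C^2\) map \(\bar\gamma\); it gives \(|N_0\cdot(\gamma(t)-x_0)|\le C\,d_{\mathbb T}(t,t_0)^2\).
\item Use the bi-Lipschitz property of \(\gamma\) on \((\mathbb T,d_{\mathbb T})\), recorded in Subsection~\ref{SS:fixed-jordan-curves}, to get \(d_{\mathbb T}(t,t_0)\le C|\gamma(t)-x_0|\).
\item Finish with the triangle inequality.
\end{itemize}
Global embeddedness is what prevents a distant portion of the curve from entering a small ball around \(x_0\) with uncontrolled normal displacement. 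It enters only through the bi-Lipschitz constant.

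\emph{Step 2: Gaussian lower bound.} Next I would repeat Lemma~\ref{L:arc-gaussian-normal-frequency-lower-bound} verbatim with \(\gamma(\mathbb T)\) in place of \(\gamma([0,1])\). Fubini and the Gaussian Fourier transform give
\[
\int|\widehat\eta(RN_0)|^2e^{-\pi(R/\Lambda)^2}dR=\Lambda\iint e^{-\pi\Lambda^2(N_0\cdot(x-y))^2}\,d\eta\,d\eta .
\]
Restricting to \(E_\rho=\gamma(\mathbb T)\cap B(x_0,c_\gamma\rho)\) and using Step~1 with \(\Lambda=\rho^{-2}\) bounds the left-hand side below by \(c_\gamma\Lambda\,\eta(B(x_0,c_\gamma\rho))^2\).

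\emph{Step 3: conclude.} Then Lemma~\ref{L:arc-local-mass-from-fourier-decay} applies unchanged. An admissible exponent \(\sigma'\) yields the upper bound \(C_\tau\Lambda^{1-\tau}\) for every \(\tau<\min\{\sigma',1\}\), hence \(\eta(B(x_0,\rho))\le C\rho^\tau\) and \(\alpha_{\min}(\eta)\ge\tau\). Combined with \(\dim_{\mathrm F}(\eta)\le\dim_{\mathrm H}(\gamma(\mathbb T))\le1\), the same supremum argument as in Proposition~\ref{P:arc-support-upper-obstruction} gives \(\dim_{\mathrm F}(\eta)\le\alpha_{\min}(\eta)\). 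Curvature is never used, so the conclusion holds in particular for nondegenerate Jordan curves.

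\emph{Alternative route.} One could instead split \(\eta\) into restrictions to two overlapping closed subarcs \(\gamma\circ\rho_i\) and apply Proposition~\ref{P:arc-support-upper-obstruction}. That does not work directly, because a restriction of \(\eta\) need not inherit the Fourier decay of \(\eta\). This is why I prefer the direct approach above.

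\emph{Main obstacle.} The main obstacle is Step~1, specifically making the constant uniform over all base points. That requires the global bi-Lipschitz bound on \(\mathbb T\) with the quotient metric, and a consistent choice of lifts near the identification point \(0\sim1\).
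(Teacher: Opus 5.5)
Your proposal is correct and follows the paper's proof. It uses the same Gaussian average along the normal frequency line, the same local-mass bound from Fourier decay, and the same supremum argument based on \(\dim_{\mathrm F}(\eta)\le 1\). The only difference is in Step~1: you derive the uniform quadratic normal-projection estimate directly from the periodic Taylor expansion and the global bi-Lipschitz bound on \((\mathbb T,d_{\mathbb T})\). The paper instead covers \(\mathbb T\) by finitely many arcs \(V_j\) with \(\overline{V_j}\subset\operatorname{int}_{\mathbb T}I_j\), uses injectivity to obtain \(\delta>0\) with \(\gamma(\mathbb T)\cap B(x_0,\delta)\subset\gamma(I_j)\), and then applies Lemma~\ref{L:arc-uniform-quadratic-normal-projection} to each subarc. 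Your version is slightly more economical and holds for all \(x,y\in\gamma(\mathbb T)\); embeddedness enters through the bi-Lipschitz constant at the same point where the paper uses it to obtain \(\delta\).
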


\begin{proof}
Set
\(
    \Gamma=\gamma(\mathbb T).
\)
Choose finitely many open arcs \(V_j\subset\mathbb T\) and compact arcs
\(I_j\subset\mathbb T\) such that
\[
    \mathbb T=\bigcup_{j=1}^M V_j,
    \qquad
    \overline{V_j}\subset \operatorname{int}_{\mathbb T} I_j,
\]
and such that each restriction \(\gamma|_{I_j}\) is a \(C^2\) embedded arc.
Put
\(
    K_j=\mathbb T\setminus \operatorname{int}_{\mathbb T} I_j.
\)
Since \(\gamma\) is injective, the compact sets
\(\gamma(\overline{V_j})\) and \(\gamma(K_j)\) are disjoint.  Hence
\[
    \delta
    :=
    \frac12
    \min_{1\le j\le M}
    \operatorname{dist}\bigl(\gamma(\overline{V_j}),\gamma(K_j)\bigr)
    >0.
\]
It follows that, if \(x_0=\gamma(t_0)\) with \(t_0\in V_j\), then
\(
    \Gamma\cap B(x_0,\delta)\subset \gamma(I_j).
\)

For each \(j\), choose a \(C^2\) diffeomorphism
\(\theta_j:[0,1]\to I_j\) with nonvanishing derivative, and set
\(
    \gamma_j=\gamma\circ\theta_j.
\)
Applying Lemma~\ref{L:arc-uniform-quadratic-normal-projection} to the finitely
many arcs \(\gamma_j\), and taking the maximum of the corresponding constants, 
gives a constant \(C_\gamma<\infty\).  
Set
\(
    \rho_\gamma=\min\{1,\delta\}.
\)
Now let \(x_0=\gamma(t_0)\in\Gamma\), choose \(j\) with \(t_0\in V_j\), and let
\(N_0\) be a unit vector satisfying
\(
    N_0\cdot\gamma'(t_0)=0.
\)
If
\[
    x,y\in\Gamma\cap B(x_0,\rho),
    \qquad
    0<\rho\le\rho_\gamma,
\]
then \(x,y\in\gamma(I_j)\).  Since \(N_0\) is also normal to the arc
\(\gamma_j\) at \(x_0\), Lemma~\ref{L:arc-uniform-quadratic-normal-projection}
gives
\(
    |N_0\cdot(x-y)|
    \le
    C_\gamma\rho^2.
\)

With this uniform local quadratic normal projection estimate in hand, the proof
of Lemma~\ref{L:arc-gaussian-normal-frequency-lower-bound} applies verbatim.
Thus, after possibly decreasing \(\rho_\gamma\) and \(c_\gamma>0\), for every
\(x_0\in\operatorname{spt}\eta\), every unit normal \(N_0\) at \(x_0\), and
every \(0<\rho\le\rho_\gamma\), with
\(
    \Lambda=\rho^{-2},
\)
one has
\[
    \int_{\mathbb R}
    |\widehat\eta(RN_0)|^2
    e^{-\pi(R/\Lambda)^2}\,dR
    \ge
    c_\gamma\Lambda\,\eta(B(x_0,c_\gamma\rho))^2.
\]

Now let \(\sigma>0\) be an admissible Fourier decay exponent for \(\eta\), and
fix
\(
    0<\tau<\min\{\sigma,1\}.
\)
Repeating the proof of Lemma~\ref{L:arc-local-mass-from-fourier-decay}, using
the preceding Gaussian lower bound in place of the arc one, gives a constant
\(C_{\tau,\gamma}<\infty\) such that
\(
    \eta(B(x_0,\rho))
    \le
    C_{\tau,\gamma}\rho^\tau
\)
for every \(x_0\in\operatorname{spt}\eta\) and all sufficiently small
\(\rho>0\).  Hence
\(
    \alpha_{\min}(\eta)\ge\tau.
\)

Finally, since \(\eta\) is supported on the Lipschitz curve \(\Gamma\), the
standard Fourier-energy support comparison gives
\[
    \dim_{\mathrm F}(\eta)
    \le
    \dim_{\mathrm H}(\operatorname{spt}\eta)
    \le
    \dim_{\mathrm H}(\Gamma)
    \le
    1.
\]
If \(\dim_{\mathrm F}(\eta)=0\), then the desired inequality is immediate.

Otherwise, fix
\(
    0<\sigma<\dim_{\mathrm F}(\eta).
\)
By the definition of Fourier dimension, there exists an admissible Fourier
decay exponent \(\sigma'\) for \(\eta\) such that
\(
    \sigma<\sigma'.
\)
Since \(\dim_{\mathrm F}(\eta)\le1\), we also have \(\sigma<1\).  Choose
\(
    \sigma<\tau<\min\{\sigma',1\}.
\)
The preceding paragraph gives
\(
\alpha_{\min}(\eta)\ge\tau>\sigma.
\)
Taking the supremum over all
\(
    0<\sigma<\dim_{\mathrm F}(\eta)
\)
yields
\(
 \alpha_{\min}(\eta)\ge\dim_{\mathrm F}(\eta).
\)
\end{proof}

\begin{proposition}
\label{P:jordan-endpoint-lower-bound}
Almost surely on \(S_\gamma^{\mathbb T}\),
\[
    \dim_{\mathrm F}(\mu_\gamma^{\mathbb T})
    \ge
    A_{\mathrm{loc}}(W).
\]
Moreover, if \(A_{\mathrm{loc}}(W)>0\), then almost surely, for every
\(
    0<\sigma<A_{\mathrm{loc}}(W),
\)
there exist finite constants
\(C_\sigma<\infty\),
\(R_\sigma<\infty\)
such that
\[
    |\widehat{\mu_\gamma^{\mathbb T}}(\xi)|
    \le
    C_\sigma|\xi|^{-\sigma/2}
    \qquad(|\xi|\ge R_\sigma).
\]
\end{proposition}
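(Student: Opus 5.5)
The plan is to reduce the Jordan-curve lower bound to the fixed-arc result through the dyadic cutting identity of Lemma~\ref{L:circle-cascade-dyadic-cutting}. Work on the probability-one event \(E_{\mathrm{cut}}\), on which
\[
    \mu_\gamma^{\mathbb T}
    =
    \frac{W_0}{2}(\gamma_0)_\#\widetilde\mu^{(0)}
    +
    \frac{W_1}{2}(\gamma_1)_\#\widetilde\mu^{(1)},
    \qquad \gamma_i=\gamma\circ\rho_i .
\]
By Lemma~\ref{L:jordan-dyadic-subarcs}, each \(\gamma_i\) is a fixed nondegenerate \(C^2\) embedded arc. Each \(\widetilde\mu^{(i)}\) is a dyadic scalar interval cascade generated by \(W\), since it is built from the descendant weights. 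Hence Proposition~\ref{P:arc-endpoint-lower-bound}, or more directly Lemma~\ref{L:arc-strict-subendpoint-annular-decay}, applies to each pushforward \(\mu_{\gamma_i}=(\gamma_i)_\#\widetilde\mu^{(i)}\) separately. Note that the arc-level statement holds almost surely without conditioning on non-extinction; the event \(S_{\gamma}\) only enters the dimension inequality. This is what we need, because one summand may be extinct while the other survives.

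If \(A_{\mathrm{loc}}(W)=0\), the lower bound is trivial and the moreover statement is void. Otherwise, fix the countable set \(\mathcal Q=\mathbb Q\cap(0,A_{\mathrm{loc}}(W))\). For each \(q\in\mathcal Q\) and each \(i\in\{0,1\}\), take the probability-one event \(E_q^{(i)}\) on which \(|\widehat{\mu_{\gamma_i}}(\xi)|=O(|\xi|^{-q/2})\), and intersect all of these with \(E_{\mathrm{cut}}\) and with \(\{W_0,W_1<\infty\}\). On this event \(E\), the triangle inequality gives
\[
    |\widehat{\mu_\gamma^{\mathbb T}}(\xi)|
    \le
    \frac{W_0}{2}|\widehat{\mu_{\gamma_0}}(\xi)|
    +
    \frac{W_1}{2}|\widehat{\mu_{\gamma_1}}(\xi)|
    \le
    C_q(\omega)|\xi|^{-q/2}
\]
for \(|\xi|\ge R_q(\omega)\). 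The random weights \(W_i\) are simply absorbed into the random constant.

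On \(E\cap S_\gamma^{\mathbb T}\), the measure \(\mu_\gamma^{\mathbb T}\) is nonzero, so every \(q\in\mathcal Q\) is an admissible decay exponent, and taking the supremum gives \(\dim_{\mathrm F}(\mu_\gamma^{\mathbb T})\ge A_{\mathrm{loc}}(W)\). For the moreover part, given \(\sigma\in(0,A_{\mathrm{loc}}(W))\), pick a rational \(q\in(\sigma,A_{\mathrm{loc}}(W))\) and reuse its constants, exactly as in the proof of Proposition~\ref{P:arc-endpoint-lower-bound}.

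The only point that needs care, rather than any analytic estimate, is measurability and independence. The arc theorem must be applied to \(\widetilde\mu^{(i)}\) as a cascade in its own right, with its own weak-convergence event, and not conditionally on \(W_0,W_1\). Since \(\widetilde\mu^{(i)}\) is measurable with respect to the descendant weights and has the law of the interval cascade, the arc events have probability one unconditionally. Finiteness of \(W_i\) holds almost surely because \(\mathbb E W=1\).
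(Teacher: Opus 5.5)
Your proposal is correct and follows essentially the same route as the paper: on \(E_{\mathrm{cut}}\) you split \(\mu_\gamma^{\mathbb T}\) into the two weighted pushforwards along the nondegenerate subarcs \(\gamma_0,\gamma_1\). You then apply the unconditional fixed-arc strict subendpoint decay to each descendant cascade, absorb the finite factors \(W_i/2\) into the random constants, and use that \(\mu_\gamma^{\mathbb T}\ne0\) on \(S_\gamma^{\mathbb T}\). Your explicit countable intersection over rational exponents just unpacks what the paper obtains by citing Proposition~\ref{P:arc-endpoint-lower-bound} directly.
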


\begin{proof}
If \(A_{\mathrm{loc}}(W)=0\), the lower bound is immediate.
Assume \(A_{\mathrm{loc}}(W)>0\).  Let
\[
    \rho_0(t)=\frac{t}{2},
    \qquad
    \rho_1(t)=\frac{1+t}{2}\pmod 1,
    \qquad 0\le t\le1.
\]
Let \(E_{\mathrm{cut}}\) be the probability-one event in
Lemma~\ref{L:circle-cascade-dyadic-cutting}.  On \(E_{\mathrm{cut}}\), we may
write
\[
    \widetilde\mu^{\mathbb T}
    =
    \frac{W_0}{2}(\rho_0)_{\#}\widetilde\mu^{(0)}
    +
    \frac{W_1}{2}(\rho_1)_{\#}\widetilde\mu^{(1)},
\]
where \(\widetilde\mu^{(0)}\) and \(\widetilde\mu^{(1)}\) are interval
cascades generated by independent copies of the same dyadic weights.  Therefore
\[
    \mu_\gamma^{\mathbb T}
    =
    \frac{W_0}{2}(\gamma\circ\rho_0)_{\#}\widetilde\mu^{(0)}
    +
    \frac{W_1}{2}(\gamma\circ\rho_1)_{\#}\widetilde\mu^{(1)}.
\]
By Lemma~\ref{L:jordan-dyadic-subarcs}, the two maps
\(\gamma_0=\gamma\circ\rho_0\),
\(\gamma_1=\gamma\circ\rho_1\)
are fixed nondegenerate \(C^2\) embedded arcs.

For \(i=0,1\), let \(E_i\) be the probability-one event given by
Proposition~\ref{P:arc-endpoint-lower-bound}, applied to the fixed arc
\(\gamma_i\) and to the descendant cascade \(\widetilde\mu^{(i)}\).  On the
common probability-one event
\(
    E_{\mathrm{cut}}\cap E_0\cap E_1,
\)
for each \(i=0,1\), every strict subendpoint exponent
\(
    0<\sigma<A_{\mathrm{loc}}(W)
\)
is an admissible Fourier decay exponent for
\(
    (\gamma_i)_{\#}\widetilde\mu^{(i)}
\)
whenever this measure is nonzero; if it is the zero measure, the corresponding
Fourier transform estimate is trivial.  Multiplication by the finite random
constants \(W_i/2\) does not affect the decay exponent.
Hence the finite sum
\[
    \mu_\gamma^{\mathbb T}
    =
    \frac{W_0}{2}(\gamma_0)_{\#}\widetilde\mu^{(0)}
    +
    \frac{W_1}{2}(\gamma_1)_{\#}\widetilde\mu^{(1)}
\]
also satisfies
\[
    |\widehat{\mu_\gamma^{\mathbb T}}(\xi)|
    =
    O(|\xi|^{-\sigma/2})
    \qquad(|\xi|\to\infty)
\]
for every
\(
    0<\sigma<A_{\mathrm{loc}}(W).
\)
On \(S_\gamma^{\mathbb T}\), the measure \(\mu_\gamma^{\mathbb T}\) is nonzero.
Therefore every strict subendpoint exponent is admissible, and
\[
    \dim_{\mathrm F}(\mu_\gamma^{\mathbb T})
    \ge
    A_{\mathrm{loc}}(W)
\]
almost surely on \(S_\gamma^{\mathbb T}\).  The same argument gives the stated
strict subendpoint Fourier decay estimate with finite random constants
\(C_\sigma\) and \(R_\sigma\).
\end{proof}

\begin{proposition}
\label{P:jordan-endpoint-upper-bound}
Almost surely on \(S_\gamma^{\mathbb T}\),
\[
    \dim_{\mathrm F}(\mu_\gamma^{\mathbb T})
    \le
    A_{\mathrm{loc}}(W).
\]
\end{proposition}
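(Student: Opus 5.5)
The plan mirrors the proof of Proposition~\ref{P:arc-endpoint-upper-bound}, with the arc inputs replaced by their closed-curve analogues. On the non-extinction event \(S_\gamma^{\mathbb T}\), the measure \(\widetilde\mu^{\mathbb T}\) is nonzero and finite. Hence \(\mu_\gamma^{\mathbb T}=\gamma_\#\widetilde\mu^{\mathbb T}\) is a nonzero finite Borel measure on \(\mathbb R^2\). Its support lies in the compact set \(\gamma(\mathbb T)\), because \(\gamma\) is continuous and \(\mathbb T\) is compact.

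First, I would apply Proposition~\ref{P:jordan-support-upper-obstruction} deterministically to \(\eta=\mu_\gamma^{\mathbb T}\). Its hypotheses hold: a fixed nondegenerate \(C^2\) Jordan curve is a \(C^2\) embedding with \(\inf_{\mathbb T}|\gamma'|>0\). This gives, pointwise on \(S_\gamma^{\mathbb T}\) and for every realization,
\[
    \dim_{\mathrm F}(\mu_\gamma^{\mathbb T})\le\alpha_{\min}(\mu_\gamma^{\mathbb T}).
\]

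Second, I would invoke Proposition~\ref{P:jordan-local-dimension-identity}. It supplies a probability-one event on which, whenever \(S_\gamma^{\mathbb T}\) holds, \(\alpha_{\min}(\mu_\gamma^{\mathbb T})=A_{\mathrm{loc}}(W)\). Intersecting this event with \(S_\gamma^{\mathbb T}\) and chaining the two inequalities yields \(\dim_{\mathrm F}(\mu_\gamma^{\mathbb T})\le A_{\mathrm{loc}}(W)\) almost surely on \(S_\gamma^{\mathbb T}\).

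There is essentially no obstacle, since both ingredients are already established; the only point to check is measure-theoretic bookkeeping. The deterministic obstruction holds for every realization. The local-dimension identity is an almost-sure statement on \(S_\gamma^{\mathbb T}\), and its proof relies on Theorem~\ref{T:imported-circle-local-dimension} applied to \((f_{\mathbb T})_\#\widetilde\mu^{\mathbb T}\). That pushforward is exactly the scalar circle cascade, with the same non-extinction event, so no further probabilistic work is needed.
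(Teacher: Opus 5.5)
Your proposal is correct and matches the paper's proof: apply the deterministic closed-curve obstruction (Proposition~\ref{P:jordan-support-upper-obstruction}) to get \(\dim_{\mathrm F}(\mu_\gamma^{\mathbb T})\le\alpha_{\min}(\mu_\gamma^{\mathbb T})\) on \(S_\gamma^{\mathbb T}\). Then use Proposition~\ref{P:jordan-local-dimension-identity} to identify \(\alpha_{\min}(\mu_\gamma^{\mathbb T})=A_{\mathrm{loc}}(W)\) almost surely on \(S_\gamma^{\mathbb T}\).
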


\begin{proof}
On \(S_\gamma^{\mathbb T}\), 
the measure \(\mu_\gamma^{\mathbb T}\) is a nonzero finite Borel measure supported 
on \(\gamma(\mathbb T)\).  
By Proposition~\ref{P:jordan-support-upper-obstruction},
\(
    \dim_{\mathrm F}(\mu_\gamma^{\mathbb T})
    \le
    \alpha_{\min}(\mu_\gamma^{\mathbb T}).
\)
By Proposition~\ref{P:jordan-local-dimension-identity},
\(
    \alpha_{\min}(\mu_\gamma^{\mathbb T})
    =
    A_{\mathrm{loc}}(W)
\)
almost surely on \(S_\gamma^{\mathbb T}\).  Combining the two estimates gives
\(
    \dim_{\mathrm F}(\mu_\gamma^{\mathbb T})
    \le
    A_{\mathrm{loc}}(W)
\)
almost surely on \(S_\gamma^{\mathbb T}\).
\end{proof}

\begin{proof}[Proof of Theorem~\ref{T:jordan-main-endpoint}]
The lower bound is Proposition~\ref{P:jordan-endpoint-lower-bound}, and the
upper bound is Proposition~\ref{P:jordan-endpoint-upper-bound}.  Combining them
gives
\[
    \dim_{\mathrm F}(\mu_\gamma^{\mathbb T})
    =
    A_{\mathrm{loc}}(W)
\]
almost surely on \(S_\gamma^{\mathbb T}\).  The strict subendpoint Fourier
decay statement is included in Proposition~\ref{P:jordan-endpoint-lower-bound}.
This proves the theorem.
\end{proof}

\begin{corollary}
\label{C:positive-zero-endpoint-regimes}
The conclusions of Theorems~\ref{T:arc-main-endpoint} and
\ref{T:jordan-main-endpoint} imply the following in the corresponding fixed-arc
or fixed-Jordan setting.
\begin{enumerate}[label=\textup{(\roman*)}]
\item If there exists \(q>1\) such that
\(
    \mathbb E[W^q]<2^{q-1},
\)
then the corresponding pushforward cascade has positive Fourier dimension
almost surely on non-extinction.

\item If
\(\mathbb E[W^q]=\infty\)
for every \(q>1\),
then the corresponding pushforward cascade has Fourier dimension \(0\) almost
surely on non-extinction.
\end{enumerate}
\end{corollary}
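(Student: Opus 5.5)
The corollary follows directly from Theorems~\ref{T:arc-main-endpoint} and \ref{T:jordan-main-endpoint}. Each theorem identifies the Fourier dimension of the relevant pushforward ($\mu_\gamma$ or $\mu_\gamma^{\mathbb T}$) almost surely on non-extinction ($S_\gamma$ or $S_\gamma^{\mathbb T}$) with the deterministic number $A_{\mathrm{loc}}(W)$. So the proof reduces to two elementary statements about $A_{\mathrm{loc}}(W)$, read off from Definition~\ref{D:endpoint-local-exponent}. No probabilistic argument is needed beyond the theorems themselves.

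For (i), suppose there is $q>1$ with $\mathbb E[W^q]<2^{q-1}$. Then $\mathbb E[W^q]$ is finite, so the $q$-term in the supremum is not set to $0$ by convention. Moreover $\log_2\mathbb E[W^q]<q-1$, so
\[
    \frac{q-1-\log_2\mathbb E[W^q]}{q}>0 .
\]
Since $A_{\mathrm{loc}}(W)$ is a supremum over $q>1$ that includes this term, $A_{\mathrm{loc}}(W)>0$. The theorems then give a positive Fourier dimension almost surely on non-extinction, in both the arc and the Jordan setting.

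For (ii), suppose $\mathbb E[W^q]=\infty$ for every $q>1$. By the convention in Definition~\ref{D:endpoint-local-exponent}, every $q$-term is interpreted as $0$, so every entry $\max\{0,\cdot\}$ equals $0$ and hence $A_{\mathrm{loc}}(W)=0$. The theorems then give Fourier dimension $0$ almost surely on non-extinction.

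There is no real obstacle. The only point to check is that the theorems apply: the corollary is stated under the standing hypothesis that $W$ is in the minimal Kahane--Peyri\`ere regime, which is assumed in both main theorems. Case (ii) should be noted as consistent with that regime, since only $\mathbb E[W\log_2^+W]<\infty$ is required there, not any moment of order $q>1$. Finally, the non-extinction event is exactly the event on which the theorems' conclusions hold.
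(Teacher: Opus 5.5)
Your proposal is correct and follows the paper's proof essentially line for line. In both, (i) uses $\mathbb E[W^q]<2^{q-1}$ to make one finite $q$-term strictly positive, hence $A_{\mathrm{loc}}(W)>0$. In (ii), every $q$-term is set to $0$ by convention, so $A_{\mathrm{loc}}(W)=0$. Each case then concludes by invoking the endpoint formulas of Theorems~\ref{T:arc-main-endpoint} and \ref{T:jordan-main-endpoint}.
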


\begin{proof}
If \(\mathbb E[W^q]<2^{q-1}\) for some \(q>1\), then the corresponding term in
the definition of \(A_{\mathrm{loc}}(W)\) is strictly positive:
\[
    \frac{q-1-\log_2\mathbb E[W^q]}{q}>0.
\]
Hence
\(
    A_{\mathrm{loc}}(W)>0.
\)
The endpoint formula in either Theorem~\ref{T:arc-main-endpoint} or
Theorem~\ref{T:jordan-main-endpoint} gives positive Fourier dimension almost surely on the relevant non-extinction event.

If \(\mathbb E[W^q]=\infty\) for every \(q>1\), then every \(q\)-term in Definition~\ref{D:endpoint-local-exponent} is interpreted as zero.  Therefore
\(
    A_{\mathrm{loc}}(W)=0.
\)
The endpoint formula in either setting gives Fourier dimension \(0\) almost
surely on the relevant non-extinction event.
\end{proof}

Given the exact dimension formulas we have, it is a straightforward matter to derive the following Salem consequences. 

\begin{corollary}
\label{C:Salem-classification}
Let \(W\) be in the minimal Kahane--Peyri\`ere regime, and set
\(\chi=1-\mathbb E[W\log_2 W]\). In each of the following two settings---the fixed-arc setting of
Theorem~\ref{T:arc-main-endpoint} and the fixed-Jordan setting of
Theorem~\ref{T:jordan-main-endpoint}---the corresponding pushforward cascade
measure is Salem almost surely on the relevant non-extinction event if and
only if there exists \(a\in[1,2)\) such that
\[
    \mathbb P(W=a)=a^{-1},
    \qquad
    \mathbb P(W=0)=1-a^{-1}.
\]
Here Salem means that the Fourier dimension of the measure equals its
Hausdorff dimension.
\end{corollary}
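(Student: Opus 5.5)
The plan is to reduce the Salem property to an identity between two explicit functionals of the law of \(W\), and then to analyse when that identity holds. Write \(\tau(q)=q-1-\log_2\mathbb E[W^q]\) for \(q\ge1\).

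\textbf{Step 1: the Hausdorff dimension.} First I would record that, almost surely on non-extinction, the Hausdorff dimension of the interval cascade \(\widetilde\mu\) equals \(\chi=1-\mathbb E[W\log_2W]\). This is the classical Kahane--Peyri\`ere dimension theorem, since \(\widetilde\mu\) is exact dimensional of dimension \(\chi\). The same holds for the circle cascade \(\widetilde\mu^{\mathbb T}\).

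Hausdorff dimension of a measure is invariant under bi-Lipschitz maps. The fixed arc \(\gamma\) is bi-Lipschitz onto its image (Subsection~\ref{SS:fixed-curve-consequences}), and so is the fixed Jordan curve (Subsection~\ref{SS:fixed-jordan-curves}). Hence \(\dim_{\mathrm H}\mu_\gamma=\dim_{\mathrm H}\mu_\gamma^{\mathbb T}=\chi\) almost surely on the relevant non-extinction event.

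Combining this with Theorems~\ref{T:arc-main-endpoint} and \ref{T:jordan-main-endpoint}, the measure is Salem almost surely on non-extinction if and only if \(A_{\mathrm{loc}}(W)=\chi\). Both quantities are deterministic, so the ``almost surely'' qualifier causes no difficulty. Note also that \(\chi>0\) in the minimal Kahane--Peyri\`ere regime.

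\textbf{Step 2: the inequality \(A_{\mathrm{loc}}(W)\le\chi\), with an equality criterion.} On the interval of \(q\) where \(\mathbb E[W^q]<\infty\), the function \(\tau\) is concave, since \(\log_2\mathbb E[W^q]\) is convex. It satisfies \(\tau(1)=0\) and \(\tau'_+(1)=\chi\). For the derivative identity I would use the convexity and monotone-convergence argument already given at the start of Section~\ref{S:finite-r-annular}.

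Set \(g(q)=\chi(q-1)-\tau(q)\). Then \(g\) is convex, \(g\ge0\), \(g(1)=0\) and \(g'_+(1)=0\). In particular \(\tau(q)/q\le\chi(q-1)/q<\chi\) for every finite \(q\). Since \(\chi>0\), the truncation at \(0\) and the convention for infinite moments only lower the terms, so \(A_{\mathrm{loc}}(W)\le\chi\).

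Equality forces \(\sup_q\tau(q)/q=\chi\). Because each term is strictly below \(\chi\), this requires a sequence \(q\to\infty\) with finite moments, along which \(g(q)/q\to0\). If \(g'(q_0)>0\) for some \(q_0\), then convexity gives \(\liminf_{q\to\infty}g(q)/q\ge g'(q_0)>0\), a contradiction. Hence \(g\equiv0\) on \([1,\infty)\), which means \(\mathbb E[W^q]=b^{q-1}\) for all \(q\ge1\), where \(b=2^{1-\chi}\).

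\textbf{Step 3: identifying the law.} Put \(Z=W/b\). Then \(\mathbb E[Z^q]=b^{-1}\) for all \(q\ge1\). Next I would compare \(q=2\) and \(q=4\): the identity \(\mathbb E[(Z^2-Z)^2]=\mathbb E Z^4-2\mathbb E Z^3+\mathbb E Z^2=0\) forces \(Z\in\{0,1\}\) almost surely. Therefore \(W\in\{0,a\}\) with \(a=b\) and \(\mathbb P(W=a)=\mathbb E[Z]=a^{-1}\). Since \(\chi\in(0,1]\), we get \(a=2^{1-\chi}\in[1,2)\).

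\textbf{Step 4: the converse.} Suppose \(W\) has the two-point law of the statement. Then \(\mathbb E[W\log_2W]=\log_2a<1\), so \(W\) lies in the minimal Kahane--Peyri\`ere regime. Moreover \(\mathbb E[W^q]=a^{q-1}\), so \(\tau(q)/q=(q-1)(1-\log_2a)/q\). This increases to \(1-\log_2a=\chi\) as \(q\to\infty\), giving \(A_{\mathrm{loc}}(W)=\chi\). The case \(a=1\) (\(W\equiv1\)) is included.

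\textbf{Main obstacle.} The only delicate step is the equality analysis in Step~2. It must handle weights with only finitely many finite moments: then the supremum is taken over a bounded range of \(q\), and the strict inequality \(\tau(q)/q<\chi\) still excludes equality. It must also make precise the convexity argument that passes from an asymptotic equality to \(g\equiv0\). I would also double-check that the Hausdorff dimension used in the Salem definition is the Kahane--Peyri\`ere dimension of the measure, and not that of the support.
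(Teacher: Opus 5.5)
Your proof is correct. Step~1 and the converse coincide with the paper's, but the core inequality-and-equality analysis takes a different route.

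The paper proves \(A_{\mathrm{loc}}(W)\le\chi\) by Jensen's inequality under the size-biased law \(d\mathbb P^\star=W\,d\mathbb P\), namely \(\mathbb E[W^q]=\mathbb E^\star[W^{q-1}]\ge 2^{(q-1)\mathbb E^\star[\log_2 W]}\). This is exactly your inequality \(g\ge0\). However, it needs neither the right-derivative identity \(\tau'_+(1)=\chi\) nor any discussion of the domain of finite moments. Your derivative identity does need some finite moment of order \(>1\); when none exists, \(A_{\mathrm{loc}}(W)=0<\chi\) trivially, so nothing is lost.

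For the equality case, the paper uses only that \(\Phi(q_j)/q_j\to1-\chi\) along some \(q_j\to\infty\). It then applies \(\|W\|_{q}\to\|W\|_\infty\) to get \(\log_2\|W\|_\infty=\mathbb E^\star[\log_2 W]\). Hence \(W\) equals its essential supremum \(\mathbb P^\star\)-a.s., i.e.\ \(W\in\{0,a\}\).

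You instead use convexity of \(g\) to upgrade this asymptotic information to the exact identity \(\mathbb E[W^q]=b^{q-1}\) for every \(q\ge1\). You then identify the law algebraically through \(\mathbb E[(Z^2-Z)^2]=0\), which uses only the moments of order \(2,3,4\). Your route gives the stronger intermediate fact that \(q\mapsto\log_2\mathbb E[W^q]\) is exactly affine, and it avoids \(L^\infty\)-limits and the strict-Jensen equality case. The paper's route is shorter and never differentiates.

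Your concern about measure versus support dimension is harmless here. Since \(\dim_{\mathrm F}=A_{\mathrm{loc}}(W)\le\chi\le\dim_{\mathrm H}(\operatorname{spt})\), reading "Hausdorff dimension" as that of the support would still force \(A_{\mathrm{loc}}(W)=\chi\). That gives the same two-point law, whose support is a Galton--Watson limit set of dimension \(\log_2(2/a)=\chi\).
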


\begin{proof}
By the standard exact-dimensionality theorem for scalar Mandelbrot cascades,
the interval and circle parameter cascades have Hausdorff dimension
\(\chi=1-\mathbb E[W\log_2 W]\) almost surely on their respective
non-extinction events; see, for example,
\cite{Kahane1985,KahanePeyriere1976}. Since the fixed curve
parametrizations are bi-Lipschitz, the corresponding pushforward measures
also have Hausdorff dimension \(\chi\). In view of
Theorems~\ref{T:arc-main-endpoint} and
\ref{T:jordan-main-endpoint}, it therefore suffices to determine when
\(A_{\mathrm{loc}}(W)=\chi\).

Let \(d\mathbb P^\star=W\,d\mathbb P\). Since \(\mathbb EW=1\),
\(\mathbb P^\star\) is a probability measure. Moreover,
\(\mathbb P^\star(W=0)=0\), and the minimal Kahane--Peyri\`ere assumptions
imply \(\log_2 W\in L^1(\mathbb P^\star)\) and
\[
    \mathbb E^\star[\log_2 W]
    =
    \mathbb E[W\log_2 W]
    =
    1-\chi.
\]
For \(q>1\) with \(\mathbb E[W^q]<\infty\), set
\(
    \Phi(q)=\log_2\mathbb E[W^q].
\)
Since \(\mathbb E[W^q]=\mathbb E^\star[W^{q-1}]\), Jensen's inequality gives
\[
    \Phi(q)
    \geq
    (q-1)\mathbb E^\star[\log_2 W]
    =
    (q-1)(1-\chi).
\]
Thus
\[
    \max\left\{
        0,\frac{q-1-\Phi(q)}{q}
    \right\}
    \leq
    \chi\frac{q-1}{q}.
\]
The terms corresponding to infinite \(q\)-moments are \(0\) by definition,
so
\(
    A_{\mathrm{loc}}(W)\leq\chi.
\)

Suppose that \(A_{\mathrm{loc}}(W)=\chi\). Since \(\chi>0\), the definition of
\(A_{\mathrm{loc}}(W)\) allows us to choose \(q_j>1\) such that
\(\mathbb E[W^{q_j}]<\infty\) and
\[
    \frac{q_j-1-\Phi(q_j)}{q_j}\longrightarrow\chi.
\]
The preceding bound implies \(q_j\to\infty\), and
\[
\begin{aligned}
    0
    \leq
    \frac{\Phi(q_j)-(q_j-1)(1-\chi)}{q_j}
    =
    \chi\frac{q_j-1}{q_j}
    -
    \frac{q_j-1-\Phi(q_j)}{q_j}
    \longrightarrow0.
\end{aligned}
\]
Hence
\(
    \frac{\Phi(q_j)}{q_j}
    \longrightarrow
    1-\chi
    =
    \mathbb E^\star[\log_2 W].
\)
On the other hand, the standard convergence of \(L^p\)-norms as
\(p\to\infty\), understood in the extended real sense, gives
\[
    \lim_{j\to\infty}\frac{\Phi(q_j)}{q_j}
    =
    \lim_{j\to\infty}
    \log_2\bigl(\mathbb E[W^{q_j}]\bigr)^{1/q_j}
    =
    \log_2\|W\|_\infty.
\]
Since the limit on the left is finite, \(a:=\|W\|_\infty<\infty\), and
\(
    \mathbb E^\star[\log_2 W]=\log_2 a.
\)
Since \(\log_2 W\leq\log_2 a\)
\(\mathbb P^\star\)-almost surely, equality of the expectations implies
\(W=a\) \(\mathbb P^\star\)-almost surely. Consequently,
\[
    0
    =
    \mathbb P^\star(W\neq a)
    =
    \mathbb E\!\left[W\mathbf 1_{\{W\neq a\}}\right],
\]
so \(W\in\{0,a\}\) almost surely. Since \(\mathbb EW=1\),
\[
    \mathbb P(W=a)=a^{-1},
    \qquad
    \mathbb P(W=0)=1-a^{-1}.
\]
Necessarily \(a\geq1\), while
\(
    \chi
    =
    1-\mathbb E[W\log_2 W]
    =
    1-\log_2 a>0
\)
gives \(a<2\).

Conversely, suppose that, for some \(a\in[1,2)\),
\[
    \mathbb P(W=a)=a^{-1},
    \qquad
    \mathbb P(W=0)=1-a^{-1}.
\]
Then, for every \(q>1\),
\[
    \mathbb E[W^q]=a^{q-1},
    \qquad
    \chi=1-\log_2 a,
\]
and hence
\[
\begin{aligned}
    A_{\mathrm{loc}}(W)
    =
    \sup_{q>1}
    \frac{(q-1)(1-\log_2 a)}{q}
    =
    1-\log_2 a
    =
    \chi.
\end{aligned}
\]
The endpoint formulas therefore show that the relevant pushforward measure
has equal Fourier and Hausdorff dimensions, and is thus Salem almost surely
on the corresponding non-extinction event.
\end{proof}

\section*{Acknowledgments}

X. F. was partially supported by the National Science
and Technology Council, Taiwan, grant no. 114-2115-M-A49-003-MY3.

The authors used an artificial-intelligence tool for language editing and
\LaTeX\ formatting; all mathematical content was written, checked, and approved
by the authors.

\end{document}